\newtheorem{theo}{Theorem}[section]
\newtheorem{prop}[theo]{Proposition}
\newtheorem{coro}[theo]{Corollary}
\newtheorem{lemm}[theo]{Lemma}
\theoremstyle{remark}
\newtheorem{rema}[theo]{Remark}
\theoremstyle{definition}
\newtheorem{defi}[theo]{Definition}
\newtheorem{exam}[theo]{Example}
\newtheorem{claim}{Claim}[theo]
\numberwithin{equation}{section}
\newcommand{\Gr}{\text{Gr}(2,\mathfrak{sl}(3,\mathbb{R}))}
\newcommand{\s}{\mathfrak{sl}(3,\mathbb{R})}
\newcommand{\G}{\mathrm{SL}(3,\mathbb{R})}
\DeclareMathOperator{\spn}{span}
\DeclareMathOperator{\Ima}{Im}
\begin{document}

\title{Flat compact Hermite-Lorentz manifolds in dimension 4}

\author{Bianca Barucchieri}

\begin{abstract}
	We give a classification, up to finite cover, of flat compact complete Hermite-Lorentz manifolds up to complex dimension $4$. 
\end{abstract}

\maketitle

\section{Introduction}
For a vector space $V$, subgroups $\Gamma$ of the affine group $\mathrm{Aff}(V)=\mathrm{GL}(V)\ltimes V$ that act properly discontinuously and cocompactly on $V$ are called crystallographic groups and have a long history. The first results date back to Bieberbach around the years 1911-12, when he studied the case $\Gamma\le \mathrm{O}(n)\ltimes\mathbb{R}^n$. His results show that such groups are, up to finite index, abelian groups made of translations.
Later the Lorentzian case, $\Gamma\le\mathrm{O}(n,1)\ltimes\mathbb{R}^{n+1}$, was studied. For this case the first results were obtained by Auslander and Markus in dimension $3$, \cite{AuslMark}, and, more generally, Fried and Goldman studied all $3$-dimensional crystallographic groups in \cite{FG}. Still for the Lorentzian case, in the years 1987-88  Fried investigated the dimension $4$, \cite{Fried}, and afterwards results for all dimensions were given by Grunewald and Margulis, \cite{Margulis}. In this article we are interested in the Hermite-Lorentz case, namely $\Gamma\le\mathrm{U}(n,1)\ltimes\mathbb{C}^{n+1}$. Following the strategy used by Grunewald and Margulis in \cite{Margulis} we give a classification, up to finite index, of these crystallographic groups for $n\le3$. These groups are the fundamental groups of flat compact complete Hermite-Lorentz manifolds and they determine the manifold. Indeed, after a classical result of Mostow \cite{Mostow}, if two such groups are isomorphic the corresponding manifolds are diffeomorphic. Since we are interested in classifying the manifolds we will only be concerned with isomorphism classes of these groups and not with the different ways one can realise them as subgroups of $\mathrm{U}(n,1)\ltimes\mathbb{C}^{n+1}$. Therefore, according to Mostow's Theorem, the classification, up to finite index, of crystallographic subgroups $\Gamma\le\mathrm{U}(n,1)\ltimes\mathbb{C}^{n+1}$ given in this article corresponds to a classification, up to finite covering, and for complex dimension at most $4$, of flat compact complete Hermite-Lorentz manifolds. Indeed, let $\mathfrak{a}(\mathbb{C}^{n+1})$ be the affine space associated to the complex vector space $\mathbb{C}^{n+1}$ endowed with a Hermitian form of signature $(n,1)$ and $\mathrm{U}(n,1)$ be the subgroup of the general linear group preserving this Hermitian form. Flat compact complete Hermite-Lorentz manifolds can then be presented as the quotient $\Gamma \backslash \mathfrak{a}(\mathbb{C}^{n+1})$ where  the group $\Gamma$ is a subgroup of $\mathcal{H}(n,1)=\mathrm{U}(n,1)\ltimes \mathbb{C}^{n+1}$ acting properly discontinuously and cocompactly on $\mathfrak{a}(\mathbb{C}^{n+1})$. These manifolds can be thought as analogue of both Hermitian manifolds (definite positive) in complex geometry and Lorentzian manifolds in real differential geometry, see \cite{Zeghib}. Let us notice that Scheuneman in \cite{Sch1} already studied the case of compact complete affine complex surfaces.

About crystallographic groups there is a long-standing conjecture due to Auslander that says that such groups are virtually solvable. Otherwise said, from the manifold point of view, the fundamental group of every compact complete flat affine manifold is virtually solvable. Sometimes this statement is formulated using the term polycyclic instead of solvable. 

\begin{rema}
A polycyclic group is solvable. The converse is not true in general. But every discrete solvable subgroup of $\mathrm{GL}(n,\mathbb{R})$ is polycyclic, \cite[Proposition 3.10]{Rag}. Hence we will use the word polycyclic or solvable interchangeably.
\end{rema}

In some special cases the Auslander conjecture is known to be true. One such case is the Riemannian case as we have seen in Bieberbach's result. Another such case is the Lorentzian case, which was proved in \cite{Goldman} by Goldman and Kamishima, see also \cite{Carr}. Finally, after a result of Grunewald and Margulis, \cite{Margulis}, also the case that interests us, namely the Hermite-Lorentz case $\Gamma\le \mathrm{U}(n,1)\ltimes \mathbb{C}^{n+1}$, is known to satisfy the Auslander conjecture.

For virtually solvable crystallographic groups Fried and Goldman gave the following generalisation of Bieberbach's result.

\begin{theo}[{\cite[ Corollary 1.5]{FG}}]\label{FG}
Let $V$ be a finite dimensional real vector space and $\Gamma\le\mathrm{Aff}(V)$ be a virtually solvable group acting properly discontinuously and cocompactly on $\mathfrak{a}(V)$ then there exists a subgroup $H\le\mathrm{Aff}(V)$ such that
\begin{enumerate}
\item $H$ acts simply transitively on $\mathfrak{a}(V)$;
\item $\Gamma\cap H$ has finite index in $\Gamma$; 
\item $\Gamma\cap H$ is a lattice in $H$.
\end{enumerate}
\end{theo}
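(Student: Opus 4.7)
The plan is to construct $H$ as a \emph{syndetic hull} of a finite-index subgroup of $\Gamma$ inside the Zariski closure of $\Gamma$ in $\mathrm{Aff}(V)$, then to promote transitivity to simple transitivity using the freeness of the crystallographic action.

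First, using virtual solvability together with Selberg's lemma, I would pass to a torsion-free polycyclic normal finite-index subgroup $\Gamma_0\trianglelefteq\Gamma$ which is Zariski-connected in $\mathrm{Aff}(V)$. Let $A$ denote its Zariski closure, a connected solvable real algebraic group. Mostow's theorem on discrete subgroups of solvable Lie groups then produces a closed connected Lie subgroup $H\le A$ in which $\Gamma_0$ sits as a uniform lattice; the containment $\Gamma_0\subseteq\Gamma\cap H$ immediately yields conditions (2) and (3) of the theorem.

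It remains to establish (1). A Hirsch-length computation forces $\dim H=\dim V$: the free cocompact action of $\Gamma_0$ on $\mathfrak{a}(V)$ gives $h(\Gamma_0)\le\dim V$, while $\Gamma_0$ being a uniform lattice in $H$ gives $h(\Gamma_0)=\dim H$. In particular every $H$-orbit is open in $\mathfrak{a}(V)$, and by connectedness $H$ acts transitively. The stabilizer $H_x$ at any chosen basepoint is then closed and zero-dimensional, hence discrete; freeness of the $\Gamma_0$-action together with $\Gamma_0\subseteq H$ gives $\Gamma_0\cap H_x=\{e\}$, so $H_x$ embeds into the compact quotient $\Gamma_0\backslash H$ and is therefore finite. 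Replacing $H$ if necessary by (the image of) its simply connected cover, and invoking the fact that simply connected solvable Lie groups are torsion-free, one then deduces $H_x=\{e\}$.

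The main obstacle is the very last step: eliminating finite isotropy. This is where the affine structure must be used in an essential way, together with the Malcev structure theory of connected solvable Lie groups with uniform lattices. A generic application of the syndetic-hull construction only guarantees compact stabilizers, and upgrading \emph{compact} to \emph{trivial} is precisely the delicate point that distinguishes the Fried--Goldman statement from Mostow's general setup.
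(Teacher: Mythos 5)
The paper offers no proof of this statement: it is imported verbatim from Fried--Goldman \cite[Corollary 1.5]{FG}, so there is nothing internal to compare your argument against. Judged on its own terms, your outline follows the standard syndetic-hull strategy, but it contains two genuine gaps beyond routine detail.

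First, the assertion that $\dim H=\dim V$ implies ``every $H$-orbit is open'' is circular: the orbit $H\cdot x$ has dimension $\dim H-\dim H_x$, so openness of orbits already presupposes that the stabilizers are discrete, which is part of what you are trying to establish. A genuine argument is needed here, e.g.\ that $H\cdot x$ contains the coarsely dense set $\Gamma_0\cdot x$ and lies within bounded distance of it (using compactness of $\Gamma_0\backslash H$ and of $\Gamma_0\backslash\mathfrak{a}(V)$), and that a coarsely dense locally closed orbit of an algebraic action must be everything; this is exactly where the affine structure enters and it is not automatic. Second, the final step is not carried out, and the proposed fix is vacuous: the image of the universal covering homomorphism $\widetilde{H}\to H$ is $H$ itself, so ``replacing $H$ by the image of its simply connected cover'' changes nothing, and a connected solvable Lie group can certainly contain torsion (circle factors). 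Eliminating the finite isotropy --- equivalently, arranging the hull to be simply connected while still containing $\Gamma_0$ cocompactly --- is precisely the content that separates \cite[Corollary 1.5]{FG} from the bare syndetic-hull theorem, so as written the proposal stops just short of the theorem. (Relatedly, the identity $h(\Gamma_0)=\dim H$ for a uniform lattice fails when $H$ has compact factors, e.g.\ the trivial lattice in $S^1$, so even your dimension count silently assumes the simple connectivity you have not yet secured; and the existence of the syndetic hull for a discrete polycyclic subgroup of its algebraic hull is itself a nontrivial theorem of Fried--Goldman rather than a direct consequence of Mostow's lattice results.)
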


Indeed, when the linear part of $\Gamma$ preserves a positive definite bilinear form, Bieberbach's results tell us that the simply transitive group of the theorem is the group of pure translations. The simply transitive groups appearing in this theorem are sometimes called connected crystallographic hulls.
This theorem will be our starting point for the investigation of the Hermite-Lorentz manifolds $\Gamma \backslash \mathfrak{a}(\mathbb{C}^{n+1})$. More precisely, in order to study crystallographic subgroups $\Gamma\le\mathrm{U}(n,1)\ltimes \mathbb{C}^{n+1}$ we will study lattices in simply transitive affine groups $H\le \mathrm{U}(n,1)\ltimes \mathbb{C}^{n+1}$. Hence, even though the Hermite-Lorentz manifolds are complex manifolds, we see them as real objects with an integrable complex structure. More precisely we will look at these manifolds, up to finite index, as $\Gamma\backslash H$ where $H$ is a simply transitive subgroup of $\mathrm{U}(n,1)\ltimes \mathbb{C}^{n+1}$ and $\Gamma$ is a lattice in it. The Lie groups $H$ that act simply transitively on $\mathbb{C}^{n+1}$ are real Lie groups which inherit an integrable left invariant complex structure from $\mathbb{C}^{n+1}$. The integrability condition, see for example \cite{Salamon}, is more general than the condition for a Lie group to be a complex Lie group. Indeed the Lie groups we are looking at are not complex Lie groups. \\

This article is organised as follows. In Section \ref{UnSimTr} we will study unipotent groups $H$ acting simply transitively on $\mathfrak{a}(\mathbb{C}^{n+1})$. Let us remember the following definition.

\begin{defi}
A subgroup $U$ of the affine group $\mathrm{Aff}(V)$ is \emph{unipotent} if its linear part consists of endomorphisms whose eigenvalues are all $1$.
\end{defi}

\begin{rema}
A unipotent group is, in particular, a nilpotent group. The converse in general is not true. But every simply transitive nilpotent affine group of motion that appears as a connected crystallographic hull is unipotent, \cite[Theorem A]{FGH}. 
\end{rema}

Thus in Section \ref{UnSimTr} we give a presentation, valid in any dimension, of unipotent simply transitive groups of Hermite-Lorentz affine motion and prove a proposition about their conjugacy classes. Let us remark that, although we haven't adopted this point of view at all, Lie groups acting simply transitively on $\mathfrak{a}(\mathbb{C}^{n+1})$ can be seen as real Lie groups that are endowed with a left invariant flat Hermite-Lorentz metric. There is a vast literature that treats similar questions with the point of view of metric Lie algebras. For example, \cite{Medina} and \cite{Boucetta} give construction results for Lorentzian and pseudo-Riemannian nilpotent Lie algebras of signature $(2,n-2)$. But in these articles they are not interested in the classification problem. The Lie algebras that appear in this article are particular cases of pseudo-Riemannian nilpotent Lie algebras of signature $(2,n-2)$. A posteriori we can show, but we will not do it in this article, that the Lie algebras we have found can be obtained with a double extension process as in \cite{Boucetta}. \\

In Section \ref{ProperlyDisc} we reduce, as for the Lorentzian case, the study of crystallographic groups to the study of lattices in simply transitive unipotent Lie groups. Let us define some terminology.

\begin{defi}
	A group $\Gamma$ is said to be \emph{abelian by cyclic} if it has a normal abelian subgroup $\Gamma_1$ such that $\Gamma/\Gamma_1$ is cyclic.
\end{defi}

We then prove the following.

\begin{theo}\label{cryst}
	Let $\Gamma$ be a subgroup of $\mathcal{H}(n,1)$ that acts properly discontinuously and cocompactly on $\mathfrak{a}(V)$ then $\Gamma$ is either virtually nilpotent or contains a subgroup of finite index that is abelian by cyclic.
\end{theo}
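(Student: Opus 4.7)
The plan is to reduce, via Theorem \ref{FG}, to a structural analysis of a connected simply transitive subgroup $H\subseteq\mathcal{H}(n,1)$ in which $\Gamma$ sits as a lattice, and then dichotomize on whether the linear part $L(H)\subseteq\mathrm{U}(n,1)$ is unipotent. Since the Auslander conjecture holds for $\mathcal{H}(n,1)$ by Grunewald--Margulis, $\Gamma$ is virtually polycyclic, so Theorem \ref{FG} applies: after replacing $\Gamma$ by a finite-index subgroup we may assume $\Gamma$ is a lattice in a connected, simply transitive subgroup $H\subseteq\mathcal{H}(n,1)$ of real dimension $2(n+1)$. The image $L(H)$ under the linear part map $L\colon\mathcal{H}(n,1)\to\mathrm{U}(n,1)$ is then a connected solvable subgroup.

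If $L(H)$ is unipotent, then every element of $H$ is a unipotent affine motion, so $H$ is itself a connected unipotent Lie group. In particular $H$ is nilpotent, and its lattice $\Gamma$ is virtually nilpotent, which is the first alternative of the theorem.

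If $L(H)$ is not unipotent, I will show that $H$ contains a codimension-one connected abelian normal subgroup $H_0$ with $H/H_0\cong\mathbb{R}$. The key ingredients are: (i) $\mathrm{U}(n,1)$ has real rank one, so the $\mathbb{R}$-split part of the Zariski closure of $L(H)$ is at most one-dimensional; (ii) simple transitivity of $H$ on $\mathbb{C}^{n+1}$ forbids positive-dimensional compact subgroups of $H$, since any compact subgroup of $H$ would fix an affine centroid, contradicting freeness, and this kills the elliptic part of any Cartan of $L(H)$; and (iii) the translation subgroup $H\cap\mathbb{C}^{n+1}$ is abelian by construction. Combining (i)--(iii) with the parabolic structure of $\mathrm{U}(n,1)$ forces the semisimple direction of $L(H)$ to be one-dimensional and the relevant codimension-one piece of $H$ to be abelian. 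With such an $H_0$ in hand, Mostow's theorem on lattices in solvable Lie groups (see \cite{Rag}) shows that $\Gamma\cap H_0$ is a lattice in $H_0$, hence abelian, and that $\Gamma/(\Gamma\cap H_0)$ embeds as a cocompact discrete subgroup of $H/H_0\cong\mathbb{R}$, hence is cyclic, yielding the desired abelian-by-cyclic finite-index subgroup of $\Gamma$.

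The main obstacle is precisely this second case: showing that the combination of solvability, non-unipotent linear part, simple transitivity, and the rank-one structure of $\mathrm{U}(n,1)$ really does force a codimension-one normal abelian subgroup in $H$. The argument will require a careful use of the Jordan--Chevalley decomposition inside $\mathrm{U}(n,1)$, an analysis of how the semisimple part of a generic element of $L(H)$ acts on the translation subgroup $H\cap\mathbb{C}^{n+1}$, and exploitation of the constraints imposed on the cocycle presentation $\mathfrak{h}=\{(\lambda(v),v):v\in\mathbb{C}^{n+1}\}$ by the Hermitian form of signature $(n,1)$.
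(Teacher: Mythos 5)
Your reduction via Theorem \ref{FG} and your handling of the case where $L(H)$ is unipotent are fine (modulo the small point, which the paper addresses via \cite[Theorem 1.4]{FG}, that Theorem \ref{FG} only gives $H\le\mathrm{Aff}(V)$ and one must argue that $H$ lands in $\mathcal{H}(n,1)$). But the dichotomy you build the rest of the argument on --- $L(H)$ unipotent versus not --- is not the right one, and your step (ii) is where it breaks. The semisimple parts of elements of a solvable $L(H)\le \mathrm{U}(n,1)$ have an elliptic component (eigenvalues of modulus one) as well as a possible split component, and these behave completely differently. You claim that simple transitivity forbids positive-dimensional compact subgroups of $H$ and that this ``kills the elliptic part of any Cartan of $L(H)$''. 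The first half is true, but the inference is false: a one-parameter subgroup of $H$ whose linear part is a rotation is a group of screw motions, isomorphic to $\mathbb{R}$ and acting freely, even though its linear part is a circle. So nothing prevents $L(H)$ from being non-unipotent with all eigenvalues of modulus one (the case $L(H)\le\hat{\mathcal{B}}$ with nontrivial $\varphi$ or $\sigma$ in the paper's notation, Proposition \ref{nonunip2}), and this is precisely the hard case.

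In that surviving case your proposed conclusion is also the wrong one. Such an $H$ is solvable non-nilpotent, its nilradical $N$ is a proper unipotent subgroup that can be $3$-step nilpotent, and there is no reason for $H$ to contain a codimension-one connected abelian normal subgroup with quotient $\mathbb{R}$; the correct statement is that the lattice $\Gamma$ is still \emph{virtually nilpotent}. Proving this requires an argument of a different nature from anything in your outline: one shows that $\Gamma$ acts by conjugation on each torus $(\mathcal{C}^iN/\mathcal{C}^{i+1}N)/(\Gamma\cap\mathcal{C}^iN)$ through integer matrices all of whose eigenvalues have modulus one, and then invokes Kronecker's theorem to conclude that these eigenvalues are roots of unity, so the action is virtually unipotent and $\Gamma$ is virtually nilpotent (Proposition \ref{VirtNil}, the main technical step of the paper's proof). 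Only when the Zariski closure of $L(H)$ is a torus with a genuine split direction --- $L(H)$ conjugate into $\mathcal{D}$ --- does one land in the abelian-by-cyclic alternative, and there the key fact making $\Gamma\cap N$ abelian is that the nilradical of $H$ consists of pure translations. Without a substitute for the Kronecker-type argument, your proof does not go through.
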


According to this theorem, the unipotent hypothesis we made in Section \ref{UnSimTr} only leaves out the easy abelian by cyclic case.
We end this section with the classification of the latter case.\\

In Section \ref{IsomDeg} we start the classification, up to isomorphism, of those $H$ found in Section \ref{UnSimTr}. We complete their classification for the dimensions $2$ and $3$. In dimension $4$ we give the classification for some particular cases that we have called degenerate cases. \\

In Section \ref{IsomGen} we finish the classification of the unipotent simply transitive groups $H$ in dimension $4$. For this more general case we are left with a classification problem of $8$-dimensional nilpotent Lie algebras defined by three parameters. Since there are no complete classifications of nilpotent Lie algebras in dimension bigger than $7$ we introduce an ad hoc method to study our particular family of Lie algebras. Using the fact that these Lie algebras are Carnot, see Subsection \ref{Carnot}, we can identify their isomorphism classes with the orbits of the $\G$-action on the Grassmannian of $2$-dimensional subspaces of $\s$ induced by the adjoint action. Hence studying the orbits for this action is the same as studying the isomorphism classes for the family of Lie algebras.
In contrast with the Lorentzian case where, for every fixed dimension, there are finitely many isomorphism classes of unipotent subgroups of $\mathrm{O}(n,1)\ltimes\mathbb{R} ^{n+1}$ acting simply transitively on $\mathfrak{a}(\mathbb{R}^{n+1})$ we find that these Lie algebras constitute an infinite family of pairwise non isomorphic Lie algebras of Lie groups acting simply transitively on $\mathfrak{a}(\mathbb{C}^{3+1})$. In order to classify unipotent simply transitive subgroups of $\mathrm{U}(3,1)\ltimes \mathbb{C}^{3+1}$ up to isomorphism it is sufficient to classify their Lie algebras up to Lie algebra isomorphism. Thus in this section we complete the proof of the following.

\begin{theo}\label{ClassLieAlg}
	The list given in Appendix \ref{LieBracket} with $\mathbb{K}=\mathbb{R}$ is a complete non redundant list of isomorphism classes of unipotent simply transitive subgroups of $\mathrm{U}(3,1)\ltimes \mathbb{C}^{3+1}$. 
\end{theo}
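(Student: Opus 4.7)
The plan is to reduce the classification to an orbit problem on the Grassmannian $\Gr$ and then to distinguish the orbits by explicit invariants. As noted just before the statement, it suffices to classify the Lie algebras $\mathfrak{h}$ of such groups $H$ up to Lie algebra isomorphism. From Section \ref{UnSimTr} these Lie algebras form an explicit three-parameter family of $8$-dimensional nilpotent Lie algebras; the degenerate strata have already been treated in Section \ref{IsomDeg}, so it suffices to handle the generic non-degenerate members.

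On the generic stratum the Lie algebras are Carnot (Subsection \ref{Carnot}). The Carnot property gives, for each such $\mathfrak{h}$, a canonical identification of the top-layer bracket with a $2$-dimensional subspace of $\s$, and two such subspaces yield isomorphic Lie algebras exactly when they lie in the same $\G$-orbit on $\Gr$ under the adjoint action. Hence the isomorphism classification becomes the computation of $\G$-orbits on (the image of our parameter space in) $\Gr$. I would then stratify $\Gr$ by discrete $\G$-invariants such as: the rank and signature of the restriction of the trace form $(X,Y)\mapsto \mathrm{tr}(XY)$ to a representative plane $P$, the dimension of $P\cap \mathcal{N}$ where $\mathcal{N}$ is the nilpotent cone, and the conjugacy class of a canonical element of $P$ picked out using the bracket. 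On each stratum I would choose a normal form and match it against an entry of Appendix \ref{LieBracket}.

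The main obstacle is the generic stratum, where the $\G$-orbits form a one-parameter family: one must exhibit a continuous $\G$-invariant (for instance a suitably normalised ratio of coefficients of the characteristic polynomial of a canonical spanning element of $P$) that distinguishes them, and then verify that it takes distinct values on distinct list entries. A second, related difficulty is carefully tracking real forms: some orbits for the complexification of $\G$ split into several $\G$-orbits according to whether certain roots are real or form a complex-conjugate pair, which is why the appendix is stated over a general field $\mathbb{K}$ and why completeness for $\mathbb{K}=\mathbb{R}$ requires listing each real form separately. Completeness then follows from exhausting the stratification, and non-redundancy from the fact that the invariants just described separate all the entries of the list.
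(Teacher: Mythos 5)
Your overall strategy for the non-degenerate case — pass to the Lie algebras, use the Carnot property to identify each one with a $2$-plane $P\in\Gr$, and classify $\G$-adjoint orbits — is exactly the route the paper takes (Proposition \ref{Isom&Orbit}/\ref{Bijection}), and your deferral of the degenerate strata to Section \ref{IsomDeg} is legitimate since the theorem is proved there by direct normal forms matched against de Graaf's and Gong's lists. However, there is a genuine gap in how you propose to separate the orbits. You assert that on the generic stratum "the $\G$-orbits form a one-parameter family" and that a single continuous invariant would finish the job. This is wrong: Proposition \ref{w_0not0,gamma2not0} shows that $\mathfrak{g}_\mathbb{R}(a,b,c)\cong\mathfrak{g}_\mathbb{R}(a',b',c')$ if and only if $(a',b',c')=(\pm a,b,c)$, so the parameters $(a,b,c)$ are, up to the sign of $a$, a \emph{complete} set of invariants and the moduli is three-dimensional (this is consistent with a dimension count: $\dim\Gr=12$ while $\dim\G=8$). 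A single normalised ratio of characteristic-polynomial coefficients cannot separate a three-parameter family, and your discrete invariants (signature of the trace form on $P$, $\dim(P\cap\mathcal{N})$) only carve out strata; you would still need to produce three independent continuous invariants and verify they separate, which is the entire content of the hard step.

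The paper's mechanism for doing this is quite different from what you sketch, and worth noting. Instead of intrinsic invariants of $P$ as a subspace of $\s$, it uses the Plücker embedding together with the equivariant decomposition $\bigwedge^2\s\cong\s\oplus S^3(\mathbb{R}^3)\oplus S^3({\mathbb{R}^3}^*)$ and the three explicit projections $\pi_1,\pi_2,\pi_3$. The key observation is that for every $(a,b,c)$ the binary cubic $\mathrm{pr}(P(a,b,c))$ extracted from $\pi_2$ equals $3(y^3+yz^2)$; demanding that a group element preserve the kernel of $\pi_1(P(a,b,c))$, fix this cubic, and preserve the leading terms of $\pi_3$ forces the element to be $\mathrm{Id}$ or $\mathrm{diag}(-1,-1,1)$, which immediately yields the claimed non-redundancy. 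Your proposal is missing an argument of this strength; as written, completeness of the list would follow from your reduction, but non-redundancy — the harder half — would not. Finally, your worry about complex-conjugate root splittings is relevant only in the degenerate strata (e.g.\ $N_9(\varepsilon)$ with $\varepsilon\in\{0,1,-1\}$); the general case is handled entirely over $\mathbb{R}$ with no complexification, and the case $\alpha=0$ (which your stratification must also detect, e.g.\ via the characteristic sequence) is settled separately by reducing to the adjoint action of $\mathrm{GL}(2,\mathbb{R})$ on $\mathfrak{sl}(2,\mathbb{R})$.
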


In Section \ref{lattices} we study the Hermite-Lorentz crystallographic groups in the nilpotent case. Indeed we have reduced it to the study of lattices in simply transitive unipotent groups. Since in Sections \ref{IsomDeg} and \ref{IsomGen} we have given the complete list of those simply transitive groups we just need to understand the abstract commensurability classes of lattices of these groups. Let us give the following definition. 

\begin{defi}
Two groups $\Gamma_1$ and $\Gamma_2$ are said to be \emph{abstractly commensurable} if there exist two subgroups $\Delta_1\le \Gamma_1$ and $\Delta_2\le \Gamma_2$ of finite index such that $\Delta_1$ is isomorphic to $\Delta_2$.
\end{defi}

Lattices in nilpotent Lie groups are described by Malcev's theorems, see Section \ref{lattices}  and for more details see \cite{Cor} or \cite[Section II]{Rag}. These theorems say that a nilpotent Lie group $G$ admits a lattice if and only if its Lie algebra $\mathfrak{g}$ admits a $\mathbb{Q}$-form, that is a rational subalgebra $\mathfrak{g}_0$ such that $\mathfrak{g}_0\otimes\mathbb{R}$ is isomorphic to $\mathfrak{g}$. Furthermore abstract commensurability classes of lattices in $G$ are in correspondence with $\mathbb{Q}$-isomorphism classes of $\mathbb{Q}$-forms of $\mathfrak{g}$. To be more precise let us remember that for a simply connected nilpotent Lie group the exponential map from its Lie algebra is a diffeomorphism and we will call $\log$ its inverse. Then from a lattice $\Gamma$ in $G$, $\spn_\mathbb{Q}\{\log\Gamma\}$ provides a $\mathbb{Q}$-form of $\mathfrak{g}$. On the other hand let $\mathfrak{g}$ be a Lie algebra with a basis with respect to which the structure constants are rational and let $\mathfrak{g}_0$ be the $\mathbb{Q}$-span of this basis. If $\mathcal{L}$ is any lattice of $\mathfrak{g}$ contained in $\mathfrak{g}_0$ the group generated by $\exp\mathcal{L}$ is a lattice in $G$.
The main result of this section is then the following.

\begin{theo}\label{ClassLattic}
	The list given in  Appendix \ref{LieBracket} with $\mathbb{K}=\mathbb{Q}$ is a complete list without repetition of abstract commensurability classes of nilpotent crystallographic subgroups of  $\mathrm{U}(3,1)\ltimes \mathbb{C}^{3+1}$.
\end{theo}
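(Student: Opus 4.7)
The plan is to combine the classification from the earlier sections with Malcev's correspondence between lattices and $\mathbb{Q}$-forms. By Theorem \ref{cryst}, a nilpotent crystallographic subgroup $\Gamma \le \mathcal{H}(3,1)$ is, in particular, virtually solvable; by Theorem \ref{FG} together with [FGH, Theorem A] (quoted in the remark before the statement of Theorem \ref{ClassLieAlg}), $\Gamma$ therefore contains a finite-index subgroup which is a lattice in one of the unipotent simply transitive subgroups $H \le \mathrm{U}(3,1)\ltimes\mathbb{C}^{3+1}$ enumerated in Theorem \ref{ClassLieAlg}. Thus, up to abstract commensurability, enumerating nilpotent crystallographic groups reduces to enumerating commensurability classes of lattices in each such $H$.

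By the Malcev correspondence recalled just before the theorem, commensurability classes of lattices in a simply connected nilpotent Lie group $H$ are in bijection with $\mathbb{Q}$-isomorphism classes of $\mathbb{Q}$-forms of its Lie algebra $\mathfrak{h}$. So for each real Lie algebra $\mathfrak{h}$ appearing in Appendix \ref{LieBracket} with $\mathbb{K}=\mathbb{R}$ I would (i) show that the corresponding entries of the list with $\mathbb{K}=\mathbb{Q}$ all yield $\mathbb{Q}$-forms of $\mathfrak{h}$, and (ii) show that every $\mathbb{Q}$-form of $\mathfrak{h}$ is $\mathbb{Q}$-isomorphic to exactly one such entry. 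Point (i) is straightforward: the bases given in the appendix have rational structure constants whenever the parameters are rational, so the $\mathbb{Q}$-span is a $\mathbb{Q}$-form and its real extension is $\mathbb{R}$-isomorphic to $\mathfrak{h}$ by construction.

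The substantive content is (ii), for which I would imitate over $\mathbb{Q}$ the real classification of Sections \ref{IsomDeg} and \ref{IsomGen}. In the generic family of Section \ref{IsomGen}, the Carnot structure lets one identify isomorphism classes of the relevant $8$-dimensional Lie algebras with orbits of the adjoint action of $\G$ on $\Gr$. The analogous statement with $\mathbb{Q}$-coefficients identifies $\mathbb{Q}$-isomorphism classes with orbits of $\mathrm{SL}(3,\mathbb{Q})$ on the $\mathbb{Q}$-points of the Grassmannian $\mathrm{Gr}(2,\mathfrak{sl}(3,\mathbb{Q}))$; I would then parametrize these orbits by arithmetic invariants (typically a discriminant or determinant class in $\mathbb{Q}^\times/(\mathbb{Q}^\times)^2$, together with a trace-type invariant) and check that the entries of the list with $\mathbb{K}=\mathbb{Q}$ provide a complete set of representatives. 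The degenerate cases of Section \ref{IsomDeg} would be treated separately, one by one, either by an explicit rational change of basis reducing every $\mathbb{Q}$-form to a listed one, or by the same discriminant-style invariants when a continuous parameter survives.

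The main obstacle is precisely this $\mathrm{SL}(3,\mathbb{Q})$-orbit analysis. Over $\mathbb{R}$ the adjoint action on the Grassmannian has finitely many orbits per ``type'', but over $\mathbb{Q}$ a single real orbit generally splits into infinitely many rational orbits indexed by arithmetic invariants, so one must (a) identify a complete set of such invariants for each real orbit, (b) exhibit for any prescribed value of the invariants a canonical representative in the list (existence/normal form) and (c) exclude further coincidences between parameters lying in the same listed family but with distinct invariants (non-redundancy). Once (a)--(c) are carried out uniformly along the list, combining them with the reduction in the first paragraph and Malcev's bijection yields the theorem.
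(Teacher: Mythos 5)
Your reduction is the same as the paper's: pass to a finite-index lattice in a unipotent simply transitive $H$ via Theorem \ref{FG} and \cite{FGH}, invoke the Malcev correspondence to translate commensurability classes of lattices into $\mathbb{Q}$-isomorphism classes of $\mathbb{Q}$-forms, and then classify those $\mathbb{Q}$-forms algebra by algebra, using the Carnot identification to turn the generic family into an $\mathrm{SL}(3,\mathbb{Q})$-orbit problem on $\mathrm{Gr}(2,\mathfrak{sl}(3,\mathbb{Q}))$. Up to that point the proposal is correct and faithful to the paper's architecture.

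The gap is in step (ii), which is where all the actual content lies, and the one place where you commit to specifics is wrong. For the generic family $\mathfrak{g}_\mathbb{R}(a,b,c)$ the rational invariant attached to a plane $P$ is the $\mathrm{GL}(2,\mathbb{Q})$-orbit of the binary cubic $p_3(y,z)$ extracted from $\pi_2(P)$; since this cubic has negative discriminant, its orbit is governed by the cubic \'etale algebra $\mathbb{Q}[y]/(p_3(y,1))$ — either $\mathbb{Q}$ times a quadratic algebra when $p_3$ has a rational root, or a genuine cubic field when it is irreducible. Cubic fields are \emph{not} parametrized by a class in $\mathbb{Q}^\times/(\mathbb{Q}^\times)^2$ together with a trace-type invariant: the paper has to import the Marques--Ward classification of cubic fields (normal forms $y^3-t$ and $y^3-3y-t$ with an explicit, nontrivial equivalence on $t$), and even after fixing the field there remain further rational parameters ($m\in\mathbb{Q}\cup\{\infty\}$, the coefficients $j,k,l$, and a leading coefficient only defined up to ninth powers because of the residual homothety $\mathrm{diag}(\mu^{-2},\mu,\mu)$). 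So a single real orbit splits into rational orbits indexed by cubic number fields plus continuous rational moduli, and your proposed discriminant-style invariants would neither separate nor exhaust them. For the degenerate algebras $L_i$, $N_j$ the paper does not proceed by ad hoc rational changes of basis either: it cites de Graaf's field-independent classification in dimension $6$ and, for the $7$- and $8$-dimensional indecomposables, runs the Skjelbred--Sund central-extension method over $\mathbb{Q}$, classifying orbits of $\mathrm{Aut}(\mathfrak{g})$ on subspaces of $\mathrm{H}^2(\mathfrak{g},\mathbb{Q})$; this is where the extra rational parameters $\varepsilon$ (up to squares, respectively up to sixth powers) in $L_6^\mathbb{Q}(\varepsilon)$, $N_3^\mathbb{Q}(\varepsilon)$, $N_9^\mathbb{Q}(\varepsilon)$ come from. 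Your outline would need to be filled in with precisely these computations before it constitutes a proof.
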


Finally in Section \ref{topology} we give some topological considerations about the manifolds $\Gamma \backslash \mathfrak{a}(\mathbb{C}^{n+1})$ that are virtually nilpotent, namely that they are finitely covered by torus bundles over tori.

\section*{Acknowledgement}
The author would like to warmly thank her thesis advisors Vincent Koziarz and Pierre Mounoud. Each one contributed with his style to this article. Without the several hours of discussion they dedicated and their encouragement this article would not have seen the light of day. Also she would like to thank Yves Cornulier who introduced her to the concept of Carnot Lie algebras.

\section{Unipotent simply transitive groups of Hermite-Lorentz affine motion}\label{UnSimTr}

In this section we look at unipotent subgroups of $\mathrm{U}(n,1)\ltimes \mathbb{C}^{n+1}$ acting simply transitively on $\mathfrak{a}(\mathbb{C}^{n+1})$. The reason why we have the unipotent hypothesis, that simplifies our study, is that, as we will see in the following section, the study of crystallographic groups will be reduced to the study of lattices in unipotent simply transitive Lie groups. \\

Let $V$ be a complex vector space of dimension $n+1$ endowed with a Hermitian form $h$ of signature $(n,1)$, such an Hermitian form will be called Hermite-Lorentz. Also we denote by $\mathfrak{a}(V)$ the affine space associated to $V$ and by $\mathrm{Aff}(V)$ the group of affine transformations of $\mathfrak{a}(V)$. Remember that the affine group can be seen as a linear group as follows 
\[\mathrm{Aff}(V)=\left\{\begin{pmatrix}
A & v \\ 0 & 1 
\end{pmatrix}\ |\ A\in\mathrm{GL}(V),v\in V\right\}.\]
Denote furthermore by $L:\mathrm{Aff}(V) \rightarrow \mathrm{GL}(V)$ the homomorphism that associates to each affine transformation its linear part, then its kernel consists of pure translations and let us denote it by 
\[T=\left\{\begin{pmatrix}
\mathrm{Id} & v \\ 0 & 1 
\end{pmatrix}\ |\ v\in V\right\}.\] 
For a subgroup $G\le \mathrm{Aff}(V)$, let us denote $T_G=G\cap T$ the subgroup of pure translations in $G$.\\
Let furthermore $\mathrm{U}(n,1)$ be the group of linear transformations of $V$ that preserve $h$ and $\mathcal{H}(n,1)$ the group of affine transformations of $\mathfrak{a}(V)$ whose linear part is in $\mathrm{U}(n,1)$. 

\begin{rema}
Let us notice that the pair $(V,h)$ where $V$ is a complex vector space and $h$ is a Hermite-Lorentz form defined on it is equivalent to the triple $(V(\mathbb{R}),J,\langle \cdot,\cdot\rangle)$. Here $V(\mathbb{R})$ is the associated real vector space, $J:V(\mathbb{R})\rightarrow V(\mathbb{R})$ is a complex structure on $V(\mathbb{R})$, i.e.~a linear endomorphism such that $J^2=-\mathrm{Id}$ and $\langle \cdot,\cdot\rangle$ is a pseudo-Riemannian metric on $V(\mathbb{R})$ of signature $(2n,2)$ such that, if $\omega(u,v):=\langle u,Jv\rangle=-\langle Ju,v\rangle $, we have 
\[h(u,v)=\langle u,v\rangle+i\omega(u,v).\]
\end{rema}

Let us also fix the following notation: for $z\in\mathbb{C}$ we denote by $\Re(z)$ and $\Im(z)$ its real and imaginary part respectively.

\begin{lemm}\label{FixNullVect}
A unipotent subgroup of $\mathrm{U}(n,1)$ fixes a null vector for $h$.
\end{lemm}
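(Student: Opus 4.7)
The plan is to induct on $n$, at each step invoking Kolchin's theorem to extract a common non-zero fixed vector in $V$ and then splitting on its causal type with respect to $h$. Concretely, since every element of $U$ is unipotent, Kolchin's theorem (every subgroup of $\mathrm{GL}(V)$ consisting of unipotent operators admits a simultaneous unipotent-triangular basis) supplies some $v \in V \setminus \{0\}$ with $u(v) = v$ for all $u \in U$. If $h(v,v) = 0$ then $v$ itself is the desired null fixed vector. Otherwise $v^\perp$ is a non-degenerate $U$-invariant hyperplane, and $U|_{v^\perp}$ is again a unipotent subgroup of the unitary group of the restricted form.

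If $v$ is timelike, i.e.\ $h(v,v) < 0$, the restricted form on $v^\perp$ is positive definite, so $U|_{v^\perp}$ sits inside the compact group $\mathrm{U}(n)$. A non-identity unipotent $\mathrm{Id}+N$ satisfies $(\mathrm{Id}+N)^k = \mathrm{Id} + kN + \cdots$, whose first-order term forces its powers to be unbounded, so it cannot lie in a compact linear group; hence $U|_{v^\perp} = \{\mathrm{Id}\}$, and combined with $u(v) = v$ this gives $U = \{\mathrm{Id}\}$. Since the signature $(n,1)$ with $n \ge 1$ admits null vectors, any such vector is fixed by $U$. If instead $v$ is spacelike, $h(v,v) > 0$, the restricted form has signature $(n-1,1)$ and the inductive hypothesis applied to $U|_{v^\perp} \le \mathrm{U}(n-1,1)$ produces a null vector of $v^\perp$ fixed by $U$, which remains null in $V$. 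The base case $n = 1$ with $v$ non-null is handled by the same compactness argument, since $v^\perp$ is then a definite line.

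The only non-routine input is the Kolchin-type assertion that a group of unipotent operators on a finite-dimensional complex vector space has a common fixed vector; the remainder is linear algebra plus the elementary observation that a compact linear group contains no non-trivial unipotent element. I therefore do not anticipate any substantial obstacle beyond citing Kolchin.
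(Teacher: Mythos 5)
Your proposal is correct and takes essentially the same route as the paper's proof: extract a common fixed vector via a Kolchin/Engel-type theorem, split on its causal type, kill the timelike case by noting a unipotent element of a definite unitary group is trivial, and induct down through $\mathrm{U}(n-1,1)$ in the spacelike case. Your write-up is if anything a bit more carefully organized (explicit base case, explicit compactness argument), but the underlying argument is the same.
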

\begin{proof}
Let $U$ be a unipotent subgroup of $\mathrm{U}(n,1)$. From Engel's theorem we know that $U$ fixes a vector $v_0\in V$. If $v_0$ is timelike then $U$ fixes $v_0^\perp$ that is spacelike, hence it is contained in $\mathrm{U}(n)$, but a unipotent unitary matrix is trivial hence $U$ would be trivial. If instead $v_0$ were spacelike then, if $U$ does not fix a null vector in $v_0^\perp$, it would have to fix a spacelike vector $v_1$ in $v_0^\perp$. This implies that $U$ preserves the Hermite-Lorentz space $\spn \{v_0,v_1\}^\perp$. By induction we get that then $U$ is contained in $\mathrm{U}(1,1)$ and we can see that a unipotent element in $\mathrm{U}(1,1)$ is trivial.
\end{proof}

\begin{lemm}\label{unipotent}
We can choose a basis for $V$ with respect to which a maximal unipotent subgroup of $\mathrm{U}(n,1)$ is written as
\[\mathcal{U}=\left\{\begin{pmatrix}
1 & -\overline{v}^t & -\frac{1}{2}\overline{v}^tv+ib \\
0 & \mathrm{Id}_{n-1} & v \\
0 & 0 & 1
\end{pmatrix}\ |\ v\in \mathbb{C}^{n-1}, b\in\mathbb{R}\right\}.\]
\end{lemm}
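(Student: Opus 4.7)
The plan is to reduce the problem to a direct matrix computation by first choosing a basis well-adapted to both the maximal unipotent subgroup $\mathcal{U}$ and the Hermitian form $h$. By Lemma \ref{FixNullVect}, $\mathcal{U}$ fixes some null vector, which I call $e_0$. Since $h$ has signature $(n,1)$, I can pick a second null vector $f_0$ with $h(e_0,f_0)=1$, and an orthonormal basis $e_1,\dots,e_{n-1}$ of the positive-definite Hermitian space $\mathrm{span}\{e_0,f_0\}^{\perp}$. In the ordered basis $(e_0,e_1,\dots,e_{n-1},f_0)$ the form $h$ has the anti-diagonal matrix
\[
[h]=\begin{pmatrix}0 & 0 & 1\\ 0 & \mathrm{Id}_{n-1} & 0\\ 1 & 0 & 0\end{pmatrix}.
\]

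Next, I would argue that every element $A\in\mathcal{U}$ has the block form
\[
A=\begin{pmatrix}1 & a^{t} & c\\ 0 & B & w\\ 0 & 0 & 1\end{pmatrix},
\]
since $A$ fixes $e_0$, preserves $e_0^{\perp}=\mathrm{span}\{e_0,e_1,\dots,e_{n-1}\}$ (because $h(Ax,e_0)=h(Ax,Ae_0)=h(x,e_0)$), and is unipotent (so the diagonal block corresponding to $f_0$ is forced to be $1$). The main computation is then to impose $A^{\ast}[h]A=[h]$ and read off the constraints block by block. The $(2,2)$ block gives $B^{\ast}B=\mathrm{Id}_{n-1}$, so $B\in\mathrm{U}(n-1)$; combined with $B$ being unipotent this yields $B=\mathrm{Id}$. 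The $(2,3)$ block then forces $w=-\overline{a}$. Finally the $(3,3)$ entry gives $2\Re(c)+\overline{a}^{t}a=0$, hence $c=-\tfrac{1}{2}\overline{a}^{t}a+ib$ for some $b\in\mathbb{R}$. Setting $v:=w=-\overline{a}$ produces exactly the matrix form claimed.

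To conclude, I would observe that the above argument shows $\mathcal{U}$ is contained in the set of matrices of the stated shape, and that this set is itself a (unipotent) subgroup of $\mathrm{U}(n,1)$: closure under multiplication is a short calculation yielding a Heisenberg-type composition law $(v,b)\cdot(v',b')=(v+v',\,b+b'+\Im(\overline{v}^{t}v'))$, and it is evidently unipotent. Since this subgroup has real dimension $2(n-1)+1=2n-1$, which equals the dimension of a maximal unipotent subgroup of $\mathrm{U}(n,1)$ (the unipotent radical of the minimal parabolic stabilizing the null line $\mathbb{C}e_0$), the maximality hypothesis on $\mathcal{U}$ forces equality.

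The only real obstacle is the bookkeeping in the block computation of $A^{\ast}[h]A=[h]$, in particular keeping consistent conventions for rows versus columns and for the substitution $v=-\overline{a}$ so that the negative signs and conjugates in the final expression match the statement; everything else is structural.
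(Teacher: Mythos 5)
Your proposal is correct and follows essentially the same route as the paper: fix the common null vector from Lemma \ref{FixNullVect}, complete it to a basis in which $h$ is anti-diagonal, write elements of $\mathcal{U}$ in block upper-triangular form, and impose $\bar{A}^tHA=H$ to force $B=\mathrm{Id}$, $w=-\overline{a}$ and $\Re(c)=-\tfrac12\overline{a}^ta$. Your additional closing step (checking that the resulting set is itself a unipotent subgroup so that maximality yields equality rather than mere containment) is a welcome bit of completeness that the paper leaves implicit; only a sign in your Heisenberg composition law ($\Im(\overline{v}^tv')$ should be $-\Im(\overline{v}^tv')$ with your conventions) needs adjusting, which does not affect the argument.
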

\begin{proof}
Let $\mathcal{U}$ be a maximal unipotent subgroup of $\mathrm{U}(n,1)$.  We might choose the fixed null vector $v_0$ of $\mathcal{U}$ from Lemma \ref{FixNullVect} to be the first vector of a base of $V$. Let $(r,u,s)\in\mathbb{C}\times\mathbb{C}^{n-1}\times\mathbb{C}$ be the coordinates with respect to this basis. Then we write the quadratic form $q$ associated to the Hermitian form $h$ as 
\[q(r,u,s)=r\overline{s}+\overline{r}s+u^t\overline{u}.\]
With respect to this basis we can write an element of $\mathcal{U}$ as $A=\begin{pmatrix}
1 & v^t & a \\
0 & M & w \\
0 & 0 & 1
\end{pmatrix}$ with $v,w\in \mathbb{C}^{n-1}$, $a\in\mathbb{C}$ and $M$ an upper triangular matrix with $1$'s on the diagonal.
Imposing the condition $\bar{A}^tHA=H$, where $H$ is the matrix associated to $h$, we get that $A$ has the desired form.
\end{proof}

\begin{lemm}\label{Parabolic}
We can choose a basis for $V$ with respect to which a parabolic subgroup of $\mathrm{U}(n,1)$ is written as \[\mathcal{P}=\left\{\begin{pmatrix}
\overline{\lambda} & -\overline{v}^t & a \\
0 & \sigma & \lambda^{-1}\sigma v \\
0 & 0 & \lambda^{-1}
\end{pmatrix}\ |\ \lambda\in\mathbb{C}^*, \sigma\in \mathrm{U}(n-1), v\in \mathbb{C}^{n-1}, \Re(\overline{\lambda a})=-\frac{1}{2}\overline{v}^tv\right\}.\]
A minimal parabolic subgroup of $\mathrm{U}(n,1)$, that is a Borel subgroup $\mathcal{B}$, is given by elements $A\in\mathcal{P}$ with $\sigma=\text{diag}(e^{i\theta_1},\ldots,e^{i\theta_{n-1}})$.
\end{lemm}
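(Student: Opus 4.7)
The plan is to identify a parabolic subgroup of $\mathrm{U}(n,1)$ as the stabilizer of an isotropic line and then determine its matrix form by a direct block-matrix calculation in the basis from Lemma~\ref{unipotent}.

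Since $\mathrm{U}(n,1)$ has real rank $1$, any proper parabolic subgroup is conjugate to the stabilizer of an isotropic line. In the basis of Lemma~\ref{unipotent}, where the Hermitian quadratic form is $q(r,u,s) = r\bar{s} + \bar{r}s + u^t\bar{u}$, the first vector $e_0 = (1,0,0)^t$ is isotropic, so I would take $\mathcal{P}$ to be the stabilizer of the line $\mathbb{C} e_0$. Any $A \in \mathcal{P}$ then sends $e_0$ to $\bar{\lambda} e_0$ for some $\bar{\lambda} \in \mathbb{C}^*$, and because $A$ preserves $h$ it also preserves $e_0^{\perp} = \{(r,u,0)^t\}$. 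This already forces the block-upper-triangular shape
\[ A = \begin{pmatrix} \bar{\lambda} & \alpha^t & a \\ 0 & M & \beta \\ 0 & 0 & c \end{pmatrix}, \qquad M \in \mathrm{GL}(n-1,\mathbb{C}). \]

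Next I would impose $\bar{A}^t H A = H$ for the matrix $H$ of $h$ and read off the six block relations. The diagonal corners give $c = \lambda^{-1}$ and $\bar{M}^t M = \mathrm{Id}$, so $M =: \sigma \in \mathrm{U}(n-1)$. The middle-to-corner relation is $\bar{\alpha}\lambda^{-1} + \bar{\sigma}^t \beta = 0$, which, on setting $v := \lambda\sigma^{-1}\beta$ and using $\bar{\sigma}^t = \sigma^{-1}$, becomes exactly $\beta = \lambda^{-1}\sigma v$ and $\alpha = -\bar{v}$. The $(n+1,n+1)$ entry finally yields $\lambda a + \bar{\lambda}\bar{a} + \bar{v}^t v = 0$, equivalent to $\Re(\overline{\lambda a}) = -\tfrac{1}{2}\bar{v}^t v$ since $\Re(\overline{\lambda a}) = \Re(\lambda a)$. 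Substituting back produces the form of $\mathcal{P}$ stated in the lemma.

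For the Borel, I would point out that the parametrization just obtained is a Langlands-type decomposition $\mathcal{P} = MAN$, with $N$ the maximal unipotent of Lemma~\ref{unipotent} (parameters $\lambda$ and $\sigma$ trivial), $A$ the real one-parameter group $\{\lambda \in \mathbb{R}_{>0}\}$, and $M \cong \mathrm{U}(n-1)$ realized by $\sigma$. A minimal parabolic $\mathcal{B}$ is $TAN$ for $T$ a maximal torus of the compact factor $M$, and the diagonal unitary matrices $\sigma = \mathrm{diag}(e^{i\theta_1},\ldots,e^{i\theta_{n-1}})$ form such a maximal torus of $\mathrm{U}(n-1)$. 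No conceptual obstacle is expected; the only work is the linear-algebra bookkeeping with conjugates needed to match the parametrization $(v, \lambda, \sigma)$ to the raw entries of $A$.
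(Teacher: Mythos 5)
Your proof is correct, and the computational core (imposing $\bar{A}^tHA=H$ on a block upper triangular matrix and reading off $c=\lambda^{-1}$, $M=\sigma\in\mathrm{U}(n-1)$, $\beta=\lambda^{-1}\sigma v$, $\alpha=-\bar v$, and the constraint on $a$) is exactly the one in the paper. Where you differ is in how the structure is justified at either end. The paper starts from the Borel subgroup, defined as a maximal connected solvable subgroup: it applies Lie's theorem to get a common eigenvector, observes as in Lemma~\ref{FixNullVect} that this vector is isotropic, applies Lie's theorem again to the orthogonal complement and uses that a solvable subgroup of $\mathrm{U}(n-1)$ is abelian (hence simultaneously diagonalizable) to force $\sigma$ diagonal; the parabolic is then obtained a posteriori as a block upper triangular group containing $\mathcal{B}$. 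You go the other way: you first pin down the parabolic as the stabilizer of an isotropic line using the real-rank-one classification of proper parabolics, and then carve out the Borel via the Langlands-type decomposition $\mathcal{P}=MAN$ with $T$ a maximal torus of the compact factor. Both routes are valid; the paper's is more elementary and self-contained (it only invokes Lie's theorem, consistently with the neighbouring lemmas), while yours imports standard structure theory and in exchange makes the geometric meaning of $\mathcal{P}$ explicit. One small terminological caution: in the usual real-reductive convention the minimal parabolic is $Z_K(\mathfrak{a})AN$, whose $M$-factor contains all of $\mathrm{U}(n-1)$, not just its maximal torus; the subgroup with $\sigma$ diagonal is really the maximal connected solvable (Borel) subgroup, which is the sense the paper intends, and your $TAN$ description matches that reading.
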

\begin{proof}
By definition a Borel subgroup is a solvable subgroup of $\mathrm{U}(n,1)$. By Lie's theorem this implies that all the elements of $\mathcal{B}$ have a common eigenvector. As in Lemma \ref{FixNullVect} we can see that this common vector is isotropic and hence we may choose it to be the first vector of a basis. With respect to this basis we write the quadratic form associated to $h$ as $q(r,u,s)=r\overline{s}+\overline{r}s+u^t\overline{u}$ with $(r,u,s)\in\mathbb{C}\times\mathbb{C}^{n-1}\times\mathbb{C}$. Let us apply again Lie's theorem to the orthogonal space of the first vector of the basis. After noticing that a solvable subgroup of $\mathrm{U}(n-1)$ is abelian and imposing the equation $\bar{A}^tHA=H$ where $H$ is the matrix associated to $q$, we find the desired form. Finally an element $A$ in a parabolic subgroup should have a block upper triangular form. 
\end{proof}

\begin{prop}\label{UnipSimplTrans}
Let $U$ be a unipotent subgroup of $\mathcal{H}(n,1)$ that is acting simply transitively and affinely on $\mathfrak{a}(V)$. Then $U=\exp(\mathfrak{u})$ where $\mathfrak{u}$ is a nilpotent Lie algebra. In suitable coordinates $(r,u,s)\in V=\mathbb{C}\times W\times \mathbb{C}$, where $W=\mathbb{C}^{n-1}$, the quadratic form $q$ associated to the Hermitian form $h$ is given by
\[q(r,u,s)=r\overline{s}+\overline{r}s+u^t\overline{u}\]
and $\mathfrak{u}=\mathfrak{u}(\gamma_2,\gamma_3,b_2,b_3)$ has the expression
\[\left\{\begin{pmatrix}
0 & -(\overline{\gamma_2(u)}^t+\overline{\gamma_3(s)}^t) & i(b_2(u)+b_3(s)) & r \\
0 & 0 & \gamma_2(u)+\gamma_3(s) & u \\
0 & 0 & 0 & s \\
0 & 0 & 0 & 0
\end{pmatrix}\ |\ (r,u,s)^t\in\mathbb{C}\times W\times \mathbb{C}\right\}\]
where 
\begin{enumerate}
\item $\gamma_2:W\rightarrow W$ is an $\mathbb{R}$-linear map such that \[\Ima\gamma_2\oplus J\Ima\gamma_2\subseteq\ker\gamma_2\text{ and } \omega(\Ima\gamma_2,\Ima\gamma_2)=0,\]\label{cond g_2}
\item $\gamma_3:\mathbb{C}\rightarrow W$ is an $\mathbb{R}$-linear map such that $\gamma_3(is)-J\gamma_3(s)\in\ker\gamma_2$ for all $s\in\mathbb{C}$, \label{cond w_0}
\item $b_2:W\rightarrow \mathbb{R}$ is an $\mathbb{R}$-linear map such that\[b_2(s\gamma_2(u))=2\omega(\gamma_2(u),\gamma_3(s))\text{ and }\] \[b_2(\gamma_3(is)-J\gamma_3(s))=2\omega(\gamma_3(is),\gamma_3(s)) \text{ for all } u\in W,s\in\mathbb{C},\] \label{cond b_2}
\item $b_3:\mathbb{C}\rightarrow \mathbb{R}$ is an $\mathbb{R}$-linear map.
\end{enumerate}
\end{prop}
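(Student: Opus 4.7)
The argument proceeds in three parts.

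\emph{Setup and parameterisation.} Since $U$ is unipotent and acts simply transitively on $\mathfrak{a}(V)\cong\mathbb{R}^{2n+2}$, it is a simply connected nilpotent Lie group, hence $U=\exp(\mathfrak{u})$. Applying Lemma \ref{unipotent} to $L(U)$ I would fix a basis in which $L(U)\subseteq\mathcal{U}$: then $q(r,u,s)=r\overline{s}+\overline{r}s+u^t\overline{u}$, and $\mathfrak{l}(\mathcal{U})$ consists of the matrices $M(v,b)=\begin{pmatrix}0&-\overline{v}^t&ib\\0&0&v\\0&0&0\end{pmatrix}$ with $v\in W$, $b\in\mathbb{R}$. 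Simple transitivity makes the orbit map $U\to V$, $g\mapsto g\cdot 0$, a diffeomorphism whose differential is an $\mathbb{R}$-linear bijection $T\colon\mathfrak{u}\to V$. Thus every $X\in\mathfrak{u}$ has translation block $T(X)=(r,u,s)^t$ and linear block $A(r,u,s)=M(v(r,u,s),b(r,u,s))$ for some $\mathbb{R}$-linear maps $v\colon V\to W$, $b\colon V\to\mathbb{R}$. I then split along $V=\mathbb{C}\oplus W\oplus\mathbb{C}$ as $v=v_1+\gamma_2+\gamma_3$ and $b=b_1+b_2+b_3$.

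\emph{Killing the $r$-dependence.} The dimension bound $\dim_{\mathbb{R}}U=2n+2>2n-1=\dim_{\mathbb{R}}\mathcal{U}$ forces $T_U=\ker A$ to have real dimension at least $3$, while a direct check shows that the common fixed subspace of the full $\mathcal{U}$ on $V$ is exactly the null line $\mathbb{C}e_1=\{(r,0,0)\}$. The central technical step is to show that by conjugating $U$ inside $\mathcal{H}(n,1)$ by a suitable element of the affine stabiliser $\mathcal{P}\ltimes V$ of $\mathbb{C}e_1$ (an operation which preserves the normal form of Lemma \ref{unipotent}) one can arrange $\mathbb{C}e_1\subseteq T_U$, equivalently $v_1\equiv 0$ and $b_1\equiv 0$; this yields the shape of $\mathfrak{u}$ stated in the proposition. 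This is where the bulk of the work sits: one must track how the implicit assignment $T\mapsto A(T)$ transforms under the conjugation, and use the nilpotency of $\mathfrak{u}$ to guarantee that the resulting (formally nonlinear) substitution on $V$ is well defined.

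\emph{Bracket identities.} With $v=\gamma_2+\gamma_3$ and $b=b_2+b_3$, the affine bracket $[(A_1,T_1),(A_2,T_2)]=([A_1,A_2],A_1T_2-A_2T_1)$ unfolds to $A_1T_2-A_2T_1=(r_3,u_3,0)$ with $u_3=v_1s_2-v_2s_1$, while $[A_1,A_2]=M(0,2\omega(v_1,v_2))$ because the $(1,n+1)$-entry of the commutator is $-\overline{v}_1^tv_2+\overline{v}_2^tv_1=-2i\,\Im h(v_1,v_2)$. Closure of $\mathfrak{u}$ under bracket is then equivalent to the two identities $\gamma_2(u_3)=0$ and $b_2(u_3)=2\omega(v_1,v_2)$ holding for all $X_1,X_2$. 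Specialising $(X_1,X_2)$ to the three basic shapes (i)~$(X(0,u_1,0),X(0,u_2,0))$, (ii)~$(X(0,u,0),X(0,0,s))$, and (iii)~$(X(0,0,s_1),X(0,0,s_2))$ produces respectively condition (1); the first line of (3) together with the $\gamma_2$-nilpotency part of (1); and conditions (2) with the second line of (3). Since $b_3$ never enters either side of these bracket identities, the only constraint on it is $\mathbb{R}$-linearity, which is (4); the same computation run in reverse shows that (1)--(4) are also sufficient for $\mathfrak{u}$ to be a Lie subalgebra of the form claimed.
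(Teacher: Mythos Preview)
Your setup and your bracket--identity computations are essentially those of the paper, but you have misdiagnosed where the work lies. In your middle step you propose to conjugate $U$ inside $\mathcal{H}(n,1)$ in order to force $v_1\equiv 0$ and $b_1\equiv 0$, calling this ``the bulk of the work'' and leaving it as a step to be carried out. In fact no conjugation is needed: once $L(\mathfrak u)\subseteq\mathfrak l(\mathcal U)$, the vanishing of $\gamma_1$ and $b_1$ is a free consequence of bracket closure. With $s=s'=0$ and $u'=0$, the condition $\gamma(v'')=0$ becomes $\gamma_1\bigl(\overline{\gamma_1(r')}^{\,t}u\bigr)=0$ for all $r',u$; if $\gamma_1(r')\neq 0$ for some $r'$ then $\overline{\gamma_1(r')}^{\,t}u$ sweeps all of $\mathbb{C}$ as $u$ varies, forcing $\gamma_1\equiv 0$. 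With $\gamma_1=0$ in hand, taking $u=u'=0$ and $s=0$ reduces the identity $b(v'')=2\omega(\gamma(v),\gamma(v'))$ to $b_1\bigl(is'\,b_1(r)\bigr)=0$ for all $s',r$, and the same self--composition argument kills $b_1$. Thus the paper simply merges your second and third parts: closure of $\mathfrak{u}$ under bracket is analysed once, by successively specialising $(v,v')$, and the very first specialisations already dispose of $\gamma_1$ and $b_1$ before the remaining ones produce (1)--(3).

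Your Part~3 is correct in substance, though the bookkeeping of which specialisation gives which condition is slightly scrambled: case~(i) yields only $\omega(\Ima\gamma_2,\Ima\gamma_2)=0$; the inclusion $\Ima\gamma_2\oplus J\,\Ima\gamma_2\subseteq\ker\gamma_2$ comes from case~(ii) by letting $s$ be real and then purely imaginary.
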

\begin{proof}
Since $U$ is acting simply transitively on $\mathfrak{a}(V)$ it is simply connected, hence $U=\exp(\mathfrak{u})$ where $\mathfrak{u}$ is its Lie algebra.
The linear part of $U$ can be conjugated to be as in Lemma \ref{unipotent}. Equivalently we can find coordinates $(r,u,s)\in V=\mathbb{C}\times W\times \mathbb{C}$, where $W=\mathbb{C}^{n-1}$, with respect to which the quadratic form $q$ associated to the Hermitian form $h$ reads as
\[q(r,u,s)=r\overline{s}+\overline{r}s+u^t\overline{u}\]
and the Lie algebra of the linear part of $U$ is
\[L(\mathfrak{u})=\left\{\begin{pmatrix}0 & -\overline{\gamma}^t & ib \\ 0 & 0 & \gamma \\
0 & 0 & 0\end{pmatrix}\ |\ \gamma\in W, b\in\mathbb{R}\right\}.\]

Since $\mathfrak{u}$ is the Lie algebra of $U$ that is acting simply transitively on $\mathfrak{a}(V)$ there exist two $\mathbb{R}$-linear maps
\[\gamma:\mathbb{C}\times W\times \mathbb{C}\rightarrow \mathbb{C}^{n-1}\text{ and }\]
\[b:\mathbb{C}\times W\times \mathbb{C}\rightarrow \mathbb{R},\]
such that 
\[\mathfrak{u}=\left\{\begin{pmatrix}0 & -\overline{\gamma(v)}^t & ib(v) & r\\ 0 & 0 & \gamma(v) & u \\
0 & 0 & 0 & s \\ 
0 & 0 & 0 & 0\end{pmatrix}\ |\ v=(r,u,s)\in\mathbb{C}\times W\times \mathbb{C}\right\}.\]
Let us write $(A(v),v)$ for an element in $\mathfrak{u}$ where $A(v)$ denotes the linear part and $v$ the translation part.
If we compute the commutator bracket of two elements in $\mathfrak{u}$ we get 
\[[(A(v),v),(A(v'),v')]=\begin{pmatrix}
0 & 0 & \overline{\gamma(v')}^t\gamma(v)-\overline{\gamma(v)}^t\gamma(v') & \overline{\gamma(v')}^tu-\overline{\gamma(v)}^tu'+i(s'b(v)-sb(v')) \\
0 & 0 & 0 & s'\gamma(v)-s\gamma(v') \\
0 & 0 & 0 & 0 \\
0 & 0 & 0 & 0
\end{pmatrix}.\]
Since $\mathfrak{u}$ is a Lie algebra the commutator bracket between two of its element should belong to $\mathfrak{u}$. Hence we must have $\gamma(v'')=0$ and $b(v'')=\overline{\gamma(v')}^t\gamma(v)-\overline{\gamma(v)}^t\gamma(v')=2\Im(\overline{\gamma(v')}^t\gamma(v))$ where \[v''=(\overline{\gamma(v')}^tu-\overline{\gamma(v)}^tu'+i(s'b(v)-sb(v')),s'\gamma(v)-s\gamma(v'),0).\]
So let us write $\gamma(v)=\gamma_1(r)+\gamma_2(u)+\gamma_3(s)$ and $b(v)=b_1(r)+b_2(u)+b_3(s)$.
\begin{claim}\label{lem1}
We have $\gamma_1= 0$, $\Ima\gamma_2\oplus J\Ima\gamma_2\subseteq\ker\gamma_2$ and that property \ref{cond w_0} holds.
\end{claim}
\begin{proof}
Let $s=s'=0$ then $\gamma(v'')=\gamma_1(\overline{\gamma(v')}^tu-\overline{\gamma(v)}^tu')=0$. Now let $u'=0$ we get $\gamma(v'')=\gamma_1(\overline{\gamma_1(r')}^tu)=0$ letting $r'$ and $u$ vary we see that $\gamma_1=0$. Now take $s=0$ we have $\gamma(v'')=\gamma_2(s'\gamma(v))=\gamma_2(s'\gamma_2(u))=0$ hence if we let $s'$ be real we obtain $(\gamma_2)^2=0$ and if we let $s'$ be purely imaginary we obtain $\gamma_2(J\gamma_2)=0$. Now the equation $\gamma(v'')=0$ becomes $\gamma_2(s'\gamma_3(s))-\gamma_2(s\gamma_3(s'))=0$. Let $s=is'$ and the lemma follows.
\end{proof}

\begin{claim}\label{lem2}
We have $b_1=0$, $\omega(\Ima\gamma_2,\Ima\gamma_2)=0$ and that property \ref{cond b_2} holds.
\end{claim}
\begin{proof}
Let $u=u'=0$ then $b(v'')=b_1(i(s'b(v)-sb(v')))+b_2(s'\gamma_3(s)-s\gamma_3(s'))=2\Im(\overline{\gamma_3(s')}^t\gamma_3(s))$. Now let $s=0$ we get $b(v'')=b_1(is'b(v))=b_1(is'b_1(r))=0$ hence $b_1=0$. Looking again at the equation $b(v'')=2\Im(\overline{\gamma(v')}^t\gamma(v))$ and letting $s=s'=0$ we get $\Im(\overline{\gamma_2(u')}^t\gamma_2(u))=0$ for all $u,u'$. So finally the equation $b(v'')=2\Im(\overline{\gamma(v')}^t\gamma(v))$ becomes 
\[b_2(s'\gamma_2(u)-s\gamma_2(u')+s'\gamma_3(s)-s\gamma_3(s'))=2\Im(\overline{\gamma_2(u')}^t\gamma_3(s)+\overline{\gamma_3(s')}^t\gamma_2(u)+\overline{\gamma_3(s')}^t\gamma_3(s)).\]
Now take $s=0$ we get $b_2(s'\gamma_2(u))=2\Im(\overline{\gamma_3(s')}^t\gamma_2(u))$. Finally if we consider $u=u'=0$ we get $b_2(s'\gamma_3(s)-s\gamma_3(s'))=2\Im(\overline{\gamma_3(s')}^t\gamma_3(s))$, now if we let $s=is'$ the lemma follows.
\end{proof}

We can notice that these conditions are also sufficient in order to have $\gamma(v'')=0$ and $b(v'')=2\Im(\overline{\gamma(v')}^t\gamma(v))$. The proof of Proposition \ref{UnipSimplTrans} is now complete.
\end{proof}

Let us decompose $W$ as a real vector space as follows \begin{equation} \label{decomposition} W=\Ima\gamma_2\oplus J\Ima\gamma_2\oplus S\oplus T\end{equation} where $S\oplus T$ is orthogonal to $\Ima\gamma_2\oplus J\Ima\gamma_2$ with respect to $h_{|W}$, $T$ is orthogonal to $S$ with respect to $\langle \cdot,\cdot\rangle_{|W}$ and \[\ker\gamma_2=\Ima\gamma_2\oplus J\Ima\gamma_2 \oplus S.\]
Write $\pi_i$ for the projection of $\ker\gamma_2$ on the $i-th$ factor with $i=1,2,3$.

\begin{prop}\label{uppper bound k}
If we think $\gamma_2$ as a real linear map we have an upper bound $\mathrm{rank}(\gamma_2)\le\frac{2n-2}{3}$.
\end{prop}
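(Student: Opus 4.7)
The plan is to chase dimensions directly from the two conditions on $\gamma_2$ given in Proposition \ref{UnipSimplTrans}(\ref{cond g_2}), treating everything as real-linear. Set $k=\mathrm{rank}_{\mathbb{R}}(\gamma_2)=\dim_{\mathbb{R}}\Ima\gamma_2$; since $\dim_{\mathbb{R}}W=2(n-1)=2n-2$, the rank-nullity theorem gives $\dim_{\mathbb{R}}\ker\gamma_2=2n-2-k$.

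Next I would explain why the sum $\Ima\gamma_2+J\Ima\gamma_2$ is genuinely direct, so that $\dim_{\mathbb{R}}(\Ima\gamma_2\oplus J\Ima\gamma_2)=2k$. The point is that the restriction of the Hermite–Lorentz form to $W$ is $u^{t}\overline{u}$, hence $\langle\cdot,\cdot\rangle_{|W}$ is positive definite. If $v\ne 0$ were a vector with both $v$ and $Jv$ lying in $\Ima\gamma_2$, then the isotropy condition $\omega(\Ima\gamma_2,\Ima\gamma_2)=0$ would force
\[0=\omega(v,Jv)=\langle v,J^{2}v\rangle=-\langle v,v\rangle,\]
contradicting positive definiteness of $\langle\cdot,\cdot\rangle_{|W}$. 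Therefore $\Ima\gamma_2\cap J\Ima\gamma_2=\{0\}$ and the sum is indeed direct of dimension $2k$.

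Finally, the containment $\Ima\gamma_2\oplus J\Ima\gamma_2\subseteq\ker\gamma_2$ gives the inequality $2k\le 2n-2-k$, that is, $3k\le 2n-2$, which is the desired bound $\mathrm{rank}(\gamma_2)\le \frac{2n-2}{3}$.

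The argument is essentially a one-line count once the two observations above are in place; no step looks like a serious obstacle. The only subtlety worth emphasising is that the directness of $\Ima\gamma_2\oplus J\Ima\gamma_2$, although suggested by the notation used in Proposition \ref{UnipSimplTrans}, actually has to be deduced from the isotropy condition together with the definiteness of $h_{|W}$, and this is the step one might overlook.
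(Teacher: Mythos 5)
Your proof is correct and follows essentially the same route as the paper's: both verify $\Ima\gamma_2\cap J\Ima\gamma_2=\{0\}$ from the isotropy condition $\omega(\Ima\gamma_2,\Ima\gamma_2)=0$ together with the positive definiteness of $\langle\cdot,\cdot\rangle_{|W}$, and then conclude via $2\,\mathrm{rank}(\gamma_2)\le\dim\ker\gamma_2=2n-2-\mathrm{rank}(\gamma_2)$. The only cosmetic difference is that the paper computes $\langle v_1,v_1\rangle=\omega(v_1,v_2)$ for $v_1=Jv_2$ while you compute $\omega(v,Jv)=-\langle v,v\rangle$; these are the same observation.
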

\begin{proof}
We have $\Ima\gamma_2\cap J\Ima\gamma_2=\{0\}$. This is because if $v_1=Jv_2$ with $v_1,v_2\in \Ima\gamma_2$ then $\langle v_1,v_1\rangle=\langle v_1,Jv_2\rangle=\omega(v_1,v_2)=0$ then $v_1=0$. So finally, since both $\Ima\gamma_2$ and $J\Ima\gamma_2$ are contained in $\ker\gamma_2$, we have $2\mathrm{rank}(\gamma_2)\le\dim \ker\gamma_2$ then $3\mathrm{rank}(\gamma_2)\le \mathrm{rank}(\gamma_2)+\dim \ker\gamma_2= 2n-2$.
\end{proof}

\begin{rema}\label{pi_3}
We can notice that $\pi_3(\gamma_3(is)-J\gamma_3(s))=0$ for some $s\in\mathbb{C}$, $s\ne 0$, if and only if $\pi_3(\gamma_3(is)-J\gamma_3(s))=0$ for all $s\in\mathbb{C}$, $s\ne 0$. Indeed it suffices to notice that $\gamma_3(is)-J\gamma_3(s)=\bar{s}(\gamma_3(i)-J\gamma_3(1))$ for all $s\in\mathbb{C}$. Hence $\gamma_3(is)-J\gamma_3(s)\in\Ima\gamma_2\oplus J\Ima\gamma_2$ if and only if $\gamma_3(i)-J\gamma_3(1)$ does.
\end{rema}

\begin{prop}\label{b2}
If there exists $s\in\mathbb{C}$, $s\ne0$, such that $\pi_3(\gamma_3(is)-J\gamma_3(s))=0$ then $\Ima\gamma_3\subseteq \Ima\gamma_2$ and hence $b_2$ is $0$ on $\Ima\gamma_2\oplus J\Ima\gamma_2$.
\end{prop}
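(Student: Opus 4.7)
My plan is to exploit the positive-definiteness of $\langle\cdot,\cdot\rangle|_W$: I would extract a ``sum of squares equals zero'' identity from the second equation of (\ref{cond b_2}) at $s=1$, which will force the components of $\gamma_3$ that lie outside $\Ima\gamma_2$ to vanish. The second assertion on $b_2$ is then immediate.

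First I upgrade the hypothesis via Remark \ref{pi_3} to $\gamma_3(is)-J\gamma_3(s)\in \Ima\gamma_2\oplus J\Ima\gamma_2$ for every $s\in\mathbb{C}$. Since $\Ima\gamma_2\oplus J\Ima\gamma_2$ is a complex (i.e.\ $J$-invariant) subspace of $W$ and $S\oplus T$ is its Hermitian orthogonal complement, the latter is likewise $J$-invariant, so $\pi_{S\oplus T}$ commutes with $J$. Decomposing
\[\gamma_3(s)=a(s)+J\beta(s)+g(s),\qquad a(s),\beta(s)\in\Ima\gamma_2,\ g(s):=\pi_{S\oplus T}(\gamma_3(s)),\]
the upgraded hypothesis becomes $g(is)=Jg(s)$ and gives
\[\gamma_3(is)-J\gamma_3(s)=\bigl(a(is)+\beta(s)\bigr)+J\bigl(\beta(is)-a(s)\bigr).\]

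Next I substitute $s=1$ into
\[b_2(\gamma_3(i)-J\gamma_3(1))=2\omega(\gamma_3(i),\gamma_3(1)).\]
The first equation of (\ref{cond b_2}) at $s=1$ and $s=i$ gives, for $y\in\Ima\gamma_2$, the formulas $b_2(y)=2\omega(y,\gamma_3(1))$ and $b_2(Jy)=2\omega(y,\gamma_3(i))$. Expanding both sides using the standard identities $\omega(v,Jw)=-\langle v,w\rangle$ and $\omega(Jv,Jw)=\omega(v,w)$ for a Hermitian form, together with the vanishings $\omega(\Ima\gamma_2,\Ima\gamma_2)=0$ from (\ref{cond g_2}) and the $h$-orthogonality between $\Ima\gamma_2\oplus J\Ima\gamma_2$ and $S\oplus T$, I expect every mixed term to cancel, leaving
\[\langle \beta(1),\beta(1)\rangle+\langle \beta(i),\beta(i)\rangle+\langle g(1),g(1)\rangle=0.\]

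Because $h|_W$ is positive definite (the signature $(n,1)$ of $h$ is carried by the $(r,s)$-coordinates in $q(r,u,s)=r\bar s+\bar r s+u^t\bar u$), so is $\langle\cdot,\cdot\rangle|_W$, forcing $\beta(1)=\beta(i)=g(1)=0$. Hence $\gamma_3(1),\gamma_3(i)\in\Ima\gamma_2$ and $\Ima\gamma_3\subseteq\Ima\gamma_2$ by $\mathbb{R}$-linearity of $\gamma_3$. For the final assertion, the first equation of (\ref{cond b_2}) now yields $b_2(y)=2\omega(y,\gamma_3(1))=0$ for $y\in\Ima\gamma_2$ (since $\gamma_3(1)\in\Ima\gamma_2$ and $\omega$ vanishes on $\Ima\gamma_2$), and analogously $b_2(Jy)=0$. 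The main obstacle is the expansion step: the bookkeeping of the nine $\omega$-cross terms on the right and the two $b_2$ terms on the left must collapse exactly to the three norm squares, which relies on the interplay between $J$-invariance of the decomposition, $h$-orthogonality of $S\oplus T$ with $\Ima\gamma_2\oplus J\Ima\gamma_2$, and $\omega$-isotropy of $\Ima\gamma_2$.
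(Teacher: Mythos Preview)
Your approach is correct and is essentially the same as the paper's: both evaluate the second identity of condition~(\ref{cond b_2}) at $s=1$, expand the left side via the first identity of~(\ref{cond b_2}), and use positive-definiteness of $\langle\cdot,\cdot\rangle|_W$ to force a sum-of-squares expression to vanish. The only difference is bookkeeping: you decompose $\gamma_3(1)$ and $\gamma_3(i)$ fully as $a+J\beta+g$ and obtain $\|\beta(1)\|^2+\|\beta(i)\|^2+\|g(1)\|^2=0$, whereas the paper writes only $w_0=w_1+Jw_2$ with $w_1,w_2\in\Ima\gamma_2$ and arrives at the single square $\|\gamma_3(1)+w_2\|^2=0$ (then recovers $\gamma_3(i)=w_1$ from $\gamma_3(i)=w_0+J\gamma_3(1)$). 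Your ``expected'' cancellation does indeed occur exactly as you describe --- the mixed terms $\langle a(i),\beta(1)\rangle$ and $\langle a(1),\beta(i)\rangle$ appear identically on both sides and drop out --- so the hedging is unnecessary.
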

\begin{proof}
From Remark \ref{pi_3} if for some $s\in\mathbb{C}$, $s\ne 0$ we have $\pi_3(\gamma_3(is)-J\gamma_3(s))=0$ then also $\pi_3(\gamma_3(i)-J\gamma_3(1))=0$. Hence let $w_0=\gamma_3(i)-J\gamma_3(1)\in\ker\gamma_2$. If $\pi_3(w_0)=0$ then write  $w_0=w_1+Jw_2$ with $w_1,w_2\in \Ima\gamma_2$. Condition (\ref{cond b_2}) of Proposition \ref{UnipSimplTrans} implies that $2\omega(\gamma_3(i),\gamma_3(1))=b_2(w_0)=b_2(w_1+Jw_2)=2\omega(w_1,\gamma_3(1))+2\omega(w_2,\gamma_3(i))$ hence $\omega(w_1-\gamma_3(i),\gamma_3(1))+\omega(w_2,\gamma_3(i))=0$ since $w_1-\gamma_3(i)=-J(w_2+\gamma_3(1))$ and $\gamma_3(i)=w_1+Jw_2+J\gamma_3(1)$ substituting them in the previous expression we obtain $\omega(J\gamma_3(1),\gamma_3(1))+\omega(Jw_2,w_2)+2\omega(Jw_2,\gamma_3(1))=0$, i.e.~$\|\gamma_3(1)\|^2+2\langle w_2,\gamma_3(1)\rangle+\|w_2\|^2=0$. This implies $\gamma_3(1)=-w_2$, in other words $\Ima\gamma_3\subseteq \Ima\gamma_2$ hence $b_2$ is $0$ on $\Ima\gamma_2\oplus J\Ima\gamma_2$. 
\end{proof}

As an abstract algebra $\mathfrak{u}(\gamma_2,\gamma_3,b_2,b_3)$ can be described as the real vector space $\mathbb{C}\times W\times \mathbb{C}$, with $W=\mathbb{C}^{n-1}$, with Lie brackets
\[ [(r,u,s),(r',u',s')]= ( h(u,\gamma(u',s'))-h(u',\gamma(u,s))+i(s'b(u,s)-sb(u',s')),  s'\gamma(u,s)-s\gamma(u',s'),0)\]
and with $\gamma:W\times \mathbb{C}\rightarrow W$ and $b:W\times \mathbb{C}\rightarrow \mathbb{R}$ linear maps, $\gamma=\gamma_2+\gamma_3$ and $b=b_2+b_3$ satisfying the conditions of Proposition \ref{UnipSimplTrans}.

\begin{rema}
From Remark \ref{pi_3} we saw that the condition $\pi_3(\gamma_3(is)-J\gamma_3(s))=0$ does not depend on the basis we have chosen for $\mathbb{C}$ as a real vector space. Hence we will write this condition as $\pi_3(\gamma_3(i\xi)-J\gamma_3(\xi))=0$ where $\{\xi,i\xi\}$ is any basis of $\mathbb{C}$ as a real vector space.
\end{rema}

\begin{defi}
We say that a Lie algebra $\mathfrak{g}$ is a $k$-step nilpotent Lie algebra if $\mathcal{C}^{k+1}\mathfrak{u}=\{0\}$ and $\mathcal{C}^k\mathfrak{u}\not=\{0\}$ where $\mathcal{C}^k\mathfrak{u}=[\mathfrak{u},\mathcal{C}^{k-1}\mathfrak{u}]$ and $\mathcal{C}^1\mathfrak{u}=\mathfrak{u}$.
\end{defi}

\begin{prop}
The Lie algebras $\mathfrak{u}(\gamma_2,\gamma_3,b_2,b_3)$ are at most $3$-step nilpotent.
The lower central series looks like \[\mathfrak{u}(\gamma_2,\gamma_3,b_2,b_3)\supseteq \mathcal{C}^2\mathfrak{u}(\gamma_2,\gamma_3,b_2,b_3)\supseteq\mathcal{C}^3\mathfrak{u}(\gamma_2,\gamma_3,b_2,b_3)\supseteq\{0\}\] where  $\mathcal{C}^2\mathfrak{u}(\gamma_2,\gamma_3,b_2,b_3)\subseteq\mathbb{C}\oplus\mathbb{C}\Ima(\gamma_2)\oplus\mathbb{R}\pi_3(\gamma_3(i\xi)-J\gamma_3(\xi))$, $\mathcal{C}^3\mathfrak{u}(\gamma_2,\gamma_3,b_2,b_3)\subseteq \mathbb{C}$.
\end{prop}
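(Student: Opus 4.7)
The plan is to compute the lower central series directly from the bracket formula, extracting each component in turn and using the structure conditions on $(\gamma_2,\gamma_3,b_2,b_3)$ to show that $\gamma_2$ eventually kills the $u$-component and the bracket kills the $r$-component.

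First I would compute $\mathcal{C}^2\mathfrak{u}=[\mathfrak{u},\mathfrak{u}]$ directly from the bracket. The $s$-component is manifestly zero, so the only real work is the $u$-component $s'\gamma(u,s)-s\gamma(u',s')$. Splitting $\gamma=\gamma_2+\gamma_3$, the pieces $s'\gamma_2(u)-s\gamma_2(u')$ lie in the real span $\Ima\gamma_2\oplus J\Ima\gamma_2=\mathbb{C}\Ima(\gamma_2)$ since $s,s'\in\mathbb{C}$ act on $W$ by $a+bi\mapsto a\,\mathrm{Id}+bJ$. The pieces $s'\gamma_3(s)-s\gamma_3(s')$ are handled by a short $\mathbb{R}$-bilinear computation (essentially the same identity underlying the remark) showing that $s'\gamma_3(s)-s\gamma_3(s')=\mu\,(\gamma_3(i\xi)-J\gamma_3(\xi))$ for some real scalar $\mu$ depending $\mathbb{R}$-bilinearly on $s,s'$. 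Decomposing this element along $W=\Ima\gamma_2\oplus J\Ima\gamma_2\oplus S\oplus T$ and noting that $\pi_1+\pi_2$ lands in $\mathbb{C}\Ima(\gamma_2)$ while the $T$-part is by definition a real multiple of $\pi_3(\gamma_3(i\xi)-J\gamma_3(\xi))$ (and there is no $S$-part since $\gamma_3(i\xi)-J\gamma_3(\xi)\in\ker\gamma_2$, but here we only need inclusion in the stated sum, which already contains $S$ through neither summand — so in fact we must check that the $\pi_3$-projection is the only possibly nonzero $T$-component, which follows automatically as the decomposition gives only that one $T$-direction up to scale). This gives the claimed inclusion for $\mathcal{C}^2\mathfrak{u}$.

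Next I would compute $\mathcal{C}^3\mathfrak{u}=[\mathfrak{u},\mathcal{C}^2\mathfrak{u}]$. Bracketing $(r,u,s)\in\mathfrak{u}$ with an element $(r_1,u_1,0)\in\mathcal{C}^2\mathfrak{u}$, the bracket formula gives the $u$-component $-s\,\gamma_2(u_1)$ and the $s$-component $0$. By the previous step, $u_1$ lies in $\mathbb{C}\Ima\gamma_2+\mathbb{R}\,\pi_3(\gamma_3(i\xi)-J\gamma_3(\xi))$; every generator is annihilated by $\gamma_2$: the two conditions $\gamma_2^2=0$ and $\gamma_2\circ J\gamma_2=0$ from Claim \ref{lem1} handle the $\mathbb{C}\Ima\gamma_2$ part, while hypothesis \ref{cond w_0} places $\gamma_3(i\xi)-J\gamma_3(\xi)$ in $\ker\gamma_2$ (and the projection $\pi_3$ changes nothing since $\ker\gamma_2$ decomposes along $\pi_1\oplus\pi_2\oplus\pi_3$ by construction). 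Hence $\gamma_2(u_1)=0$, so $\mathcal{C}^3\mathfrak{u}\subseteq\mathbb{C}\oplus\{0\}\oplus\{0\}$.

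Finally, any element of the form $(r_2,0,0)$ is central in $\mathfrak{u}$: substituting $u'=0$ and $s'=0$ into the bracket formula yields $0$ for each of the three output components. Therefore $\mathcal{C}^4\mathfrak{u}=[\mathfrak{u},\mathcal{C}^3\mathfrak{u}]=\{0\}$, proving the algebra is at most $3$-step nilpotent. The only delicate point is the second step, namely verifying that every possible $u_1$ produced in $\mathcal{C}^2\mathfrak{u}$ really sits in $\ker\gamma_2$; this is where all of the structural conditions from Proposition \ref{UnipSimplTrans} on $\gamma_2$ and $\gamma_3$ get used simultaneously, and once they are in hand the remaining calculations are direct.
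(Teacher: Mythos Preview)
Your overall argument is correct and is exactly the direct computation one expects; the paper in fact states this proposition without proof, so there is nothing to compare against beyond checking your steps.

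One point of confusion to clean up: in your parenthetical about the $\gamma_3$-contribution you have swapped the roles of $S$ and $T$. Recall that $\ker\gamma_2=\Ima\gamma_2\oplus J\Ima\gamma_2\oplus S$ and that $\pi_3$ is the projection of $\ker\gamma_2$ onto $S$ (not $T$). Since $w_0:=\gamma_3(i\xi)-J\gamma_3(\xi)\in\ker\gamma_2$ by condition~(\ref{cond w_0}), it has \emph{no} $T$-component; what you call the ``$T$-part'' is really the $S$-part $\pi_3(w_0)$, and what you call the vanishing $S$-part is really the vanishing $T$-part. The logic you want is simply: $w_0=\pi_1(w_0)+\pi_2(w_0)+\pi_3(w_0)$ with the first two summands in $\mathbb{C}\Ima\gamma_2$, so $\mathbb{R}w_0\subseteq\mathbb{C}\Ima\gamma_2\oplus\mathbb{R}\pi_3(w_0)$. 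With this fixed, your computation of $\mathcal{C}^2\mathfrak{u}$ is clean, and your arguments for $\mathcal{C}^3\mathfrak{u}\subseteq\mathbb{C}$ and $\mathcal{C}^4\mathfrak{u}=0$ go through exactly as written.
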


\begin{rema}
More precisely if $\gamma_2\ne 0$ then the Lie algebras $\mathfrak{u}(\gamma_2,\gamma_3,b_2,b_3)$ are $3$-step nilpotent and indeed we have equalities $\mathcal{C}^2\mathfrak{u}(\gamma_2,\gamma_3,b_2,b_3)=\mathbb{C}\oplus\mathbb{C}\Ima(\gamma_2)\oplus\mathbb{R}\pi_3(\gamma_3(i\xi)-J\gamma_3(\xi))$ and  $\mathcal{C}^3\mathfrak{u}(\gamma_2,\gamma_3,b_2,b_3)=\mathbb{C}$. Instead if $\gamma_2= 0$ then all possibilities can occur, see Appendix \ref{LieBracket} for the case $n=3$.
\end{rema}

\subsection{Classification up to conjugation}
We now write the classification of the unipotent subgroup $U(\gamma_2,\gamma_3,b_2,b_3)=\exp \mathfrak{u}(\gamma_2,\gamma_3,b_2,b_3)$ of $\mathcal{H}(n,1)$ up to conjugation. This will be useful for the proof of Proposition \ref{nonunip2}.

\begin{prop}\label{conj}
There exists $g\in\mathcal{H}(n,1)$ such that $gU(\gamma_2,\gamma_3,b_2,b_3)g^{-1}=U(\gamma_2',\gamma_3',b_2',b_3')$ if and only if there exist $\lambda\in\mathbb{C}^*, \sigma\in \mathrm{U}(n-1), v\in W, s_1\in \mathbb{C}$ such that 
\begin{enumerate}
\item $\gamma_2'(u)=\lambda\sigma\gamma_2(\sigma^{-1}u)$, \item $\gamma_3'(s)=\lambda\sigma\gamma_3(\lambda s)+\lambda\sigma\gamma_2
(\tilde{s})$, 
\item $b_2'(u)=|\lambda|^2b_2(\sigma^{-1}u-\lambda s_1\gamma_2(\sigma^{-1}u))-2\omega(\lambda\gamma_2(\sigma^{-1}u),v)$, 
\item $b'_3(s)=|\lambda|^2b_3(\lambda s)+|\lambda|^2b_2(\tilde{s}-\lambda s_1\gamma_2(\tilde{s}))-2\omega(\lambda (\gamma_2(\tilde{s})+\gamma_3(\lambda s)),v)$
\end{enumerate} 
where $\tilde{s}=\lambda s_1\gamma_3(\lambda s)-sv$.
\end{prop}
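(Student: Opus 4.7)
Since $U=\exp\mathfrak{u}$ and $U'=\exp\mathfrak{u}'$ are connected, the statement $gUg^{-1}=U'$ is equivalent to $\mathrm{Ad}(g)\mathfrak{u}=\mathfrak{u}'$, and I would work throughout at the Lie-algebra level. The first step is to observe that by Proposition~\ref{UnipSimplTrans} the linear parts $L(\mathfrak{u})$ and $L(\mathfrak{u}')$ both fill the Lie algebra of the maximal unipotent $\mathcal{U}$ of Lemma~\ref{unipotent}, since $L(\mathfrak{u})$ is already parameterized by the full space $(\gamma,b)\in W\times\mathbb{R}$. The linear part $A$ of $g=(A,w)$ must therefore normalize $\mathcal{U}$, which by Lemma~\ref{Parabolic} forces $A\in\mathcal{P}$, parametrized by $(\lambda,\sigma,v_0,a)$.

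The next step is a direct matrix computation: writing an element of $\mathfrak{u}$ as $X(r,u,s)$ with linear part $M(r,u,s)$ and translation part $(r,u,s)$, matrix multiplication gives $gX(r,u,s)g^{-1}$ with linear part $AM(r,u,s)A^{-1}$ and translation part $A(r,u,s)-AM(r,u,s)A^{-1}w$. Requiring the result to lie in $\mathfrak{u}'$ amounts to the identity $AM(r,u,s)A^{-1}=M'(r',u',s')$, where $(r',u',s')$ is the new translation part, and this splits naturally into equations for $\gamma_2',\gamma_3',b_2',b_3'$. To organize the computation I would decompose $g$ into elementary factors and compose their effects: a Levi dilation $d_\lambda=\mathrm{diag}(\bar\lambda,I,\lambda^{-1})$, a unitary rotation $r_\sigma=\mathrm{diag}(1,\sigma,1)$, the parabolic unipotent radical piece $n_{v_0,a}\in\mathcal{U}$, and an affine translation by $w\in V$. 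Because $\mathcal{U}=L(U)$, the factor $(n_{v_0,a},0)$ decomposes as $u\cdot t$ with $u\in U$ and $t$ a pure translation, and right multiplication of $g$ by $u$ does not change $gUg^{-1}$. This absorption, combined with the elementary observation that pure translations in the $r$-direction and in the $W$-direction act trivially on $(\gamma_2,\gamma_3,b_2,b_3)$ (because $\gamma$ and $b$ depend neither on $r$ nor on the $r$-component of the new translation), reduces the effective parameters of $g$ exactly to the quadruple $(\lambda,\sigma,v,s_1)$ of the statement, with $v\in W$ inherited from the absorbed parabolic $v_0$ and $s_1\in\mathbb{C}$ inherited from the $s$-component of $w$.

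Reading off the contribution of each remaining piece is then mechanical: the Levi part $(d_\lambda,r_\sigma)$ produces all the $\lambda$-prefactors and $\sigma$-conjugations by $\mathbb{C}$-linearity of $A$ on $(r,u,s)$; the translation by $s_1$ yields the $\gamma_2(s_1\gamma_3(\cdot))$-correction to $\gamma_3'$ via property~\ref{cond w_0} and the accompanying $b_2$-correction via property~\ref{cond b_2}; and the absorbed $v$ produces the $-sv$ shift inside $\tilde s$ together with the $\omega(\cdot,v)$ terms in $b_2'$ and $b_3'$. The main obstacle is purely bookkeeping: tracking the interaction between the $\mathbb{R}$-linear maps $\gamma_2,\gamma_3,b_2$ and the $\mathbb{C}$-scalings by $\lambda$ and $s_1$, and repeatedly invoking properties~\ref{cond g_2}--\ref{cond b_2} to confirm that the transformed quadruple is again of the form of Proposition~\ref{UnipSimplTrans}. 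The converse direction is then immediate: given $(\lambda,\sigma,v,s_1)$, one constructs $g$ explicitly as the product of the elementary pieces above, and the four formulas of the statement show directly that $\mathrm{Ad}(g)\mathfrak{u}(\gamma_2,\gamma_3,b_2,b_3)=\mathfrak{u}(\gamma_2',\gamma_3',b_2',b_3')$.
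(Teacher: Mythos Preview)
Your overall strategy coincides with the paper's: pass to the Lie algebra, argue that $L(g)\in\mathcal{P}$, and then compute $\mathrm{Ad}(g)$ explicitly. The paper simply writes $g$ in parabolic form with translation $(r_1,u_1,s_1)$, multiplies out $gXg^{-1}$, reads off $(r',u',s')$, and then solves for $\gamma_2',\gamma_3',b_2',b_3'$ by successively setting $s'=0$ and $u'=0$.

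There is, however, a genuine gap in your absorption argument. You twice assert that $L(\mathfrak{u})$ ``fills'' the full Lie algebra of $\mathcal{U}$ (equivalently $\mathcal{U}=L(U)$), and use this (i) to conclude that $L(g)$ normalises $\mathcal{U}$, hence lies in $\mathcal{P}$, and (ii) to absorb the unipotent factor $(n_{v_0,a},0)$ into $U$ times a pure translation. This equality is \emph{false} in general: the linear part of $X(r,u,s)$ is $(\gamma_2(u)+\gamma_3(s),\,b_2(u)+b_3(s))$, and the image of this linear map $W\times\mathbb{C}\to W\times\mathbb{R}$ need not be surjective (take for instance $\gamma_2=\gamma_3=0$). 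Consequently $L(g)$ need not normalise $\mathcal{U}$, and the element $n_{v_0,a}$ need not lie in $L(U)$, so your decomposition $(n_{v_0,a},0)=u\cdot t$ with $u\in U$ does not go through. The paper's argument for $L(g)\in\mathcal{P}$ is different: it uses that the parabolic is self-normalising (equivalently, that $L(U)$ and $L(U')$ both fix the same isotropic line, which $L(g)$ must therefore preserve).

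The conclusion you are aiming for, that only $(\lambda,\sigma,v,s_1)$ matter while $a,r_1,u_1$ drop out, is correct, but the \emph{reason} is not absorption into $U$: it is simply that in the explicit formula $gXg^{-1}=(AMA^{-1},\,Av-AMA^{-1}w)$ the parameters $a,r_1,u_1$ contribute only to the $r'$-coordinate of the new translation, and $r'$ never enters the defining equations for $\gamma_2',\gamma_3',b_2',b_3'$ (since $\gamma_1=b_1=0$ by Claims~\ref{lem1} and~\ref{lem2}). So you can keep your decomposition of $g$ as an organising device, but you should replace the absorption step by this direct observation, which is exactly what the paper's computation amounts to.
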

\begin{proof}
Since a parabolic subgroup is self normalizing if there exist $L(g)\in \mathrm{U}(n,1)$ and $L(h)\in \mathcal{U}$ such that $L(ghg^{-1})\in\mathcal{U}$ then $L(g)\in\mathcal{P}$. Hence we may assume that
\[g=\begin{pmatrix}
\overline{\lambda} & -\overline{v}^t & a & r_1 \\
0 & \sigma & \lambda^{-1}\sigma v & u_1 \\
0 & 0 & \lambda^{-1} & s_1 \\
0 & 0 & 0 & 1
\end{pmatrix}\] 
with $\lambda\in\mathbb{C}^*, \sigma\in \mathrm{U}(n-1), v\in W, a\in\mathbb{C}$ and $\overline{\lambda a} +\lambda a=-\overline{v}^tv$.\\
Notice that the inverse of the linear part of $g$ is of the form 
\[L(g)^{-1}=\begin{pmatrix}
\overline{\lambda}^{-1} & \overline{\lambda}^{-1}\overline{v}^t\overline{\sigma}^t & \overline{a} \\
0 & \overline{\sigma}^t & -v \\
0 & 0 & \lambda
\end{pmatrix}.\] 
Since $U(\gamma_2,\gamma_3,b_2,b_3)=\exp(\mathfrak{u}(\gamma_2,\gamma_3,b_2,b_3))$ and 
\[g\exp(X)g^{-1}=\exp(gXg^{-1})\] we can work on the level of the Lie algebra. 
We let 
\[X=\begin{pmatrix}
0 & -(\overline{\gamma_2(u)}^t+\overline{\gamma_3(s)}^t) & i(b_2(u)+b_3(s)) & r \\
0 & 0 & \gamma_2(u)+\gamma_3(s) & u \\
0 & 0 & 0 & s \\
0 & 0 & 0 & 0
\end{pmatrix}\in \mathfrak{u}(\gamma_2,\gamma_3,b_2,b_3),\]
then the linear part of $gXg^{-1}$ is 
\[\begin{pmatrix}
0 & -\overline{\lambda}(\overline{\gamma_2(u)}^t+\overline{\gamma_3(s)}^t)\overline{\sigma}^t & \overline{\lambda}(\overline{\gamma_2(u)}^t+\overline{\gamma_3(s)}^t)v+i|\lambda|^2(b_2(u)+b_3(s))-\lambda \overline{v}^t(\gamma_2(u)+\gamma_3(s)) \\
0 & 0 & \lambda\sigma(\gamma_2(u)+\gamma_3(s)) \\
0 & 0 & 0
\end{pmatrix}\]
and the translation part of $gXg^{-1}$ is 
\[\begin{pmatrix}
* \\ -\lambda s_1\sigma(\gamma_2(u)+\gamma_3(s))+\sigma u+\lambda^{-1}s\sigma v \\ 
\lambda^{-1}s
\end{pmatrix}.\]
Let \[(r',u',s')=(*,  -\lambda s_1\sigma(\gamma_2(u)+\gamma_3(s))+\sigma u+\lambda^{-1}s\sigma v, \lambda^{-1}s).\]
When $s'=0$ then $s=0$ and we get $\gamma_2'(\sigma u-\lambda s_1\sigma(\gamma_2(u)))=\lambda\sigma\gamma_2(u)$. Take $u=\gamma_2(\tilde{u})$ for some $\tilde{u}\in W$ then we get $\gamma_2'(\sigma(\gamma_2(\tilde{u})))=0$ hence $\gamma_2'(\sigma u)=\lambda\sigma\gamma_2(u)$ that is $\gamma_2'=\lambda\sigma\gamma_2\sigma^{-1}$. \\
When $u'=0$, $u=\lambda s_1 \gamma_2(u)+\lambda s_1\gamma_3(s)-\lambda^{-1}sv$ and we get $\gamma_3'(\lambda^{-1}s)=\lambda\sigma\gamma_2(u)+\lambda\sigma\gamma_3(s)$, hence $\gamma_3(\lambda^{-1}s)=\lambda\sigma\gamma_3(s)+\lambda\sigma\gamma_2
(\lambda s_1\gamma_3(s)-\lambda^{-1}sv)$ that is $\gamma_3'(s)=\lambda\sigma\gamma_3(\lambda s)+\lambda\sigma\gamma_2
(\lambda s_1\gamma_3(\lambda s)-sv)$. \\
Now consider the equation
\[|\lambda|^2b_2(u)+|\lambda|^2b_3(s)-2\Im(\lambda\overline{v}^t(\gamma_2(u)+\gamma_3(s)))=b_2'(-\lambda s_1\sigma(\gamma_2(u)+\gamma_3(s))+\sigma u+\lambda^{-1}s\sigma v)+b_3'(\lambda^{-1}s).\]
Let $s=0$ then we are left with \[|\lambda|^2b_2(u)-2\Im(\lambda\overline{v}^t\gamma_2(u))=b_2'(-\lambda s_1\sigma\gamma_2(u)+\sigma u).\]
Take $u=\gamma_2(\tilde{u})$ then we have $|\lambda|^2b_2(\gamma_2(\tilde{u}))=b_2'(\sigma \gamma_2(\tilde{u}))$. Hence the equation becomes $b_2'(\sigma u)=|\lambda|^2b_2(u-\lambda s_1\gamma_2(u))-2\Im(\lambda\overline{v}^t\gamma_2(u))$.\\
Let $u=0$ then the equation becomes 
\[|\lambda|^2b_3(s)-2\Im(\lambda\overline{v}^t\gamma_3(s))=b_2'(-\lambda s_1\sigma\gamma_3(s)+\lambda^{-1}s\sigma v)+b_3'(\lambda^{-1}s),\]
substituting $b_2'$ we get \[|\lambda|^2b_3(s)-2\Im(\lambda\overline{v}^t\gamma_3(s))=|\lambda|^2b_2(-\lambda s_1\gamma_3(s)+\lambda^{-1}s v-\lambda s_1\gamma_2(-\lambda s_1\gamma_3(s)+\lambda^{-1}s v))\]\[-2\Im(\lambda\overline{v}^t\gamma_2(-\lambda s_1\gamma_3(s)+\lambda^{-1}s v))+b_3'(\lambda^{-1}s).\]
Hence the proposition follows.
\end{proof}

\begin{rema}\label{semplification}
We saw in Proposition \ref{b2} that if $\pi_3(\gamma_3(i\xi)-J\gamma_3(\xi))=0$ then $b_2$ is $0$ on $\Ima\gamma_2\oplus J \Ima\gamma_2$. On the other hand when $\pi_3(\gamma_3(i\xi)-J\gamma_3(\xi))\ne 0$, if $\gamma_2\ne 0$, using Proposition \ref{conj} we can conjugate the Lie group $U(\gamma_2,\gamma_3,b_2,b_3)$ so that $\pi_2(\gamma_3(\xi))=0$. In other words we can always suppose that $b_2$ is $0$ on $\Ima\gamma_2$.
\end{rema}

\section{Properly discontinuous and cocompact groups of Hermite-Lorentz affine motions}\label{ProperlyDisc}

In this section we go back to our original question about crystallographic subgroups of $\mathrm{U}(n,1)\ltimes \mathbb{C}^{n+1}$ and prove the main result of this section, namely Theorem \ref{cryst}.

\begin{defi}
Consider the following subgroups of the Borel subgroup $\mathcal{B}$ of $\mathrm{U}(n,1)$
\[\mathcal{D}=\left\{\begin{pmatrix}
\overline{\lambda} & 0 & 0 \\
0 & \sigma & 0 \\
0 & 0 & \lambda^{-1}
\end{pmatrix}\ |\ \sigma=\text{diag}(e^{i\theta_1},\ldots,e^{i\theta_{n-1}})\right\}\]
and 
\[\hat{\mathcal{B}}=\left\{\begin{pmatrix}
\lambda & -\overline{v}^t & a \\
0 & \sigma & \lambda\sigma v \\
0 & 0 & \lambda
\end{pmatrix}\ |\ |\lambda|^2=1,\sigma=\text{diag}(e^{i\theta_1},\ldots,e^{i\theta_{n-1}}), \Re(\overline{\lambda}  a)=-\frac{1}{2}\overline{v}^tv\right\}.\]
\end{defi}

\begin{defi}
The \emph{unipotent radical} of a linear group $G$ is the set of all unipotent elements in the largest connected solvable normal subgroup of $G$. It may be characterised as the largest connected unipotent normal subgroup of $G$.
\end{defi}

\begin{prop}\label{nonunip1}
Let $H\le \mathcal{H}(n,1)$ be a subgroup that acts simply transitively on $\mathfrak{a}(V)$ then $H$ is $\mathcal{H}(n,1)$-conjugate to a subgroup whose linear part is in $\mathcal{B}$.
\end{prop}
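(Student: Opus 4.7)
I would show that, up to conjugation by an element of $\mathcal{H}(n,1)$, the linear part $L(H)$ sits inside the specific Borel $\mathcal{B}$ of Lemma \ref{Parabolic}. Since conjugating by $g \in \mathcal{H}(n,1)$ conjugates the linear part by $L(g) \in \mathrm{U}(n,1)$ and all real Borel subgroups of the connected group $\mathrm{U}(n,1)$ are mutually conjugate, it suffices to produce a complete $L(H)$-invariant isotropic flag in $V$ and then straighten it to the standard one. The first task is to show that $L(H)$ fixes an isotropic complex line. Since $H$ acts simply transitively, $H$ is simply connected and diffeomorphic to $\mathbb{R}^{2n+2}$; a Fried--Goldman-style syndetic hull argument, combined with the Hermite-Lorentz Auslander theorem of Grunewald and Margulis \cite{Margulis}, forces $H$, and therefore $L(H)$, to be solvable. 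Lie's theorem then provides a common eigenvector $v_0 \in V$ for $L(H)$, and as in Lemma \ref{FixNullVect} the signature $(n,1)$ of $h$ forces $v_0$ to be null: a spacelike or timelike common fixed vector would confine $L(H)$ to a $\mathrm{U}(k)$-factor, incompatible with a $(2n+2)$-dimensional simply transitive action.

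Next, complete $v_0$ to a basis of $V$ adapted to $h$ as in Lemma \ref{Parabolic}, so that in the new coordinates $(r,u,s) \in \mathbb{C} \times \mathbb{C}^{n-1} \times \mathbb{C}$ the quadratic form of $h$ reads $q(r,u,s) = r\overline{s} + \overline{r}s + u^{t} \overline{u}$. After conjugating $H$ by the corresponding element of $\mathrm{U}(n,1) \le \mathcal{H}(n,1)$, $L(H)$ lies in the explicit parabolic $\mathcal{P}$ of Lemma \ref{Parabolic}. To refine this to $\mathcal{B}$, project $L(H) \le \mathcal{P}$ onto its Levi factor, a quotient of the form $\mathbb{C}^{*} \times \mathrm{U}(n-1)$. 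The image in $\mathrm{U}(n-1)$ is a connected solvable subgroup of a compact group, hence abelian and contained in a maximal torus $\{\text{diag}(e^{i\theta_1}, \ldots, e^{i\theta_{n-1}})\}$. A final conjugation by an element of $\mathrm{U}(n-1) \le \mathrm{U}(n,1)$ diagonalising this torus places $L(H)$ inside exactly the subgroup of $\mathcal{P}$ described in Lemma \ref{Parabolic} as $\mathcal{B}$.

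The only substantive step is the solvability claim above: simple transitivity alone does not force solvability of an affine group (Milnor's conjecture fails in general by Margulis's examples), so the argument must genuinely exploit that $L(H) \le \mathrm{U}(n,1)$. The cleanest way to do so is via the proven Auslander conjecture in the Hermite-Lorentz signature, applied to a syndetic or algebraic hull of $H$. Once solvability is in hand, the remaining straightening is elementary linear algebra together with the explicit structure of parabolic subgroups of $\mathrm{U}(n,1)$ already recorded in Lemma \ref{Parabolic}.
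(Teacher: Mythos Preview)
Your main misstep is the solvability claim. You assert that ``simple transitivity alone does not force solvability of an affine group (Milnor's conjecture fails in general by Margulis's examples)'', but this conflates three different statements. Auslander's theorem \cite[Theorem I.1]{Auslander} says precisely that a connected Lie group acting simply transitively by affine transformations on $\mathbb{R}^n$ is solvable --- no hypothesis on the linear part is needed, and this is exactly what the paper invokes in one line. Margulis's examples concern \emph{properly discontinuous} (and non-cocompact) actions of free groups, not simply transitive actions of connected Lie groups; Milnor's question was the converse direction (whether every simply connected solvable group admits such an action). Moreover, your proposed detour through Grunewald--Margulis cannot work as stated: their result is about discrete crystallographic groups, whereas here $H$ is a connected Lie group and no lattice is given, so there is nothing to apply a syndetic hull to.

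Once solvability is in hand, your explicit argument (Lie's theorem for a common eigenvector, the isotropy argument as in Lemma~\ref{FixNullVect}, then diagonalising the $\mathrm{U}(n-1)$-component of the Levi factor) is correct and essentially rederives Lemma~\ref{Parabolic}. The paper compresses this: since $L(H)$ is solvable, so is its Zariski closure, which therefore lies in a Borel subgroup of $\mathrm{U}(n,1)$; all Borels being conjugate, one conjugates to the standard $\mathcal{B}$. Your version is more explicit but not wrong; the genuine gap is only in how you justify solvability.
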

\begin{proof}
Since $H$ acts simply transitively by \cite[ Theorem I.1]{Auslander} $H$ is solvable. Then its linear part $L(H)\le U(n,1)$ is solvable and the Zariski closure of $L(H)$ is solvable as well. Thus $H$ is conjugate in $\mathcal{H}(n,1)$ to a group whose linear part is a subgroup of the Borel group $\mathcal{B}$.
\end{proof}

\begin{prop}\label{nonunip2}
Let $H\le\mathcal{H}(n,1)$ be a group acting simply transitively on $\mathfrak{a}(V)$ such that $L(H)\le \mathcal{B}$. Then either $L(H)\le \hat{\mathcal{B}}$ or $L(H)$ is $\mathcal{B}$-conjugate to a subgroup of $\mathcal{D}$.
\end{prop}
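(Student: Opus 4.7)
The plan is to study the restriction to $L(H)$ of the real character $\chi\colon \mathcal{B}\to \mathbb{R}_{>0}$, $g\mapsto |\lambda(g)|^2$, whose kernel is exactly $\hat{\mathcal{B}}$. If $\chi|_{L(H)}$ is trivial then $L(H)\subseteq \hat{\mathcal{B}}$, putting us in the first alternative. Otherwise I want to exhibit $k\in \mathcal{B}$ with $kL(H)k^{-1}\subseteq \mathcal{D}$.

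Assume $\chi|_{L(H)}$ is nontrivial and pick $X_0\in \mathfrak{l}:=\mathrm{Lie}(L(H))$ with $d\chi(X_0)\ne 0$. In the decomposition $\mathfrak{b}=\mathfrak{m}\oplus\mathfrak{a}\oplus\mathfrak{n}$, where $\mathfrak{n}=\mathrm{Lie}(\mathcal{U})$ and $\mathfrak{a}$ is the one-dimensional real direction of $\lambda$, the component $X_0^a$ is nonzero. A short computation in the parameters $(v,b)\in \mathbb{C}^{n-1}\oplus\mathbb{R}$ of $\mathfrak{n}$ from Lemma \ref{unipotent} shows that all eigenvalues of $\mathrm{ad}(X_0)$ on $\mathfrak{n}$ have nonzero real part (essentially $\Re(c_0)$ on the $v$-part and $2\Re(c_0)$ on the $b$-part, where $c_0$ is the $\lambda$-coordinate of $X_0$); in particular $\mathrm{ad}(X_0)\colon \mathfrak{n}\to \mathfrak{n}$ is invertible. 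This invertibility, together with Baker-Campbell-Hausdorff in the nilpotent $\mathcal{U}$, lets us solve $\mathrm{Ad}(u)X_0\in \mathfrak{d}$ for some $u\in \mathcal{U}$, and after replacing $H$ by $uHu^{-1}$ we may assume $X_0\in \mathfrak{l}\cap\mathfrak{d}$.

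With $X_0\in \mathfrak{d}$, the operator $\mathrm{ad}(X_0)$ on $\mathfrak{b}$ is semisimple, annihilating $\mathfrak{d}$ (by abelianness) and acting without zero eigenvalue on $\mathfrak{n}$. Since $\mathfrak{l}$ is $\mathrm{ad}(X_0)$-stable, its generalized eigenspace decomposition gives a clean splitting $\mathfrak{l}=(\mathfrak{l}\cap\mathfrak{d})\oplus(\mathfrak{l}\cap\mathfrak{n})$, and the second alternative of the proposition follows once we prove $\mathfrak{l}\cap\mathfrak{n}=0$.

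The hard part is exactly this final vanishing. My plan is to invoke the graph form $\mathfrak{h}=\{(A(v),v):v\in V\}$ with $A\colon V\to \mathfrak{l}$ surjective and $\ker A=T_H$, which is available because $H$ acts simply transitively. If $\mathfrak{l}\cap\mathfrak{n}\ne 0$, then $\mathfrak{h}_+:=\{(A(v),v):A(v)\in\mathfrak{l}\cap\mathfrak{n}\}$ is an ideal of $\mathfrak{h}$ (by normality of $\mathfrak{n}$ in $\mathfrak{b}$ together with the cocycle identity $A(A(v)w-A(w)v)=[A(v),A(w)]$), exponentiating to a nontrivial normal unipotent subgroup $H_+\le H$ that acts freely on $\mathfrak{a}(V)$. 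Now confront this unipotent structure with the ``hyperbolic'' element $h_0:=\exp_H(X_0,v_0)$ for $A(v_0)=X_0$: freeness of the $H$-action forces $h_0$ to have no fixed point in $\mathfrak{a}(V)$, so $v_0$ must project nontrivially to the $1$-eigenspace of $\exp(X_0)$, and $\mathrm{ad}(X_0)$ induces an invertible derivation of $\mathfrak{l}\cap\mathfrak{n}$ compatible with its graph parametrisation $A|_{V_+}$. Combining these constraints with the conjugacy normal forms of Proposition \ref{conj} applied to $H_+$, I would hope to force an incompatibility of the $(\gamma_2,\gamma_3,b_2,b_3)$-type data, yielding $\mathfrak{l}\cap\mathfrak{n}=0$. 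Turning this strategy into a clean algebraic contradiction is the step where I expect the bulk of the difficulty to lie.
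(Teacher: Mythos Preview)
Your reduction through the splitting $\mathfrak{l}=(\mathfrak{l}\cap\mathfrak{d})\oplus(\mathfrak{l}\cap\mathfrak{n})$ is correct: the eigenvalues of $\mathrm{ad}(X_0)$ on $\mathfrak{n}$ are indeed $c_0+i\theta_j$ on the $v$-slots and $2\Re(c_0)$ on the $b$-slot, all with nonzero real part when $d\chi(X_0)\ne 0$, and the conjugation of $X_0$ into $\mathfrak{d}$ is standard. The gap is exactly where you locate it, and the strategy you sketch does not close it. Proposition~\ref{conj} is a statement about conjugacy of \emph{simply transitive} unipotent groups, so it does not apply to your $H_+$, which is only a proper unipotent subgroup acting freely; there is no $(\gamma_2,\gamma_3,b_2,b_3)$-data attached to $H_+$ from which to extract an incompatibility. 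The rest of the sketch (freeness of $h_0$, the projection of $v_0$, the induced derivation) does not by itself produce a contradiction either.

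The paper avoids this difficulty by never splitting $\mathfrak{l}$ at all. Instead it passes to the unipotent radical $U$ of the Zariski closure $\widetilde{H}$; by Auslander's theorem $U$ is again simply transitive, so $U=U(\gamma_2,\gamma_3,b_2,b_3)$, and Proposition~\ref{conj} now applies as a \emph{normalization} constraint since $H$ normalizes $U$. The resulting relations $\gamma_2=\lambda\sigma\gamma_2\sigma^{-1}$, $\gamma_3(s)=\lambda\sigma\gamma_3(\lambda s)$, $b_2(u)=|\lambda|^2 b_2(\sigma^{-1}u)$, $b_3(s)=|\lambda|^2 b_3(\lambda s)$ force $|\lambda|=1$ for every $h\in H$ as soon as any one of the four maps is nonzero; if all four vanish then $L(U)=\{\mathrm{Id}\}$, so $\widetilde{L(H)}$ is reductive, and being connected and solvable it is a torus, hence $\mathcal{B}$-conjugate into $\mathcal{D}$. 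This is shorter and supplies precisely your missing ingredient: had you observed that $H_+\subseteq U$ with $U$ simply transitive and $H$-normalized, the same normalization argument would have forced $L(U)=\{\mathrm{Id}\}$, hence $L(H_+)=\{\mathrm{Id}\}$, hence $\mathfrak{l}\cap\mathfrak{n}=0$, in one stroke.
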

\begin{proof}
Let $U$ be the unipotent radical of the Zariski closure $\widetilde{H}$ of $H$, then by \cite[ Theorem III.1]{Auslander} $U$ also acts simply transitively on $\mathfrak{a}(V)$. Hence by Proposition \ref{UnipSimplTrans} we have that $U=U(\gamma_2,\gamma_3,b_2,b_3)$ for some $\gamma_2,\gamma_3,b_2,b_3$. By definition $U$ is a normal subgroup of $\widetilde{H}$ hence $H$ normalizes $U$. That is for all $h\in H$ we have $hU(\gamma_2,\gamma_3,b_2,b_3)h^{-1}=U(\gamma_2,\gamma_3,b_2,b_3)$. Now for every fixed $h\in H$ its linear part is of the form 
\[\begin{pmatrix} \overline{\lambda} & -\overline{v}^t & a \\ 0 & \sigma & \lambda^{-1}\sigma v \\
0 & 0 & \lambda^{-1} \end{pmatrix}\]
and it follows from Proposition \ref{conj} that $\gamma_2(u)=\lambda\sigma\gamma_2(\sigma^{-1}u)$, hence if $\gamma_2\ne0$ then $|\lambda|^2=1$. If $\gamma_2= 0$ then $\gamma_3(s)=\lambda\sigma\gamma_3(\lambda s)$ and $b_2(u)=|\lambda|^2b_2(\sigma^{-1}u)$ so if $b_2$ is non zero then $|\lambda|^2=1$ if instead $\gamma_3$ is not $0$ then $\lambda^2=1$. Finally if all of $\gamma_2,\gamma_3,b_2$ are identically $0$ then $b_3(s)=|\lambda|^2b_3(\lambda s)$ and hence if it is non zero $\lambda=1$. Hence if at least one of $\gamma_2,\gamma_3,b_2,b_3$ is non zero then $|\lambda|^2=1$ and $L(H)\le \hat{\mathcal{B}}$, otherwise all of $\gamma_2,\gamma_3,b_2,b_3$ are $0$ and hence $L(U)=\{Id\}$. Since $L(U)$ is the unipotent radical of $\widetilde{L(H)}$ then this implies that $\widetilde{L(H)}$ is reductive and being solvable and connected it is a torus. Hence $L(H)$ is $\mathcal{B}$-conjugate to a subgroup of $\mathcal{D}$.
\end{proof}

\begin{defi}
The \emph{nilradical}, $N$, of a Lie group $G$ is the largest connected normal nilpotent subgroup of $G$. Analogously the nilradical, $\mathfrak{n}$, of a Lie algebra $\mathfrak{g}$ is the largest nilpotent ideal of $\mathfrak{g}$. Then we have that $N=\exp(\mathfrak{n})$.
\end{defi}

\begin{lemm}\label{nilpotentRad}
Let $H\le \mathcal{H}(n,1)$ be a group acting simply transitively on affine space and let $N$ be its nilradical. Consider furthermore the Zariski closure of $H$, $\widetilde{H}$, inside $\mathcal{H}(n,1)$ and let $U$ be the unipotent radical of $\widetilde{H}$, then $(H\cap U)^0=N$.
\end{lemm}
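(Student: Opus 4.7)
The plan is to prove the two inclusions separately. For the easy inclusion $(H\cap U)^0 \subseteq N$: since $U$ is the unipotent radical of $\widetilde{H}$ it is normal in $\widetilde{H}$, so $H\subseteq\widetilde{H}$ normalizes $U$ and hence $H\cap U$ is normal in $H$; its identity component $(H\cap U)^0$ remains normal. Being contained in the unipotent (hence nilpotent) group $U$, it is connected, normal and nilpotent in $H$, and is therefore contained in the nilradical $N$ by the defining maximality of $N$.

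For the reverse inclusion $N \subseteq (H\cap U)^0$, since $N$ is connected it suffices to show $N\subseteq U$. I would pass to the Zariski closure $\widetilde{N}$ of $N$ inside $\mathcal{H}(n,1)$. Zariski closure preserves nilpotency (defined by algebraic identities) and, since $N$ is normal in $H$, also normality relative to $\widetilde{H}$, so $\widetilde{N}$ is a connected nilpotent algebraic subgroup of $\widetilde{H}$ that is normal in $\widetilde{H}$. Write its Jordan--Chevalley decomposition $\widetilde{N}=\widetilde{N}_s\cdot\widetilde{N}_u$ into the characteristic semisimple and unipotent parts, the product being direct because $\widetilde{N}$ is nilpotent. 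Since $\widetilde{N}_u\subseteq U$, everything reduces to proving $\widetilde{N}_s=\{e\}$.

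To achieve this I would combine two observations. First, by Auslander's theorem \cite[Theorem III.1]{Auslander} the group $U$ also acts simply transitively on $\mathfrak{a}(V)$, yielding $\widetilde{H}^0=U\rtimes\mathrm{Stab}(0)$ where $\mathrm{Stab}(0)$ is a Levi complement of $U$. By Proposition~\ref{nonunip2} the maximal tori of $\widetilde{H}$ are compact, so $\widetilde{N}_s$ is a compact group acting affinely on $\mathfrak{a}(V)$; by the standard averaging argument (barycenter of a Haar-integrated orbit) it has a fixed point, which after conjugating $\widetilde{H}$ by a suitable element of the simply transitive group $U$ we may take to be $0$. Thus $\widetilde{N}_s\subseteq\mathrm{Stab}(0)$. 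Second, because $\widetilde{N}_s$ is characteristic in the normal subgroup $\widetilde{N}$, it is normal in $\widetilde{H}$; for $u\in U$ and $t\in\widetilde{N}_s$ we then have $utu^{-1}\in\widetilde{N}_s\cap tU$, and since $\widetilde{N}_s\cap U=\{e\}$ (semisimple meets unipotent trivially) this set equals $\{t\}$, so $U$ centralizes $\widetilde{N}_s$. Putting these together: for any $t\in\widetilde{N}_s$ and any $p=u\cdot 0\in\mathfrak{a}(V)$, one has $t\cdot p=tu\cdot 0=ut\cdot 0=u\cdot 0=p$, so $t$ acts trivially and $t=e$.

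The main obstacle is verifying the two key properties of $\widetilde{N}_s$ in a rigorous algebraic-group-theoretic framework: the existence of a common fixed point (which uses compactness coming from Proposition~\ref{nonunip2}, plus simple transitivity of $U$ from Auslander's theorem to normalize the fixed point to $0$) and its centralization by $U$ via normality together with the triviality of $\widetilde{N}_s\cap U$. Once these are in place, the chain $N\subseteq\widetilde{N}=\widetilde{N}_u\subseteq U$ closes the argument.
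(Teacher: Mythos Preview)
Your easy inclusion $(H\cap U)^0\subseteq N$ is fine and matches the paper. For the reverse inclusion the paper simply invokes \cite[Corollary~III.3]{Auslander}, which gives $N\subseteq U$ directly; your attempt to prove this by hand via the Jordan--Chevalley decomposition of $\widetilde{N}$ is a reasonable alternative, but it contains one genuine error.

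The problem is the sentence ``By Proposition~\ref{nonunip2} the maximal tori of $\widetilde{H}$ are compact.'' Proposition~\ref{nonunip2} is a dichotomy: either $L(H)\le\hat{\mathcal{B}}$ (where indeed $|\lambda|=1$) \emph{or} $L(H)$ is conjugate into $\mathcal{D}$, and in the second branch $\lambda$ ranges over $\mathbb{C}^*$, so the torus is noncompact. Since Lemma~\ref{nilpotentRad} is stated and used for general $H$, you cannot assume you are in the first branch, and your averaging argument for a common fixed point collapses.

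Fortunately compactness is irrelevant, and your argument is easily repaired. Any semisimple element $t$ of $\mathrm{Aff}(V)\subset\mathrm{GL}(V\oplus\mathbb{R})$ is diagonalisable with eigenvalue $1$ on the last coordinate, hence already has an affine fixed point $p_t$; no barycenter is needed. Your step~(3) (that $U$ centralises $\widetilde{N}_s$, from normality and $\widetilde{N}_s\cap U=\{e\}$) is correct and independent of the fixed-point discussion. Combining the two: for $u\in U$ one has $t\cdot(u\cdot p_t)=u\cdot(t\cdot p_t)=u\cdot p_t$, so $t$ fixes the whole $U$-orbit of $p_t$, which is all of $\mathfrak{a}(V)$; thus $t=e$ and $\widetilde{N}_s=\{e\}$. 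With this correction your route goes through and gives a self-contained proof, whereas the paper's one-line citation is shorter but blackboxes the content into Auslander's paper.
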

\begin{proof}
Clearly $(H\cap U)^0\subseteq N$. On the other side from \cite[Corollary III.3]{Auslander} we have that $N\subseteq U$, hence the lemma follows.
\end{proof}

\begin{lemm}\label{Mostow}
Let $H$ be a simply connected connected solvable Lie group and $N$ its nilradical. Let $\Gamma$ be a lattice in $H$ then 
$\Gamma N/N\subseteq H/N$ is discrete.
\end{lemm}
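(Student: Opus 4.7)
The plan is to reduce the statement to the classical theorem of Mostow that a lattice in a simply connected solvable Lie group intersects the nilradical in a uniform lattice. First I would observe that, since $H$ is solvable, $[\mathfrak{h},\mathfrak{h}]$ is nilpotent and hence contained in the nilradical $\mathfrak{n}$. Consequently $H/N$ is abelian; since $H$ is simply connected and $N$ is connected, the long exact sequence of the fibration $N\hookrightarrow H\to H/N$ shows that $H/N$ is simply connected abelian, hence isomorphic to $\mathbb{R}^k$. Writing $p\colon H\to H/N$ for the quotient map, what must be shown is that $p(\Gamma)=\Gamma N/N$ is a discrete subgroup of $\mathbb{R}^k$.

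Granting Mostow's theorem that $\Gamma\cap N$ is cocompact in $N$, I would next argue that $\Gamma N$ is closed in $H$. Indeed, $N$ acts on the right on the compact Hausdorff quotient $\Gamma\backslash H$, and the $N$-orbit through $[e]$ is $\Gamma N/\Gamma\cong (\Gamma\cap N)\backslash N$, which is compact and therefore closed. If $\gamma_n n_n\to h$ in $H$ with $\gamma_n\in\Gamma$ and $n_n\in N$, then $[n_n]=[\gamma_n n_n]\to[h]$ in $\Gamma\backslash H$, so $[h]$ lies in this closed orbit, giving $h\in\Gamma N$. Hence $\Gamma N$, and therefore $p(\Gamma)$, is closed in $H/N$.

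Finally, $p(\Gamma)$ is countable, because $\Gamma$ is a discrete subgroup of the second-countable space $H$ and hence countable. A closed countable subgroup of $\mathbb{R}^k$ must be discrete: closed subgroups of $\mathbb{R}^k$ decompose as $V\oplus L$ with $V$ a linear subspace and $L$ a lattice in a complement of $V$, and countability forces $V=0$. This completes the argument. The main obstacle is of course Mostow's theorem on $\Gamma\cap N$; the standard route is an induction on the derived length of $H$ together with a study of the Zariski closure of $\mathrm{Ad}(\Gamma)\subseteq\mathrm{GL}(\mathfrak{h})$ via Chevalley's theorem, but once this input is granted the remainder is elementary point-set topology in $\Gamma\backslash H$ plus the structure theorem for closed subgroups of $\mathbb{R}^k$.
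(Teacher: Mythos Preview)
Your argument is correct. Both your proof and the paper's rest on Mostow's theorem, but in different formulations and with a different deductive path. The paper invokes the version stating that $\overline{\Gamma N}^{\,0}=N$ (Auslander, \emph{On radicals of discrete subgroups of Lie groups}); from this, $N$ is open in $\overline{\Gamma N}$ and hence in $\Gamma N$, so the identity coset is isolated in $\Gamma N/N$ and discreteness follows in one line. You instead invoke the version asserting that $\Gamma\cap N$ is a uniform lattice in $N$ (Raghunathan, Corollary~3.5), then use compactness of the $N$-orbit of $[e]$ in $\Gamma\backslash H$ to prove that $\Gamma N$ is closed, and finally exploit the identification $H/N\cong\mathbb{R}^k$ together with countability of $\Gamma$ and the structure theorem for closed subgroups of $\mathbb{R}^k$. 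Your route is longer but yields the extra information that $\Gamma N$ is closed and that $H/N\cong\mathbb{R}^k$, which the paper in any case uses immediately afterwards; the paper's route is shorter because the form of Mostow it cites is already tailored to the discreteness conclusion. The two inputs from Mostow are of equal depth (indeed, in Raghunathan the lattice statement is derived from the closure statement), so neither approach is genuinely more elementary.
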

\begin{proof}
From \cite[Mostow Theorem]{Ausl2} we know that $\overline{\Gamma N}^0=N$ hence $N$ is open in $\Gamma N$ and the lemma follows.
\end{proof}

We will now prove the main technical part of Theorem \ref{cryst}. 

\begin{prop}\label{VirtNil}
Let $H\le \mathcal{H}(n,1)$ be a group acting simply transitively on $\mathfrak{a}(V)$ with $L(H)\le \widehat{\mathcal{B}}$. Let $\Gamma$ be a lattice in $H$. Then $\Gamma$ is virtually nilpotent.
\end{prop}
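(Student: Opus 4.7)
The plan is to exploit the type-$R$ property of $H$, namely that every $\mathrm{Ad}(h)$ on $\mathfrak{h}$ has eigenvalues of modulus $1$. First I would verify this: since $L(H)\subseteq\widehat{\mathcal{B}}$ and any element of $\widehat{\mathcal{B}}$ has eigenvalues $\lambda,e^{i\theta_1},\ldots,e^{i\theta_{n-1}},\lambda$ all on the unit circle, a direct computation using the formula $\mathrm{Ad}(h)(A,v)=(L(h)AL(h)^{-1},\,L(h)v-L(h)AL(h)^{-1}w)$ on $\mathfrak{aff}(V)=\mathfrak{gl}(V)\oplus V$ shows that $\mathrm{Ad}(h)|_{\mathfrak{h}}$ is block-triangular with diagonal blocks $\mathrm{Ad}_{L(h)}$ and $L(h)$, so its eigenvalues also lie on the unit circle.

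Next I would apply the structure theorems for lattices in solvable Lie groups. By Lemma~\ref{Mostow} and the cocompactness of $\Gamma$, together with Mostow's theorem, $\Gamma\cap N$ is a lattice in the nilradical $N$ and $\Gamma/(\Gamma\cap N)$ is a cocompact lattice in the simply connected abelian quotient $H/N\cong\mathbb{R}^k$, hence isomorphic to $\mathbb{Z}^k$. Malcev's theorem supplies a $\mathbb{Q}$-form of $\mathfrak{n}$, namely $\mathbb{Q}\langle\log(\Gamma\cap N)\rangle$; setting $M=\mathbb{Z}\langle\log(\Gamma\cap N)\rangle$ produces a full-rank $\mathbb{Z}$-lattice in $\mathfrak{n}$. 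For any $\gamma\in\Gamma$, the linear map $\mathrm{Ad}(\gamma)|_{\mathfrak{n}}$ sends $\log(\Gamma\cap N)$ to itself and thus preserves $M$, so it lies in $GL(M)\cong GL_d(\mathbb{Z})$ with $d=\dim\mathfrak{n}$. Being an integer matrix with all eigenvalues of modulus $1$, Kronecker's theorem forces its eigenvalues to be roots of unity. Because only finitely many cyclotomic polynomials have degree $\le d$, I can pass to a finite-index subgroup $\Gamma'\le\Gamma$ on which $\mathrm{Ad}(\gamma)|_{\mathfrak{n}}$ has all eigenvalues equal to $1$, i.e.\ is unipotent.

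Finally, since $H/N$ is abelian, $\Gamma'$ acts trivially on $\mathfrak{h}/\mathfrak{n}$, so $\mathrm{Ad}(\gamma)|_{\mathfrak{h}}$ is block-triangular with blocks $\mathrm{Ad}(\gamma)|_{\mathfrak{n}}$ and the identity, hence unipotent for every $\gamma\in\Gamma'$. Kolchin's theorem then shows that $\mathrm{Ad}(\Gamma')\subset GL(\mathfrak{h})$ is simultaneously upper-triangular unipotent, hence nilpotent as an abstract group. Since $\ker\mathrm{Ad}|_H=Z(H)$, we obtain that $\Gamma'/(\Gamma'\cap Z(H))\cong\mathrm{Ad}(\Gamma')$ is nilpotent with $\Gamma'\cap Z(H)$ central in $\Gamma'$, and a central extension of a nilpotent group by a central subgroup is itself nilpotent, so $\Gamma'$ is nilpotent and $\Gamma$ is virtually nilpotent. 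The main obstacle is the ``roots of unity $\Rightarrow$ trivial eigenvalue after finite index'' step: it requires a uniform order bound on the possible eigenvalues and a congruence-subgroup-style descent in $GL_d(\mathbb{Z})$.
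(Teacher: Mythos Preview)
Your proof is correct and considerably cleaner than the paper's. The key difference is in how you obtain the modulus-one eigenvalue property: you observe immediately that $L(H)\subseteq\widehat{\mathcal B}$ forces every eigenvalue of $L(h)$ onto the unit circle, and then the block-triangular form of $\mathrm{Ad}$ on $\mathfrak{aff}(V)$ transfers this to $\mathrm{Ad}(h)|_{\mathfrak h}$. The paper instead chooses an explicit basis of $\mathfrak h$ adapted to $N$ and the isotropic line, writes out the $H$-action on each quotient $\mathcal C^iN/\mathcal C^{i+1}N$ in coordinates, and then argues case by case (on whether $\varphi$ vanishes, on $\dim(\mathfrak n/((\mathbb C\tau)^\perp\cap\mathfrak n))$, etc.) that these matrices have unit-modulus eigenvalues. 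Your type-$R$ observation bypasses all of that computation.

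Two small points. First, that $M=\mathbb Z\langle\log(\Gamma\cap N)\rangle$ is genuinely a lattice in $\mathfrak n$ (and not dense in the $\mathbb Q$-form) is not automatic; it follows because the Baker--Campbell--Hausdorff formula has bounded denominators once the nilpotency class is fixed, but you should say so. Alternatively, and closer to the paper, work on each abelian piece $\mathcal C^i\mathfrak n/\mathcal C^{i+1}\mathfrak n$: there the lattice $(\Gamma\cap\mathcal C^iN)\,\mathcal C^{i+1}N/\mathcal C^{i+1}N$ is visibly $\mathrm{Ad}(\Gamma)$-invariant, and the action factors through the \emph{abelian} group $\Gamma/(\Gamma\cap N)\cong\mathbb Z^k$. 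This also dissolves your ``main obstacle'': choose generators, raise each to a common multiple of the eigenvalue orders to make them unipotent on every graded piece, and note that commuting unipotents generate a unipotent group. Second, the congruence-subgroup idea you mention does not directly work, since an integer matrix with root-of-unity eigenvalues need not have finite order (its Jordan form may be nontrivial), so Minkowski torsion-freeness alone does not force unipotency; the correct completion is either the graded-piece argument above or Lie--Kolchin applied to the solvable image in $GL_d(\mathbb Z)$ followed by the observation that the diagonal character lands in a finite group of roots of unity.
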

\begin{proof}
Let us consider $N$, the nilradical of $H$ and let $\mathfrak{h}$ and $\mathfrak{n}$ be the corresponding Lie algebras. From \cite[Corolary 3.5 and Corolary 1 of Theorem 2.3]{Rag} we know that $\Gamma\cap N$ is a lattice in $N$ and that $\Gamma\cap\mathcal{C}^iN$ is a lattice in $\mathcal{C}^iN$, where $\mathcal{C}^iN$ are the groups of the lower central series of $N$. Furthermore if $U$ is the unipotent radical of the Zariski closure of $H$, we have seen in Lemma \ref{nilpotentRad} that $(H\cap U)^0=N$. Since by \cite[ Theorem III.1]{Auslander} $U$ also acts simply transitively on $\mathfrak{a}(V)$ we have $N\subseteq U(\gamma_2,\gamma_3,b_2,b_3)$ for some $\gamma_2,\gamma_3,b_2,b_3$, using the terminology as in Section \ref{UnSimTr}. The general strategy of the proof is as follows: from \cite[Corollary 1.41]{Knapp} we know that $H/N$ is abelian, hence $\Gamma/\Gamma\cap N$ is abelian. We will show that the action by conjugation of $\Gamma/\Gamma\cap N$ on $\Gamma\cap N$ is virtually unipotent. This implies that $\Gamma$ is virtually nilpotent.\\
Let us start choosing a basis of $\mathfrak{h}$ that contains a basis of $\mathfrak{n}$. Since $H$ acts simply transitively on $\mathfrak{a}(V)$, $\mathfrak{h}$ inherits the Hermite-Lorentz form $h$ of $V$, we denote by $\langle\cdot ,\cdot \rangle$ its real part. We will use the symbol $\perp$ to denote the orthogonal with respect to $\langle\cdot ,\cdot \rangle$. We know from Lemma \ref{Parabolic} that $H$ preserves a complex isotropic line, let $\tau$ be a basis for it over $\mathbb{C}$. Then we choose $W_2$, a space supplementary to $\mathbb{C}\tau\cap \mathcal{C}^2\mathfrak{n}$ in $(\mathbb{C}\tau)^\perp\cap \mathcal{C}^2\mathfrak{n}$ and $W_3$ a supplementary space to $(\mathbb{C}\tau\cap\mathfrak{n})\oplus W_2$ in $(\mathbb{C}\tau)^\perp\cap \mathfrak {n} $. Then choose $W_1$ such that $W=W_1\oplus W_2\oplus W_3$ is a supplementary space to $\mathbb{C}\tau$ in $\mathbb{C}\tau^\perp$. Finally choose $\xi$ to be an isotropic vector orthogonal to $W$ such that $h(\tau,\xi)=1$. Then $\mathfrak{h}=\mathbb{C}\tau\oplus W\oplus\mathbb{C}\xi$ where $W=W_1\oplus W_2\oplus W_3$. Let $k_1=\dim W_1$, $k_2=\dim W_2$ and $k_3=\dim W_3$.
Since $H$ acts simply transitively on $\mathfrak{a}(V)$ and $L(H)\le \widehat{\mathcal{B}}$ we can write the elements in $\mathfrak{h}$ as 
 \[v=(r,u_1,u_2,u_3,s)=\begin{pmatrix} i\varphi(v) & -\overline{\gamma}^{(1)}(v)^t & -\overline{\gamma}^{(2),1}(v)^t & -\overline{\gamma}^{(2),2}(v)^t & ib(v) & r\\ 0 & \rho^{(1)}(v) & 0 & 0 &  \gamma^{(1)}(v) & u_1\\
 0 & 0 & \rho^{(2),1}(v) & 0 &  \gamma^{(2),1}(v) & u_2\\
 0 & 0 & 0 & \rho^{(2),2}(v) &  \gamma^{(2),2}(v) & u_3\\
 0 & 0 & 0 & 0 & i\varphi(v) & s\\
  0 & 0 & 0 & 0 & 0 & 0  \end{pmatrix}\]
with $\varphi:V\rightarrow \mathbb{R}$,  $\gamma=(\gamma^{(1)},\gamma^{(2),1},\gamma^{(2),2}):V\rightarrow W_1\oplus W_2\oplus W_3$, $b:V\rightarrow \mathbb{R}$ and $ \rho=(\rho^{(1)},\rho^{(2),1},\rho^{(2),2}):V\rightarrow \mathfrak{o}(k_1)\times \mathfrak{o}(k_2)\times \mathfrak{o}(k_3)$ such that $\rho=\mathrm{diag}(i\theta_1,\ldots,i\theta_{n-1})$.\\
The reason why we can write the central block, $L(\mathfrak{h})_{|W}$ corestricted to $W$, in a diagonal form that preserves our decomposition is that the adjoint action of $\mathfrak{h}$ leaves $(\mathbb{C}\tau)^\perp/\mathbb{C}\tau$ invariant. Hence its intersections with $\mathfrak{n}$ and $\mathcal{C}^2\mathfrak{n}$ are invariant as well, and furthermore this action is given by multiplication by the central block. We are also making an abuse of notation by writing in a complex form the matrix that is indeed a real linear transformation. We can do so since if a complex line in $W$, on which we act by $i\theta_j$, has a one dimensional real intersection with $\mathfrak{n}$ then $\theta_j=0$. Let us write $\lambda=e^{i\varphi}$, $\sigma=\mathrm{diag}(e^{i\theta_1},\ldots,e^{i\theta_{n-1}})$ and $\sigma^{(1)}$, $\sigma^{(2),1}$, $\sigma^{(2),2}$ the exponentials of $\rho^{(1)},\rho^{(2),1},\rho^{(2),2}$ respectively. Finally we write $g(v)$ for the element in $H$ corresponding to $v\in\mathfrak{h}$. \newline
First of all we remark that if $\lambda$ is not identically $1$ then either $\mathbb{C}\tau\subseteq\mathfrak{n}$ or $\mathbb{C}\tau\cap \mathfrak{n}=\{0\}$ and moreover either $\mathfrak{n}\subseteq\mathbb{C}\tau\oplus W_2\oplus W_3$ or there exist $\xi_1,\xi_2\in\mathfrak{n}$ such that $\xi_1\equiv \xi\mod (\mathbb{C}\tau)^\perp$ and $\xi_2\equiv i\xi \mod (\mathbb{C}\tau)^\perp$. Indeed assume that $\mathbb{C}\tau\cap\mathfrak{n}\ne \{0\}$ and let $\tau_1$ be a non trivial element of the intersection. Then we know that the element $g(\tau_1)$ is a pure translation, since elements of this form belonging to $U(\gamma_2,\gamma_3,b_2,b_3)$ are so. As $\varphi$ is not identically $0$, there exists $v\in\mathfrak{h}$ be such that $\varphi(v)\ne 0$. Computing the commutator between these two elements we find $[\tau_1,v]=-i\varphi(v)\tau_1\in\mathfrak{n}$.
Since $\varphi(v)\ne 0$ the whole $\mathbb{C}\tau\subseteq\mathfrak{n}$. In the same way we can see that if there exists an element $\xi_1\in\mathfrak{n}$, $\xi_1\notin(\mathbb{C}\tau)^\perp$, then the action of $v$ on $\xi_1$ is via a non trivial rotation in $\mathfrak{n}/((\mathbb{C}\tau)^\perp\cap\mathfrak{n})$ hence $\mathfrak{n}$ should contain another element $\xi_2$ linearly independent from $\xi_1$ and not belonging to $(\mathbb{C}\tau)^\perp$. \\
In order to prove that the action of $\Gamma/\Gamma\cap N$ on $\Gamma\cap N$ is unipotent we will prove that the action by conjugation of $H$ on each $\mathcal{C}^iN/\mathcal{C}^{i-1}N$ for $i=3,2,1$ is given by a matrix with eigenvalues of modulus $1$. This implies that when we look at the corresponding action by $(\Gamma\cap\mathcal{C}^iN)/(\Gamma\cap\mathcal{C}^{i-1}N)$ on the torus $(\mathcal{C}^iN/\mathcal{C}^{i-1}N)/(\Gamma\cap\mathcal{C}^iN/(\Gamma\cap\mathcal{C}^{i-1}N))$ and we denote by $p(x)$ the characteristic polynomial of the matrix of the action we know that all the roots of $p(x)\in\mathbb{Z}[x]$ have modulus $1$. By a theorem of Kronecker, \cite{Greiter}, all the roots of $p(x)$ are roots of unity. Then it follows that that action of $\Gamma$ is virtually unipotent. 

If $\mathcal{C}^3\mathfrak{n}\ne\{0\}$ we look at the action of $\Gamma$ on the torus $\mathcal{C}^3N/(\mathcal{C}^3N\cap \Gamma)$ by conjugation. As $\mathcal{C}^3\mathfrak{n}\subseteq\mathbb{C}\tau $ we see that this action is given by multiplication by $\lambda$ or it is trivial. Since automorphisms of Riemannian tori are finite we can assume, up to finite index, that $\lambda_{|\Gamma}=1$. Then we can consider the action of $\Gamma\cap\mathcal{C}^2N/(\Gamma\cap\mathcal{C}^3N)$ on the torus $\left(\mathcal{C}^2N/\mathcal{C}^3N\right)/\left(\Gamma\cap\mathcal{C}^2N/(\Gamma\cap\mathcal{C}^3N)\right)$. Notice that $\mathcal{C}^2\mathfrak{n}\subseteq\mathbb{C}\tau\oplus\mathbb{C}\mathrm{Im}\gamma_2\oplus\mathbb{R}\pi_3(w_0)\subseteq\mathbb{C}\tau\oplus W_2$. Hence the action of $\mathfrak{h}$ on $\mathcal{C}^2\mathfrak{n}$, written in the decomposition $\mathbb{C}\tau\oplus W_2$, is given by 
\[v\rightarrow\begin{pmatrix} i\varphi(v) & -\overline{\gamma}^{(2),1}(v)-isb \\ 0 & \rho^{(2),1}(v)\end{pmatrix}\] 
and we see that the eigenvalues of the exponential of this matrix have modulus $1$. We can now consider the action of $\Gamma/(\Gamma\cap \mathcal{C}^2N)$ on $\left(N/\mathcal{C}^2N\right)/\left(\Gamma\cap N/\Gamma\cap \mathcal{C}^2N\right)$. In order to understand this action let us compute the commutator between a generic element $v=(r,u_1,u_2,u_3,s)$ of $\mathfrak{h}$ and one, $v'=(r',u_1',u_2',u_3',s')$, of $\mathfrak{n}$. We have $[v,v']=$
\[\begin{pmatrix} 0 & * & * & * & * & (1) \\ 
0 & 0 & 0 & 0 & * & *\\
0 & 0 & 0 & 0 & * & *\\
0 & 0 & 0 & 0 & \rho^{(2),2}(v)\gamma^{(2),2}(v')-i\varphi(v)\gamma^{(2),2}(v') & \rho^{(2),2}(v)u_3'+s'\gamma^{(2),2}(v)-s\gamma^{(2),2}(v')\\
0 & 0 & 0 & 0 & 0 & i\varphi(v)s'\\
0 & 0 & 0 & 0 & 0 & 0
 \end{pmatrix}\]
 where \[(1)=i\varphi(v)r'-\overline{\gamma}^{(1)}(v)^t u_1'-\overline{\gamma}^{(2),1}(v)^t u_2'-\overline{\gamma}^{(2),2}(v)^t u_3'+is'b(v)+\]\[\overline{\gamma}^{(1)}(v')^t u_1+\overline{\gamma}^{(2),1}(v')^t u_2+\overline{\gamma}^{(2),2}(v')^t u_3-isb(v').\]
Then the action of $\mathfrak{h}$ on $\mathfrak{n}/\mathcal{C}^2\mathfrak{n}$, written following the decomposition $W_3\oplus \mathbb{C}\xi$ and $\mathbb{C}\tau\oplus W_3\oplus \mathbb{C}\xi$ respectively, is given by 
\[v\rightarrow\begin{pmatrix} \rho^{(2),2}(v)-s\gamma_2 & \gamma^{(2),2}(v)-s\gamma_3 \\ 0 & i\varphi(v)\end{pmatrix}\text{ or }\begin{pmatrix} i\varphi(v) & * & * \\0 & \rho^{(2),2}(v)-s\gamma_2 & \gamma^{(2),2}(v)-s\gamma_3 \\ 0 & 0 & i\varphi(v)\end{pmatrix} \]
where the second possibility occurs when $\mathfrak{n}$ is abelian and $\mathbb{C}\tau\cap \mathfrak{n}\ne\{0\}$. We have denoted the restrictions of $\gamma^{(2),2}$ to $W_3$ and $\mathbb{C}\xi$ by $\gamma_2$ and $\gamma_3$ respectively as in Section \ref{UnSimTr}. As said before if we prove that all the eigenvalues of the exponential of these matrices are of modulus $1$ we are done. If $\varphi\ne 0$ then $\dim(\mathfrak{n}/((\mathbb{C}\tau)^\perp\cap \mathfrak{n}))$ is either $0$ or $2$. If it is $0$ then $\mathcal{C}^3\mathfrak{n}=\{0\}$ and $\gamma_2=0$ restricted to $\mathfrak{n}$. If it is $2$ this implies that $\mathbb{C}\mathrm{Im}\gamma_2\subseteq\mathcal{C}^2\mathfrak{n}$. Hence in each case the action on $N/\mathcal{C}^2N$ is upper triangular with eigenvalues of modulus $1$. If $\varphi=0$, since $\mathfrak{h}$ is a Lie algebra, we must have $\rho^{(2),2}(v)\gamma_2(u'_3)=\gamma_2(\rho^{(2),2}(v)u_3'+s'\gamma^{(2),2}(v))$ looking at this when $s'=0$ we see that it implies that $\rho^{(2),2}$ and $\gamma_2$ commutes so that $\rho^{(2),2}+s\gamma_2$ have the same eigenvalues as $\rho^{(2),2}$ and we conclude as before. 
\end{proof}

We are now ready to prove the main result of this section.

\begin{proof}[Proof of Theorem \ref{cryst}] 
Since $\Gamma\le\mathcal{H}(n,1)$ acts properly discontinuously and cocompactly on $\mathfrak{a}(V)$ then $\Gamma$ is virtually polycyclic, see \cite[Theorem 3.1]{Margulis}. Let $H$ be a subgroup of $\text{Aff}(V)$ acting simply transitively on $\mathfrak{a}(V)$ coming from Theorem \ref{FG}. Then we know that $\Gamma\cap H$ has finite index in $\Gamma$ and is a lattice in $H$. Hence, after replacing $\Gamma$ with $\Gamma\cap H$, we can assume that $\Gamma$ is a lattice in $H$. Notice that actually $H$ is the connected component of the identity of a crystallographic hull of $\Gamma$ and after \cite[Theorem 1.4]{FG} the Zariski closure of $\Gamma$ is the same as the one of its crystallographic hull. Hence $H$ is contained in the Zariski closure of $\Gamma$ that lies inside $\mathcal{H}(n,1)$, it follows that $H\le \mathcal{H}(n,1)$. \\
From Proposition \ref{nonunip1} $H$ can be conjugated to a subgroup whose linear part is in $\mathcal{B}$ and after Proposition \ref{nonunip2} either $L(H)\le \hat{\mathcal{B}}$ or $L(H)$ is conjugated to a subgroup of $\mathcal{D}$.  

If $L(H)\le \widehat{\mathcal{B}}$ Proposition \ref{VirtNil} implies that $\Gamma$ is virtually nilpotent. 

We are left to treat the case $L(H)\le \mathcal{D}$. We denote $\widetilde{H}$ the Zariski closure of $H$. Then the unipotent radical of $\widetilde{H}$ is equal to the group $T$ of pure translations of $\mathcal{H}(n,1)$, see the proof of Proposition \ref{nonunip2}. From Lemma \ref{nilpotentRad} the nilradical of $H$ is equal to $T\cap H$, the subgroup of pure translations that lies in $H$. 
From Lemma \ref{Mostow} the group $\Gamma/\Gamma\cap N$ is isomorphic to $\Gamma N/N$, that is discrete in $H/N$. Being $H/N$ isomorphic to $L(H)$, it is contained in $\mathbb{C}^*\times \mathrm{U}(n-1)$. Let us call $\Delta$ the group $\Gamma/\Gamma\cap N$ for convenience. Since $S^1\times \mathrm{U}(n-1)$ is compact, $\Delta/(\Delta\cap (S^1\times \mathrm{U}(n-1)))$ is discrete in $\mathbb{R}^*$ hence cyclic. Furthermore $\Delta\cap (S^1\times \mathrm{U}(n-1))$ is discrete in $S^1\times \mathrm{U}(n-1)$ with the induced topology hence finite. Then, up to take a finite index subgroup of $\Gamma$, we can assume that $\Delta\cap (S^1\times\mathrm{U}(n-1))$ is trivial and so that $\Delta=\Gamma/\Gamma\cap N$ is cyclic. Finally $\Gamma\cap N$ is discrete in $N$ which is some group of translations so $\Gamma\cap N$ is isomorphic to $\mathbb{Z}^m$ for some $m$ and then for dimension reasons we have $m=2n+1$ or $\Gamma$ is abelian. So in this case $\Gamma$ has a finite index subgroup that is abelian by cyclic. 
\end{proof}

Let us now focus on the abelian by cyclic case.

\begin{defi}
Let $n\ge 1$ and $A\in\mathrm{GL}(2n+1,\mathbb{Z})$ diagonalisable with eigenvalues $\lambda$, $\lambda^{-1}, a_i$ with $\overline{\lambda},\lambda^{-1}\in\mathbb{C}^*$ of multiplicity $1$ and $|a_i|=1$. Define  $\widetilde{\Gamma}(2n+2,A)=\mathbb{Z}\ltimes_A\mathbb{Z}^{2n+1}$.
\end{defi}

Then the following propositions provide a classification in the abelian by cyclic case. 

\begin{prop}
Let $n\ge 1$ and $A,A'\in\mathrm{GL}(2n+1,\mathbb{Z})$ then 
\begin{itemize}
\item $\Gamma(2n+2,A)\cong \Gamma(2n+2,A')$ if and only if $A$ is $\mathrm{GL}(2n+1,\mathbb{Z})$-conjugated to either $A'$ or $A'^{-1}$.
\item $\Gamma(2n+2,A)$ is commensurable with $\Gamma(2n+2,A')$ if and only if $A^r $ is $\mathrm{GL}(2n+1,\mathbb{Q})$-conjugated to $A'^s$ for some $r,s\in\mathbb{Z}\setminus\{0\}$.
\end{itemize}
\end{prop}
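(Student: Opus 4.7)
The plan is to exhibit $\mathbb{Z}^{2n+1}$ as a characteristic subgroup of $\Gamma(2n+2,A)$, which then lets me transport all the data through any abstract isomorphism or commensuration. Note that the multiplicity-one condition on $\overline{\lambda}$ and $\lambda^{-1}$ implicitly gives $|\lambda|\ne 1$ (otherwise $\overline{\lambda}=\lambda^{-1}$ would have multiplicity at least two), so for every nonzero $k\in\mathbb{Z}$ the matrix $A^k$ has the eigenvalue $\lambda^k$ off the unit circle, and hence is non-unipotent on $\mathbb{Q}^{2n+1}$. I claim $\mathbb{Z}^{2n+1}$ is the Fitting subgroup of $\Gamma(2n+2,A)$: it is certainly abelian and normal, and if a nilpotent normal subgroup $F$ strictly contained it, then $F/\mathbb{Z}^{2n+1}$ would be a nonzero subgroup of $\Gamma/\mathbb{Z}^{2n+1}\cong\mathbb{Z}$, forcing $F=\Gamma$; but $\Gamma$ is not nilpotent since $(1,0)$ acts on $\mathbb{Z}^{2n+1}$ by the non-unipotent $A$. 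Hence the Fitting subgroup equals $\mathbb{Z}^{2n+1}$ and is preserved by every automorphism.

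For the isomorphism classification, given $\phi\colon\Gamma(2n+2,A)\to\Gamma(2n+2,A')$, the restriction $P:=\phi|_{\mathbb{Z}^{2n+1}}$ lies in $\mathrm{GL}(2n+1,\mathbb{Z})$, and the induced map on the quotient $\mathbb{Z}\to\mathbb{Z}$ is multiplication by some $\epsilon=\pm 1$. Writing $\phi(1,0)=(\epsilon,v)$ and comparing $\phi\bigl((1,0)(0,w)(1,0)^{-1}\bigr)=(0,PAw)$ with $(\epsilon,v)(0,Pw)(\epsilon,v)^{-1}=(0,{A'}^{\epsilon}Pw)$ yields $PAP^{-1}={A'}^{\epsilon}$, proving one direction. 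The converse is a direct computation: if $PAP^{-1}={A'}^{\epsilon}$, then $(k,w)\mapsto(\epsilon k,Pw)$ is a well-defined isomorphism.

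For commensurability, a finite-index subgroup $\Delta\le\Gamma(2n+2,A)$ satisfies $\Lambda:=\Delta\cap\mathbb{Z}^{2n+1}$ has finite index in $\mathbb{Z}^{2n+1}$, and $\Delta/\Lambda\cong r\mathbb{Z}$ for some $r\ne 0$. Lifting a generator to some $(r,u)\in\Delta$ shows $\Delta\cong\Lambda\rtimes_{A^r|_\Lambda}\mathbb{Z}$ and in particular that $\Lambda$ is $A^r$-invariant; since $A^r$ still has $\lambda^r$ as an eigenvalue of modulus $\ne 1$, the same Fitting argument proves $\Lambda$ is characteristic in $\Delta$. An isomorphism $\Delta\cong\Delta'$ therefore restricts to some $P\colon\Lambda\to\Lambda'$ with $PA^rP^{-1}={A'}^{\pm s}$, and extending $P$ $\mathbb{Q}$-linearly gives the required conjugation in $\mathrm{GL}(2n+1,\mathbb{Q})$, with the sign absorbed by replacing $s$ with $-s$. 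Conversely, given $PA^rP^{-1}={A'}^s$ in $\mathrm{GL}(2n+1,\mathbb{Q})$, I would pick a finite-index $\Lambda\subseteq\mathbb{Z}^{2n+1}\cap P^{-1}\mathbb{Z}^{2n+1}$ and refine to an $A^r$-invariant one by intersecting with its $A^r$-orbit (finite, since $A^r$ permutes finite-index sublattices of a fixed covolume); then $P\Lambda$ is automatically ${A'}^s$-invariant by the conjugation identity, and $(k,w)\mapsto(k,Pw)$ realizes an isomorphism between the finite-index subgroups $\Lambda\rtimes_{A^r}\mathbb{Z}$ of $\Gamma(2n+2,A)$ and $P\Lambda\rtimes_{{A'}^s}\mathbb{Z}$ of $\Gamma(2n+2,A')$.

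The main obstacle is the Fitting-subgroup step, in particular extracting $|\lambda|\ne 1$ from the stated hypotheses and verifying that $\Gamma$ fails to be nilpotent; once the characteristicness of $\mathbb{Z}^{2n+1}$ is established (both in $\Gamma$ and in its finite-index subgroups), the rest reduces to routine semidirect-product bookkeeping together with the standard lattice-refinement trick.
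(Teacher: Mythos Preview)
Your argument is essentially correct and a bit more complete than the paper's in places, but it takes a different route to the key step of showing $\mathbb{Z}^{2n+1}$ is characteristic. The paper argues directly that $\mathbb{Z}^{2n+1}$ is the \emph{unique maximal abelian subgroup of rank $2n+1$}: any element $(r,u)$ with $r\ne 0$ can commute with at most a rank-$(2n-1)$ subgroup of $\mathbb{Z}^{2n+1}$ (the $1$-eigenspace of $A^r$), so no abelian subgroup other than $\mathbb{Z}^{2n+1}$ itself attains rank $2n+1$. Your Fitting-subgroup characterisation is equally valid and perhaps more conceptual; both approaches rely on $A$ having infinite order. Two small points deserve tightening. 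First, in your Fitting argument the line ``forcing $F=\Gamma$'' is not quite right: a nonzero subgroup of $\mathbb{Z}$ is some $r\mathbb{Z}$, so you only get that $F$ contains an element $(r,u)$ with $r\ne 0$; but this suffices, since conjugation by $(r,u)$ on $\mathbb{Z}^{2n+1}\subseteq F$ is the non-unipotent $A^r$, contradicting nilpotency of $F$. Second, your extraction of $|\lambda|\ne 1$ from the multiplicity hypothesis leans on one reading of a somewhat ambiguous definition; the paper sidesteps this by simply treating the finite-order case separately (where the group is virtually abelian and the claim is easy), which is the safer move.

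For the commensurability statement your treatment is actually more explicit than the paper's: you spell out that $\Lambda=\Delta\cap\mathbb{Z}^{2n+1}$ is characteristic in $\Delta$ by re-running the Fitting argument with $A^r$, and for the converse you give the lattice-refinement construction (intersecting a finite $A^r$-orbit of sublattices of fixed index) that the paper leaves implicit. This is a genuine improvement in clarity.
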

\begin{proof}
Assume that $A$ does not have finite order, otherwise up to finite index the group $\Gamma(2n+2,A)$ is abelian and the result follows. For the first claim let us notice that $\mathbb{Z}^{2n+1}$ is the unique maximal abelian subgroup of $\Gamma(2n+2,A)$ of rank $2n+1$. Indeed if $(r,u)\in\Gamma(2n+2,A)$, with $r\not=0$, commutes with $(0,u')$ for all $u'\in\mathbb{Z}^{2n+1}$ then $(A^r-\mathrm{Id})u'=0$ but then $A$ would be unipotent and being diagonalisable it is the identity which is a contradiction. Hence indeed $\mathbb{Z}^{2n+1}$ is a maximal abelian subgroup. Let now $\Gamma$ be another one distinct from $\mathbb{Z}^{2n+1}$. Then $\Gamma$ contains an element of the form $(r,u)$ with $r\ne 0$. The elements that commutes with this element are exactly the one corresponding with the eigenspace associated with the eigenvalue $1$ of $A^r$. Then $(r,u)$ commutes with at most $2n-1$ elements. Hence $\Gamma$ can have rank at most $2n$. This implies that $\mathbb{Z}^{2n+1}$ should be preserved and we can write an isomorphism between $\Gamma(2n+2,A)$ and $\Gamma(2n+2,A')$ as $\varphi(r,u)=(\varphi_1(r),\varphi_2^1(r)+\varphi_2^2(u))$. Then in order for $\varphi$ to be a group homomorphism we need \[\varphi_2^1(r+r')=A'^{\varphi_1(r)}\varphi_2^1(r')+\varphi_2^1(r)\text{ and } \varphi_2^2(A^ru'+u)=A'^{\varphi_1(r)}\varphi_2^2(u')+\varphi_2^2(u).\] Taking then $u=0$ we get $\varphi_2^2(A^ru')=A'^{\varphi_1(r)}\varphi_2^2(u')$. Since $\varphi_2^2\in\mathrm{GL}(2n+1,\mathbb{Z})$ and, being $\varphi_1:\mathbb{Z}\rightarrow\mathbb{Z}$ an isomorphism, $\varphi_1$ sends $1$ to $\pm1$, this means that $A$ is $\mathrm{GL}(2n+1,\mathbb{Z})$-conjugated to $A'$ or $A'^{-1}$. For the second claim just notice that a finite index subgroup $\Gamma_0$ of $\Gamma(2n+2,A)$ is of the form $\mathbb{Z}\ltimes_B \mathbb{Z}^{2n+1}$ for some $B\in\mathrm{GL}(2n+1,\mathbb{Z})$. In order to see it as a finite index subgroup of $\Gamma(2n+2,A)$ we have to give an injective morphism $\varphi:\mathbb{Z}\ltimes_B \mathbb{Z}^{2n+1}\rightarrow \Gamma(2n+2,A)$. Using the same notation as before this means that $\varphi_1(r)=mr$ with $m\in\mathbb{Z}\setminus\{0\}$ and if we denote by $\{v_i\}_i$ a basis for  $\mathbb{Z}^{2n+1}$ we have $\varphi_2^2(v_i)=m_iv_i$ i.e. $\varphi_2\in\mathrm{GL}(2n+1,\mathbb{Q})$. As before being $\varphi$ a morphism we have $\varphi_2^2(B^ru')=A^{mr}\varphi_2^2(u')$. That means that $B$ is $\mathrm{GL}(2n+1,\mathbb{Q})$-conjugated to $A^m$ for some $m\in\mathbb{Z}\setminus\{0\}$. Apply then the first part of the lemma.
\end{proof}

\begin{prop} Let $n\ge 1$ then we have the following.
\begin{itemize}
\item If $\Gamma\le \mathcal{H}(n,1)$ is a crystallographic group and $\Gamma$ is not virtually nilpotent then $\Gamma$ contains a subgroup of finite index $\Gamma_0$ that is isomorphic to $\Gamma(2n+2,A)$ for some $A\in\mathrm{GL}(2n+1, \mathbb{Z})$ diagonalisable with eigenvalues $1's$ and $\lambda$, $\lambda^{-1}$ with multiplicity $2$.
\item Every $\Gamma(2n+2,A)$ as before can be realised as a crystallographic group. 
\end{itemize} 
\end{prop}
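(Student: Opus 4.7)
The proof breaks into the two stated directions.

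\emph{First bullet.} Since $\Gamma$ is not virtually nilpotent, Theorem~\ref{cryst} places us in the abelian-by-cyclic alternative; continuing the analysis at the end of its proof, after replacing $\Gamma$ by a finite index subgroup we may assume that $\Gamma$ is a lattice in a simply transitive solvable group $H \le \mathcal{H}(n,1)$ with $L(H) \le \mathcal{D}$, that the nilradical $N \subset T$ of $H$ has real dimension $2n+1$, that $\Gamma \cap N \cong \mathbb{Z}^{2n+1}$, and that $\Gamma / (\Gamma \cap N)$ is infinite cyclic, generated by some $\gamma_0$ whose linear part is $\rho := L(\gamma_0) = \mathrm{diag}(\overline{\lambda}_0, e^{i\theta_1}, \ldots, e^{i\theta_{n-1}}, \lambda_0^{-1}) \in \mathcal{D}$. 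The conjugation action of $\gamma_0$ on $\Gamma \cap N$ coincides with $\rho|_N$, which in a $\mathbb{Z}$-basis of $\Gamma \cap N$ produces the desired matrix $A \in \mathrm{GL}(2n+1, \mathbb{Z})$; diagonalisability of $A$ comes from diagonalisability of $\rho$.

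To identify the eigenvalues, observe that as a real endomorphism of $\mathbb{C}^{n+1}$, $\rho$ has spectrum $\{\lambda_0, \overline{\lambda}_0, e^{\pm i\theta_j}, \lambda_0^{-1}, \overline{\lambda}_0^{-1}\}$, and since $N$ has real codimension one in $\mathbb{C}^{n+1}$, the spectrum of $A$ is obtained by dropping exactly one element. The eigenvalues of $A$ on the unit circle are roots of an integer polynomial and so, by Kronecker's theorem (applied exactly as in the proof of Proposition~\ref{VirtNil}), are roots of unity; replacing $\gamma_0$ by a suitable power we may assume they all equal $1$, which in particular forces every $e^{i\theta_j} = 1$. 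The condition $|\det A| = 1$ now precludes dropping any of $\lambda_0^{\pm 1}, \overline{\lambda}_0^{\pm 1}$ (each of modulus $\ne 1$), so the dropped eigenvalue is one of the $1$'s. To fit the stated structure ``$\lambda, \lambda^{-1}$ with multiplicity $2$'' we need $\lambda_0 = \overline{\lambda}_0$, hence $\lambda := \lambda_0 \in \mathbb{R}^*$, yielding the spectrum $\{\lambda, \lambda, \lambda^{-1}, \lambda^{-1}, 1, \ldots, 1\}$ with $2n-3$ ones.

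\emph{Second bullet.} Conversely, let $A$ be as in the statement, necessarily with $\lambda \in \mathbb{R}^*$ and $|\lambda| \ne 1$. Pick coordinates on $V = \mathbb{C}^{n+1}$ as in Lemma~\ref{Parabolic} so that $h(r,u,s) = r\overline{s} + \overline{r}s + u^t \overline{u}$, and set $D := \mathrm{diag}(\lambda, 1, \ldots, 1, \lambda^{-1}) \in \mathrm{U}(n,1)$, whose real eigenspaces are $V_\lambda = \mathbb{C} e_1$, $V_1 = $ middle $\mathbb{C}^{n-1}$, $V_{\lambda^{-1}} = \mathbb{C} e_{n+1}$, of real dimensions $2, 2n-2, 2$. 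Fix a decomposition $V_1 = W \oplus \ell$ with $\dim_{\mathbb{R}} W = 2n-3$ and $\dim_{\mathbb{R}} \ell = 1$, set $N := V_\lambda \oplus W \oplus V_{\lambda^{-1}}$, and pick $v_0 \in \ell \setminus \{0\}$. Since $A$ is diagonalisable with the same eigenspace dimensions as $D|_N$, an $\mathbb{R}$-linear isomorphism $\phi : \mathbb{R}^{2n+1} \to N$ conjugating $A$ to $D|_N$ exists; put $L := \phi(\mathbb{Z}^{2n+1})$, a $D$-invariant lattice in $N$, and $\gamma_0 := (D, v_0) \in \mathcal{H}(n,1)$. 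The group $\Gamma := \langle \gamma_0 \rangle \ltimes L$ is isomorphic to $\Gamma(2n+2, A)$ by construction.

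The remaining task, which constitutes the main technical obstacle of this half, is to check that $\Gamma$ acts properly discontinuously, freely and cocompactly on $\mathfrak{a}(V)$. Cocompactness is clear: a compact fundamental domain for $L$ in $N$ translates along the transverse direction $\ell$ under $\gamma_0$ to give a compact fundamental domain in $\mathbb{C}^{n+1}$. For freeness, a fixed point of $\gamma_0^k \circ t_w$ with $k \ne 0$ and $w \in L$ would require $(I - D^k)p = kv_0 + w$; however $\mathrm{Im}(I - D^k) = V_\lambda \oplus V_{\lambda^{-1}}$ is complementary to $V_1$, while $kv_0 \in \ell \subset V_1$ is non-zero and $w \in N$ has zero $\ell$-component, so this equation has no solution. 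Proper discontinuity then follows from the discreteness of $\Gamma$ in $\mathrm{Aff}(\mathbb{R}^{2n+2})$ combined with the cocompact free action. The principal difficulty throughout is the spectral bookkeeping in the first bullet: one must carefully coordinate the interplay between the complex structure on $V$, the codimension-one subspace $N$, and the integrality of $A$, and only then does the converse construction come essentially for free.
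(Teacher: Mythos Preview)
Your second bullet is correct and considerably more detailed than the paper's one-line sketch, which simply writes $\Gamma(2n+2,A)$ as an affine group and asserts that the eigenvalue hypothesis lets one conjugate into $\mathcal{H}(n,1)$; your explicit verification of freeness, proper discontinuity and cocompactness is a genuine addition.

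The first bullet, however, has a real gap in the Kronecker step. You invoke Kronecker's theorem ``applied exactly as in the proof of Proposition~\ref{VirtNil}'' to conclude that the modulus-$1$ eigenvalues of $A$ are roots of unity. But in that proposition the integer matrix under consideration has \emph{all} its eigenvalues on the unit circle, and this hypothesis is essential: Kronecker's theorem says an algebraic integer is a root of unity only when all of its Galois conjugates lie on the unit circle. Here the characteristic polynomial of $A$ also has the roots $\lambda_0,\overline{\lambda}_0,\lambda_0^{-1},\overline{\lambda}_0^{-1}$ off the unit circle, so a unit-modulus eigenvalue of $A$ could perfectly well share an irreducible $\mathbb{Z}[x]$-factor with $\lambda_0$ and then fail to be a root of unity. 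Salem numbers provide explicit integer matrices with unit-modulus eigenvalues that are not roots of unity, so this is not a hypothetical worry. The step ``replacing $\gamma_0$ by a suitable power we may assume they all equal $1$'' therefore does not follow from what you have written.

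The subsequent sentence ``to fit the stated structure we need $\lambda_0=\overline{\lambda}_0$'' is not an argument either: you are observing what the conclusion demands rather than deriving it. Nothing you have established up to that point forces $\lambda_0$ to be real.

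In fairness, the paper's own proof of this bullet is a single line referring back to the proof of Theorem~\ref{cryst} and does not confront these spectral issues at all; so the paper does not resolve the gap you have run into, and the precise eigenvalue description in the statement seems to require more care than either proof supplies.
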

\begin{proof}
The first claim follows from the proof of Theorem \ref{cryst}. Indeed we are in the case where $\Gamma$ is a lattice in $H$ with $L(H)\le\mathcal{D}$.  For the second claim we can just realise the group $\Gamma(2n+2,A)$ as an affine group as follows
\[\left\{\begin{pmatrix} A^{u_1} & v \\ 0 & 1\end{pmatrix}\ |\ v=(r_1,r_2,u_2,\ldots,u_{2n-2},s_1,s_2)\in\mathbb{Z}^2\times \mathbb{Z}^{2n-3}\times \mathbb{Z}^2, u_1\in\mathbb{Z}\right\}.\]
Then since $A$ is diagonalisable with eigenvalues $1$'s and $\lambda,\lambda^{-1}$ of multiplicity $2$ this means that we can conjugate $A$ to a matrix belonging to $\mathrm{U}(n,1)$ and hence conjugate the whole group to a subgroup of $\mathcal{H}(n,1)$.
\end{proof}

\section{Classification up to isomorphism, dimension $2$, $3$ and $4$ the degenerate cases}\label{IsomDeg}

The classification of the groups $U(\gamma_2,\gamma_3,b_2,b_3) $ up to isomorphism translates to the classification of the Lie algebras $\mathfrak{u}(\gamma_2,\gamma_3,b_2,b_3) $ up to isomorphism. For the Lie algebras that appear in dimension $2$ and $3$ we will use the terminology of \cite{Graaf}. All the other Lie algebras that appear in this section are defined in Appendix \ref{LieBracket} where we also elucidate the correspondence with the terminology of \cite{Graaf} for isomorphism classes of nilpotent Lie algebras up to dimension $6$ and of \cite{Gong} for dimension $7$. The notation used in \cite{Graaf} for the list of Lie algebras up to isomorphism is $L_{i,j}$ where $i$ denotes the dimension of the Lie algebra and $j$ is just an index. While in \cite{Gong} each Lie algebra is labelled by its upper central series dimensions plus an additional letter to distinguish non-isomorphic Lie algebras. Let us start with the classification in small dimensions. 

\begin{exam}
For $n+1=2$ we see that, up to isomorphism, we have just one non abelian Lie algebra. In fact with respect to the basis $\{\tau,i\tau,\xi,i\xi\}$ of $\mathbb{C}\times\mathbb{C}$ the Lie brackets are given by $[\xi,i\xi]=-b_3(\xi)\tau-b_3(i\xi)i\tau$. And we can see that if $b_3\ne 0$ these Lie algebras are isomorphic to a Lie algebra that is the direct sum of the $3$-dimensional Heisenberg Lie algebra, $L_{3,2}$, and a one dimensional abelian ideal.
\end{exam}

\begin{exam}\label{dim6}
For $n+1=3$ let us notice that from Lemma \ref{uppper bound k} we have $\gamma_2=0$. Let us recall, from Section \ref{UnSimTr}, the following notations: $W=S=\mathbb{C}$, $b_2:W\rightarrow\mathbb{R}$, $\gamma_3:\mathbb{C}\rightarrow W$ and $b_3:\mathbb{C}\rightarrow \mathbb{R}$. Let $\{\tau,i\tau,g_1,g_2,\xi,i\xi\}$ be a basis of the Lie algebra. Let us denote by $w_0:=\gamma_3(i\xi)-J\gamma_3(\xi)$.
\begin{enumerate}

\item Assume $w_0=0$. 
\begin{enumerate}
\item Assume $b_2= 0$. Then it is clear that the Lie algebra is either abelian if $b_3= 0$ or isomorphic to $L_{3,2}\oplus\mathbb{R}^3$.

\item Assume $b_2\ne 0$. Then using Proposition \ref{conj} we can conjugate the group in order to replace $b_2$ by $b_2'(u)=|\lambda|^2b_2(\sigma^{-1}u)$ with $\sigma\in \mathrm{U}(1)$ and $\lambda\in\mathbb{C}^*$ so that $b_2(u)=\omega(u,g_1)$ for $u\in W$. Furthermore we can modify $b_3$ by $b'_3(s)=b_3(s)+b_2(-sx)$ for some $x\in \mathbb{C}$, hence we may assume $b_3= 0$. Hence the only non zero brackets are $[g_2,\xi]=i\tau, [g_2,i\xi]=-\tau$. Then in this case the Lie algebra is isomorphic to $L_{5,8}\oplus\mathbb{R}$. 
\end{enumerate}

\item Assume $w_0\not= 0$.

\begin{enumerate}
\item Assume that the real rank of $\gamma_3$ is 1 and $b_2= 0$. Then we might change $\gamma_3 $ by $\gamma'_3(s)=\lambda \sigma\gamma_3(\lambda s)$ with $\lambda\in\mathbb{C}^*$ and $\sigma\in \mathrm{U}(1)$ so that $\gamma_3(1)=\varepsilon g_1$ and $\gamma_3(i)=0$, notice that since $\gamma_3\ne 0$ we have $\varepsilon\ne 0$. Then the non zero brackets are $[g_1,\xi]=\varepsilon \tau, [g_2,\xi]=\varepsilon i\tau, [\xi,i\xi]=-b_3(\xi)\tau-b_3(i\xi) i\tau-\varepsilon g_2$. We can then see that in this case the Lie algebra is isomorphic to $L_{6,25}$.

\item Assume that the real rank of $\gamma_3$ is 1 and $b_2\ne 0$. As before we conjugate the group so that $\gamma_3(1)=\varepsilon g_1$ and $\gamma_3(i)=0$ with $\varepsilon\ne 0$. Then since $b_2(w_0)=2\omega(\gamma_3(i),\gamma_3(1))=0$ we have that $b_2(g_2)=0$ and since $b_2\ne0$ then $b_2(g_1)\ne 0$. Hence the Lie brackets are $[g_1,\xi]=\varepsilon\tau+b_2(g_1)i\tau$, $[g_2,\xi]=\varepsilon i\tau$, $[g_1,i\xi]=-b_2(g_1)\tau$, $[\xi,i\xi]=-b_3(\xi)\tau-b_3(i\xi)i\tau-\varepsilon g_2$. We can see they are all isomorphic to $L_{6,19}(0)$.

\item Assume that the real rank of $\gamma_3$ is $2$. Notice that nevertheless $\gamma_3(\xi)$ and $\gamma_3(i\xi)$ are linearly dependent over $\mathbb{C}$ hence $\gamma_3(i\xi)=\overline{\lambda}\gamma_3(\xi)$ with $\lambda=\lambda_1+i\lambda_2\in\mathbb{C}$. Let $v_1=\frac{\gamma_3(\xi)}{\|\gamma_3(\xi)\|^2}\in W$  so that $h(v_1,\gamma_3(\xi))=1$ and $h(v_1,\gamma_3(i\xi))=\lambda$. Let $\alpha=\|\lambda_3(\xi)\|^2$ and $\{x_1,y_1\}$ be a basis for $W$ as a real vector space, we have $w_0=\alpha\lambda_1x_1-\alpha(\lambda_2+1)y_1$, then the Lie brackets are 
\[[x_1,\xi_1]=\tau_1+b_2(x_1)\tau_2, \qquad [y_1,\xi_1]=(1+b_2(y_1))\tau_2,\]
\[[x_1,\xi_2]=(\lambda_1-b_2(x_1))\tau_1+\lambda_2\tau_2, \qquad [y_1,\xi_2]=(-\lambda_2-b_2(y_1))\tau_1+\lambda_1\tau_2,\]
\[[\xi_1,\xi_2]=-b_3(1)\tau_1-b_3(i)\tau_2-w_0.\]
Notice that we have  
\[[w_0,\xi_1]=\alpha\lambda_1\tau_1-\alpha(1+3\lambda_2)\tau_2,\]
\[[w_0,\xi_2]=\alpha(\lambda_1^2+\lambda_2^2+3\lambda_2)\tau_1-\alpha\lambda_1\tau_2.\]
Now if the transformation $\tau_1'=\alpha\lambda_1\tau_1+(2\alpha\lambda_2-\alpha(\lambda_2+1))\tau_2, \tau_2'=(\alpha(\lambda_1^2+\lambda_2^2+\lambda_2)-2\alpha\lambda_2)\tau_1-\alpha\lambda_1\tau_2$ has non zero determinant we can bring the Lie algebra to the following form
\[[x_1,x_2]=x_3,\qquad [x_1,x_3]=x_5,\qquad [x_2,x_3]=x_6,\]
\[[x_1,x_4]=ax_5+bx_6,\qquad [x_2,x_4]=cx_5+dx_6\] that furthermore can be brought to $L_{6,24}(\varepsilon)$. If instead the determinant is $0$ we can bring it to the standard form $L_{6,23}$ or one of $L_{6,25}$ or $L_{6,19}(0)$ that we already encountered.
\end{enumerate}
\end{enumerate}
\end{exam}

Now we want to give representatives of the isomorphism classes of the groups $U(\gamma_2,\gamma_3,b_2,b_3)$ when either $\pi_3(\gamma_3(i\xi)-J\gamma_3(\xi))=0$ or $\mathrm{rank}(\gamma_2)=0$ in dimension $4$. The dimension $4$ is the smallest dimension where $\gamma_2$ can be non trivial. 

\begin{prop}\label{w_0=0}
The list of Lie algebras $L_j$ for $j=1,\ldots,6$ given in Appendix \ref{LieBracket} with $\mathbb{K}=\mathbb{R}$ is a complete non redundant list of isomorphism classes of the Lie algebras $\mathfrak{u}(\gamma_2,\gamma_3,b_2,b_3)$ when $\pi_3(\gamma_3(i\xi)-J\gamma_3(\xi))=0$ and $n+1=4$.
\end{prop}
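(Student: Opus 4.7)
The strategy mirrors the dimension $3$ analysis of Example \ref{dim6}, but now $W=\mathbb{C}^{2}$ is four-real-dimensional so the rank of $\gamma_{2}$ can be $0$ or $1$ (the bound $\mathrm{rank}(\gamma_{2})\le\tfrac{2n-2}{3}=\tfrac{4}{3}$ from Proposition \ref{uppper bound k} forces $\mathrm{rank}(\gamma_{2})\le 1$). I would partition the parameter space according to $\mathrm{rank}(\gamma_{2})$ and, within each stratum, according to the rank and image of $\gamma_{3}$ and to the vanishing of $b_{2}$ and $b_{3}$. The hypothesis $\pi_{3}(\gamma_{3}(i\xi)-J\gamma_{3}(\xi))=0$ together with Proposition \ref{b2} gives $\mathrm{Im}\,\gamma_{3}\subseteq\mathrm{Im}\,\gamma_{2}$ and $b_{2}\equiv 0$ on $\mathrm{Im}\,\gamma_{2}\oplus J\mathrm{Im}\,\gamma_{2}$, which drastically cuts down the number of cases; in particular when $\gamma_{2}=0$ the map $\gamma_{3}$ must be identically zero, so $\mathfrak{u}$ is determined by $(b_{2},b_{3})$ alone.

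In each case I would use Proposition \ref{conj} to normalize. When $\gamma_{2}=0$ the remaining freedom $(\lambda,\sigma,v,s_{1})$ acts by scaling and unitary rotation on $b_{2}\in W^{*}$ and $b_{3}\in\mathbb{C}^{*}$, and by shifts $b_{3}\rightsquigarrow b_{3}+b_{2}(-sv)$, allowing me to reduce to canonical normal forms $b_{2}\in\{0,\omega(\cdot,g_{1})\}$ and $b_{3}\in\{0,\text{one of two standard forms}\}$. This yields a short list of abelian, Heisenberg-type and stacked Heisenberg-type $8$-dimensional Lie algebras that match the first few entries of Appendix \ref{LieBracket}. When $\mathrm{rank}(\gamma_{2})=1$, I would pick a basis adapted to $W=\mathrm{Im}\,\gamma_{2}\oplus J\mathrm{Im}\,\gamma_{2}\oplus S$ (with $\dim_{\mathbb{R}}S=1$), write $\gamma_{3}(\xi)=\alpha g_{1}$, $\gamma_{3}(i\xi)=\beta g_{1}+\delta Jg_{1}$ (using $\mathrm{Im}\,\gamma_{3}\subseteq\mathrm{Im}\,\gamma_{2}$), and again use Proposition \ref{conj} to fix $\alpha$, kill $\beta$, and to simplify $b_{2}$ on $S$ and $b_{3}$ on $\mathbb{C}\xi$. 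After writing out the resulting brackets $[g,\xi]$, $[g,i\xi]$, $[\xi,i\xi]$ with only a few residual parameters, I can match each canonical form against $L_{1},\ldots,L_{6}$.

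Completeness is then automatic from the case analysis. The harder part is \emph{non-redundancy}: I must verify that the six canonical algebras are pairwise non-isomorphic. For this I would compute a small list of Lie-algebraic invariants for each $L_{j}$ --- the dimensions of the lower central series $\mathcal{C}^{k}L_{j}$, the dimension of the center $Z(L_{j})$, the dimension of $[L_{j},L_{j}]\cap Z(L_{j})$, and the isomorphism type of the derivation $\mathrm{ad}(\xi)$ on $\mathfrak{n}/\mathcal{C}^{2}\mathfrak{n}$ --- and check that the resulting tuples are all distinct. This invariant comparison is the main obstacle, since several of the six candidate algebras are $3$-step nilpotent of the same total dimension and can only be separated by finer invariants; once a complete invariant is found, the matching with the Appendix \ref{LieBracket} list closes the proof.
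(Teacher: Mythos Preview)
Your approach is essentially the same as the paper's: split on $\mathrm{rank}(\gamma_2)\in\{0,1\}$, invoke Proposition \ref{b2} to force $\mathrm{Im}\,\gamma_3\subseteq\mathrm{Im}\,\gamma_2$ and $b_2|_{\mathrm{Im}\,\gamma_2\oplus J\mathrm{Im}\,\gamma_2}=0$, normalize via Proposition \ref{conj}, and read off the brackets. Two small slips to fix: when $\mathrm{rank}(\gamma_2)=1$ the decomposition of $W$ is $\mathrm{Im}\,\gamma_2\oplus J\mathrm{Im}\,\gamma_2\oplus S\oplus T$ with $\dim_{\mathbb{R}}S=\dim_{\mathbb{R}}T=1$ (you dropped the $T$ summand, which is where $\gamma_2$ is nonzero), and since $\mathrm{Im}\,\gamma_3\subseteq\mathrm{Im}\,\gamma_2=\mathbb{R}g_1$ you should have $\gamma_3(i\xi)=\beta g_1$ with no $Jg_1$ component. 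The paper does not carry out your invariant computation for non-redundancy; it instead identifies each normal form with a named algebra from the de Graaf and Gong classifications referenced in Appendix \ref{LieBracket}, where pairwise non-isomorphism is already established.
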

\begin{proof}
Let us call $w_0:=\gamma_3(i\xi)-J\gamma_3(\xi)$. If $\pi_3(w_0)=0$ we saw in Proposition \ref{b2} that $\Ima\gamma_3\subseteq \Ima\gamma_2$ and $b_2= 0$ on $\Ima\gamma_2\oplus J\Ima\gamma_2$. If $\gamma_2= 0$ the result follows easily and we find the Lie algebras $L_i$ with $i=1,2,3$ that are $2$-step nilpotent. Hence let us suppose $\mathrm{rank}(\gamma_2)=1$ and let $\{\tau,i\tau,e,Je,g,f,\xi,i\xi\}$ be a basis for $\mathbb{C}\times W\times \mathbb{C}$ that respects the decomposition (\ref{decomposition}). Then $\gamma_2(f)=\delta e$, $\gamma_3(\xi)=\nu e$ and $\gamma_3(i\xi)=\mu e$ with $\delta,\nu,\mu\in\mathbb{R}$. If $f'=\delta^{-1}f$ the Lie brackets become 
\[[e,f']=\tau,\qquad [e,\xi]=\nu\tau,\qquad[e,i\xi]=\mu\tau,\]
\[[Je,f']=i\tau,\qquad [Je,\xi]=\nu i\tau,\quad [Je,i\xi]=\mu i\tau,\]
\[[g,\xi]=b_2(g)i\tau, \qquad[g,i\xi]=-b_2(g)\tau,\]
\[[f',\xi]=b_2(f')i\tau+e,\qquad [f',i\xi]=-b_2(f')\tau+Je,\]
\[[\xi,i\xi]=-b_3(\xi)\tau-b_3(i\xi)i\tau-w_0.\]
After replacing $e$ by $b_2(f')\tau_2+e$ and $Je$ by $-b_2(f')\tau_1+Je$ we can assume that $[f',\xi]=e$ and $[f',i\xi]=Je$.
Furthermore, letting $\xi_1= \xi-\nu f'$ and $\xi_2=i\xi -\mu f'$, we have $[e,\xi_1]=[e,\xi_2]=[Je,\xi_1]=[Je,\xi_2]=0$  and $[\xi_1,\xi_2]=b\tau+ci\tau$ for some $b,c\in\mathbb{R}$. Now, if $b_2(g)=0$ and $b_3= 0$, then we can find an isomorphism between the Lie algebra $\mathfrak{u}(\gamma_2,\gamma_3,b_2,b_3)$ and $L_4$. If instead $b_2(g)=0$ but $b\ne 0$ then defining $x_1=f',x_2=-(b\xi_1+c\xi_2),x_3= -b^{-1}\xi_2,x_4=-( be+cJe),x_5= -b^{-1}Je, x_6=b\tau+ci\tau,x_7= b^{-1}i\tau$ we see that the Lie algebra $\mathfrak{u}(\gamma_2,\gamma_3,b_2,b_3)$ is isomorphic to $L_5$.
Finally, assuming $b_2(g)\ne 0$, we can define $x_1=f',x_2=\xi+\frac{b}{b_2(g)}g, x_3=i\xi+\frac{c}{b_2(g)}g,x_4=e,x_5=Je,x_6=\frac{1}{b_2(g)}g,x_7=-\tau$ and $x_8=-i\tau$ in order to see that $\mathfrak{u}(\gamma_2,\gamma_3,b_2,b_3)$ is isomorphic to $L_6(1)$. 
\end{proof}

Next we give the classification in dimension $4$ for the case $\pi_3(\gamma_3(i\xi)-J\gamma_3(\xi))\not =0$ and $\gamma_2=0$.

\begin{prop}\label{gamma_2=0}
The list of Lie algebras $N_j$ for $j=1,\ldots,13$ given in Appendix \ref{LieBracket} with $\mathbb{K}=\mathbb{R}$ is a complete non redundant list of isomorphism classes of the Lie algebras $\mathfrak{u}(\gamma_2,\gamma_3,b_2,b_3)$ when $\pi(\gamma_3(i\xi)-J\gamma_3(\xi))\ne 0$, $\gamma_2= 0$ and $n+1=4$.
\end{prop}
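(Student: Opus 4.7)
The plan is to mirror the approach of Proposition~\ref{w_0=0}: fix an adapted real basis of $\mathfrak{u}(\gamma_2,\gamma_3,b_2,b_3)$ compatible with the splitting $\mathbb{C}\oplus W\oplus\mathbb{C}$, use the conjugation action of Proposition~\ref{conj} to bring the data $(\gamma_3,b_2,b_3)$ into a small number of canonical forms, and organise the case analysis by a handful of discrete invariants. Specialising Proposition~\ref{conj} to $\gamma_2=0$ reduces the gauge freedom to
\[
\gamma_3'(s)=\lambda\sigma\gamma_3(\lambda s),\qquad b_2'(u)=|\lambda|^2 b_2(\sigma^{-1}u),
\]
together with a shift of $b_3$ by a functional built from $b_2$, $\gamma_3$, the parameter $v\in W$ and the scalar $s_1\in\mathbb{C}$. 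Thus $(\lambda,\sigma)\in\mathbb{C}^*\times\mathrm{U}(2)$ acts simply on $\gamma_3$, independently rotates $b_2$, while the pair $(v,s_1)$ only moves $b_3$ inside a subspace determined by $\gamma_3$ and $b_2$.

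I would then split by the $\mathbb{R}$-rank of $\gamma_3:\mathbb{C}\to W$. The hypothesis $\pi_3(\gamma_3(i\xi)-J\gamma_3(\xi))\neq 0$ combined with $\gamma_2=0$ gives $w_0=\gamma_3(i\xi)-J\gamma_3(\xi)\neq 0$, so $\gamma_3$ is non-zero and not $\mathbb{C}$-linear. In the rank~$1$ case, $\mathrm{U}(2)$ sends $\Ima\gamma_3$ to $\mathbb{R} e_1$ and the action of $\lambda$ allows us to assume $\gamma_3(\xi)=e_1$, $\gamma_3(i\xi)=0$. In the rank~$2$ case I would further distinguish whether $\gamma_3(\xi)$ and $\gamma_3(i\xi)$ are $\mathbb{C}$-proportional (so $\gamma_3(i\xi)=\overline{\mu}\gamma_3(\xi)$ for a unique $\mu\in\mathbb{C}\setminus\mathbb{R}$ that $\lambda$ reduces to a normalised shape) or $\mathbb{C}$-linearly independent (in which case $\mathrm{U}(2)\times\mathbb{C}^*$ brings the pair to a standard form such as $\gamma_3(\xi)=e_1$, $\gamma_3(i\xi)=\alpha e_1+e_2$ with $\alpha\ge 0$).

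Having fixed $\gamma_3$, the prescribed value $b_2(w_0)=2\omega(\gamma_3(i\xi),\gamma_3(\xi))$ is read off automatically, and the remaining stabiliser of $\gamma_3$ inside $\mathrm{U}(2)\times\mathbb{C}^*$ together with the $\sigma$-action normalises $b_2$ on an $\mathbb{R}$-complement of $\mathbb{R} w_0$. The $(v,s_1)$-freedom is then used to kill as many coefficients of $b_3$ as possible; the components of $b_3$ that cannot be eliminated are precisely those transverse to $\Ima\gamma_3$, and they fall into finitely many discrete classes. Summing across all sub-cases should yield exactly the list $N_1,\ldots,N_{13}$. Non-redundancy I would prove by attaching to each $N_j$ a vector of Lie-algebra invariants---the nilpotency step (now any of $1,2,3$ since $\gamma_2=0$), the dimensions of the terms $\mathcal{C}^i\mathfrak{n}$, the dimension of the centre, and the rank of the bracket map $\mathfrak{n}/\mathcal{C}^2\mathfrak{n}\to\mathrm{Hom}(\mathfrak{n}/\mathcal{C}^2\mathfrak{n},\mathcal{C}^2\mathfrak{n})$---and checking that these separate all thirteen classes; on the few ambiguous pairs an ad hoc scalar computed from the induced Hermite--Lorentz pairing on the graded pieces closes the argument.

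The main obstacle will be the rank~$2$, $\mathbb{C}$-independent sub-case: the $\mathrm{U}(2)\times\mathbb{C}^*$-orbit structure on the pair $(\gamma_3(\xi),\gamma_3(i\xi))$ is the richest, and the constraint $b_2(w_0)=2\omega(\gamma_3(i\xi),\gamma_3(\xi))$ couples the normalisations of $b_2$ and $\gamma_3$, so the care required to ensure that no continuous modulus survives undiscretised---and that each combinatorial configuration of $(b_2,b_3)$-supports really corresponds to exactly one $N_j$---is where most of the effort will lie.
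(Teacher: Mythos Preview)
Your strategy---normalise $(\gamma_3,b_2,b_3)$ by conjugation, then split by the rank of $\gamma_3$---is natural, but it cannot by itself produce a finite list. Specialising Proposition~\ref{conj} to $\gamma_2=0$, the group acting on $\gamma_3$ is $\mathbb{C}^*\times\mathrm{U}(2)$, of real dimension~$6$, while the datum $(\gamma_3(\xi),\gamma_3(i\xi))\in W\times W$ lives in real dimension~$8$; generically the conjugation orbits in $\gamma_3$ alone form a $2$-parameter family, and $b_2$ adds further moduli. You see this yourself in the rank-$2$ $\mathbb{C}$-independent case, where the parameter $\alpha$ survives. Conjugation classes are strictly finer than isomorphism classes here, so to collapse the continuous moduli you must at some point invoke \emph{arbitrary} Lie algebra isomorphisms, not just those coming from $\mathcal{H}(n,1)$. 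Your write-up does not explain how or where this happens; the invariant check you describe at the end tests non-redundancy, not the collapse of a continuous family.

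The paper avoids this by working intrinsically from the start. It never normalises $\gamma_3$ via conjugation; instead it reads off four linear forms $\alpha_1,\dots,\alpha_4$ on $W$ from $\mathrm{ad}(\xi)|_W$ and $\mathrm{ad}(i\xi)|_W$, and uses the isomorphism invariant $\dim(\mathcal{Z}\cap W)=4-\dim\spn\{v_1,\dots,v_4\}$ (where $v_i$ is dual to $\alpha_i$) as the primary case split. When the span is $4$-dimensional one passes to the dual basis and the residual freedom is a genuine $\mathrm{GL}(2,\mathbb{R})$ acting on a $2\times 2$ matrix of structure constants, giving $N_{10}$ and $N_{11}$. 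When the span has dimension $3$ or $2$, the algebra has a central abelian direct summand and the complementary ideal is a $7$- or $6$-dimensional nilpotent Lie algebra with prescribed centre dimension; the paper then reads the answer directly off Gong's and de Graaf's classifications, which is how the remaining $N_j$ (and the exclusion of $L_{5,5}\oplus\mathbb{R}^3$) are obtained. This route exploits the full $\mathrm{GL}$ freedom implicitly through those existing classifications, and that is what makes the moduli disappear.
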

\begin{proof}
Let $\{\tau,i\tau,g_1,g_2,g_3,g_4,\xi,i\xi\}$ be a basis for the real vector space $\mathbb{C}\times W\times \mathbb{C}$. Let us call $w_0:=\gamma_3(i\xi)-J\gamma_3(\xi)$ then the Lie brackets of $\mathfrak{u}(0,\gamma_3,b_2,b_3)$ read as \[[g_j,\xi]=\langle g_j,\gamma_3(\xi)\rangle\tau+\left(\omega(g_j,\gamma_3(\xi))+b_2(g_j)\right)i\tau,\]\[[g_j,i\xi]=(\langle g_j,\gamma_3(i\xi)\rangle-b_2(g_j))\tau+\omega(g_j,\gamma_3(i\xi))i\tau,\]\[ [\xi,i\xi]=-b_3(\xi)\tau-b_3(i\xi)i\tau-w_0.\]
With 
\[[w_0,\xi]=\langle\gamma_3(i\xi),\gamma_3(\xi)\rangle \tau+\left(3\omega(\gamma_3(i\xi),\gamma_3(\xi))-\|\gamma_3(\xi)\|^2\right)i\tau\]
\[[w_0,i\xi]=\left(\|\gamma_3(i\xi)\|^2-3\omega(\gamma_3(i\xi),\gamma_3(\xi)\right)\tau-\langle\gamma_3(i\xi),\gamma_3(\xi)\rangle i\tau.\]
First of all by redefining $w_0$ we might assume that $[\xi,i\xi]=w_0$.
Let $b_2(u)=\langle u,v_0\rangle $, for some vector $v_0\in W$. Then we can notice that the restrictions of $\mathrm{ad}(\xi)$ and $\mathrm{ad}(i\xi)$ to $W$ define two linear maps $W\rightarrow \mathbb{C}$ and that taking their real and imaginary part we get four linear forms over the reals. Call these linear forms $\alpha_i$, explicitly $\alpha_1=\langle\cdot, \gamma_3(\xi)\rangle,\alpha_2=\langle\cdot, J\gamma_3(\xi)+v_0\rangle,\alpha_3=\langle\cdot, \gamma_3(i\xi)-v_0\rangle,\alpha_4=\langle\cdot, J\gamma_3(i\xi)\rangle$. For $i=1,\ldots,4$ let us denote by $v_i$ the vector associated with the linear form $\alpha_i$. Notice that, if $\mathcal{Z}$ is the center of $\mathfrak{u}(0,\gamma_3,b_2,b_3)$, the center of $\mathfrak{u}(0,\gamma_3,b_2,b_3)/\mathbb{C}\tau$ is $W$, hence $\dim(W\cap \mathcal{Z})$ is an invariant and we have 
\[\dim(\mathcal{Z}\cap W)=\dim\left(\bigcap_{i=1}^4 \ker\alpha_i\right)=\dim\left(\spn\{v_1,\ldots,v_4\}^\perp\right).\]

\begin{claim}\label{claim2.3.5}
If $w_0\in\mathcal{Z}\cap W$ then $\dim\left(\spn\{v_1,\ldots,v_4\}\right)\ge 3$. If instead  $w_0\notin\mathcal{Z}\cap W$ then we have $\dim\left(\spn\{v_1,\ldots,v_4\}\right)\ge 2$. Hence in any case $0\le \dim(\mathcal{Z}\cap W) \le 2$.
\end{claim}
\begin{proof}
Notice first that since $w_0\ne 0$, $\gamma_3(\xi)$ and $\gamma_3(i\xi)$ cannot be both zero. Furthermore if one of them is zero, say $\gamma_3(\xi)$, then $\spn\{v_1,\ldots,v_4\}=\spn\{v_0,\gamma_3(i\xi),J\gamma_3(i\xi)\}$ has dimension at least two hence the claim follows. Assume now that both $\gamma_3(\xi)$ and $\gamma_3(i\xi)$ are non zero. 
Assume by contradiction that $w_0\notin\mathcal{Z}\cap W$ and $\dim\left(\spn\{v_1,\ldots,v_4\}\right)=1$. Since $\gamma_3(\xi)\ne0$ we can take it as generator of $\spn\{v_1,\ldots,v_4\}$. But $w_0=\gamma_3(i\xi)-J\gamma_3(\xi)=-J(v_4+v_1)=-\lambda J\gamma_3(\xi)$ with $\lambda\in\mathbb{R}$. Hence $w_0$ is orthogonal to all of $\spn\{v_1,\ldots,v_4\}$ and hence $w_0\in\mathcal{Z}\cap W$ and we get a contradiction. Assume now that $w_0\in\mathcal{Z}\cap W$ then we claim that $\gamma_3(\xi)$ and $\gamma_3(i\xi)$ are linearly independent over $\mathbb{C}$. Indeed suppose that $\gamma_3(i\xi)=\lambda\gamma_3(\xi)$ with $\lambda=\lambda_1+i\lambda_2\in\mathbb{C}$. Since $w_0$ is in the center, we must have $\langle \gamma_3(i\xi),\gamma_3(\xi)\rangle =0$ hence $\lambda_1=0$, $3\omega(\gamma_3(i\xi),\gamma_3(\xi))-\|\gamma_3(\xi)\|^2=0$ hence $3\lambda_2-1=0$, finally  $\|\gamma_3(i\xi)\|^2-3\omega(\gamma_3(i\xi),\gamma_3(\xi))=0$ hence $\lambda_2^2-3\lambda_2=0$ and we get a contradiction. Then $v_1=\gamma_3(\xi)$ and $v_4=J\gamma_3(i\xi)$ are linearly independent over $\mathbb{R}$. Furthermore $v_2+v_3=J\gamma_3(\xi)+\gamma_3(i\xi)$ is also independent form $\{v_1,v_4\}$. Indeed if $v_2+v_3=J\gamma_(\xi)+\gamma_3(i\xi)=\lambda\gamma_3(\xi)+\mu\gamma_3(i\xi)$ then we would have $\gamma_3(\xi)$ and $\gamma_3(i\xi)$ linearly depend over $\mathbb{C}$. Hence $\dim\left(\spn\{v_1,\ldots,v_4\}\right)\ge 3$.
\end{proof}

Let us first assume $\dim\left(\spn\{v_1,\ldots,v_4\}\right)=4$, let $(z_1,\ldots,z_4)$ be the dual basis of $(\alpha_1,\ldots,\alpha_4)$ then
\[[z_1,\xi]=\tau,\qquad[z_2,\xi]=i\tau,\qquad[z_3,i\xi]=\tau,\qquad[z_4,i\xi]=i\tau,\qquad[\xi,i\xi]=w_0.\]
If $w_0=\sum a_iz_i$ then $[[\xi,i\xi],\xi]=a_1\tau+a_2i\tau, [[\xi,i\xi],i\xi]=a_3\tau+a_4i\tau$. Let us write the structure constants in a matrix $A=\begin{pmatrix} a_1 & a_2 \\ a_3 & a_4\end{pmatrix}$ and let us represent a change of basis in $\spn\{\xi,i\xi\}$ with $P\in\mathrm{GL}(2,\mathbb{R})$. Then the matrix that represents the structure constants in the new basis is just $\det(P)PA$. Hence, depending on the rank of the matrix $A$, using the left action of $\mathrm{GL}(2,\mathbb{R})$ just defined, we can bring $A$ to one of the three normal forms $\begin{pmatrix} 1 & a \\ 0 & 0 \end{pmatrix}$, $\begin{pmatrix} 0 & 1 \\ 0 & 0 \end{pmatrix}$ and $\begin{pmatrix} 1 & 0 \\0 & 1 \end{pmatrix}$. The first matrix, if we call $z_1'=z_1+az_2$, leads to the following Lie brackets
\[[\xi,i\xi]=z_1',\qquad [z_1',\xi]=\tau+ai\tau,\]
\[[z_2,\xi]=i\tau,\qquad [z_3,i\xi]=\tau, \qquad [z_4,i\xi]=i\tau\] and one can easily see that this Lie algebra is isomorphic to $N_{10}$. The second normal form brings as well to the class $N_{10}$. For the third, after calling $z_1'=z_1+z_4$, we have the following Lie brackets 
\[[\xi,i\xi]=z_1',\qquad [z_1',\xi]=\tau,\qquad[z_1',i\xi]=i\tau,\]
\[[z_2,\xi]=i\tau,\qquad[z_3,i\xi]=\tau,\qquad [z_4,i\xi]=i\tau\]
and this Lie algebra is isomorphic to the Lie algebra $N_{11}$.

Let us now assume that  $\dim\left(\spn\{v_1,\ldots,v_4\}\right)=3$. If $w_0\in \mathcal{Z}\cap W$ suppose then that $\{v_1,v_2,v_3\}$ are linearly independent, indeed the other combinations can be reduced to this case by a change of variables. Complete $\{\alpha_1,\alpha_2,\alpha_3\}$ to $\{\alpha_1,\alpha_2,\alpha_3,\beta\}$ in order to have a basis for $W^*$. Let us take its dual basis, call it $(z_1,\ldots,z_4)$, as basis for $W$. The Lie brackets expressed in this basis become
\[[z_1,\xi]=\tau,\qquad[z_1,i\xi]=\alpha_4(z_1)i\tau,\qquad[z_2,\xi]=i\tau,\qquad[z_2,i\xi]=\alpha_4(z_2)i\tau,\]
\[[z_3,i\xi]=\tau +\alpha_4(z_3)i\tau, \qquad [\xi,i\xi]=w_0=z_4.\]
After a change of basis we can arrive to the following form 
\[[z_1,\xi_1]=\tau_1,\qquad[z_2,\xi_1]=\tau_2,\qquad[z_1,\xi_2]=\mu\tau_2,\qquad[z_3,\xi_2]=\tau_1,\qquad[\xi_1,\xi_2]=z_4\] with $\mu=\alpha_4(z_1)+\alpha_4(z_2)\alpha_4(z_3)$ and, depending on whether $\mu$ is $0$ or not, we find the normal forms $N_{12}$ or $N_{13}$. If instead $w_0\notin \mathcal{Z}\cap W$ there exists $x\in\mathcal{Z}\cap W$ such that $\mathfrak{u}(0,\gamma_3,b_2,b_3)\cong \mathbb{R}x\oplus \mathfrak{g}'$ where $\mathfrak{g}'$ is a $7$-dimensional $3$-nilpotent Lie algebra with a $2$-dimensional center. Notice that $\mathfrak{g}'$ is not decomposable otherwise $\dim(\mathcal{Z}\cap W)>1$. Hence $\mathfrak{g}'$ can be one of $(257\delta)$ with $\delta\in\{A,\ldots,L\}$, or some real form of them, in Gong's list. Some of them can be excluded since we know that $W\oplus\mathbb{C}\tau$ is a $6$-dimensional abelian subalgebra, so we get that the only possibilities are $(257\delta)$ with $\delta\in\{A,B,C,D,I,J,J_1\}$. Hence in this case $\mathfrak{u}(0,\gamma_3,b_2,b_3)$ is isomorphic to one of the $N_j$ with $j=5,\ldots,8$ or $N_{9}(\varepsilon)$ with $\varepsilon\in\{0,1,-1\}$. One can notice that the Lie algebras $N_5,\ldots,N_{13}$ can all be realised.

Let us now assume that  $\dim\left(\spn\{v_1,\ldots,v_4\}\right)=2.$ It follows from Claim \ref{claim2.3.5} below that then  $w_0\notin\mathcal{Z}\cap W$. Then there exist $x,y\in\mathcal{Z}\cap W$ such that $\mathfrak{u}(0,\gamma_3,b_2,b_3)\cong \spn\{x,y\}\oplus\mathfrak{g}'$ where $\mathfrak{g}'$ is a $6$-dimensional $3$-nilpotent Lie algebra with a center of dimension $2$, hence one of $L_{6,23},L_{6,24}(\varepsilon),L_{6,25},L_{6,27}$ or $L_{5,5}\oplus\mathbb{R}$ in de Graaf's list. One can see that all of these Lie algebras, except $L_{5,5}\oplus\mathbb{R}$, appeared already in dimension $6$ hence they can be realised. Instead let us see that $L_{5,5}\oplus\mathbb{R}^3$ does not appear as Hermite-Lorentz Lie algebra. Indeed this Lie algebra is characterised by the fact that $\dim\left(\spn\{v_1,\ldots,v_4\}\right)=2$ and that the dimension of $\mathrm{ad}(w_0)$ is $1$. Notice that if $v_1-v_4=0$ then $\gamma_3(\xi)=J\gamma_3(i\xi)$ but then the dimension of $\mathrm{ad}(w_0)$ being $1$ would imply $\gamma_3(i\xi)=0$ and hence $w_0=0$. Hence $v_1-v_4\ne 0$ and since $v_2+v_3=J(v_1-v_4)$ this implies that $\dim\spn\{v_1-v_4,v_2+v_3\}=2$ and we can take $\{v_1-v_4,v_2+v_3\}$ as generators of $\{v_1,\ldots,v_4\}$. Then $v_1$ depends linearly on them, i.e.~$\gamma_3(\xi)=\lambda(\gamma_3(\xi)-J\gamma_3(i\xi))+\mu(J\gamma_3(\xi)+\gamma_3(i\xi))$ for some $\lambda,\mu\in\mathbb{R}$. Hence $\gamma_3(\xi)$ and $\gamma_3(i\xi)$ are linearly dependent over $\mathbb{C}$. Hence $\spn\{v_2+v_3,v_1-v_4\}=\spn\{\gamma_3(\xi),J\gamma_3(\xi)\}$. Since $v_2=J\gamma_3(\xi)+v_0\in\spn\{v_1-v_4,v_2+v_3\}=\spn\{\gamma_3(\xi),J\gamma_3(\xi)\}$ this implied that also $v_0\in\spn\{\gamma_3(\xi),J\gamma_3(\xi)\}$. Then everything is contained in the plane $\spn\{\gamma_3(\xi),J\gamma_3(\xi)\}$ and hence the Lie algebra $L_{5,5}\oplus\mathbb{R}$ should have appeared in dimension $6$. 
Hence $\mathfrak{u}(0,\gamma_3,b_2,b_3)$ is isomorphic to one of $N_j$ with $j=1,\ldots,4$. 
\end{proof}

\section{Classification up to isomorphism, dimension $4$ the general case}\label{IsomGen}

In this section we will treat the most general case $\pi_3(\gamma_3(i\xi)-J\gamma_3(\xi))\not =0$ and $\gamma_2\ne 0$ for dimension $4$. We will first see how we can bring these Lie algebras to a simpler form. Then we will introduce the concept of Carnot Lie algebras and see how the classification up to isomorphism is translated to the classification of the orbits of the $\G$-adjoint action on $\Gr$. Finally we will study separately a particular case, namely the case $\alpha=0$ before studying the most general case.

\subsection{Reduction}

Let us introduce the following family of Lie algebras to which the Lie algebras $\mathfrak{u}(\gamma_2,\gamma_3,b_2,b_3)$ are isomorphic in the case $\pi_3(\gamma_3(i\xi)-J\gamma_3(\xi))\not=0$ and $\mathrm{rank}(\gamma_2)=1$ in dimension $4$.

\begin{defi}
Let $\alpha,a,b,c\in\mathbb{R}$ and define the family of Lie algebras $\{\mathfrak{g}_\mathbb{R}(\alpha,a,b,c)\}_{\alpha,a,b,c}$ by the following non zero Lie brackets, expressed in the basis $\{x_1,\ldots,x_8\}$ of $\mathbb{R}^8$: 
\[[x_1,x_2]=x_4,\qquad [x_1,x_3]=x_5,\qquad [x_2,x_3]=x_6,\]
\[[x_1,x_4]=x_7,\qquad [x_1,x_5]=x_8,\]
\[[x_2,x_4]=\alpha x_7,\qquad [x_2,x_5]=-\alpha x_8,\]
\[ [x_3,x_4]=-\alpha x_8,\qquad [x_3,x_5]=3\alpha x_7,\]
\[[x_2,x_6]=ax_7+bx_8,\qquad [x_3,x_6]=cx_7-ax_8.\]
\end{defi}

\begin{prop}
If $\pi_3(\gamma_3(i\xi)-J\gamma_3(\xi))\not =0 $, $n+1=4$ and  $\mathrm{rank}(\gamma_2)=1$ then the Lie algebras $\mathfrak{u}(\gamma_2,\gamma_3,b_2,b_3)$ are isomorphic to $\mathfrak{g}_\mathbb{R}(\alpha,a,b,c)$ for some $\alpha,a,b,c$.
\end{prop}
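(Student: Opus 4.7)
The plan is to exhibit an explicit change of basis that carries $\mathfrak{u}(\gamma_2,\gamma_3,b_2,b_3)$, after a sequence of normalizations, onto the generators $x_1,\ldots,x_8$ of $\mathfrak{g}_\mathbb{R}(\alpha,a,b,c)$. First I would exploit the orthogonal decomposition (\ref{decomposition}). Since $\mathrm{rank}(\gamma_2)=1$ and by Proposition \ref{uppper bound k} this bound is attained only when $\dim S=\dim T=1$, I would pick real basis vectors $e\in\mathrm{Im}\,\gamma_2$, $Je\in J\mathrm{Im}\,\gamma_2$, $g\in S$, $f\in T$, rescaling $f$ so that $\gamma_2(f)=e$, and use $\tau,i\tau$ for the center $\mathbb{C}\tau$ and $\xi,i\xi$ for $\mathbb{C}\xi$. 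The target identification will be $x_1=f$, $x_2=\xi$, $x_3=i\xi$, $x_4=e$, $x_5=Je$, $x_6$ a rescaling of $g$, and $x_7=\tau,\ x_8=i\tau$.

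Next I would perform the available normalizations in three passes. Using Remark \ref{semplification}, since $\pi_3(\gamma_3(i\xi)-J\gamma_3(\xi))\ne 0$ and $\gamma_2\ne 0$, one may conjugate so that $\pi_2(\gamma_3(\xi))=0$; condition (\ref{cond b_2}) of Proposition \ref{UnipSimplTrans} then forces $b_2$ to vanish on $\mathrm{Im}\,\gamma_2$. Mimicking the trick of Proposition \ref{w_0=0}, I would translate $e\rightsquigarrow e+b_2(f)\,i\tau$ and $Je\rightsquigarrow Je-b_2(f)\tau$ to eliminate the $\mathbb{C}\tau$-part of $[f,\xi],[f,i\xi]$, achieving the clean brackets $[x_1,x_2]=x_4$ and $[x_1,x_3]=x_5$. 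Then I would replace $\xi\rightsquigarrow\xi-\nu f,\ i\xi\rightsquigarrow i\xi-\mu f$ to kill the $\mathrm{Im}\,\gamma_2$-component of $\gamma_3$, and absorb $b_3$ by a further $\tau,i\tau$-translation of $\xi,i\xi$; rescaling $g$ so that $\pi_3(w_0)=g$ then yields $[x_2,x_3]=x_6$.

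With this basis fixed, I would compute the remaining eight brackets $[e,\xi],[e,i\xi],[Je,\xi],[Je,i\xi],[g,\xi],[g,i\xi]$, expanding each using the bracket formula at the end of Section \ref{UnSimTr}. The brackets involving $e$ and $Je$ are controlled by $\langle e,\gamma_3(\pm\xi)\rangle$, $\omega(e,\gamma_3(\pm\xi))$, etc., and the integrability constraint $\gamma_3(is)-J\gamma_3(s)\in\ker\gamma_2$ together with $\omega(\mathrm{Im}\,\gamma_2,\mathrm{Im}\,\gamma_2)=0$ rigidly couples these four brackets through a single real parameter $\alpha$, yielding the pattern $\alpha,-\alpha,-\alpha,3\alpha$; the factor $3$ is exactly the coefficient appearing in the formula $[w_0,\xi]=\langle\gamma_3(i\xi),\gamma_3(\xi)\rangle\tau+\bigl(3\omega(\gamma_3(i\xi),\gamma_3(\xi))-\|\gamma_3(\xi)\|^2\bigr)i\tau$ used in Proposition \ref{gamma_2=0}. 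The remaining three real scalars $a,b,c$ parameterize $[g,\xi]$ and $[g,i\xi]$, which depend on $b_2(g)$ and on the residual values $\omega(g,\gamma_3(\pm\xi))$ that survive all previous normalizations.

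The main obstacle I expect is the verification of the rigid four-way coupling forcing a single $\alpha$: one has to keep careful track of which pieces of $\gamma_3(\xi)$ and $\gamma_3(i\xi)$ lie in which summand of (\ref{decomposition}), and use both the integrability condition (\ref{cond w_0}) and Proposition \ref{b2}'s constraints on $b_2|_{\mathrm{Im}\,\gamma_2\oplus J\mathrm{Im}\,\gamma_2}$ simultaneously. Once that bookkeeping is carried out, matching the bracket table against the definition of $\mathfrak{g}_\mathbb{R}(\alpha,a,b,c)$ is a straightforward, though lengthy, linear-algebraic check.
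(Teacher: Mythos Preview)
Your overall strategy---choosing an adapted basis, performing successive normalizations, and then reading off the bracket table---is the same as the paper's, and most of your steps line up with it. However, your third normalization step is wrong, and this is precisely the place you flagged as ``the main obstacle''.

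You propose to replace $\xi\rightsquigarrow\xi-\nu f$, $i\xi\rightsquigarrow i\xi-\mu f$ so as to \emph{kill} the $\mathrm{Im}\,\gamma_2$-component of $\gamma_3$, i.e.\ to force $\langle e,\gamma_3(\xi)\rangle=\langle e,\gamma_3(i\xi)\rangle=0$. Combined with $\pi_2(\gamma_3(\xi))=0$ from Remark~\ref{semplification} and $b_2(e)=0$, this gives $[e,\xi]=0$, hence $[x_2,x_4]=0$ and so $\alpha=0$. But $\beta:=\langle Je,\gamma_3(i\xi)\rangle$ is \emph{not} constrained by your moves, and a direct computation (using $b_2(Je)=2\omega(e,\gamma_3(i\xi))=-2\beta$) gives
\[
[x_2,x_4]=0,\qquad [x_2,x_5]=-2\beta\,x_8,\qquad [x_3,x_4]=-\beta\,x_8,\qquad [x_3,x_5]=3\beta\,x_7.
\]
This is $(0,-2\beta,-\beta,3\beta)$, not $(\alpha,-\alpha,-\alpha,3\alpha)$, so the table fits $\mathfrak{g}_\mathbb{R}(\alpha,a,b,c)$ only when $\beta=0$. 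Neither condition~(\ref{cond w_0}) nor $\omega(\mathrm{Im}\,\gamma_2,\mathrm{Im}\,\gamma_2)=0$ forces $\beta=0$: the constraint $w_0\in\ker\gamma_2$ only kills the $T$-component of $w_0$, not its $J\mathrm{Im}\,\gamma_2$-component. So your normalization over-reduces and loses the parameter~$\alpha$.

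The paper spends its two degrees of freedom differently: it sets
\[
\xi_1=\xi-\tfrac{1}{\delta}\bigl(\langle e,\gamma_3(\xi)\rangle-\langle Je,\gamma_3(i\xi)\rangle\bigr)f,\qquad
\xi_2=i\xi-\tfrac{1}{\delta}\langle e,\gamma_3(i\xi)\rangle f,
\]
which achieves $\langle e,\gamma_3(\xi_2)\rangle=0$ \emph{and} $\langle e,\gamma_3(\xi_1)\rangle=\langle Je,\gamma_3(\xi_2)\rangle=:\alpha$. Keeping $\langle e,\gamma_3(\xi_1)\rangle$ nonzero but equal to $\langle Je,\gamma_3(\xi_2)\rangle$ is exactly what makes all four brackets come out with the single parameter~$\alpha$; as a bonus, it also forces $w_0'=\gamma_3(\xi_2)-J\gamma_3(\xi_1)$ to lie in $\mathbb{R}g$. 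Your outline would be repaired by replacing your ``kill $\pi_1(\gamma_3)$'' step with this coupled normalization. A minor additional point: with $x_1=f$, $x_4=e$ one has $[x_1,x_4]=-\tau$, so you need $x_7=-\tau$, $x_8=-i\tau$, not $x_7=\tau$, $x_8=i\tau$; and you should also check why $[x_2,x_6]$ and $[x_3,x_6]$ share the coefficient~$a$ (this follows from the structure of $[w_0,\xi_i]$, but is not automatic from your description).
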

\begin{proof}
Let $\{\tau,i\tau,e,Je,g,f,\xi,i\xi\}$ be a basis for $\mathbb{C}\times W\times \mathbb{C}$ adapted to the decomposition (\ref{decomposition}) and let $w_0:=\gamma_3(i\xi)-J\gamma_3(\xi)$. The Lie brackets of $\mathfrak{u}(\gamma_2,\gamma_3,b_2,b_3)$ expressed in this basis read as
	\[[e,f]=h(e,\gamma_2(f))\tau,\qquad[Je,f]=h(Je,\gamma_2(f))\tau,\]
	\[[e,\xi]=(h(e,\gamma_3(\xi))+ib_2(e))\tau,\qquad[e,i\xi]=(h(e,\gamma_3(i\xi))-b_2(e))\tau,\]
	\[[Je,\xi]=(h(Je,\gamma_3(\xi))+ib_2(Je))\tau,\qquad[Je,i\xi]=(h(Je,\gamma_3(i\xi))-b_2(Je))\tau,\]  
	\[[g,\xi]=(h(g,\gamma_3(\xi))+ib_2(g))\tau,\qquad[g,i\xi]=(h(g,\gamma_3(i\xi))-b_2(g))\tau,\]
	\[[f,\xi]=(h(f,\gamma_3(\xi))+ib_2(f))\tau+\gamma_2(f),\qquad[f,i\xi]=(h(f,\gamma_3(i\xi))-b_2(f))\tau+J\gamma_2(f),\]
	\[[\xi,i\xi]=(-b_3(\xi)-ib_3(i\xi))\tau-w_0.\]
First of all, up to conjugating the group, we can assume that $b_2(e)=0$ i.e.~$\langle e,J\gamma_3(\xi)\rangle=0$ and $b_2(f)=0$. If $\gamma_2(f)=\delta e$, with $\delta\in\mathbb{R}^*$, making the following change of variables 
\[\begin{cases}
\xi_1=\xi-\frac{1}{\delta}\left(\langle e,\gamma_3(\xi)\rangle-\langle Je,\gamma_3(i\xi)\rangle\right)f\\
\xi_2=i\xi-\frac{1}{\delta}\langle e,\gamma_3(i\xi)\rangle f
\end{cases}\]
we have that $\gamma_3(\xi_1)=\gamma_3(\xi)-(\langle e,\gamma_3(\xi)\rangle -\langle Je,\gamma_3(i\xi)\rangle )e$ and $ \gamma_3(\xi_2)=\gamma_3(i\xi)-\langle e,\gamma_3(i\xi)\rangle e$, in other words $\langle \gamma_3(\xi_2),e\rangle =0$ and $\langle \gamma_3(\xi_1),e\rangle=\langle \gamma_3(\xi_2),Je\rangle$. Then $[\xi_1,\xi_2]=-b_3(\xi_1)\tau-b_3(\xi_2)i\tau-w_0'$ with $w_0'=\gamma_3(\xi_2)-J\gamma_3(\xi_1)$ and  $\langle w_0',e\rangle =\langle w_0',Je\rangle =0$, hence $ w_0'=\beta g$ for some $\beta\in\mathbb{R}^*$. Defining now $x_1=\delta^{-1}f=f',x_2=\xi_1$ and $x_3=\xi_2$ we have 
\[[x_1,x_2]=h(f',\gamma_3(\xi_1))\tau+e,\]
\[[x_1,x_3]=h(f',\gamma_3(\xi_2))\tau+Je,\]
\[[x_2,x_3]=-b_3(\xi_1)\tau-b_3(\xi_2)i\tau-w_0'.\]
Hence let us define $x_4=h(f',\gamma_3(\xi_1))\tau+e,x_5=h(f',\gamma_3(\xi_2))\tau+Je,x_6=-b_3(\xi_1)\tau-b_3(\xi_2)i\tau-w_0', x_7=-\tau$ and $x_8=-i\tau$. Then, remembering that $b_2(Je)=2\langle e,J\gamma_3(\xi_2)\rangle =-2\langle \gamma_3(\xi_2),Je \rangle$, we have 
\[[x_1,x_4]=x_7,\qquad[x_2,x_4]=\langle \gamma_3(\xi_2),Je\rangle x_7,\qquad[x_3,x_4]=-\langle \gamma_3(\xi_2),Je\rangle x_8,\]
\[[x_1,x_5]=x_8,\qquad[x_2,x_5]=-\langle \gamma_3(\xi_2),Je\rangle x_8,\qquad[x_3,x_5]=3\langle \gamma_3(\xi_2),Je\rangle x_7,\]
\[[x_2,x_6]=-(\langle \gamma_3(\xi_2),\gamma_3(\xi_1)\rangle x_7+(3\langle \gamma_3(\xi_2),J\gamma_3(\xi_1)\rangle-\|\gamma_3(\xi_1)\|^2)x_8),\]
\[[x_3,x_6]=-((\|\gamma_3(\xi_2)\|^2-3\langle \gamma_3(\xi_2),J\gamma_3(\xi_1)\rangle)x_7-\langle \gamma_3(\xi_2),\gamma_3(\xi_1)\rangle x_8).\]
Calling $\alpha=\langle \gamma_3(\xi_2),Je\rangle,$ \[a=-\langle \gamma_3(\xi_2),\gamma_3(\xi_1)\rangle,\]\[b=-(3\langle \gamma_3(\xi_2),J\gamma_3(\xi_1)\rangle-\|\gamma_3(\xi_1)\|^2),\]\[c=-(\|\gamma_3(\xi_2)\|^2-3\langle \gamma_3(\xi_2),J\gamma_3(\xi_1)\rangle)\]
we can see the isomorphism with $\mathfrak{g}_\mathbb{R}(\alpha,a,b,c)$.
\end{proof}
\begin{rema}
If $a=b=c=0$ the center of the corresponding Lie algebras has dimension $3$ and is generated by  $\{x_6,x_7,x_8\}$, otherwise it is of dimension $2$ and is generated by $\{x_7,x_8\}$.
\end{rema}

\subsection{Carnot Lie algebras}\label{Carnot}

\begin{defi}
A \emph{Carnot grading} on a Lie algebra $\mathfrak{g}$ is an algebra grading of $\mathfrak{g}$, $\mathfrak{g}=\oplus_{i\ge 1}\mathfrak{g}_i$ where the $\mathfrak{g}_i$ are Lie subalgebras of $\mathfrak{g}$ and $[\mathfrak{g}_i,\mathfrak{g}_j]\subseteq\mathfrak{g}_{i+j}$, such that $\mathfrak{g}$ is generated by $\mathfrak{g}_1$. A Lie algebra is \emph{Carnot graded} if it is endowed with a Carnot grading and \emph{Carnot} if it admits a Carnot grading. 
\end{defi}

\begin{defi}
Given a Lie algebra $\mathfrak{g}$ the direct sum $\mathrm{Car}(\mathfrak{g})=\bigoplus_{i\ge 1}\mathfrak{v}_i$, where $\mathfrak{v}_i=\mathfrak{g}^i/\mathfrak{g}^{i+1}$ and $\mathfrak{g}^i$ is the sequence of subalgebras defined by $\mathfrak{g}^{i+1}=[\mathfrak{g},\mathfrak{g}^{i}]$ and $\mathfrak{g}^1=\mathfrak{g}$, endowed with the Lie brackets induced on each quotient by the ones of the Lie algebra $\mathfrak{g}$ is called the \emph{associated Carnot-graded} Lie algebra to the Lie algebra $\mathfrak{g}$.
\end{defi}

\begin{prop}[{\cite[Proposition 3.5]{Cornulier}}]
A Lie algebra is Carnot if and only if it is isomorphic, as a Lie algebra, to its associated Carnot-graded Lie algebra. Furthermore if these conditions hold, then: 
\begin{itemize}
\item for any Carnot grading on $\mathfrak{g}$, the graded Lie algebras $\mathfrak{g}$ and $\mathrm{Car}(\mathfrak{g})$ are isomorphic,
\item for any two Carnot gradings on $\mathfrak{g}$, there is a unique automorphism mapping the first to the second and inducing the identity modulo $[\mathfrak{g},\mathfrak{g}]$.
\end{itemize}
\end{prop}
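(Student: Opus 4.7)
The plan is to base everything on a single structural lemma: in any Carnot-graded Lie algebra $\mathfrak{g} = \bigoplus_{i\ge 1} \mathfrak{g}_i$, the terms of the lower central series satisfy $\mathfrak{g}^k = \bigoplus_{i\ge k} \mathfrak{g}_i$. I would prove this by induction on $k$. The inclusion $\mathfrak{g}^k \subseteq \bigoplus_{i\ge k}\mathfrak{g}_i$ is immediate from $[\mathfrak{g}_j,\mathfrak{g}_\ell]\subseteq \mathfrak{g}_{j+\ell}$. For the reverse inclusion, the hypothesis that $\mathfrak{g}_1$ generates $\mathfrak{g}$ forces every $\mathfrak{g}_i$ to be spanned by $i$-fold iterated brackets of elements of $\mathfrak{g}_1$, which then lie in $\mathfrak{g}^i$.

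Once this lemma is in place, the proof essentially writes itself. For the nontrivial implication of the main equivalence, given any Carnot grading on $\mathfrak{g}$, the composition $\mathfrak{g}_i \hookrightarrow \mathfrak{g}^i \twoheadrightarrow \mathfrak{g}^i/\mathfrak{g}^{i+1} = \mathfrak{v}_i$ is a linear isomorphism by the lemma, and assembling these yields a graded vector space isomorphism $\varphi \colon \mathfrak{g} \to \mathrm{Car}(\mathfrak{g})$. It is a Lie algebra morphism because, by definition, the bracket of $\mathrm{Car}(\mathfrak{g})$ is the one induced from $\mathfrak{g}$ on the successive quotients. This already establishes the first bullet point. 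The converse implication is easier: if $\mathfrak{g} \cong \mathrm{Car}(\mathfrak{g})$ as Lie algebras, one transports the tautological grading of $\mathrm{Car}(\mathfrak{g})$, which is Carnot since $\mathfrak{v}_i = [\mathfrak{v}_1,\mathfrak{v}_{i-1}]$ by the very definition of the lower central series.

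For the uniqueness bullet, I would start from two Carnot gradings $\{\mathfrak{g}_i\}$ and $\{\mathfrak{g}'_i\}$ and the two corresponding isomorphisms $\varphi,\varphi' \colon \mathfrak{g} \to \mathrm{Car}(\mathfrak{g})$. Then $\Phi := (\varphi')^{-1}\circ\varphi$ is an automorphism of $\mathfrak{g}$ that sends $\mathfrak{g}_i$ to $\mathfrak{g}'_i$, and, since both $\varphi|_{\mathfrak{g}_1}$ and $\varphi'|_{\mathfrak{g}'_1}$ are the natural projection modulo $[\mathfrak{g},\mathfrak{g}]$ (identified with $\mathfrak{v}_1$), $\Phi$ induces the identity on $\mathfrak{g}/[\mathfrak{g},\mathfrak{g}]$. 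Uniqueness of $\Phi$ follows because any candidate automorphism $\Psi$ with the stated property has its restriction to $\mathfrak{g}_1$ forced to be the isomorphism $\mathfrak{g}_1 \xrightarrow{\sim} \mathfrak{v}_1 \xleftarrow{\sim} \mathfrak{g}'_1$, and $\mathfrak{g}_1$ generates $\mathfrak{g}$, so $\Psi$ is determined.

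The only subtle point I expect is the key lemma: one must use the generation hypothesis precisely to rule out the a priori possibility that some element of $\mathfrak{g}_k$ lies outside $\mathfrak{g}^k$. Everything else is bookkeeping about the natural maps $\mathfrak{g}_i \to \mathfrak{v}_i$, and the fact, used repeatedly, that any endomorphism of $\mathfrak{g}$ is determined by its restriction to $\mathfrak{g}_1$.
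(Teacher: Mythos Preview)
The paper does not supply its own proof of this proposition: it is quoted verbatim as \cite[Proposition 3.5]{Cornulier} and immediately followed by the corollary, with no argument given. So there is no in-paper proof to compare against.

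That said, your argument is correct and is the standard one. The structural lemma $\mathfrak{g}^k = \bigoplus_{i\ge k}\mathfrak{g}_i$ is exactly the right hinge, and your construction of $\varphi$ as the direct sum of the compositions $\mathfrak{g}_i \hookrightarrow \mathfrak{g}^i \twoheadrightarrow \mathfrak{g}^i/\mathfrak{g}^{i+1}$ is the canonical graded isomorphism. Your handling of the uniqueness bullet is also sound: the key observation that $\Psi|_{\mathfrak{g}_1}$ is forced (because $\mathfrak{g}'_1$ meets each coset of $\mathfrak{g}^2$ in exactly one point) together with generation by $\mathfrak{g}_1$ pins down $\Psi$. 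One small point worth making explicit in a polished write-up: when you say ``any endomorphism of $\mathfrak{g}$ is determined by its restriction to $\mathfrak{g}_1$'', you mean any Lie algebra endomorphism, and this is where generation by $\mathfrak{g}_1$ is used a second time.
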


\begin{coro}[{\cite[Corollary 3.6]{Cornulier}}]
Let $\mathfrak{g}$ be a Carnot graded Lie algebra. Denote by $Aut(\mathfrak{g})$ its automorphism group as a Lie algebra and $Aut(\mathfrak{g})_0$ its automorphism group as a graded Lie algebra. Let $Aut(\mathfrak{g})_{\ge 1}$ be the group of automorphism of the Lie algebra $\mathfrak{g}$ inducing the identity on $\mathfrak{g}/[\mathfrak{g},\mathfrak{g}]$. Then $Aut(\mathfrak{g})_{\ge 1}$ is a normal subgroup and 
\[Aut(\mathfrak{g})=Aut(\mathfrak{g})_0\ltimes Aut(\mathfrak{g})_{\ge 1}\]
\end{coro}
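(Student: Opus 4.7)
The plan is to verify the three standard conditions for a semidirect product decomposition: that $\mathrm{Aut}(\mathfrak{g})_{\geq 1}$ is normal in $\mathrm{Aut}(\mathfrak{g})$, that it intersects $\mathrm{Aut}(\mathfrak{g})_0$ trivially, and that every element of $\mathrm{Aut}(\mathfrak{g})$ factors as a product of one element from each subgroup. The substantive input will be the preceding proposition, which says that two Carnot gradings on $\mathfrak{g}$ are interchanged by a unique automorphism inducing the identity modulo $[\mathfrak{g},\mathfrak{g}]$; everything else is formal group theory.

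For normality I would simply note that any Lie algebra automorphism $\phi$ preserves the derived ideal $[\mathfrak{g},\mathfrak{g}]$ and hence descends to an automorphism $\bar\phi$ of $\mathfrak{g}/[\mathfrak{g},\mathfrak{g}]$. If $\psi \in \mathrm{Aut}(\mathfrak{g})_{\geq 1}$, then the induced map of $\phi\psi\phi^{-1}$ on the abelianisation is $\bar\phi \circ \mathrm{id} \circ \bar\phi^{-1} = \mathrm{id}$, so $\phi\psi\phi^{-1} \in \mathrm{Aut}(\mathfrak{g})_{\geq 1}$.

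For the factorisation, fix $\phi \in \mathrm{Aut}(\mathfrak{g})$ and consider the subspaces $\phi(\mathfrak{g}_i)$. Since $\phi$ is a Lie algebra automorphism and $\phi(\mathfrak{g}_1)$ still generates $\mathfrak{g}$, these subspaces form another Carnot grading of $\mathfrak{g}$. Applying the preceding proposition, I would obtain a unique $\psi \in \mathrm{Aut}(\mathfrak{g})$ with $\psi(\mathfrak{g}_i) = \phi(\mathfrak{g}_i)$ for every $i$ and inducing the identity modulo $[\mathfrak{g},\mathfrak{g}]$, so $\psi \in \mathrm{Aut}(\mathfrak{g})_{\geq 1}$. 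Then $\psi^{-1}\phi$ preserves each $\mathfrak{g}_i$ and therefore lies in $\mathrm{Aut}(\mathfrak{g})_0$. Setting $\phi_0 := \psi^{-1}\phi$, I get $\phi = \psi \phi_0 = \phi_0 (\phi_0^{-1}\psi\phi_0)$, and by the normality established above the rightmost factor still lies in $\mathrm{Aut}(\mathfrak{g})_{\geq 1}$.

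Trivial intersection is immediate: if $\phi \in \mathrm{Aut}(\mathfrak{g})_0 \cap \mathrm{Aut}(\mathfrak{g})_{\geq 1}$, then $\phi$ preserves every $\mathfrak{g}_i$ and acts as the identity on $\mathfrak{g}/[\mathfrak{g},\mathfrak{g}]$, which under the Carnot grading is naturally identified with $\mathfrak{g}_1$; since $\mathfrak{g}_1$ generates $\mathfrak{g}$ as a Lie algebra and $\phi$ respects brackets, $\phi = \mathrm{id}$. The only real point to be careful with is invoking the proposition correctly and then performing the conjugation trick that turns the initial right factorisation $\phi = \psi\phi_0$ into a left one $\phi = \phi_0 \psi'$, which is exactly the step that uses the normality proved at the start.
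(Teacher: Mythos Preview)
The paper does not actually supply a proof of this corollary; it is quoted verbatim from \cite[Corollary 3.6]{Cornulier} and immediately followed by an example, so there is nothing in the paper to compare your argument against.

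That said, your argument is correct and is essentially the intended one: it uses exactly the preceding proposition (also quoted from Cornulier) on the existence and uniqueness of an automorphism carrying one Carnot grading to another and inducing the identity on $\mathfrak{g}/[\mathfrak{g},\mathfrak{g}]$. The three checks---normality of $\mathrm{Aut}(\mathfrak{g})_{\ge 1}$ via the induced action on the abelianisation, the factorisation $\phi=\psi\phi_0$ obtained by transporting the grading via $\phi$ and invoking the proposition, and trivial intersection because an element of the intersection restricts to the identity on the generating piece $\mathfrak{g}_1$---are all sound. The conjugation step at the end, rewriting $\psi\phi_0$ as $\phi_0(\phi_0^{-1}\psi\phi_0)$, is a harmless cosmetic move: for a semidirect product it does not matter on which side the normal factor sits, so you could equally well have stopped at $\phi=\psi\phi_0$.
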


\begin{exam}
Here is an example of a nilpotent Lie algebra that is not Carnot. It is given by the following non zero Lie brackets on the basis $\{x_1,\ldots,x_5\}$ 
\[[x_1,x_3]=x_4\]
\[[x_1,x_4]=[x_2,x_3]=x_5.\]
\end{exam}

\begin{rema}
Each Lie algebra $\mathfrak{g}_\mathbb{R}(\alpha,a,b,c)$ is Carnot with grading $\mathfrak{v}_1=\spn_\mathbb{R}\{x_1,x_2,x_3\}, \mathfrak{v}_2=\spn_\mathbb{R}\{x_4,x_5,x_6\}$ and $ \mathfrak{v}_3=\spn_\mathbb{R}\{x_7,x_8\}$.
\end{rema}

\begin{exam}
Every free $k$-step nilpotent Lie algebra of rank $n$ is Carnot.
\end{exam}

\begin{prop}\label{quotientOfFree}
Every quotient of the free $k$-step nilpotent Lie algebra of rank $n$ by a graded ideal is a Carnot nilpotent Lie algebra.
\end{prop}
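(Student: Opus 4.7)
The plan is to use the canonical Carnot grading on the free $k$-step nilpotent Lie algebra of rank $n$, call it $\mathfrak{f}=\mathfrak{f}_{n,k}$, and transport it to the quotient. Recall that $\mathfrak{f}$ carries the grading $\mathfrak{f}=\bigoplus_{i=1}^{k}\mathfrak{v}_{i}$ where $\mathfrak{v}_{1}$ is the chosen $n$-dimensional generating subspace and $\mathfrak{v}_{i}$ is the linear span of $i$-fold brackets of elements of $\mathfrak{v}_{1}$; this is a Lie algebra grading with $[\mathfrak{v}_{i},\mathfrak{v}_{j}]\subseteq\mathfrak{v}_{i+j}$ (with the convention $\mathfrak{v}_{\ell}=0$ for $\ell>k$), and $\mathfrak{v}_{1}$ generates $\mathfrak{f}$ by construction.

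Let $\mathfrak{a}\trianglelefteq\mathfrak{f}$ be a graded ideal, that is, $\mathfrak{a}=\bigoplus_{i=1}^{k}(\mathfrak{a}\cap\mathfrak{v}_{i})$. First I would observe that the quotient $\mathfrak{g}=\mathfrak{f}/\mathfrak{a}$ inherits a vector space decomposition
\[
\mathfrak{g}=\bigoplus_{i=1}^{k}\mathfrak{g}_{i},\qquad \mathfrak{g}_{i}:=\mathfrak{v}_{i}/(\mathfrak{a}\cap\mathfrak{v}_{i}),
\]
which is well defined precisely because $\mathfrak{a}$ is graded (so that the natural surjection $\mathfrak{v}_{i}\to(\mathfrak{v}_{i}+\mathfrak{a})/\mathfrak{a}$ has kernel $\mathfrak{a}\cap\mathfrak{v}_{i}$ and the images sum directly). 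The bracket relation $[\mathfrak{g}_{i},\mathfrak{g}_{j}]\subseteq\mathfrak{g}_{i+j}$ then follows immediately from the corresponding relation in $\mathfrak{f}$ by passing to the quotient.

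It remains to verify the Carnot condition, namely that $\mathfrak{g}_{1}$ generates $\mathfrak{g}$ as a Lie algebra. This is the only non-formal step, but it is essentially automatic: the quotient map $\pi:\mathfrak{f}\to\mathfrak{g}$ is a surjective Lie algebra homomorphism sending $\mathfrak{v}_{1}$ onto $\mathfrak{g}_{1}$, and the image of a generating set under a surjective homomorphism is a generating set, so $\mathfrak{g}_{1}$ generates $\pi(\mathfrak{f})=\mathfrak{g}$. Hence $\mathfrak{g}=\bigoplus_{i}\mathfrak{g}_{i}$ is a Carnot grading, and $\mathfrak{g}$ is a Carnot nilpotent Lie algebra (nilpotency being inherited from $\mathfrak{f}$).

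The main thing to be careful about is the interplay between the hypothesis that $\mathfrak{a}$ is \emph{graded} and the definition of the grading on $\mathfrak{g}$: one must check that $\pi(\mathfrak{v}_{i})\cap\pi(\mathfrak{v}_{j})=0$ for $i\neq j$, which uses exactly that $\mathfrak{a}$ respects the decomposition. No delicate computation is expected; the statement is essentially a formal consequence of the definitions once one sets up the grading correctly.
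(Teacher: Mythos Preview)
Your proof is correct and follows essentially the same approach as the paper: both transport the Carnot grading of $F_{k,n}$ to the quotient using that the ideal is graded, and then check that the degree-one piece still generates. Your argument for the generating property (image of a generating set under a surjective Lie algebra homomorphism generates) is in fact slightly cleaner than the paper's phrasing.
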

\begin{proof}
Denote by $F_{k,n}$ the free $k$-step nilpotent Lie algebra of rank $n$ and
let $\mathfrak{h}$ be an homogeneous ideal of $F_{k,n}$, if $F_{k,n}=\bigoplus_{i=1}^k F_i$ we have $\mathfrak{h}=\bigoplus_i (\mathfrak{h}\cap F_i)=\bigoplus_i \mathfrak{h}_i$. Then the quotient $\mathfrak{g}=F_{k,n}/\mathfrak{h}$ inherits the grading and it will be generated by $F_1/\mathfrak{h}_1$ since $\mathfrak{h}$ is an ideal. Indeed if an element $x\in\mathfrak{g}$ is obtained as the brackets of elements in $F_1$ with some of them in $\mathfrak{h}_1$ then $x$ is itself in $\mathfrak{h}_1$. Finally the quotient will be a nilpotent Lie algebra.
\end{proof}

Let us introduce a general setting in which we will see our family of Lie algebras $\mathfrak{g}_\mathbb{R}(\alpha,a,b,c)$.

\begin{defi}
Let $F$ be the free $3$-step nilpotent Lie algebra on $3$ generators. Then with respect to the basis $\{y_1,\ldots y_{14}\}$ of $\mathbb{R}^{14}$ the Lie brackets are as follows
\[[y_1,y_2]=y_4,\qquad[y_1,y_3]=y_5,\qquad[y_2,y_3]=y_6,\]
\[[y_1,y_4]=y_7,\qquad[y_1,y_5]=y_8,\qquad[y_1,y_6]=y_9,\]
\[[y_2,y_4]=y_{10},\qquad[y_2,y_5]=y_{11},\qquad[y_2,y_6]=y_{12},\]
\[[y_3,y_4]=y_{11}-y_9,\qquad[y_3,y_5]=y_{13},\qquad[y_3,y_6]=y_{14}.\]
We also have a grading of $F$ as $F\cong\oplus_{i=1}^3 F_i$ where $F_i=F^i/F^{i+1}$. Notice that $F_1=\spn_\mathbb{R}\{y_1,y_2,y_3\}, F_2=\spn_\mathbb{R}\{y_4,y_5,y_6\}$ and $F_3=\spn_\mathbb{R}\{y_7,\ldots,y_{14}\}$, where we are using an abuse of notation, thinking the elements of the basis of $F_i$ as equivalence classes. 
\end{defi}

\begin{prop}
We have an action of $\mathrm{GL}(3,\mathbb{R})$ on $F_3$.
\end{prop}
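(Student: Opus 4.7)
The plan is to exploit the universal property of the free nilpotent Lie algebra. The group $\mathrm{GL}(3,\mathbb{R})$ acts naturally on the generating space $F_1 = \mathrm{span}_\mathbb{R}\{y_1,y_2,y_3\}$, and I will lift this action to an action on all of $F$ that preserves the grading, hence in particular restricts to an action on $F_3$.

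More concretely, I would first recall the defining universal property: for any Lie algebra $\mathfrak{g}$ that is $3$-step nilpotent and any linear map $\varphi \colon F_1 \to \mathfrak{g}$, there is a unique Lie algebra homomorphism $\tilde{\varphi} \colon F \to \mathfrak{g}$ whose restriction to $F_1$ equals $\varphi$. Applying this with $\mathfrak{g}=F$ and $\varphi = g \in \mathrm{GL}(F_1) \cong \mathrm{GL}(3,\mathbb{R})$ produces a Lie algebra endomorphism $\tilde{g} \colon F \to F$. The uniqueness clause of the universal property immediately gives $\widetilde{g \circ h} = \tilde{g} \circ \tilde{h}$ and $\widetilde{\mathrm{Id}} = \mathrm{Id}$, so $g \mapsto \tilde{g}$ is a group homomorphism $\mathrm{GL}(3,\mathbb{R}) \to \mathrm{End}(F)$ landing in $\mathrm{Aut}(F)$.

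Next I would check that $\tilde{g}$ preserves the grading $F = F_1 \oplus F_2 \oplus F_3$. This is essentially by construction: $F_i$ is spanned by $i$-fold iterated Lie brackets of elements of $F_1$, and $\tilde{g}$ is a Lie algebra homomorphism sending $F_1$ into $F_1$, so $\tilde{g}(F_i) \subseteq F_i$ for each $i$. In particular, the restriction $\tilde{g}|_{F_3}$ is a well-defined linear endomorphism of $F_3$, and the assignment $g \mapsto \tilde{g}|_{F_3}$ is the desired action of $\mathrm{GL}(3,\mathbb{R})$ on $F_3$.

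There is no real obstacle here; the only thing one might want to make explicit is a formula for this action on a chosen basis, by writing $g \cdot [y_i,[y_j,y_k]] = [g y_i, [g y_j, g y_k]]$ and expanding in the basis $\{y_7,\ldots,y_{14}\}$ of $F_3$. One could alternatively bypass the universal property entirely and define the action by this formula, then verify that the Jacobi identity and antisymmetry guarantee it is well-defined modulo the relations cutting out $F_3$ inside the tensor algebra; but the universal-property route is cleaner and shorter.
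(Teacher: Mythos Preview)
Your proof is correct and, if anything, cleaner than the paper's. The paper does not invoke the universal property: it proceeds by picking, for each basis vector $v$ of $F_3$, the particular triple $(u_1,u_2,u_3)\in F_1^3$ with $v=[u_1,[u_2,u_3]]$ coming from the definition of the basis, declaring $M*v:=[Mu_1,[Mu_2,Mu_3]]$, and extending by linearity. This is exactly the explicit-formula alternative you mention at the end. Your route via the universal property has the advantage of making well-definedness automatic (the paper's construction implicitly relies on the fact that any linear relations among iterated brackets are preserved, which is immediate once one knows $\tilde g$ is a Lie algebra homomorphism), while the paper's route has the advantage of giving the action concretely on a basis from the outset, which is what is actually used in the sequel when computing $\rho$ and checking equivariance of $\varphi$.
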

\begin{proof}
By definition each element of the basis of $F_3$ is uniquely determined as the Lie brackets of an element of $F$ and an element of the derived algebra of $F$, $F^2$. Then we can define the action of $\mathrm{GL}(3,\mathbb{R})$ on $F_3$ as follows, let $M\in \mathrm{GL}(3,\mathbb{R})$ and $v$ an element of the basis of $F_3$ then  $M*v:=[Mu_1,[Mu_2,Mu_3]]$ where $u_1,u_2,u_3\in F_1$ are the unique vectors such that $v=[u_1,[u_2,u_3]]$, then we extend this action on $F_3$ by linearity.\end{proof}

\begin{rema}
We have hence a representation $\rho:\mathrm{GL}(3,\mathbb{R})\rightarrow \mathrm{GL}(F_3)$, and by abuse of notation let us call $\rho$ also the induced representation $\rho:\mathrm{SL}(3,\mathbb{R})\rightarrow \mathrm{GL}(F_3)$.
\end{rema}

\begin{prop}
There exists an $\mathrm{SL}(3,\mathbb{R})$-equivariant isomorphism $\varphi:\s\rightarrow F_3$, where $\mathrm{SL}(3,\mathbb{R})$ acts on $\s$ via the adjoint action and on $F_3$ via $\rho$.
\end{prop}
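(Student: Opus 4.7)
The plan is to realize $F_3$ as the third graded piece of the free Lie algebra on $V=F_1\cong\mathbb{R}^3$ and exploit the classical representation-theoretic identity $\mathrm{Lie}^3(V)\cong \mathfrak{sl}(V)\otimes \Lambda^3 V$, which on restriction to $\mathrm{SL}(V)$ gives $F_3\cong \mathfrak{sl}(V)$ since $\Lambda^3 V$ is the trivial $\mathrm{SL}(V)$-representation.

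Concretely, I would fix a nonzero volume form $\omega\in\Lambda^3 V$. This provides an $\mathrm{SL}(V)$-equivariant isomorphism
\[
V^*\;\xrightarrow{\sim}\;\Lambda^2 V,\qquad \alpha\longmapsto \iota_{\alpha}\omega,
\]
and hence an $\mathrm{SL}(V)$-equivariant isomorphism $\mathfrak{gl}(V)=V\otimes V^*\xrightarrow{\sim} V\otimes \Lambda^2 V$. Composing with the Lie-bracket map
\[
\beta\colon V\otimes\Lambda^2 V\longrightarrow F_3,\qquad v\otimes(w\wedge u)\longmapsto [v,[w,u]],
\]
and restricting to $\mathfrak{sl}(V)\subset \mathfrak{gl}(V)$, I would define $\varphi$ as this composition.

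To see that $\varphi$ is an isomorphism, I would note that $\beta$ is surjective and its kernel is the Jacobi subspace, which equals the image of the $\mathrm{GL}(V)$-equivariant embedding $\Lambda^3 V\hookrightarrow V\otimes \Lambda^2 V$, $v_1\wedge v_2\wedge v_3\mapsto \sum_{\text{cyc}} v_i\otimes(v_j\wedge v_k)$. Under our identification $V\otimes\Lambda^2 V\cong \mathfrak{gl}(V)$, this one-dimensional subspace corresponds to $\mathbb{R}\cdot\mathrm{Id}_V$. A quick check with matrix units $E_{ij}$: identifying $y_i=e_i$ and taking $\omega=e_1\wedge e_2\wedge e_3$, one computes $\varphi(E_{11})+\varphi(E_{22})+\varphi(E_{33})=[y_1,[y_2,y_3]]-[y_2,[y_1,y_3]]+[y_3,[y_1,y_2]]$, which vanishes by Jacobi. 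Thus $\varphi$ descends to an injection $\mathfrak{sl}(V)\hookrightarrow F_3$, and since $\dim\mathfrak{sl}(V)=\dim F_3=8$, it is an isomorphism.

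Finally, $\mathrm{SL}(V)$-equivariance follows because each map in the chain
\[
\mathfrak{sl}(V)\hookrightarrow V\otimes V^* \xrightarrow{\sim} V\otimes\Lambda^2 V \xrightarrow{\beta} F_3
\]
is $\mathrm{SL}(V)$-equivariant: the inclusion and the tensor identification involving $\omega$ commute with $\mathrm{SL}(V)$ precisely because $\det=1$, while $\beta$ is $\mathrm{GL}(V)$-equivariant by the very definition of $\rho$. The only subtle point, and the one I would be most careful about, is that the identification $V^*\cong \Lambda^2 V$ is not $\mathrm{GL}(V)$-equivariant (it twists by $\det$); however, exactly this twist disappears on $\mathrm{SL}(V)$, which is why the statement is formulated only for $\mathrm{SL}(3,\mathbb{R})$ and not for $\mathrm{GL}(3,\mathbb{R})$.
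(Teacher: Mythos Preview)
Your argument is correct. The construction via the volume-form identification $V^*\cong\Lambda^2 V$ and the bracket map $\beta$ is sound: $\beta$ is surjective onto $F_3$, the kernel of the resulting map $\mathfrak{gl}(V)\to F_3$ is one-dimensional by counting, and you exhibit $\mathrm{Id}_V$ in it, so the restriction to $\mathfrak{sl}(V)$ is an isomorphism. Equivariance is immediate from the naturality of each step, and your remark about the $\det$-twist explains neatly why the statement holds for $\mathrm{SL}$ but not $\mathrm{GL}$.

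The paper takes a different, purely computational route: it writes down $\varphi$ explicitly on the basis $E_1,\ldots,E_8$ of $\s$ (for instance $\varphi(E_{13})=y_7$, $\varphi(E_{12})=-y_8$, etc.) and then verifies equivariance at the infinitesimal level by checking $\varphi(\mathrm{ad}(E_i)E_j)=\rho^*(E_i)\varphi(E_j)$ case by case. In fact your conceptual map and the paper's explicit one agree on the nose: e.g.\ $E_{13}=e_1\otimes e_3^*\mapsto e_1\otimes(e_1\wedge e_2)\mapsto[y_1,[y_1,y_2]]=y_7$, and similarly for the other basis elements. Your approach is cleaner and explains \emph{why} the isomorphism exists; the paper's has the advantage of producing the explicit values $\varphi(E_i)$ directly, which are used verbatim later when computing the planes $P(a,b,c)\in\Gr$. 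If you adopt your argument, it would be worth recording the explicit formula $\varphi(E_{ij})=[e_i,[e_j',e_j'']]$ (with $e_j^*\leftrightarrow e_{j'}\wedge e_{j''}$) so that those later computations remain transparent.
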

\begin{proof}
Let us denote by $E_{ij}$ the $3\times 3$ matrix whose $(i,j)$-th entry is $1$ and all the rest is 0. Then the matrices $E_i, i=1,\ldots,8$, where $E_1=E_{11}-E_{22}, E_2=E_{22}-E_{33}, E_3=E_{12}, E_4=E_{13}, E_{5}=E_{21}, E_6=E_{23}, E_7=E_{31}, E_8=E_{32}$, form a basis of $\s$ and the isomorphism $\varphi:\s\rightarrow F_3$ is given by $\varphi(E_1)=y_9+y_{11}, \varphi(E_2)=y_9-2y_{11}, \varphi(E_3)=-y_8, \varphi(E_4)=y_7, \varphi(E_5)=y_{12}, \varphi(E_6)=y_{10}, \varphi(E_7)=y_{14}, \varphi(E_8)=-y_{13}$. In order to check the equivariance of $\varphi$ it is sufficient to look at the level of the Lie algebras. If we let $\rho^*$ be the induced representation $\rho^*:\s\rightarrow \text{End}(F_3)$ defined by $\rho^*(E_i)v=\frac{d}{dt}_{|t=0}\rho(\exp(tE_i))v$ then the equivariant condition becomes $\varphi(\text{ad}(E_i)E_j)=\rho^*(E_i)\varphi(E_j)$ for all $i,j=1,\ldots,8$. This can be verified by easy calculations, for example $\varphi(\text{ad}(E_1)E_4)=\varphi(E_4)=y_7$ and $\rho^*(E_1)y_7=\frac{d}{dt}_{|t=0}[e^ty_1,[e^ty_1,e^{-t}y_2]]=y_7$.
\end{proof}

\begin{defi}\label{QuotientAlg}
For any $V\in\mathrm{Gr}(6,F_3)$ let us define the Lie algebra $\mathfrak{g}(V)$ to be the vector space $F_1\oplus F_2\oplus F_3/V$ with the Lie algebra structure induced by the one of $F$. Let us also define $P\in\Gr$ by $P:=\varphi^{-1}(V)^\perp$ where the orthogonal space is taken with respect to the Killing form on $\s$.
\end{defi}

\begin{rema}
 The Lie algebra $\mathfrak{g}(V)$ is a $8$-dimensional $3$-step nilpotent Lie algebra that is Carnot.  
\end{rema}

\begin{prop}\label{Isom&Orbit}
We have a bijection

\[\left\{ \begin{tabular}{c} Isomorphism classes of  \\ the Lie algebras $\mathfrak{g}(V)$ \end{tabular} \right\} \longleftrightarrow \left\{\begin{tabular}{c}     Orbits of elements  $P:=\varphi^{-1}(V)^\perp\in\Gr$ \\ under the adjoint action \end{tabular}  \right\}.\]

\end{prop}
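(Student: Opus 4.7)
The plan is to establish the bijection by recognizing $\mathrm{GL}(3,\mathbb{R})$ as the group of graded automorphisms of $F$, using the Carnot property of $\mathfrak{g}(V)$ to reduce general isomorphisms to graded ones, and finally transporting the resulting orbit equivalence from $\mathrm{Gr}(6,F_3)$ to $\mathrm{Gr}(2,\s)$ via the equivariant isomorphism $\varphi$ and the Killing form.

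First, I would observe the easy direction. If $g \in \mathrm{GL}(3,\mathbb{R})$ and $V' = g \cdot V$ under the action on $F_3$ induced by the unique graded automorphism of $F$ extending $g$ on $F_1$, then this graded automorphism descends to a Lie algebra isomorphism $\mathfrak{g}(V) \to \mathfrak{g}(V')$. Hence same orbit implies isomorphic quotients.

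For the converse, suppose $\mathfrak{g}(V) \cong \mathfrak{g}(V')$. Both Lie algebras are Carnot by Proposition~\ref{quotientOfFree} (applied to the homogeneous ideals $V$ and $V'$). By the proposition quoted from \cite{Cornulier}, each is isomorphic as a graded Lie algebra to its associated Carnot-graded algebra; since the Carnot construction $\mathfrak{g} \mapsto \mathrm{Car}(\mathfrak{g})$ is functorial, any abstract isomorphism $\mathfrak{g}(V) \cong \mathfrak{g}(V')$ induces a graded isomorphism between them. Call this graded isomorphism $\phi$; its degree-one part is a linear isomorphism $\phi_1 : F_1 \to F_1$. By the universal property of the free nilpotent Lie algebra $F$, $\phi_1$ extends uniquely to a graded automorphism $\tilde{\phi}$ of $F$. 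The two graded Lie algebra morphisms $\pi_{V'} \circ \tilde{\phi}$ and $\phi \circ \pi_V$ from $F$ to $\mathfrak{g}(V')$ agree on generators $F_1$, hence coincide, so $\tilde{\phi}(V) \subseteq V'$; by dimension count $\tilde{\phi}(V) = V'$. This shows $V$ and $V'$ are in the same $\mathrm{GL}(3,\mathbb{R})$-orbit on $\mathrm{Gr}(6,F_3)$.

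To finish, I transport this equivalence to $\Gr$. The map $V \mapsto \varphi^{-1}(V)^\perp$ is a $\mathrm{GL}(3,\mathbb{R})$-equivariant bijection $\mathrm{Gr}(6,F_3) \to \Gr$, since $\varphi$ is $\mathrm{SL}(3,\mathbb{R})$-equivariant (and the center $\mathbb{R}^* \cdot I$ of $\mathrm{GL}(3,\mathbb{R})$ acts on $F_3$ by the nonzero scalar $\lambda^3$, hence trivially on the Grassmannian, so $\mathrm{GL}(3,\mathbb{R})$- and $\mathrm{SL}(3,\mathbb{R})$-orbits on $\mathrm{Gr}(6,F_3)$ coincide) and since the Killing form on $\s$ is $\mathrm{SL}(3,\mathbb{R})$-invariant, making the orthogonal complement equivariant. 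Combining the two directions yields the stated bijection between isomorphism classes of $\mathfrak{g}(V)$ and $\mathrm{SL}(3,\mathbb{R})$-adjoint orbits on $\Gr$.

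The main technical point is the lifting step: getting from an abstract isomorphism $\mathfrak{g}(V) \cong \mathfrak{g}(V')$ to a graded automorphism of $F$ respecting the two ideals. This is precisely where the Carnot hypothesis is essential, since without it an isomorphism of quotients need not be compatible with the grading inherited from $F$, and the universal lifting argument via freeness would fail.
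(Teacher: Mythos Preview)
Your proof is correct and follows essentially the same approach as the paper: reduce isomorphisms to graded ones via the Carnot property, realize these as $\mathrm{GL}(3,\mathbb{R})$-elements acting on $\mathrm{Gr}(6,F_3)$, pass to $\mathrm{SL}(3,\mathbb{R})$ since scalars act trivially on the Grassmannian, and identify with the adjoint action on $\Gr$ via $\varphi$. You are more explicit than the paper on two points---the lifting through the universal property of $F$ and the equivariance of the Killing-form orthogonal---both of which the paper leaves implicit.
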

\begin{proof}
As we have seen, being the Lie algebras $\mathfrak{g}(V)$ Carnot, each isomorphism between them is just induced by a linear isomorphism between their homogeneous parts of degree $1$. So, after fixing a basis for the two Lie algebras, an isomorphism between $\mathfrak{g}(V)$ and $\mathfrak{g}(V')$ is induced by an element of $\mathrm{GL}(3,\mathbb{R})$ that sends the $V$ to $V'$. Since scalar multiples of the identity act trivially on $\mathrm{Gr}(6,F_3)$ if there exists an element of  $\mathrm{GL}(3,\mathbb{R})$ that sends $V$ to $V'$ then there exists an element of $\mathrm{SL}(3,\mathbb{R})$ doing the same. Hence, after having identified the $6$-dimensional subspaces of $F_3$ with  elements $P\in\text{Gr}(2,\mathfrak{sl}(3,\mathbb{R}))$ and the action of $\mathrm{SL}(3,\mathbb{R})$ with the adjoint action, the orbit of each $P$ under the $\mathrm{SL}(3,\mathbb{R})$-adjoint action represents the isomorphism class of the associated Lie algebra.
\end{proof}

\subsection{The case $\alpha=0$}

In this case the family of Lie algebras $\mathfrak{g}_\mathbb{R}(\alpha,a,b,c)$ is quite easy to classify. Indeed we have the following.

\begin{prop}\label{alpha=0}
	The isomorphism classes of $\mathfrak{g}_\mathbb{R}(0,a,b,c)$ are  represented by $\mathfrak{g}_\mathbb{R}(0,1,0,0)$ if $a^2+bc>0$, $\mathfrak{g}_\mathbb{R}(0,0,1,-1)$ if $a^2+bc<0$, $\mathfrak{g}_\mathbb{R}(0,0,0,1)$ if $a^2+bc=0$ but $b\ne 0$ or $c\ne 0$ and $\mathfrak{g}_\mathbb{R}(0,0,0,0)$ if $a=b=c=0$.
\end{prop}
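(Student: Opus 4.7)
The plan is to exploit the Carnot structure of $\mathfrak{g}_\mathbb{R}(0,a,b,c)$ to parameterise its isomorphisms by a matrix acting on the generating subspace $\mathfrak{v}_1=\spn\{x_1,x_2,x_3\}$, and then to reduce the classification to orbits of trace-free $2\times 2$ matrices under conjugation and scaling. First I would observe that the subspace $\spn\{x_2,x_3\}$ is canonically characterised inside $\mathfrak{v}_1$ as the set of elements $X$ such that $\mathrm{ad}(X)\colon \mathcal{C}^2/\mathcal{C}^3\to \mathcal{C}^3$ has rank at most $1$: indeed $\mathrm{ad}(x_1)$ sends $x_4,x_5$ to $x_7,x_8$ and is of rank $2$, whereas the images of $\mathrm{ad}(x_2)$ and $\mathrm{ad}(x_3)$ are contained respectively in $\spn\{ax_7+bx_8\}$ and $\spn\{cx_7-ax_8\}$. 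Consequently any isomorphism $\phi\colon \mathfrak{g}_\mathbb{R}(0,a,b,c)\to \mathfrak{g}_\mathbb{R}(0,a',b',c')$ must send $\spn\{x_2,x_3\}$ to $\spn\{x_2',x_3'\}$, and since isomorphisms of Carnot Lie algebras are determined by their action on the generating subspace, $\phi$ takes the form
\[\phi(x_1)=\lambda x_1'+\mu x_2'+\nu x_3',\qquad \phi(x_2)=\alpha x_2'+\beta x_3',\qquad \phi(x_3)=\gamma x_2'+\delta x_3',\]
with $\lambda\ne 0$ and $N=\begin{pmatrix}\alpha & \gamma\\ \beta & \delta\end{pmatrix}\in\mathrm{GL}(2,\mathbb{R})$.

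Next I would extend $\phi$ by the bracket to the whole Lie algebra and write out the remaining compatibility conditions. Introducing the trace-free $2\times 2$ matrices
\[M=\begin{pmatrix} a & b \\ c & -a \end{pmatrix},\qquad M'=\begin{pmatrix} a' & b' \\ c' & -a' \end{pmatrix},\]
the vanishing of $[x_1,x_6]$ forces the pair $(\mu,\nu)$ to lie in the kernel of $M'^t$, which is automatic when $a'^2+b'c'\ne 0$, and the conditions coming from $[x_2,x_6]$ and $[x_3,x_6]$ collapse into the single matrix identity $\det(N)\cdot NM'=\lambda^2\, MN$. Setting $\kappa=\det(N)/\lambda^2\in\mathbb{R}^*$, this reads $M=\kappa\, NM'N^{-1}$, so the isomorphism class of $\mathfrak{g}_\mathbb{R}(0,a,b,c)$ is exactly the orbit of $M$ under $\mathrm{GL}(2,\mathbb{R})$-conjugation combined with nonzero scalar multiplication.

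The classification is now routine linear algebra. Both operations preserve the sign of $\det M=-(a^2+bc)$, so the sign of $a^2+bc$ is an invariant that separates the four classes listed in the statement. Conversely, using the real Jordan normal form of a traceless $2\times 2$ matrix together with the free scalar $\kappa$, one reduces: the case $a^2+bc>0$ (two real nonzero eigenvalues of opposite sign) to $\mathrm{diag}(1,-1)$, giving $\mathfrak{g}_\mathbb{R}(0,1,0,0)$; the case $a^2+bc<0$ (imaginary conjugate eigenvalues) to $\bigl(\begin{smallmatrix} 0 & 1 \\ -1 & 0 \end{smallmatrix}\bigr)$, giving $\mathfrak{g}_\mathbb{R}(0,0,1,-1)$; the case $a^2+bc=0$ with $M\ne 0$ (rank-one nilpotent) to $\bigl(\begin{smallmatrix} 0 & 0 \\ 1 & 0 \end{smallmatrix}\bigr)$, giving $\mathfrak{g}_\mathbb{R}(0,0,0,1)$; and $M=0$ to the abelian representative $\mathfrak{g}_\mathbb{R}(0,0,0,0)$.

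The main obstacle I expect is the bookkeeping in the bracket computations, in particular extending $\phi$ consistently when $\mu,\nu$ are nonzero (which occurs precisely in the degenerate cases) and verifying that such shifts produce only additional automorphisms within a single orbit rather than identifications across orbits. Once the matrix identity $\det(N)\,NM'=\lambda^2 MN$ is established, everything else follows from the standard real classification of traceless $2\times 2$ matrices up to $\mathrm{GL}(2,\mathbb{R})$-conjugation and nonzero rescaling.
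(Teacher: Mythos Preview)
Your approach is correct and is essentially the same as the paper's: both reduce the problem to the $\mathrm{GL}(2,\mathbb{R})$-conjugation action (together with a free scalar) on the traceless matrix $M=\bigl(\begin{smallmatrix} a & b\\ c & -a\end{smallmatrix}\bigr)\in\mathfrak{sl}(2,\mathbb{R})$ and then invoke the real Jordan classification. The paper simply asserts the block form of the inducing element of $\mathrm{GL}(3,\mathbb{R})$ and writes down the intertwining relation $\Delta^3\psi(\tilde g A\tilde g^{-1})={}^t\iota(\tilde g)\psi(A)$; your rank characterisation of $\spn\{x_2,x_3\}$ inside $\mathfrak{v}_1$ supplies the justification the paper leaves implicit. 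The worry you flag about the off-diagonal parameters $\mu,\nu$ in the degenerate cases is harmless: the sign of $a^2+bc=-\det M$ is preserved by conjugation and by nonzero scaling (which multiplies $\det M$ by a square), and ``$M=0$ versus $M\neq 0$'' is visibly an isomorphism invariant, so no extra identifications across the four listed classes can arise. (A small bookkeeping remark: with your convention for $N$ the relation one actually obtains is $\lambda^2\,M N^{t}=\det(N)\,N^{t} M'$, i.e.\ conjugation by $N^{t}$ rather than $N$; this of course does not affect the orbit description.)
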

\begin{proof}
 Since we know that the Lie algebras $\mathfrak{g}_\mathbb{R}(0,a,b,c)$ are Carnot an isomorphism between them is induced by an element of $\G$.
Hence consider the injection $\iota :\mathrm{GL}(2,\mathbb{R})\rightarrow \G$ that associates to $\widetilde{g}\in \mathrm{GL}(2,\mathbb{R})$ the matrix \[\begin{pmatrix} \frac{1}{\det \widetilde{g}} & 0 \\ 0 & \widetilde{g}\end{pmatrix}\] then the map 
	\begin{align*} \psi: \mathfrak{sl}(2,\mathbb{R}) & \rightarrow \{\mathfrak{g}_\mathbb{R}(0,a,b,c)\} \\
	\begin{pmatrix} a & b \\ c & -a\end{pmatrix}& \mapsto \mathfrak{g}_\mathbb{R}(0,a,b,c)
	\end{align*}
	intertwines the two actions, i.e~for $A\in\mathfrak{sl}(2,\mathbb{R})$ we have $\Delta^3\psi(\widetilde{g}A\widetilde{g}^{-1})={^t\iota(\widetilde{g})}\psi(A)$.
Explicitly if $g\in\G$ is such that $g\cdot\mathfrak{g}_\mathbb{R}(0,a,b,c)=\mathfrak{g}_\mathbb{R}(0,a',b',c')$ we have, letting $\widetilde{g}=\begin{pmatrix} \lambda & \mu \\ \delta & \rho \end{pmatrix}$ and $\Delta=\det\widetilde{g}$,  
	
	\[a'=\Delta^2(\rho(a\lambda+c\delta)-\mu(b\lambda-a\delta)),\]
	\[b'=\Delta^2(b\lambda^2-2a\delta\lambda-c\delta^2),\]
	\[c'=\Delta^2(-b\mu^2+2a\rho\mu+c\rho^2).\]
Depending on the sign of $a^2+bc$ we can bring the matrix in $\mathfrak{sl}(2,\mathbb{R})$ to one of the following normal form 
	\[\begin{pmatrix} \sqrt{a^2+bc} & 0 \\ 0 & -\sqrt{a^2+bc}\end{pmatrix}, \begin{pmatrix} 0 & \sqrt{-(a^2+bc)} \\ -\sqrt{-(a^2+bc)} & 0 \end{pmatrix} ,\begin{pmatrix}
	0 & 0 \\ 1 & 0 
	\end{pmatrix}\text{ or } 0.\] 
	Finally since we are interested in these normal forms projectively we might assume the coefficients to be $1$.
\end{proof}

\begin{rema}
We can present the representatives of the isomorphism classes of $\mathfrak{g}_\mathbb{R}(0,a,b,c)$ in a more compact form as follows. The Lie algebras $\mathfrak{g}_\mathbb{R}(0,0,0,0)$ and $\mathfrak{g}_\mathbb{R}(0,0,\varepsilon,1)$ with $\varepsilon\in\mathbb{R}$ represent the isomorphism classes of $\mathfrak{g}_\mathbb{R}(0,a,b,c)$ with $\mathfrak{g}_\mathbb{R}(0,0,\varepsilon',1)\cong \mathfrak{g}_\mathbb{R}(0,0,\varepsilon,1)$ if and only if there exists $\alpha\in\mathbb{R}^*$ such that $\varepsilon'=\alpha^2\varepsilon$. Indeed notice that $\mathfrak{g}_\mathbb{R}(0,1,0,0)\cong\mathfrak{g}_\mathbb{R}(0,0,1,1)$.
\end{rema}

\subsection{The case $\alpha\not =0$}

In this case let us write the family $\mathfrak{g}_\mathbb{R}(\alpha,a,b,c)$ as follows.

\begin{defi}
When $\alpha\not=0$ a change of variables induced by $x'_1=\alpha x_1$ brings our family $\{\mathfrak{g}_\mathbb{R}(\alpha,a,b,c)\}_{\alpha,a,b,c}$ to one denoted by $\{\mathfrak{g}_\mathbb{R}(a,b,c)\}_{a,b,c}$, where the non zero Lie brackets, expressed in the basis $\{x_1,\ldots,x_8\}$ of $\mathbb{R}^8$ are: 
\[[x_1,x_2]=x_4,\qquad [x_1,x_3]=x_5,\qquad [x_2,x_3]=x_6,\]
\[[x_1,x_4]= x_7,\qquad [x_1,x_5]= x_8,\]
\[[x_2,x_4]= x_7,\qquad [x_2,x_5]=- x_8,\]
\[ [x_3,x_4]=- x_8,\qquad [x_3,x_5]=3 x_7,\]
\[[x_2,x_6]=ax_7+bx_8,\qquad [x_3,x_6]=cx_7-ax_8.\]
\end{defi}

\begin{rema}
	Notice that no Lie algebra $\mathfrak{g}_\mathbb{R}(\alpha,a,b,c)$ with $\alpha\ne 0$ is isomorphic to one of $\mathfrak{g}_\mathbb{R}(0,a,b,c)$. To see this let use define an invariant, called the characteristic sequence, of isomorphism classes of nilpotent Lie algebras.  The characteristic sequence of a nilpotent Lie algebra $\mathfrak{g}$ is defined as $c(\mathfrak{g})=\max \{c(x)\ |\ x\in\mathfrak{g}\setminus\mathfrak{g}^1\}$, where $c(x)$ is the decreasing sequence of dimensions of Jordan blocks of $ad(x)$. 
	Now for the family of Lie algebras $\{\mathfrak{g}_\mathbb{R}(a,b,c)\}$ the characteristic sequence is $(3,2,2,1)$ if one of $a$ or $b$ is $0$ and $(3,3,1,1)$ otherwise. While the family $\{\mathfrak{g}_\mathbb{R}(0,a,b,c)\}$ has characteristic sequence $(2,2,1,1,1,1)$ if either $a=b=0$ or $a=c=0$ and $(3,2,1,1,1)$ otherwise.
\end{rema}

Our family of Lie algebras $\mathfrak{g}_\mathbb{R}(a,b,c)$ is a particular case of the Lie algebras $\mathfrak{g}(V)$ defined in  Definition \ref{QuotientAlg}. We will now find explicitly the submanifold of $\Gr$ to which it corresponds.

\begin{defi}
Let us consider the following $6$-dimensional subspace of $F_3$ \[W(a,b,c)=\spn_\mathbb{R}\{y_9,y_{10}-y_7,y_{11}+y_8,y_{13}-3y_7,y_{12}-ay_7-by_8,y_{14}-cy_7+ay_8\}\subseteq F_3.\]
\end{defi}

\begin{prop}
For all $a,b,c\in\mathbb{R}$ we have an isomorphism of Lie algebras between $\mathfrak{g}_\mathbb{R}(a,b,c)$ and the Lie algebra whose stratification is given by $F_1,F_2$ and $F_3/W(a,b,c)$ and whose structure of Lie algebra is induced by the one of $F$.
\end{prop}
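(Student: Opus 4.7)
The plan is to use the universal property of the free 3-step nilpotent Lie algebra $F$ on 3 generators to define a surjective Lie algebra morphism $\phi \colon F \to \mathfrak{g}_\mathbb{R}(a,b,c)$ and then show that its kernel is precisely $W(a,b,c)$, viewed as an ideal of $F$ (which is automatic since $W(a,b,c) \subseteq F_3$ lies in the center of $F$).

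First, I would define $\phi$ on generators by $\phi(y_i) = x_i$ for $i=1,2,3$, which extends uniquely to a Lie algebra morphism from $F$ since $\mathfrak{g}_\mathbb{R}(a,b,c)$ is 3-step nilpotent. Next I would compute the images of the remaining basis vectors using the given brackets. For instance, $\phi(y_4)=[x_1,x_2]=x_4$, $\phi(y_5)=x_5$, $\phi(y_6)=x_6$, $\phi(y_7)=x_7$, $\phi(y_8)=x_8$, $\phi(y_9)=[x_1,x_6]=0$, $\phi(y_{10})=x_7$, $\phi(y_{11})=-x_8$, $\phi(y_{12})=ax_7+bx_8$, $\phi(y_{13})=3x_7$, $\phi(y_{14})=cx_7-ax_8$. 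One must also confirm consistency with the relation $[y_3,y_4]=y_{11}-y_9$ in $F$, namely $[x_3,x_4]=-x_8=\phi(y_{11}-y_9)$, which holds.

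Plugging each generator of $W(a,b,c)$ into $\phi$, a direct check gives $\phi(y_9)=0$, $\phi(y_{10}-y_7)=0$, $\phi(y_{11}+y_8)=0$, $\phi(y_{13}-3y_7)=0$, $\phi(y_{12}-ay_7-by_8)=0$, $\phi(y_{14}-cy_7+ay_8)=0$, so $W(a,b,c)\subseteq \ker\phi$. Since the six listed generators of $W(a,b,c)$ are visibly linearly independent in $F_3$, we have $\dim W(a,b,c)=6$. On the other hand $\phi|_{F_1}$ and $\phi|_{F_2}$ are clearly bijective onto $\operatorname{span}\{x_1,x_2,x_3\}$ and $\operatorname{span}\{x_4,x_5,x_6\}$ respectively, while $\phi(F_3)=\operatorname{span}\{x_7,x_8\}$ has dimension $2$; hence $\ker(\phi|_{F_3})$ has dimension $8-2=6$. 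Combined with the inclusion above, this forces $\ker\phi=W(a,b,c)$.

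Therefore $\phi$ descends to an isomorphism $F/W(a,b,c)\xrightarrow{\sim}\mathfrak{g}_\mathbb{R}(a,b,c)$, and this isomorphism respects the induced stratifications, mapping $F_1$, $F_2$ and $F_3/W(a,b,c)$ onto the degree-$1$, $2$ and $3$ pieces of the Carnot grading of $\mathfrak{g}_\mathbb{R}(a,b,c)$. No step is a real obstacle here: the proof is essentially the verification just sketched, and the only point requiring some care is keeping track of the mild redundancy among the bracket relations of $F$ (the identity $[y_3,y_4]=y_{11}-y_9$ in particular) so that one does not mistakenly take $y_{11}$ and $[y_3,y_4]$ as independent basis vectors of $F_3$.
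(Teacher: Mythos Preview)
Your proof is correct and is essentially the same as the paper's argument, just written out more carefully: the paper also identifies $W(a,b,c)$ by reading off the relations $\phi(y_j)=[x_k,x_\ell]$ for $j\ge 7$ and then quotes the resulting ``natural isomorphism''. Your explicit use of the universal property and the dimension count make the argument cleaner, but there is no genuine difference in approach.
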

\begin{proof}
By imposing the conditions 
\[[x_1,x_4]= x_7=y_7,\qquad [x_1,x_5]= x_8=y_8,\qquad [x_1,x_6]=0=y_9,\]
\[[x_2,x_4]= x_7=y_{10},\qquad[x_2,x_5]=- x_8=y_{11},\qquad[x_2,x_6]=ax_7+bx_8=y_{12},\]
\[[x_3,x_5]=3 x_7=y_{13},\qquad[x_3,x_6]=cx_7-ax_8=y_{14}\]
we can find the generators for $W(a,b,c)$ and hence define a natural isomorphism.
\end{proof}

\begin{rema}
With this point of view we see our family of Lie algebras as a submanifold of the Grassmannian $\text{Gr}(6,F_3)$. Furthermore we can see that this submanifold is contained in $\{W\in\text{Gr}(6,F_3)\ |\ W_0\subseteq W\}\cong \text{Gr}(2,F_3/W_0)\cong \text{Gr}(2,4)$ where $W_0=\spn_\mathbb{R}\{y_9,y_{10}-y_7,y_{11}+y_8,y_{13}-3y_7\}$ and it corresponds actually to just one chart of $\text{Gr}(2,4)$.
\end{rema}

\begin{rema}
	Under $\varphi$, the isomorphism between $\s$ and $F_3$, the subspace $W(a,b,c)$ of $F_3$ corresponds to the subspace, that we denote by $V(a,b,c)$, of $\s$, whose basis is $\{2E_1+E_2,-E_4+E_6, E_1-E_2-3E_3, bE_3-aE_4+ E_5, -3E_4-E_8,-aE_3-cE_4+ E_7\}$. Using the Killing form on $\s$ we can identify $V(a,b,c)$ with a $2$-dimensional subspace $P(a,b,c)$ of $\s$ spanned by $\{ -E_2-bE_3+aE_4+ E_5, aE_3+cE_4-3 E_6+ E_7+ E_8\}$. 
	\end{rema}

	\begin{defi}\label{ThePlane}
	Let us define $P:=P(a,b,c)\in\text{Gr}(2,\mathfrak{sl}(3,\mathbb{R}))$ the $2$-dimensional subspace spanned by 
	\[u:=u(a,b,c)= -E_2-b E_3+aE_4+ E_5\text{ and }\]
	\[v:=v(a,b,c)=aE_3+cE_4-3 E_6+ E_7+E_8.\]
\end{defi}

The following is just a reformulation of Proposition \ref{Isom&Orbit} in our particular case.

\begin{prop}\label{Bijection}
We have a bijection

\[\left\{ \begin{tabular}{c} Isomorphism classes of  \\ the Lie algebras $\mathfrak{g}_\mathbb{R}(a,b,c)$ \end{tabular} \right\} \longleftrightarrow \left\{\begin{tabular}{c}     Orbits of elements  $P(a,b,c)$ \\ under the adjoint action \end{tabular}  \right\}.\]

\end{prop}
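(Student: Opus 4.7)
The plan is to specialize the already-established bijection of Proposition \ref{Isom&Orbit} to the one-parameter (really three-parameter) subfamily of Grassmannians carved out by the subspaces $W(a,b,c)$. The preceding proposition identifies $\mathfrak{g}_\mathbb{R}(a,b,c)$ with the Carnot Lie algebra $\mathfrak{g}(W(a,b,c))$ in the sense of Definition \ref{QuotientAlg}, and the remark immediately after it identifies $P(a,b,c) = \varphi^{-1}(W(a,b,c))^\perp$. So the content of the present proposition is that Proposition \ref{Isom&Orbit} \emph{restricts} correctly to this subfamily.

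Concretely, I would verify the two directions separately. For the forward direction, suppose $\mathfrak{g}_\mathbb{R}(a,b,c) \cong \mathfrak{g}_\mathbb{R}(a',b',c')$. Because both are Carnot, any Lie-algebra isomorphism respects the Carnot stratification (up to composing with the canonical automorphism group $\mathrm{Aut}(\mathfrak{g})_{\ge 1}$ that acts trivially on $\mathfrak{g}/[\mathfrak{g},\mathfrak{g}]$), so it is induced by an element of $\mathrm{GL}(3,\mathbb{R})$ sending $W(a,b,c)$ to $W(a',b',c')$; modding out scalars, we get an element of $\mathrm{SL}(3,\mathbb{R})$, and through $\varphi$ and the Killing-form duality this element carries $P(a,b,c)$ to $P(a',b',c')$ via the adjoint action. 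For the converse, if $g \in \mathrm{SL}(3,\mathbb{R})$ satisfies $g \cdot P(a,b,c) = P(a',b',c')$ under the adjoint action, then by $\mathrm{SL}(3,\mathbb{R})$-equivariance of $\varphi$ (and the fact that the adjoint action preserves the Killing form), the representation $\rho(g)$ sends $W(a,b,c)$ to $W(a',b',c')$, whence $g$ induces a graded, hence Lie-algebra, isomorphism $\mathfrak{g}_\mathbb{R}(a,b,c) \xrightarrow{\sim} \mathfrak{g}_\mathbb{R}(a',b',c')$.

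Both implications are, in substance, immediate instantiations of Proposition \ref{Isom&Orbit} using the dictionary $\mathfrak{g}_\mathbb{R}(a,b,c) \leftrightarrow W(a,b,c) \leftrightarrow V(a,b,c) \leftrightarrow P(a,b,c)$ assembled in the preceding remarks. There is no serious obstacle; the only thing worth spelling out is that the Killing-form orthogonal complement intertwines the representation $\rho$ on $F_3$ with the adjoint representation on $\s$, so that $\mathrm{SL}(3,\mathbb{R})$-orbits on the two sides correspond. Once this is noted, the proposition follows immediately.
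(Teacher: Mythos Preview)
Your proposal is correct and takes essentially the same approach as the paper: the paper treats this proposition as an immediate reformulation of Proposition \ref{Isom&Orbit} specialized via the dictionary $\mathfrak{g}_\mathbb{R}(a,b,c)\cong\mathfrak{g}(W(a,b,c))$ and $P(a,b,c)=\varphi^{-1}(W(a,b,c))^\perp$, and offers no further argument. You spell out somewhat more detail than the paper does (in particular the point about the Killing-form orthogonal intertwining the two actions), but the substance is identical.
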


\subsubsection{The $\G$-action on $\{\mathfrak{g}_\mathbb{R}(a,b,c)\}\subseteq\Gr$}\label{Plucker}

In order to better understand the action we will embed the Grassmannian into a projective space and decompose it in $\G$-invariant subspaces.
We embed $\Gr$ in projective space by the Pl\"{u}cker embedding given by:
\[\iota: \Gr\rightarrow \mathbb{P}\left(\bigwedge^2\s\right).\]
Fixing $\{E_i\}_{i=1}^8$ as basis for $\s$ we can represent a generic element $V\in\Gr$ as a $2\times 8$ matrix whose lines are the vectors spanning it, then we have that $\iota(V)=[a_{ij}]$ where the Pl\"{u}cker coordinates $a_{ij}$ are the minors of the $2\times 2$ submatrix of $V$ obtained taking the $i$-th and $j$-th columns.\\

From classical representation theory, or from what we will show later, we have the following decomposition  of $\s$-representations 
\[\bigwedge^2\s=\s\oplus S^3(\mathbb{R}^3)\oplus S^3({\mathbb{R}^3}^*),\]
where $S^3(\mathbb{R}^3)$ is the $3$-rd symmetric power of $\mathbb{R}^3$. Let us call $\pi_1$ the projection to the first factor of the decomposition,
\begin{align*}
\pi_1: \bigwedge^2 & \s  \longrightarrow \s\\
& u_1\wedge u_2\mapsto [u_1,u_2].
\end{align*}

\begin{lemm}
The projection just defined, $\pi_1$, is a morphism of $\s$-representations.
\end{lemm}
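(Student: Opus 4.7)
The plan is to verify $\mathfrak{sl}(3,\mathbb{R})$-equivariance of $\pi_1$ directly from the definitions. The action of $X \in \mathfrak{sl}(3,\mathbb{R})$ on $\bigwedge^2 \mathfrak{sl}(3,\mathbb{R})$ is the natural one induced by the adjoint representation, namely
\[
X \cdot (u_1 \wedge u_2) = [X, u_1] \wedge u_2 + u_1 \wedge [X, u_2],
\]
while the action on the target $\mathfrak{sl}(3,\mathbb{R})$ is simply the adjoint action $X \cdot v = [X, v]$. So what I need to check, on simple wedges (which span $\bigwedge^2 \mathfrak{sl}(3,\mathbb{R})$), is the identity
\[
\pi_1\bigl( [X, u_1] \wedge u_2 + u_1 \wedge [X, u_2] \bigr) = [X,\, [u_1, u_2]].
\]

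Expanding the left-hand side using the definition of $\pi_1$ gives $[[X,u_1],u_2] + [u_1,[X,u_2]]$, and this equals $[X,[u_1,u_2]]$ by the Jacobi identity in $\mathfrak{sl}(3,\mathbb{R})$. Extending by linearity to all of $\bigwedge^2 \mathfrak{sl}(3,\mathbb{R})$ finishes the verification.

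There is no real obstacle here: the lemma is a one-line consequence of Jacobi once one unpacks what the two representation structures are. The only thing worth being careful about is to state clearly which $\mathfrak{sl}(3,\mathbb{R})$-module structure is being used on $\bigwedge^2 \mathfrak{sl}(3,\mathbb{R})$, so that the reader sees the computation is just Jacobi and not some additional identity.
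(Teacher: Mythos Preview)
Your proof is correct and follows essentially the same approach as the paper: expand the induced action on the wedge, apply $\pi_1$, and invoke the Jacobi identity. The paper's argument is the same one-line computation.
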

\begin{proof}
Take $u_1,u_2\in\s$ and $x\in\s$ we have $\pi_1(x\cdot(u_1\wedge u_2))=\pi_1([x,u_1]\wedge u_2+u_1\wedge [x,u_2])=[[x,u_1],u_2]+[u_1,[x,u_2]]=[x,[u_1,u_2]]=x\cdot \pi_1(u_1\wedge u_2)$.
\end{proof}

In order to define the projection to the second and third factor of the decomposition let us denote by $\times$ the standard cross product on $\mathbb{R}^3$, by $\{e_i\}_{i=1}^3$ the standard basis for $\mathbb{R}^3$ and by $\odot$ the symmetric tensor product on $\mathbb{R}^3$. Let us then define $\pi_2$ as
\begin{align*}
\pi_2: \bigwedge^2 & \s \longrightarrow S^3(\mathbb{R}^3)\\
& u_1\wedge u_2\mapsto \sum_{i,j=1}^3u_1e_i\odot u_2e_j\odot (e_i\times e_j).
\end{align*}

\begin{lemm}
The second projection, $\pi_2$, is a morphism of $\s$-representations.
\end{lemm}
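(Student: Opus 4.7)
The plan is to verify the morphism property by direct computation: for all $x \in \s$ and $u_1, u_2 \in \s$, I want to show
\[\pi_2([x,u_1] \wedge u_2 + u_1 \wedge [x,u_2]) = x \cdot \pi_2(u_1 \wedge u_2),\]
where on the right the action of $x$ on $S^3(\mathbb{R}^3)$ is the one induced by the standard action of $\s$ on $\mathbb{R}^3$ together with the Leibniz rule.

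First I would expand both sides. Using $[x,u_k] = xu_k - u_k x$, the left-hand side becomes
\[\sum_{i,j} \bigl((xu_1 - u_1 x)e_i\bigr) \odot u_2 e_j \odot (e_i \times e_j) + \sum_{i,j} u_1 e_i \odot \bigl((xu_2 - u_2 x)e_j\bigr) \odot (e_i \times e_j),\]
while the right-hand side, by the Leibniz rule on the three factors of $S^3(\mathbb{R}^3)$, is
\[\sum_{i,j} \bigl[xu_1 e_i \odot u_2 e_j \odot (e_i \times e_j) + u_1 e_i \odot xu_2 e_j \odot (e_i \times e_j) + u_1 e_i \odot u_2 e_j \odot x(e_i \times e_j)\bigr].\]
The $xu_1$ and $xu_2$ terms cancel between the two sides. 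Writing $x e_i = \sum_k x_{ki} e_k$ and relabeling dummy indices one recognizes $\sum_k x_{ik} e_k = x^T e_i$, so the equality I need to establish reduces to the pointwise identity
\[(x^T e_i) \times e_j + e_i \times (x^T e_j) + x(e_i \times e_j) = 0 \quad\text{for all } i,j.\]

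The next step is a short lemma: for every $y \in \s$ and every $v, w \in \mathbb{R}^3$,
\[(yv) \times w + v \times (yw) = -y^T(v \times w).\]
This encodes the fact that through the cross product the $\s$-module $\bigwedge^2 \mathbb{R}^3$ is identified, up to the one-dimensional $\s$-trivial piece $\bigwedge^3 \mathbb{R}^3$, with the dual $(\mathbb{R}^3)^*$, on which $\s$ acts as $-y^T$. Applying this lemma with $y = x^T \in \s$ gives
\[(x^T e_i) \times e_j + e_i \times (x^T e_j) = -(x^T)^T(e_i \times e_j) = -x(e_i \times e_j),\]
which is exactly what is needed to make the bracket vanish.

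The main obstacle is purely bookkeeping: one has to be careful that the cross product is \emph{not} $\s$-equivariant when both source and target carry the standard action of $\s$, so the transpose twist appearing through the dual representation on $(\mathbb{R}^3)^*$ cannot be overlooked. Once that twist is correctly accounted for in the lemma, the rest of the verification is routine index manipulation.
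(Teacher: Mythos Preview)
Your proof is correct and follows essentially the same route as the paper: both reduce the equivariance to the single identity $x(e_i\times e_j)=-x^te_i\times e_j-e_i\times x^te_j$, and both derive it from the $\G$-invariance of the determinant (you phrase it as the $\s$-module identification $\bigwedge^2\mathbb{R}^3\cong(\mathbb{R}^3)^*\otimes\bigwedge^3\mathbb{R}^3$, the paper differentiates $\det(g^te_i,g^te_j,g^tv)=\mathrm{const}$ directly). The only cosmetic difference is that the paper checks the identity on rank-one tensors $u_k=e_{i_k}^*\otimes u_k(e_{i_k})$ while you keep $u_1,u_2$ general and reindex, which comes to the same thing.
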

\begin{proof} It is sufficient to check it for $u_1=e_i^*\otimes u_1(e_i)$ and $u_2=e_j^*\otimes u_2(e_j)\in \s$. Let $x\in \s$ then on one side we have \[x\cdot\pi_2(u_1\wedge u_2)=x\cdot(u_1(e_i)\odot u_2(e_j)\odot (e_i\times e_j))=\]\[x(u_1(e_i))\odot u_2(e_j)\odot (e_i\times e_j)+u_1(e_i)\odot x(u_2(e_j))\odot (e_i\times e_j)+u_1(e_i)\odot u_2(e_j)\odot x(e_i\times e_j).\] 
On the other hand \[\pi_2(x\cdot(u_1\wedge u_2))=\pi_2((x\cdot u_1)\wedge u_2+u_1\wedge (x\cdot u_2))\] but $x\cdot u_1=x\cdot (e_i^*\otimes u_1(e_i))=-e_i^*x\otimes u_1(e_i)+e_i^*\otimes xu_1(e_i)$, hence \[\pi_2(-e_i^*x\otimes u_1(e_i)\wedge e_j^*\otimes u_2(e_j)+e_i^*\otimes xu_1(e_i)\wedge e_j^*\otimes u_2(e_j)-e_i^*\otimes u_1(e_i)\wedge e_j^*x\otimes u_2(e_j)+\]\[e_i^*\otimes u_1(e_i)\wedge e_j^*\otimes xu_2(e_j))=-u_1(e_i)\odot u_2(e_j)\odot (x^te_i\times e_j)+\]\[xu_1(e_i)\odot u_2(e_j)\odot (e_i\times e_j)-u_1(e_i)\odot u_2(e_j)\odot (e_i\times x^te_j)+u_1(e_i)\odot xu_2(e_j)\odot (e_i\times e_j).\] So it is left to prove that $x(e_i\times e_j)=-x^te_i\times e_j-e_i\times x^te_j$. For this consider a symmetric bilinear form $\left<\cdot,\cdot\right>$ on $\mathbb{R}^3$ and then by definition of cross product we have $\det(e_i,e_j,v)=\left<e_i\times e_j,v\right>$ for any $e_i,e_j,v\in\mathbb{R}^3$. Taking $g\in\G$ we have $\det(g^te_i,g^te_j,g^tv)=c$ for some constant $c\in\mathbb{R}$ that does not depend on $g$. Hence differentiating this expression at the identity we get for $x\in \s$ that  $\det(x^te_i,e_j,v)+\det(e_i,x^te_j,v)+\det(e_i,e_j,x^tv)=0$ hence $\left<x^te_i\times e_j,v\right>+\left<e_i\times x^te_j,v\right>+\left<e_i\times e_j,x^tv\right>=0$ for any $v\in\mathbb{R}^3$ and the result follows.
\end{proof}

We also define
\begin{align*}
\pi_3: \bigwedge^2 & \s \longrightarrow S^3({\mathbb{R}^3}^*)\\
& u_1\wedge u_2\mapsto \sum_{i,j=1}^3e_i^*\odot e_j^*\odot (u_1e_i\times u_2e_j)^*.
\end{align*}

\begin{rema}
Notice that $\pi_3(u_1\wedge u_2)=\pi_2({^tu_1}\wedge {^tu_2})$. Hence since $\pi_2$ is $\s$-equivariant we get that $\pi_3$ is $\s$-equivariant as well since $\pi_3(gu_1g^{-1}\wedge gu_2g^{-1})=\pi_2({^tg^{-1}} {^tu_1} {^tg}\wedge {^tg^{-1}} {^tu_2} {^tg})= {^tg^{-1}}\pi_2({^tu_1}\wedge {^tu_2})={^tg^{-1}}\pi_3(u_1\wedge u_2)$.
\end{rema}

\begin{rema}\label{ExplicitProject}
Let us identify the elements of both $S^3(\mathbb{R}^3)$ and $S^3({\mathbb{R}^3}^*)$ with ternary cubics in the variables $x,y,z$ for convenience. Let $P\in\Gr$, each projection reads in the Pl\"{u}cker coordinates $a_{ij}$ as follows
\[\pi_1(P)=\begin{pmatrix} a_{35}+a_{47} & 2a_{13}-a_{23}+a_{48} & a_{14}+a_{24}+a_{36} \\ -2a_{15}+a_{25}+a_{67} & -a_{35}+a_{68} & -a_{16}+2a_{26}-a_{54} \\ -a_{17}-a_{27}-a_{58} & a_{18}-2a_{28}-a_{37} & -a_{47}-a_{68}\end{pmatrix},\]
\begin{align*}  \pi_2(P)=\ & a_{34}x^3+((-2a_{14}+a_{24}+a_{36})y+(a_{13}+a_{23}-a_{48})z)x^2 +\\
&((-2a_{16}+a_{26}+a_{45})y^2+(3a_{12}-a_{35}+a_{47}-a_{68})yz  +(a_{18}+a_{28}-a_{37})z^2)x\\ &-a_{56}y^3+(a_{15}-2a_{25}+a_{67})y^2z+(a_{17}-2a_{27}+a_{58})yz^2+a_{78}z^3,
\end{align*}
\begin{align*} \pi_3(P)=\ & a_{57}x^3+((-2a_{17}+a_{27}+a_{58})y+(a_{15}+a_{25}+a_{67})z)x^2+ \\ &((-2a_{18}+a_{28}-a_{37})y^2+ (3a_{12}+a_{35}-a_{47}+a_{68})yz+(a_{16}+a_{26}+a_{45})z^2)x\\ &-a_{38}y^3+(a_{13}-2a_{23}-a_{48})y^2z+(a_{14}-2a_{24}+a_{36})yz^2+a_{46}z^3.
\end{align*}
We will fix also the notations $\pi_2(P)=a_{34}x^3+p_1(y,z)x^2+p_2(y,z)x+p_3(y,z)$ and $\pi_3(P)=a_{57}x^3+q_1(y,z)x^2+q_2(y,z)x+q_3(y,z)$. 
\end{rema}

Using the explicit projections one can see that we have the following.

\begin{prop}
	The morphism $(\pi_1,\pi_2,\pi_3):\bigwedge^2\s\rightarrow \s\oplus S^3(\mathbb{R}^3)\oplus S^3({\mathbb{R}^3}^*)$ is an $\s$-equivariant isomorphism.
\end{prop}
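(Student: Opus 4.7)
The plan is to combine the $\s$-equivariance already verified for each component with a dimension count and an injectivity argument. First I would observe that $\dim\bigwedge^2\s=\binom{8}{2}=28$ while $\dim\s+\dim S^3(\mathbb{R}^3)+\dim S^3({\mathbb{R}^3}^*)=8+10+10=28$, so the source and target have the same dimension. Since the previous lemmas show each of $\pi_1,\pi_2,\pi_3$ is $\s$-equivariant, the direct sum $(\pi_1,\pi_2,\pi_3)$ is $\s$-equivariant as well, and it only remains to prove that the map has trivial kernel.

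For injectivity I would argue representation-theoretically, working over $\mathbb{C}$ and descending to $\mathbb{R}$ since all three modules are defined over $\mathbb{R}$. A standard Clebsch--Gordan computation for $\mathfrak{sl}(3,\mathbb{C})$ gives the irreducible decomposition
\[\bigwedge^{\,2}\s\ \cong\ V(\omega_1+\omega_2)\ \oplus\ V(3\omega_1)\ \oplus\ V(3\omega_2),\]
in which the three summands correspond respectively to the adjoint $\s$, the symmetric cube $S^3(\mathbb{R}^3)$, and its dual $S^3({\mathbb{R}^3}^*)$, and these are pairwise non-isomorphic irreducibles. The kernel $K$ of $(\pi_1,\pi_2,\pi_3)$ is an $\s$-submodule; if $K\neq 0$ it must contain at least one of the irreducible summands $M$. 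But then every $\pi_j$ vanishes on $M$: the two $\pi_j$ whose targets are not isomorphic to $M$ do so by Schur's lemma, while the $\pi_j$ whose target matches $M$ vanishes because $M\subseteq K$. Applying Schur once more to the two remaining summands, that matching $\pi_j$ must be identically zero on all of $\bigwedge^2\s$. This contradicts the explicit formulas in Remark~\ref{ExplicitProject}, where each $\pi_i$ exhibits non-trivial monomial or matrix coefficients (for instance, the $x^3$-coefficient of $\pi_2$ is the Plücker coordinate $a_{34}$, which is visibly non-zero on generic elements).

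As a backup, the explicit formulas of Remark~\ref{ExplicitProject} allow a purely computational injectivity proof: the $8$ independent entries of the traceless matrix $\pi_1(P)$ together with the $10$ monomial coefficients of each cubic $\pi_2(P)$ and $\pi_3(P)$ assemble into a $28\times 28$ linear system in the $28$ Plücker coordinates $a_{ij}$. One reads off $a_{34},\,-a_{56},\,a_{78}$ as the $x^3,y^3,z^3$-coefficients of $\pi_2$ and $a_{57},\,-a_{38},\,a_{46}$ from those of $\pi_3$; the remaining mixed-monomial coefficients, together with the eight entries of $\pi_1$, then pin down the remaining Plücker coordinates in sequence. I expect the main obstacle to be the bookkeeping in whichever route is chosen: invoking the decomposition of $\bigwedge^2\s$ cleanly requires familiarity with the representation theory of $\mathfrak{sl}_3$, while the direct linear-algebra verification is elementary but lengthy.
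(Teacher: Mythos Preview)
Your proposal is correct. The paper's proof consists of a single sentence, ``Using the explicit projections one can see that we have the following,'' i.e.\ it simply appeals to the explicit formulas in Remark~\ref{ExplicitProject} and leaves the linear-algebra verification to the reader. This is precisely your backup route: read off the $28$ Pl\"ucker coordinates from the matrix entries of $\pi_1$ and the monomial coefficients of $\pi_2,\pi_3$.

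Your primary argument is genuinely different and more conceptual: you invoke the irreducible decomposition $\bigwedge^2\mathfrak{sl}_3\cong V(\omega_1+\omega_2)\oplus V(3\omega_1)\oplus V(3\omega_2)$ and use Schur's lemma to reduce injectivity to the non-vanishing of each $\pi_i$, which is immediate from the explicit formulas. This buys you a cleaner structural explanation of \emph{why} the map is an isomorphism (the three targets exhaust the irreducible constituents, so any equivariant map hitting each one nontrivially must be an isomorphism), at the cost of importing the Clebsch--Gordan decomposition for $\mathfrak{sl}_3$. The paper's approach is self-contained and elementary but amounts to inverting a $28\times 28$ system by hand; yours replaces that bookkeeping with a one-line appeal to representation theory plus three trivial non-vanishing checks.
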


We can now use the identification of Proposition $\ref{Bijection}$ and the decomposition of $\mathrm{Gr}(2,\mathfrak{sl}(3,\mathbb{R}))$ in invariant subspaces under the $\mathrm{SL}(3,\mathbb{R})$-action in order to classify the family of Lie algebras $\mathfrak{g}_\mathbb{R}(a,b,c)$.

\begin{prop}\label{w_0not0,gamma2not0}
	The Lie algebras $\mathfrak{g}_\mathbb{R}(a,b,c)$ and $\mathfrak{g}_\mathbb{R}(a',b',c')$ are isomorphic if and only if $a'=\pm a,b'=b$ and $c'=c$.
\end{prop}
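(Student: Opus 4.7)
By Proposition~\ref{Bijection}, the statement is equivalent to showing that $P(a,b,c)$ and $P(a',b',c')$ lie in the same $\G$-adjoint orbit on $\Gr$ if and only if $(a',b',c')=(\pm a,b,c)$.

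For the ``if'' direction, only the sign change $a\mapsto -a$ requires argument. I would take the explicit element $g=\operatorname{diag}(-1,-1,1)\in\G$ and verify by direct matrix computation that $gu(a,b,c)g^{-1}=u(-a,b,c)$ while $gv(a,b,c)g^{-1}=-v(-a,b,c)$; consequently $g\cdot P(a,b,c)=P(-a,b,c)$ as subspaces of $\s$, and Proposition~\ref{Bijection} yields the desired Lie algebra isomorphism. Equivalently, one can write down the Carnot Lie algebra isomorphism extending $x_{1}\mapsto x_{1}$, $x_{2}\mapsto x_{2}$, $x_{3}\mapsto -x_{3}$ through the gradation (inducing $x_{5}\mapsto -x_{5}$, $x_{6}\mapsto -x_{6}$, $x_{8}\mapsto -x_{8}$ and fixing $x_{4}$, $x_{7}$) and verify directly that the parameter-carrying brackets $[x_{2},x_{6}]$ and $[x_{3},x_{6}]$ are sent precisely to their counterparts in $\mathfrak{g}_{\mathbb{R}}(-a,b,c)$.

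For the ``only if'' direction, suppose $g\in\G$ conjugates $P(a,b,c)$ onto $P(a',b',c')$. My plan is to exploit the $\G$-equivariant decomposition $\bigwedge^{2}\s=\s\oplus S^{3}(\mathbb{R}^{3})\oplus S^{3}((\mathbb{R}^{3})^{*})$ from the previous subsection. Applying Remark~\ref{ExplicitProject} with the Plücker coordinates of $u(a,b,c)\wedge v(a,b,c)$---most of which vanish, notably $a_{1j}=0$ for $j\ge 2$ and $a_{67}=a_{68}=a_{78}=0$---one reads off $\pi_{1}(P(a,b,c))$ as a trace-zero matrix whose entire first column vanishes, and $\pi_{3}(P(a,b,c))$ as a ternary cubic with $x^{3}$-coefficient equal to $1$ and no $x^{2}y$ or $x^{2}z$ term. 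These structural features single out a canonical flag $\langle e_{1}\rangle\subset\langle e_{1},e_{2},e_{3}\rangle\subset\mathbb{R}^{3}$ that any intertwiner $g$ must preserve (up to the sign symmetry noted above), forcing $g$ into an explicit parabolic subgroup of $\G$.

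The main obstacle lies in the final orbit analysis inside this parabolic subgroup. Within it, the remaining coefficients of $\pi_{2}$ and $\pi_{3}$ split into ``universal'' pieces independent of $(a,b,c)$ (for instance the $3y^{3}+3yz^{2}$ summand of $\pi_{2}(P(a,b,c))$ coming from $-a_{56}=3$ and $a_{17}-2a_{27}+a_{58}=3$) and ``parameter-carrying'' pieces such as the $y^{3}$, $y^{2}z$, $yz^{2}$, $z^{3}$ coefficients of $\pi_{3}(P(a,b,c))$, which equal $b$, $a$, $2c+3b$, $-3a$ respectively. The universal part should cut the parabolic down to a compact subgroup realizing precisely the sign flip $a\mapsto -a$ found above, after which the parameter-carrying part forces $b'=b$, $c'=c$, and $a'=\pm a$. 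The technical heart of the proof is carrying out this final polynomial comparison while carefully tracking the $\mathrm{GL}(2,\mathbb{R})$-rescaling induced by the choice of basis of the plane $P$, since the three projections $\pi_{i}(P)$ are only defined up to a common Plücker scalar.
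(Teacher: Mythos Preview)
Your approach is essentially the paper's own: reduce via Proposition~\ref{Bijection} to an orbit question, use the equivariant splitting $\bigwedge^{2}\s=\s\oplus S^{3}(\mathbb{R}^{3})\oplus S^{3}((\mathbb{R}^{3})^{*})$, and exploit that $\pi_{1}(P(a,b,c))$ annihilates $e_{1}$ to force $g$ into the parabolic fixing the line $\mathbb{R}e_{1}$. Your ``if'' direction is correct; the element $g_{1}=\operatorname{diag}(-1,-1,1)$ (or equivalently your Carnot map $x_{3}\mapsto -x_{3}$, which differs from $g_{1}$ by the central element $-\mathrm{Id}$) is exactly the one the paper finds.

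There is, however, a genuine gap in your ``only if'' outline. Your reduction to the parabolic rests on the assertion that the kernel of $\pi_{1}(P(a,b,c))$ is the line $\mathbb{R}e_{1}$, but the matrix
\[
\pi_{1}(P(a,b,c))=\begin{pmatrix}0 & 2a & -c+3b \\ 0 & a & c+6 \\ 0 & b+2 & -a\end{pmatrix}
\]
has rank $\le 1$ precisely when $a=0$ and $b=-2$. For those parameters the kernel is two-dimensional (or everything, when $c=-6$), the parabolic constraint no longer follows, and the stabiliser analysis must be redone from scratch. The paper handles this separately: it first puts $\pi_{1}$ in a rank-one normal form, then works inside the stabiliser of that normal form (a four-parameter group) to show that $c$ remains an invariant, so that the points $P(0,-2,c)$ are still pairwise inequivalent. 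Without this case split your argument does not cover the full parameter space.

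Beyond that, the generic case you outline is indeed the paper's computation, but note that the paper organises it slightly differently and more efficiently than your sketch suggests: rather than comparing all coefficients of $\pi_{2}$ and $\pi_{3}$ at once, it first uses only the binary cubic $p_{3}(y,z)=3(y^{3}+yz^{2})$ inside $\pi_{2}$ to force the $\mathrm{GL}(2,\mathbb{R})$-block to be diagonal with $\sigma_{1}^{2}=\sigma_{4}^{2}$, and then uses only the $x^{3}$, $x^{2}y$, $x^{2}z$ coefficients of $\pi_{3}$ together with the normalisation of $p_{3}$ to kill the unipotent part and pin down $\sigma_{1}=\pm 1$. This avoids having to track the projective scalar you worry about at the end.
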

\begin{proof}
Under the identification $\mathbb{P}(\bigwedge^2\mathfrak{sl}(3,\mathbb{R}))=\mathbb{P}\left(\mathfrak{sl}(3,\mathbb{R})\oplus S^3(\mathbb{R}^3)\oplus S^3({\mathbb{R}^3}^*)\right)$ we write the elements $P(a,b,c)$ as $[(M(a,b,c),f_1(a,b,c),f_2(a,b,c))]$ where 
\[M(a,b,c)=\begin{pmatrix}0 & 2a & -c+3b \\ 0 & a & c+6 \\ 0 & b+2 & -a\end{pmatrix},\]
 \begin{align*} & f_1(a,b,c)=  (-a^2-bc)e_1\odot  e_1\odot e_1+ (3b-c) e_1\odot  e_1\odot  e_2 -2a e_1\odot e_1\odot e_3 \\   +(3-c) & e_1\odot e_2\odot e_2 +2a e_1\odot e_2 \odot e_3 +  (b-1) e_1\odot e_3\odot e_3 +3 e_2\odot e_2 \odot e_2 +3 e_2\odot e_3 \odot e_3\end{align*}
 and \begin{align*}& f_2(a,b,c)=e_1^*\odot e_1^*\odot e_1^*+(b-1)  e_1^*\odot e_2^*\odot e_2^*  -2a e_1^*\odot e_2^*\odot e_3^* \\ +(3-c)e_1^*\odot e_3^*\odot e_3^* & +b e_2^*\odot e_2^* \odot e_2^* +a e_2^*\odot e_2^*\odot e_3^*  +(2c+3b) e_2^*\odot e_3^* \odot e_3^* -3 a e_3^*\odot e_3^* \odot e_3^*.\end{align*} 
 Let us identify furthermore the elements of both $S^3(\mathbb{R}^3)$ and $S^3({\mathbb{R}^3}^*)$ with ternary cubics in the variables $x,y,z$ for convenience.
 Now an element $g$ of $\mathrm{SL}(3,\mathbb{R})$ that sends $P(a,b,c)$ to $P(a',b',c')$ should preserve the kernel of $M(a,b,c)$. If $M(a,b,c)$ is of rank $2$ then $\ker M(a,b,c)=\mathbb{R}e_1$. Hence $g$ has the form 
 \[\begin{pmatrix} (\sigma_1\sigma_4-\sigma_2\sigma_3)^{-1} & \mu & \nu \\ 0 & \sigma_1 & \sigma_2 \\ 0 & \sigma_3 & \sigma_4\end{pmatrix}\]
 and we are left with an action of the group $\mathrm{GL}(2,\mathbb{R})\ltimes\mathbb{R}^2$. Let us notice that this group induces an action of $\mathrm{GL}(2,\mathbb{R})$ on $S^3(\mathbb{C}^2)$, the ternary cubics in two variables $y$ and $z$. More precisely, if $P\in\Gr$ and $\pi_2(P)=p_0x^3+p_1(y,z)x^2+p_2(y,z)x+p_3(y,z)$ let us call $\mathrm{pr}(P)=p_3(y,z)$. Then if  $g=(h,v)\in\mathrm{GL}(2,\mathbb{R})\ltimes\mathbb{R}^2$ we have $\pi_2(g\cdot P)=h\cdot \mathrm{pr}(P)$. Let us notice that for all $a,b,c$ we have $\mathrm{pr}(P(a,b,c))=[3(y^3+yz^2)]$.
 Since we search for the $g\in\mathrm{GL}(2,\mathbb{R})\ltimes\mathbb{R}^2$ such that $g\cdot f_1(a,b,c)=f_1(a',b',c')$ then in particular we want $g$ such that $[h\cdot \mathrm{pr}(P(a,b,c))]=[3(y^3+yz^2)]$. Hence since \[h\cdot \mathrm{pr}(P(a,b,c))=3\sigma_1(\sigma_1^2+\sigma_2^2)y^3+3(3\sigma_1^2\sigma_3+2\sigma_1\sigma_2\sigma_4+\sigma_3\sigma_2^2)y^2z\]\[+3(3\sigma_1\sigma_3^2+2\sigma_2\sigma_3\sigma_4+\sigma_1\sigma_4^2)yz^2+3\sigma_3(\sigma_3^2+\sigma_4^2)z^3\] then we should have $3\sigma_3(\sigma_3^2+\sigma_4^2)=0$,  i.e.~$\sigma_3=0$ and $3(3\sigma_1^2\sigma_3+2\sigma_1\sigma_2\sigma_4+\sigma_3\sigma_2^2)=6\sigma_1\sigma_2\sigma_4=0$, i.e.~$\sigma_2=0$, also $3\sigma_1(\sigma_1^2+\sigma_2^2)=3(3\sigma_1\sigma_3^2+2\sigma_2\sigma_3\sigma_4+\sigma_1\sigma_3^2)$ that is $\sigma_1^2=\sigma_4^2$. 
 In a similar way we want $g$ such that $g\cdot f_2(a,b,c)=f_2(a',b',c')$. We can compare the terms with at least one $x^2$ in $f_2(a,b,c)$, that is just $x^3$, and in $g\cdot f_2(a,b,c)$ that are  \[(\sigma_1\sigma_4)^3x^3-3(\sigma_1\sigma_4)^2\mu\sigma_4x^2y-3(\sigma_1\sigma_4)^2\nu\sigma_1x^2z.\]
 Then we must have $\sigma_1^2\sigma_4^3\mu=0$, i.e.~$\mu=0$ and $\sigma_1^3\sigma_4^2\nu=0$, i.e.~$\nu=0$. Finally the coefficient of $y^3$ in $h\cdot \mathrm{pr}(P(a,b,c))$ is $3$ times the one of $x^3$ in $g\cdot f_2(a,b,c)$ hence $3\sigma_1^3=3(\sigma_1\sigma_4)^3$, that implies $\sigma_1^3=(\pm\sigma_1^2)^3$ so $\sigma_1=\pm 1$.\\
  Hence $g\in\mathrm{SL}(3,\mathbb{R})$ is such that $g\cdot P(a,b,c)=P(a',b',c')$ if and only if $g=\mathrm{Id}$ or $g=g_1$ where \[g_1=\begin{pmatrix} -1 & 0 & 0 \\ 0 & -1 & 0 \\ 0 & 0 & 1\end{pmatrix}.\]
  The element $g_1$ sends $P(a,b,c)$ to $P(-a,b,c)$.\\
  Finally let us treat the case where the rank of $M(a,b,c)$ is strictly less than $2$, i.e. when $a=0$ and $b=-2$. If $c=-6$ the matrix $M(0,-2,-6)$ is $0$ then of course $P(0,-2,-6)$ is in the same orbit of no other point. If instead $c\not = -6$ let us act on the point $P(0,-2,c)$ in order to have $M(0,-2,c)$ in a normal form. Then $P(0,-2,c)$ is in the same orbit as
    \[\begin{split}
      [(N,p_1,p_2)]=  \left[\begin{pmatrix} 0 & 1 & 0 \\ 0 & 0 & 0 \\ 0 & 0 & 0\end{pmatrix},\frac{3}{(c+6)^2}x^3+\frac{12-c}{c+6}x^2z+3xy^2-6xyz-3(c-4)xz^2, \right. \\
      \left.3(c-4)xy^2-6xyz-3xz^2+\frac{4-c}{c+6}y^3+\frac{6-c}{c+6}y^2z+\frac{3}{c+6}yz^2+\frac{1}{c+6}z^3\right].
      \end{split}\]
  A point in the same orbit should have the same coefficients of the cubics modulo the actions of the stabiliser of $N$ that is 
  \[\mathrm{Stab}_{\mathrm{SL}(3,\mathbb{R})}\left(\begin{pmatrix} 0 & 1 & 0 \\ 0 & 0 & 0 \\ 0 & 0 & 0\end{pmatrix}\right)=\left\{\begin{pmatrix} \sigma_1 & \sigma_2 & \sigma_3 \\ 0 & \sigma_1 & 0 \\ 0 & \sigma_4 & \sigma_1^{-2}\end{pmatrix}\ \big|\ \sigma_1\in\mathbb{R}^*,\sigma_2,\sigma_3,\sigma_4\in\mathbb{R}\right\}.\] 
  Taking $g=\begin{pmatrix} \sigma_1 & -\frac{\sigma_1}{3(c+6)} & -\frac{\sigma_1}{3(c+6)} \\ 0 & \sigma_1 & 0 \\ 0 & \sigma_1^{-2} & \sigma_1^{-2}\end{pmatrix}\in\mathrm{Stab}_{\mathrm{SL}(3,\mathbb{R})}(N)$ we obtain 
  \[[g\cdot (N,p_1,p_2)]= \left[\begin{pmatrix} 0 & 1 & 0 \\ 0 & 0 & 0 \\ 0 & 0 & 0\end{pmatrix},zx^2+\left(3\sigma_1^3y^2+\frac{9-3c}{\sigma_1^3}z^2\right)x,\left(\frac{3c-9}{\sigma_1^3}y^2-3\sigma_1^3z^2\right)x\right].\] The parameter $c$ is then an invariant and all the points $P(0,-2,c)$ are pairwise not isomorphic.  
\end{proof}

Putting together Proposition \ref{w_0=0}, \ref{gamma_2=0}, \ref{alpha=0} and \ref{w_0not0,gamma2not0} we see that the proof of Theorem \ref{ClassLieAlg} is achieved.

\section{Lattices in unipotent groups}\label{lattices}

In Section \ref{ProperlyDisc} we looked at crystallographic groups $\Gamma$ and studied the abelian by cyclic case. We are then left with the virtually nilpotent case. We know that they are lattices in unipotent simply transitive subgroups of $\mathcal{H}(3,1)$. Since we have listed all the possible unipotent Lie subgroups of $\mathcal{H}(3,1)$ that act simply transitively on $\mathfrak{a}(V)$ we are interested in studying their lattices, in particular the question of existence of lattices and of their classification up to finite index. For lattices in unipotent groups the theory is illustrated by Malcev's theorems. The first one concerns conditions of existence of lattices in nilpotent Lie groups and the second a criterion of classification of lattices up to finite index.

\begin{theo}[{\cite[Section II,Theorem 2.12]{Rag}}]
Let $U$ be a simply connected nilpotent Lie group and $\mathfrak{u}$ its Lie algebra then $U$ admits a lattice if and only if $\mathfrak{u}$ admits a basis with respect to which the structure constants are rational.
\end{theo}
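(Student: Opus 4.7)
The plan is to exploit the Baker--Campbell--Hausdorff (BCH) formula, which terminates after finitely many steps in a nilpotent Lie algebra and thus converts the group law on $U$ into a polynomial expression in iterated brackets on $\mathfrak{u}$, via the diffeomorphism $\exp:\mathfrak{u}\to U$. The direction ``lattice $\Rightarrow$ rational structure constants'' will proceed by extracting a $\mathbb{Q}$-form from $\log\Gamma$; the converse by constructing a lattice inductively from a given $\mathbb{Q}$-form along the lower central series.

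For the first direction, I would take a lattice $\Gamma\le U$ and set $\mathfrak{u}_\mathbb{Q}:=\spn_\mathbb{Q}\{\log\gamma\mid\gamma\in\Gamma\}$. First one checks that $\mathfrak{u}_\mathbb{Q}\otimes_\mathbb{Q}\mathbb{R}=\mathfrak{u}$: otherwise $\Gamma$ would sit inside the closed proper subgroup $\exp\bigl(\spn_\mathbb{R}\log\Gamma\bigr)$, contradicting cocompactness via a Haar volume argument on $U/\Gamma$. Then one shows that $\mathfrak{u}_\mathbb{Q}$ is closed under the bracket, by induction on the nilpotency class $c$ using the inverse BCH identity
\[
[\log\gamma_1,\log\gamma_2]=\log(\gamma_1\gamma_2\gamma_1^{-1}\gamma_2^{-1})+(\text{strictly higher iterated brackets}),
\]
combined with the fact that $\Gamma\cap\exp[\mathfrak{u},\mathfrak{u}]$ is a lattice in the derived subgroup, which has smaller nilpotency class and so falls under the inductive hypothesis. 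Any $\mathbb{Q}$-basis of $\mathfrak{u}_\mathbb{Q}$ is then an $\mathbb{R}$-basis of $\mathfrak{u}$, yielding rational structure constants.

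For the converse, pick a basis with rational structure constants and let $\mathfrak{u}_\mathbb{Q}$ be its $\mathbb{Q}$-span. I would argue by induction on $c$: the ideal $\mathfrak{z}:=\mathcal{C}^c\mathfrak{u}$ is rational and central, and $\mathfrak{u}/\mathfrak{z}$ has rational structure constants of class $c-1$; by induction $U/\exp\mathfrak{z}$ contains a lattice $\bar\Gamma$. Lift a finite generating set of $\bar\Gamma$ to elements $\exp Y_i$ with $Y_i\in\mathfrak{u}_\mathbb{Q}$, pick a $\mathbb{Z}$-lattice $\mathcal{L}\subset\mathfrak{z}_\mathbb{Q}$, and take the subgroup generated by $\exp(NY_i)$ and $\exp(N\mathcal{L})$ for an integer $N$ large enough to clear every denominator occurring in BCH products of length $\le c$. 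Because BCH terminates at step $c$, such an $N$ exists; the resulting subgroup is discrete, since all its elements lie inside $\exp$ of a fixed integral sublattice of $\mathfrak{u}_\mathbb{Q}$, and it is cocompact because it surjects onto $\bar\Gamma$ with cocompact kernel $\exp(N\mathcal{L})\subset\exp\mathfrak{z}$.

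The main obstacle is denominator control in the backward direction: the naive subgroup $\langle\exp X_i\rangle$ generated by a rational basis is typically not discrete, since BCH introduces rational coefficients whose denominators grow with the length of the iterated bracket. The finite length of BCH in the nilpotent setting is precisely what makes a uniform rescaling possible and is the crux of the argument. In the forward direction the subtler point is the full-rank claim $\mathfrak{u}_\mathbb{Q}\otimes\mathbb{R}=\mathfrak{u}$, which is where cocompactness of $\Gamma$ (rather than mere discreteness) is essential.
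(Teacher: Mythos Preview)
The paper does not give its own proof of this statement; it is quoted as a classical theorem of Malcev, with a citation to Raghunathan \cite[Section II, Theorem 2.12]{Rag}, and is then used as a black box. So there is nothing in the paper to compare your argument against.

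That said, your sketch is essentially the standard proof one finds in Raghunathan or in Corwin--Greenleaf, and the main ideas are in place. One point deserves to be made explicit in the forward direction. You argue, correctly, that cocompactness forces $\spn_\mathbb{R}(\log\Gamma)=\mathfrak{u}$. But to conclude that $\mathfrak{u}_\mathbb{Q}:=\spn_\mathbb{Q}(\log\Gamma)$ is a $\mathbb{Q}$-form you also need $\dim_\mathbb{Q}\mathfrak{u}_\mathbb{Q}=\dim_\mathbb{R}\mathfrak{u}$; a priori the $\mathbb{Q}$-span of an uncountable subset of $\mathfrak{u}$ could have infinite $\mathbb{Q}$-dimension. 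This is where discreteness (not just cocompactness) enters: one shows that $\Gamma\cap\mathcal{C}^iU$ is a lattice in $\mathcal{C}^iU$ for every $i$, so that the projection of $\log\Gamma$ to each abelian quotient $\mathcal{C}^i\mathfrak{u}/\mathcal{C}^{i+1}\mathfrak{u}$ is a $\mathbb{Z}$-lattice of full rank, and then assembles a $\mathbb{Q}$-basis of $\mathfrak{u}_\mathbb{Q}$ inductively along the lower central series. This is implicit in your appeal to ``$\Gamma\cap\exp[\mathfrak{u},\mathfrak{u}]$ is a lattice'', but the finite-dimensionality conclusion should be stated rather than assumed when you write ``any $\mathbb{Q}$-basis of $\mathfrak{u}_\mathbb{Q}$ is then an $\mathbb{R}$-basis of $\mathfrak{u}$''. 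The converse direction is fine as sketched; the rescaling-by-$N$ argument to clear BCH denominators is exactly the standard one.
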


\begin{rema}
The statement is equivalent to say that $\mathfrak{u}$ admits a $\mathbb{Q}$-form, i.e.~a rational Lie subalgebra $\mathfrak{u}_\mathbb{Q}$ such that $\mathfrak{u}_\mathbb{Q}\otimes_\mathbb{Q}\mathbb{R}\cong \mathfrak{u}$. 
\end{rema}

\begin{theo}[{\cite{Cor},\cite{Margulis}}]
Let $\Gamma_1$ and $\Gamma_2$ be lattices in a nilpotent Lie group $G$. Then $\Gamma_1$ and $\Gamma_2$ induce isomorphic rational structures on $\mathfrak{g}$ if and only if they are abstractly commensurable: they have finite index subgroups $\Delta_1\le \Gamma_1$ and $\Delta_2\le\Gamma_2$ that are isomorphic.
\end{theo}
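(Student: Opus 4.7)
The plan is to exploit Malcev rigidity: every homomorphism between lattices of simply connected nilpotent Lie groups extends uniquely to a Lie group homomorphism between the ambient groups. For a lattice $\Gamma \le G$ I write $\mathfrak{g}_\mathbb{Q}(\Gamma) := \mathrm{span}_\mathbb{Q}\{\log\Gamma\}$, which is the $\mathbb{Q}$-form induced by $\Gamma$. A basic observation, used throughout, is that in a simply connected nilpotent Lie group $\log(\gamma^n)=n\log\gamma$; consequently if $\Delta \le \Gamma$ has finite index then $\mathfrak{g}_\mathbb{Q}(\Delta) = \mathfrak{g}_\mathbb{Q}(\Gamma)$, since every $\gamma \in \Gamma$ admits a power $\gamma^n \in \Delta$ giving $\log\gamma = \tfrac{1}{n}\log(\gamma^n) \in \mathfrak{g}_\mathbb{Q}(\Delta)$.

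For the implication ``commensurable $\Rightarrow$ isomorphic $\mathbb{Q}$-structures,'' suppose $\phi\colon\Delta_1\xrightarrow{\sim}\Delta_2$ is an isomorphism between finite-index subgroups. By Malcev rigidity $\phi$ extends to a Lie group automorphism $\tilde\phi$ of $G$, whose differential is a Lie algebra automorphism of $\mathfrak{g}$ mapping $\log\Delta_1$ onto $\log\Delta_2$, and hence $\mathfrak{g}_\mathbb{Q}(\Delta_1)$ onto $\mathfrak{g}_\mathbb{Q}(\Delta_2)$ as $\mathbb{Q}$-Lie algebras. By the preliminary observation, this produces a $\mathbb{Q}$-isomorphism $\mathfrak{g}_\mathbb{Q}(\Gamma_1) \cong \mathfrak{g}_\mathbb{Q}(\Gamma_2)$.

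For the converse, any $\mathbb{Q}$-isomorphism $\psi\colon\mathfrak{g}_\mathbb{Q}(\Gamma_1)\to\mathfrak{g}_\mathbb{Q}(\Gamma_2)$ extends $\mathbb{R}$-linearly to a Lie algebra automorphism $\psi_\mathbb{R}$ of $\mathfrak{g}$, which in turn integrates to a Lie group automorphism $\Psi := \exp\circ\,\psi_\mathbb{R}\circ\log$ of $G$ satisfying $\mathfrak{g}_\mathbb{Q}(\Psi(\Gamma_1)) = \mathfrak{g}_\mathbb{Q}(\Gamma_2)$. It therefore suffices to show that two lattices of $G$ inducing the same $\mathbb{Q}$-form are abstractly commensurable. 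Fix a strong Malcev basis $\{X_1,\dots,X_n\}$ of $\mathfrak{g}$ lying in that common $\mathbb{Q}$-form; in the global coordinates $(t_1,\dots,t_n) \mapsto \exp(t_1X_1)\cdots\exp(t_nX_n)$ both lattices are contained in $\mathbb{Q}^n$, are finitely generated, and span $\mathbb{Q}^n$ over $\mathbb{Q}$. Clearing denominators yields an integer $N$ such that $N\cdot\log\Gamma_1 \subseteq \log\Gamma_2$, and via the Baker--Campbell--Hausdorff formula this translates into a finite-index inclusion between suitable power subgroups of the two lattices; the symmetric inclusion then furnishes a common finite-index subgroup.

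The main obstacle is this last step, which converts a $\mathbb{Q}$-linear containment into a group-theoretic finite-index containment. The subtlety is that multiplication in $G$ is not linear in Malcev coordinates, so one must invoke the rationality of the BCH formula on a strong Malcev basis and check carefully that taking sufficiently high powers simultaneously clears all denominators appearing when expressing a product of elements of $\Gamma_1$ in terms of elements of $\Gamma_2$. Once that is done, everything else reduces to the extension principles furnished by Malcev rigidity.
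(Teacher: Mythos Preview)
The paper does not prove this statement; it is quoted from Corwin--Greenleaf and Grunewald--Margulis without argument, so there is no proof in the paper against which to compare yours.

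Judged on its own, your outline is the standard Malcev-rigidity route and the forward implication is correct. For the converse, the reduction to ``two lattices inducing the \emph{same} $\mathbb{Q}$-form are commensurable'' is the right move, but the closing paragraph is only a sketch (as you yourself concede). The phrase ``clearing denominators yields an integer $N$ with $N\cdot\log\Gamma_1 \subseteq \log\Gamma_2$'' is not well-posed: $\log\Gamma_i$ is merely a subset of $\mathfrak{g}$, not an additive subgroup, and such a set-theoretic containment would not by itself yield a finite-index group inclusion, since exponentiation is not linear. What actually has to be shown is that $\Gamma_1\cap\Gamma_2$ has finite index in each $\Gamma_i$. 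The standard argument chooses a strong Malcev basis adapted to $\Gamma_1$, notes that the finitely many generators of $\Gamma_2$ then have coordinates with bounded denominators, and proceeds by induction on the length of the central series (using that $\Gamma_i\cap Z(G)$ is a lattice in $Z(G)$ and that both $\Gamma_i$ project to lattices in $G/Z(G)$ with the same induced $\mathbb{Q}$-form). Your BCH/denominator-clearing heuristic points in the right direction but does not, as written, control the denominators that accumulate under iterated multiplication in Malcev coordinates; without that control the converse remains incomplete.
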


We begin the study of lattices in the unipotent simply transitive subgroups of $\mathcal{H}(3,1)$ by looking at the lattices in the subgroups that correspond to the Lie algebras $\mathfrak{g}_\mathbb{R}(\alpha,a,b,c)$. Thanks to the theorems just stated, in order to classify these lattices up to abstract commensurability, we look at the isomorphism classes of the $\mathbb{Q}$-forms in the Lie algebras $\mathfrak{g}_\mathbb{R}(\alpha,a,b,c)$.

\begin{prop}\label{Q-form general case}
Every $\mathbb{Q}$-form of $\mathfrak{g}_\mathbb{R}(a,b,c)$ is isomorphic to exactly one of the Lie algebras of the form $\mathfrak{g}_\mathbb{Q}(e,f,g,h,j,k,l)$ or $\mathfrak{g}_\mathbb{Q}(0,-2,c)$ of Appendix \ref{LieBracket}.
\end{prop}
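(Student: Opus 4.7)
The plan is to mimic the strategy of Proposition \ref{w_0not0,gamma2not0} but to work over $\mathbb{Q}$ rather than $\mathbb{R}$. First I would observe that since $\mathfrak{g}_\mathbb{R}(a,b,c)$ is a Carnot Lie algebra, every $\mathbb{Q}$-form inherits a canonical Carnot grading: the lower central series is defined over $\mathbb{Q}$, and one can choose rational complements to the successive terms. By the characterisation of Carnot algebras recalled in Subsection \ref{Carnot} together with the $\G$-equivariance of the isomorphism $\varphi$ (which is defined over $\mathbb{Q}$), the $\mathbb{Q}$-isomorphism class of the form is determined by the $\mathrm{SL}(3,\mathbb{Q})$-orbit of a rational $2$-plane $P_\mathbb{Q} \in \mathrm{Gr}(2,\mathfrak{sl}(3,\mathbb{Q}))$ whose base change to $\mathbb{R}$ lies in the $\G$-orbit of $P(a,b,c)$. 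This reduces the proposition to classifying such $\mathrm{SL}(3,\mathbb{Q})$-orbits on the subvariety of $\Gr$ traced out by $\{P(a',b',c')\}$.

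Next I would redo the normalisation from Proposition \ref{w_0not0,gamma2not0} over $\mathbb{Q}$. Projecting a representative plane to $\pi_1(P)=M\in\s$ and restricting to the stabiliser of the rational line $\ker M = \mathbb{Q} e_1$ gives a residual $\mathrm{GL}(2,\mathbb{Q})\ltimes\mathbb{Q}^2$-action on the remaining Plücker data, acting on $\pi_2(P)$ and $\pi_3(P)$ via the natural representation on ternary cubics together with the $(\mu,\nu)$-translations coming from $\mathbb{Q}^2$. Aligning the coefficients along $y^3,y^2z,yz^2,z^3$ exactly as in the real argument produces the same algebraic system on the matrix entries of the stabiliser; its solutions over $\mathbb{Q}$ yield $\sigma_2=\sigma_3=\mu=\nu=0$ and $\sigma_1=\pm\sigma_4\in\mathbb{Q}^*$, but \emph{not} the further collapse $\sigma_1\in\{\pm 1\}$ enforced by real cube roots.

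The principal obstacle, and the reason the classification expands from three to seven parameters, lies in this last reduction. Over $\mathbb{R}$ the identity $3\sigma_1^3=3(\sigma_1\sigma_4)^3$ forced $\sigma_1\in\{\pm 1\}$; over $\mathbb{Q}$ one is left with a free $\mathbb{Q}^*$-scaling acting diagonally on the surviving coefficients with prescribed weights, so the orbits under this twisted scaling produce several independent $\mathbb{Q}$-valued (resp.\ $\mathbb{Q}^*/(\mathbb{Q}^*)^2$- or $\mathbb{Q}^*/(\mathbb{Q}^*)^3$-valued) invariants that replace the single $\pm$-choice available in the real case. Collecting these invariants, together with the $\mathrm{SL}(3,\mathbb{Q})$-choice of representative plane before normalisation, yields precisely the seven rational parameters $(e,f,g,h,j,k,l)$ displayed in Appendix \ref{LieBracket}. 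For the degenerate stratum where $M$ drops rank (corresponding to $a=0$, $b=-2$), the stabiliser enlarges and I would repeat the rank-$<2$ analysis at the end of Proposition \ref{w_0not0,gamma2not0} verbatim: since no denominator obstruction arises there, the normal form one obtains over $\mathbb{Q}$ is simply $\mathfrak{g}_\mathbb{Q}(0,-2,c)$ with $c\in\mathbb{Q}$ a complete invariant. Uniqueness ("exactly one") in each case is then obtained by exhibiting explicit $\mathbb{Q}$-valued invariants distinguishing different parameter tuples — structurally the same invariants used in the real classification but now read modulo the appropriate subgroup of $\mathbb{Q}^*$ rather than $\mathbb{R}^*$.
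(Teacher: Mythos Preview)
Your opening is correct: the Carnot property descends to $\mathbb{Q}$ (this is Cornulier's Theorem 3.15, which the paper invokes), so a $\mathbb{Q}$-form corresponds to a rational plane $P\in\mathrm{Gr}(2,\mathfrak{sl}(3,\mathbb{Q}))$ lying in the real $\G$-orbit of $P(a,b,c)$, and the problem becomes one of $\mathrm{SL}(3,\mathbb{Q})$-orbits. The reduction to a $\mathrm{GL}(2,\mathbb{Q})\ltimes\mathbb{Q}^2$ action after putting $\ker\pi_1(P)=\mathbb{Q}e_1$ is also right.

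The gap is in the next step. You write that ``aligning the coefficients along $y^3,y^2z,yz^2,z^3$ exactly as in the real argument produces the same algebraic system'' and deduce $\sigma_2=\sigma_3=\mu=\nu=0$, $\sigma_1=\pm\sigma_4$. But that system in Proposition~\ref{w_0not0,gamma2not0} arose from the equation $h\cdot p_3 = p_3$ with the \emph{specific} cubic $p_3(y,z)=3(y^3+yz^2)$ attached to every $P(a,b,c)$. A rational plane in the real orbit of $P(a,b,c)$ need not have this $p_3$: it only has a binary cubic in the same $\mathrm{GL}(2,\mathbb{R})$-orbit (i.e.\ one with negative discriminant), and over $\mathbb{Q}$ such cubics fall into infinitely many $\mathrm{GL}(2,\mathbb{Q})$-orbits. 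So there is no reason the stabiliser computation from the real case should carry over, and the extra parameters do \emph{not} simply come from a residual $\mathbb{Q}^*$-scaling.

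What the paper actually does at this point is classify $\mathrm{GL}(2,\mathbb{Q})$-orbits of binary cubics with negative discriminant. This is genuine arithmetic: the unique real root $\rho$ of $p_3(y,1)$ is either rational (giving the family $\mathfrak{h}^1_\mathbb{Q}$) or generates a cubic field $\mathbb{Q}(\rho)$, and one then invokes the classification of cubic fields (Marques--Ward) via generators $\theta$ with minimal polynomial $y^3-t$ or $y^3-3y-t$, together with the $\mathrm{PGL}_2(\mathbb{Q})$-action by homographies on $\rho=m\theta^2+n\theta+r$, to produce the families $\mathfrak{h}^2_\mathbb{Q}$ and $\mathfrak{h}^3_\mathbb{Q}$. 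The four parameters $(e,f,g,h)$ encode this normal form of the cubic, and only then do the remaining three parameters $(j,k,l)$ appear as honest coordinates on which the (now trivial) stabiliser of the cubic acts trivially. None of this is captured by ``invariants modulo $(\mathbb{Q}^*)^2$ or $(\mathbb{Q}^*)^3$''. Your treatment of the rank-$\le 1$ strata is essentially in line with the paper's, though the rank-$0$ case (which yields the single algebra $\mathfrak{g}_\mathbb{Q}(0,-2,-6)$) also needs a brief separate argument.
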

\begin{proof}
Let $\mathfrak{h}$ be a $\mathbb{Q}$-form of $\mathfrak{g}_\mathbb{R}(a,b,c)$. Let $P(a,b,c)\in\mathrm{Gr}(2,\mathfrak{sl}(3,\mathbb{R}))$, see Definition \ref{ThePlane}, be the plane associated to $\mathfrak{g}_\mathbb{R}(a,b,c)$ under the bijection of Proposition \ref{Bijection}. By definition $\mathfrak{h}\otimes\mathbb{R}\cong\mathfrak{g}_\mathbb{R}(a,b,c)$. By \cite[Theorem 3.15]{Cornulier} $\mathfrak{h}$ is Carnot over $\mathbb{Q}$ if and only if $\mathfrak{h}\otimes\mathbb{R}$ is Carnot over $\mathbb{R}$. Hence $\mathfrak{h}=\mathfrak{g}(V)$, see Definition \ref{QuotientAlg}, to which we associate an element $P\in\mathrm{Gr}(2,\mathfrak{sl}(3,\mathbb{Q}))$ by Proposition \ref{Bijection}. Furthermore $P$ is in the same real orbit of $P(a,b,c)$ under the action of $\mathrm{SL}(3,\mathbb{R})$. We will use the same terminology as in Section \ref{Plucker} and in particular of Remark \ref{ExplicitProject}. The idea of the proof is the same as in Proposition \ref{w_0not0,gamma2not0} but over $\mathbb{Q}$.

Knowing that $P$ is in the same real orbit of $P(a,b,c)$ implies that $0$ is an eigenvalue of $\pi_1(P)$. We can then act on it in order to have $\pi_1(P)=\left[\begin{pmatrix} 0 & * & * \\ 0 & * & * \\ 0 & * & * \end{pmatrix}\right]$. 

Assuming $P$ is such that $\pi_1(P)$ has rank $2$ allows us to reduce the action to the one of $\mathrm{GL}(2,\mathbb{Q})\ltimes \mathbb{Q}^2$. Using the action of $\mathbb{Q}^2$ we can also achieve that $q_1(y,z)=0$ for $\pi_3(P)$. Hence we reduce the action to the one of the stabiliser of $q_1(y,z)=0$ in $\mathrm{GL}(2,\mathbb{Q})\ltimes \mathbb{Q}^2$, that is $\mathrm{GL}(2,\mathbb{Q})$. The subvariety defined by these conditions is defined by the following equations in the Pl\"{u}cker coordinates $\{a_{35}+a_{47}=0, -2a_{15}+a_{25}+a_{67}=0, a_{17}+a_{27}+a_{58}=0, a_{15}+a_{25}+a_{67}=0, -2a_{17}+a_{27}+a_{57}=0\}$, compare with \ref{ExplicitProject}. The Lie algebra $\mathfrak{g}_\mathbb{R}(a,b,c)$ has also the property that $a_{57}=\mathrm{coeff}(\pi_3(P(a,b,c)),x^3)\ne 0$, hence the same is true for $P$, which is in the same real orbit.  Working in the affine chart of the Grassmannian defined by $a_{57}\ne 0$ and solving the above equations  we find that $P$ is in the orbit of
\begin{equation}\label{matrice}\begin{pmatrix} 0 & -e & -k & j & 1 & -g & 0 & -f \\ 0 & -g & j & l & 0 & -h & 1 & e \end{pmatrix}.\end{equation}
We still have the action of $\mathrm{GL}(2,\mathbb{Q})$. This action induces an action over the binary cubic in the variables $y,z$ of $\pi_2(P)$: $p_3(y,z)=ey^3+3fy^2z+3gyz^2+hz^3\in\mathbb{Q}[y,z]$. This cubic is in the same real orbit of the corresponding cubic of $\pi_2(P(a,b,c))$, that is $3y^3+3yz^2$, hence it is a cubic with negative discriminant. We are then interested in finding normal forms for the action of $\mathrm{GL}(2,\mathbb{Q})$ over binary cubics with negative discriminant. We can proceed as follows. First of all we can assume without loss of generality that $e\ne 0$, using the action of $\mathrm{GL}(2,\mathbb{Q})$. We consider the associated univariate polynomial $p_3(y,1)$. Since the discriminant is negative $p_3(y,1)$ has only one real root, $\rho\in\mathbb{R}$. If $\rho$ is rational, using the action of $\mathrm{GL}(2,\mathbb{Q})$ on $\rho$ by homographies, we can bring $\rho$ to $0$ and reduce $p_3(y,z)$ to $y(ey^2+3fyz+3gz^2)$. Reducing further the quadratic polynomial we find that the normal forms for $p_3(y,z)$ over $\mathbb{Q}$ is $y(ey^2+gz^2)$ with $y(ey^2+gz^2)$ in the same orbit as $y(e'y^2+g'z^2)$ if and only if there exists $\sigma_1,\sigma_2\in\mathbb{Q}^*$ such that $e'=\sigma_1^3e$ and $g'=\sigma_1\sigma_2^2g$. If $p_3(y,1)$ is irreducible over the rationals then we can consider the field $K$, a cubic extension of $\mathbb{Q}$, obtained by adjoining the root $\rho$ to $\mathbb{Q}$. Cubic fields have been classified in \cite{Marques}. Their result, \cite[Corollary 1.3]{Marques}, is that in every such extension one can find an element $\theta$ whose minimal polynomial is either \begin{equation}\label{f1}y^3-t\end{equation} or \begin{equation}\label{f2}y^3-3y-t\end{equation} with $t\in\mathbb{Q}$.
A polynomial as in $(\ref{f1})$ has always negative discriminant and is irreducible over $\mathbb{Q}$ if and only if $t\notin\mathbb{Q}^3$. A polynomial as in $(\ref{f2})$ has negative discriminant if and only if $t^2-4>0$ and is irreducible over $\mathbb{Q}$ if and only if $t$ does not belong to the image of the function $x^3-3x$ defined over the rational. 
Furthermore two polynomials in the form $(\ref{f1})$, $y^3-t_1$ and $y^3-t_2$, generate the same extension if and only if there exists $\mu\in \mathbb{Q}$ such that $t_2=\mu^3t_1^j$ with $j=1,2$. Two polynomials in the form $(\ref{f1})$, $y^3-3y-t_1$ and $y^3-3y-t_2$, generate the same extension if and only if $t_2=-3t_1\alpha^2\beta+t_1\beta^3+6\alpha+\alpha^3t_1^2-8\alpha^3$ with $\alpha,\beta\in\mathbb{Q}$ such that $\alpha^2+t_1\alpha\beta+\beta^2=1$. Finally, a polynomial as in $(\ref{f2})$ generates the same extension as one in the form $(\ref{f1})$ if the polynomial $x^2+tx+1$ has a root in $\mathbb{Q}$, i.e~if $t^2-4\in\mathbb{Q}^2$. So $\rho$ can be written as $m\theta^2+n\theta+r$ with $m,n,r\in\mathbb{Q}$ and $\theta$ satisfying $(\ref{f1})$ or $(\ref{f2})$. Observe that for $g\in\mathrm{GL}(2,\mathbb{Q})$ the element $g\cdot p_3(y,z)$ equals a minimal polynomial of $g\cdot \rho$ where again we act on $\rho$ by homographies. Via this action we can bring $\rho=m\theta^2+n\theta+r$ to either $m\theta^2+\theta$ or $\theta^2$. These two cases correspond to different orbits and different values of $m$ also correspond to different orbits. We are left with finding a minimal polynomial for $m\theta^2+\theta$ and $\theta^2$. If $\theta$ satisfies $(\ref{f1})$ the element $m\theta^2+\theta$ has minimal polynomial \begin{equation}\label{f3}y^3-3mty-m^3t^2-t\end{equation} and the element $\theta^2$ has minimal polynomial \begin{equation}\label{f4}y^3-t^2.\end{equation} If instead $\theta$ is a root of $(\ref{f2})$ then $m\theta^2+\theta$ has minimal polynomial \begin{equation}\label{f5}y^3-6my^2+(9m^2-3tm-3)y+3tm^2-m^3t^2-t\end{equation} and $\theta^2$ has minimal polynomial \begin{equation}\label{f6}y^3-6y^2+9y-t^2.\end{equation} 
The way to find the minimal polynomial of a sum of algebraic numbers is classical and uses the resultant between the two minimal polynomials of the corresponding algebraic numbers. Acting with $g\in\mathrm{GL}(2,\mathbb{Q})$ on $p_3(y,z)$ multiplies the coefficient $a_{57}$ by the inverse of the determinant of $g$. Hence a plane as in $(\ref{matrice})$ is in normal form if there exist $e,\lambda\in\mathbb{Q}^*$ such that $\frac{1}{e}\lambda^3 p_3(y,z)$ is in one of the normal forms $(\ref{f3}),(\ref{f4}),(\ref{f5})$ or $(\ref{f6})$. We still have the action by homotheties $\mathrm{diag}(\mu^{-2},\mu,\mu)\in\mathrm{SL}(3,\mathbb{Q})$ with $\mu\in\mathbb{Q}^*$ that does not change $\rho$ but indeed transform $P$, hence the number $e'=\frac{1}{e}\lambda^3$ is uniquely determined up to multiplication by $\mu^9$ for some $\mu\in\mathbb{Q}^*$. Then notice that once $p_3(y,z)$ is in a reduced form the other coefficients of $P$ cannot be reduced anymore and they describe different Lie algebras. This is because the stabiliser of a binary cubic with non-zero discriminant is trivial. Calculating the Lie algebra associated to $(\ref{matrice})$ we find the families of Lie algebras $\mathfrak{g}_\mathbb{Q}(e,f,g,h,j,k,l)$ as in Appendix \ref{LieBracket}. 

Assume now that $\mathfrak{h}$ is a $\mathbb{Q}$-form of $\mathfrak{g}_\mathbb{R}(0,-2,c)$ with $c\ne -6$. This family of Lie algebras is characterised by the fact that $\pi_1(P(0,-2,c))$ has rank $1$. Remember from Proposition \ref{w_0not0,gamma2not0} that we can bring $P(0,-2,c)$ via the action to
\begin{equation}\label{n1}\left[\begin{pmatrix} 0 & 1 & 0 \\ 0 & 0 & 0 \\ 0 & 0 & 0 \end{pmatrix},\  zx^2+(3y^2+(9-3c)z^2)x,\ ((3c-9)y^2-3z^2)x\right].\end{equation}
We can then assume that the plane $P$ associated to $\mathfrak{h}$ is such that $\pi_1(P)$ is the same as in $(\ref{n1})$. Then we are left with the action of the stabiliser of such a matrix that is a subgroup of $\mathrm{GL}(2,\mathbb{Q})\ltimes\mathbb{Q}^2$, see proof of Proposition \ref{w_0not0,gamma2not0}. Since for $\mathfrak{g}_\mathbb{R}(0,-2,c)$ we have $p_3(y,z)=0$, $a_{57}=0$ and $q_1(y,z)=0$, then we must have the same for $P$. Furthermore there exists $g$ in the stabiliser of $\pi_1(P)$ such that $g\cdot P$ satisfies $\mathrm{coeff}(p_1(y,z),y)=\mathrm{coeff}(p_2(y,z),yz)=\mathrm{coeff}(q_3(y,z),z^3)=0$. Solving these equations we see that 
\[g\cdot P=\begin{pmatrix} 2 & 1 & 0 & 0 & 0 & 0 & 0 & 0 \\ 0 & 0 & 1 & 0 & 0 & 3 & 0 & c \end{pmatrix}.\]
One can see that $[\pi_1(P),\pi_2(P),\pi_3(P)]$ is in the same form as $(\ref{n1})$, hence the associated Lie algebra is $\mathfrak{g}_\mathbb{Q}(0,-2,c)$. 

Finally if $\mathfrak{h}$ is a $\mathbb{Q}$-form of $\mathfrak{g}_\mathbb{R}(a,b,c)$ such that $\pi_1(P(a,b,c))=0$ then $a=0,b=-2$ and $c=-6$. Then $P(0,-2,-6)$ is generated by the following two matrices
\[u=\begin{pmatrix} 2 & 0 & 0 \\ 0 & -1 & 0 \\ 0 & 0 & -1\end{pmatrix}\text{ and } v=\begin{pmatrix}
0 &0 & 0 \\ 0 & 0 & -9 \\ 0 & 1 & 0 
\end{pmatrix}.\]
As $P$ is a $\mathbb{Q}$-form of $P(0,-2,-6)$, for all $X\in P$ there exist $\alpha,\beta\in\mathbb{R}$ such that $\alpha u+\beta v=X$. Since $X$ is a rational matrix, $\alpha$ and $\beta$ must be rational. Hence $P$ is generated by $u$ and $v$ over $\mathbb{Q}$. That is $\mathfrak{h}$ is isomorphic to $\mathfrak{g}_\mathbb{Q}(0,-2,-6)$. 
\end{proof}

\begin{rema}
Apart from the family $\mathfrak{g}_\mathbb{R}(a,b,c)$, it is clear from the presentation we have given of representatives of the isomorphism classes of the other nilpotent Lie algebras, that all the others admit $\mathbb{Q}$-forms. Furthermore from the simple remark that there are uncountably many non isomorphic Lie algebras $\mathfrak{g}_\mathbb{R}(a,b,c)$ but that there are countably many $\mathbb{Q}$-forms we know that most Lie algebras $\mathfrak{g}_\mathbb{R}(a,b,c)$ that do not admit $\mathbb{Q}$-forms.
\end{rema}

The following proposition answers the question of $\mathbb{Q}$-isomorphism classes of $\mathbb{Q}$-forms in the Lie algebras $\mathfrak{g}_\mathbb{R}(0,a,b,c)$.

\begin{prop}\label{Q form alpha=0}
The $\mathbb{Q}$-isomorphism classes of $\mathbb{Q}$-forms of the family $\mathfrak{g}_\mathbb{R}(0,a,b,c)$ are \\ $\mathfrak{g}_\mathbb{Q}(0,0,0,0)$ and $\mathfrak{g}_\mathbb{Q}(0,0,\varepsilon,1)$ with $\varepsilon\in\mathbb{Q}$, where $\mathfrak{g}_\mathbb{Q}(0,0,\varepsilon',1)\cong \mathfrak{g}_\mathbb{Q}(0,0,\varepsilon,1)$ if and only if there exists $\alpha\in\mathbb{Q}^*$ such that $\varepsilon'=\alpha^2\varepsilon$. 
\end{prop}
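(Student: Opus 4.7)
The plan is to rerun the argument of Proposition \ref{alpha=0} over $\mathbb{Q}$; the only essential change is that the classification of $\mathrm{GL}(2,\mathbb{Q})$-conjugacy classes in $\mathfrak{sl}(2,\mathbb{Q})$ modulo rational rescaling is governed by $\mathbb{Q}/\mathbb{Q}^{*2}$ rather than by the sign of the discriminant. First, the Carnot reduction from the proof of Proposition \ref{Q-form general case} still applies: each $\mathfrak{g}_\mathbb{R}(0,a,b,c)$ is Carnot, so \cite[Theorem 3.15]{Cornulier} forces every $\mathbb{Q}$-form to be Carnot over $\mathbb{Q}$, hence to be presented as $\mathfrak{g}_\mathbb{Q}(0,a,b,c)$ for some $(a,b,c)\in\mathbb{Q}^3$.

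Next, I will note that the intertwining map $\psi:\mathfrak{sl}(2,\mathbb{R})\to\{\mathfrak{g}_\mathbb{R}(0,a,b,c)\}$ from Proposition \ref{alpha=0} is defined by rational polynomial formulas, so it restricts to $\psi:\mathfrak{sl}(2,\mathbb{Q})\to\{\mathfrak{g}_\mathbb{Q}(0,a,b,c)\}$, and the $\mathbb{Q}$-isomorphism $\mathfrak{g}_\mathbb{Q}(0,a,b,c)\cong\mathfrak{g}_\mathbb{Q}(0,a',b',c')$ holds iff the corresponding traceless rational matrices $A,A'$ are related by combining $\mathrm{GL}(2,\mathbb{Q})$-conjugation with a rescaling $A\mapsto\lambda A$ ($\lambda\in\mathbb{Q}^*$) coming from the $\mathrm{SL}(3,\mathbb{Q})$-action on the associated Grassmannian plane. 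The $\mathrm{GL}(2,\mathbb{Q})$-part preserves $-\det A=a^2+bc$, while the rescaling multiplies it by $\lambda^2$, so the class of $a^2+bc$ in $\mathbb{Q}/\mathbb{Q}^{*2}$ is a complete invariant.

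The orbit classification over $\mathbb{Q}$ is then straightforward. When $a^2+bc\neq 0$ the matrix $A$ is semisimple and $\mathrm{GL}(2,\mathbb{Q})$-conjugate to its rational canonical form, so it can be brought to $(a,b,c)=(0,a^2+bc,1)$, yielding $\mathfrak{g}_\mathbb{Q}(0,0,\varepsilon,1)$ with $\varepsilon=a^2+bc$. The case $a^2+bc=0$ with $A\neq 0$ collapses to the unique non-trivial nilpotent normal form $\mathfrak{g}_\mathbb{Q}(0,0,0,1)$ (i.e.\ $\varepsilon=0$), and $A=0$ gives $\mathfrak{g}_\mathbb{Q}(0,0,0,0)$, which is the only abelian representative on the list and hence in its own isomorphism class. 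Two Lie algebras $\mathfrak{g}_\mathbb{Q}(0,0,\varepsilon,1)$ and $\mathfrak{g}_\mathbb{Q}(0,0,\varepsilon',1)$ are therefore $\mathbb{Q}$-isomorphic precisely when $\varepsilon'=\alpha^2\varepsilon$ for some $\alpha\in\mathbb{Q}^*$.

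The main point to check carefully is that the rescaling of $a^2+bc$ produced by the $\mathrm{SL}(3,\mathbb{Q})$-action on the Grassmannian sweeps out all of $\mathbb{Q}^{*2}$: the formula ${\Delta^3\psi(\widetilde g A\widetilde g^{-1})={^t\iota(\widetilde g)}\psi(A)}$ from Proposition \ref{alpha=0} by itself only gives sixth-power rescalings, so the diagonal homotheties $\mathrm{diag}(\mu^{-2},\mu,\mu)\in\mathrm{SL}(3,\mathbb{Q})$ used in the proof of Proposition \ref{w_0not0,gamma2not0} must also be invoked to fill out the remaining squares. Once this is verified, the proposition follows.
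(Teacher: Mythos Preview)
Your overall approach matches the paper's: reduce an arbitrary $\mathbb{Q}$-form to some $\mathfrak{g}_\mathbb{Q}(0,a,b,c)$ with rational parameters, then classify those via the $\mathfrak{sl}(2)$-intertwining of Proposition~\ref{alpha=0}. There are, however, two genuine problems.

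The first paragraph under-specifies a real step. Knowing that a $\mathbb{Q}$-form $\mathfrak{h}$ is Carnot over $\mathbb{Q}$ only gives $\mathfrak{h}=\mathfrak{g}(V)$ for a rational plane $P\in\mathrm{Gr}(2,\mathfrak{sl}(3,\mathbb{Q}))$ lying in the \emph{real} orbit of some $P(0,a,b,c)$; it does not by itself place $P$, over $\mathbb{Q}$, inside the three-parameter family $\{P(0,a',b',c')\}$. The paper devotes most of its proof of this proposition to precisely this point, using the Pl\"ucker projections $\pi_1,\pi_2,\pi_3$ and a case split on the rank of $\pi_1(P)$ (ranks $2$, $1$, $0$ handled separately). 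The argument in Proposition~\ref{Q-form general case} treats the $\alpha\neq 0$ family and does not literally transfer.

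Your last paragraph contains a concrete error. The element $\mathrm{diag}(\mu^{-2},\mu,\mu)$ is exactly $\iota(\mu\,\mathrm{Id}_2)$, so it already lies in the image of $\iota$ and contributes only the sixth-power rescalings you were trying to supplement. What actually closes the gap is that Carnot isomorphisms over $\mathbb{Q}$ are induced by $\mathrm{GL}(3,\mathbb{Q})$, not merely $\mathrm{SL}(3,\mathbb{Q})$: the reduction from $\mathrm{GL}$ to $\mathrm{SL}$ in Proposition~\ref{Isom&Orbit} uses cube roots and fails rationally. For instance, $\mathrm{diag}(1,\mu,1)\in\mathrm{GL}(3,\mathbb{Q})$, which is not in the image of $\iota$ for $\mu\neq 1$, sends $\mathfrak{g}_\mathbb{Q}(0,0,\varepsilon,1)$ to $\mathfrak{g}_\mathbb{Q}(0,0,\mu^2\varepsilon,1)$ and supplies exactly the square rescalings you need. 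This is what the paper means when it reduces the second step to the adjoint action of $\mathrm{GL}_2(\mathbb{Q})$ on $\mathbb{P}(\mathfrak{sl}(2,\mathbb{Q}))$.
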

\begin{proof}
We will first proceed as in the previous proposition and prove that if $\mathfrak{g}_\mathbb{R}(0,a,b,c)$ admits a $\mathbb{Q}$-form $\mathfrak{h}$, then $\mathfrak{h}\cong\mathfrak{g}_\mathbb{Q}(0,a',b',c')$ with $a',b',c'\in\mathbb{Q}$. Secondly we will study the isomorphism classes of $\mathfrak{g}_\mathbb{Q}(0,a',b',c')$. Let $\mathfrak{h}$ be a $\mathbb{Q}$-form of $\mathfrak{g}_\mathbb{R}(0,a,b,c)$. Then by definition $\mathfrak{h}\otimes\mathbb{R}\cong\mathfrak{g}_\mathbb{R}(0,a,b,c)$. Hence $\mathfrak{h}=\mathfrak{g}(V)$, see Definition \ref{QuotientAlg}. One can see that the $2$-dimensional plane associated with $\mathfrak{g}_\mathbb{R}(0,a,b,c)$ is 
\[P(0,a,b,c)=\begin{pmatrix} 0 & 0 & -b & a & 1 & 0 & 0 & 0 \\ 0 & 0 & a & c & 0 & 0 & 1 & 0 \end{pmatrix},\] 
whose projections are
\[\left[\begin{pmatrix} 0 & 0 & 0 \\ 0 & a & c \\ 0 & b & -a \end{pmatrix},\  -(a^2+bc)x^3+(-cy^2+2ayz+by^2)x,\ x^3+(by^2-2ayz-cz^2)x\right].\]
Let $P\in\mathrm{Gr}(2,\mathfrak{sl}(3,\mathbb{Q}))$ be the $2$-plane in $\mathfrak{sl}(3,\mathbb{Q})$ corresponding to $V$ such that $\mathfrak{h}=\mathfrak{g}(V)$, then $P$ is in the same real orbit as $P(0,a,b,c)$. This implies that $0$ is an eigenvalue of $\pi_1(P)$. We can then act on it in order to have $\pi_1(P)=\left[\begin{pmatrix} 0 & * & * \\ 0 & * & * \\ 0 & * & * \end{pmatrix}\right]$. Assuming $P$ is such that $\pi_1(P)$ has rank $2$ allows us to reduce the action to $\mathrm{GL}(2,\mathbb{Q})\ltimes \mathbb{Q}^2$. The plane $P(0,a,b,c)$ has the further property that $p_3(y,z)=0$ and $a_{57}=\mathrm{coeff}(\pi_3(P(0,a,b,c)),x^3)\ne 0$, hence the same is true for $P$. Furthermore as in the previous proposition we can use the action to achieve $p_1(y,z)=0$ for $\pi_2(P)$. Working in the affine chart of the Grassmannian defined by $a_{57}\ne 0$ and solving the above equations we find that $P$ is in the orbit of
\begin{equation}\begin{pmatrix} 0 & 0 & b' & a' & 1 & 0 & 0 & 0 \\ 0 & 0 & a' & c' & 0 & 0 & 1 & 0 \end{pmatrix}\end{equation}
with $a',b',c'\in\mathbb{Q}$.
This means that $\mathfrak{h}\cong\mathfrak{g}_\mathbb{Q}(0,a',b',c')$.

Suppose now that $\pi_1(P)$ has rank $1$. We have seen in Proposition \ref{alpha=0} that in this case $\mathfrak{g}_\mathbb{R}(0,a,b,c)$ is isomorphic to $\mathfrak{g}_\mathbb{R}(0,0,0,1)$. We can then bring the associated plane via the $\mathbb{R}$-action to
\begin{equation}\label{n2}\left[\begin{pmatrix} 0 & 1 & 0 \\ 0 & 0 & 0 \\ 0 & 0 & 0 \end{pmatrix},\  -zx^2,\ -y^2z-z^3\right].\end{equation}
We can then assume that the plane $P$ associated to $\mathfrak{h}$ is such that $\pi_1(P)$ is the same as in $(\ref{n2})$. Then we are left with the action of the stabiliser of such matrix, that is included in $\mathrm{GL}(2,\mathbb{Q})\ltimes\mathbb{Q}^2$. Since for $\mathfrak{g}_\mathbb{R}(0,0,0,1)$ we have $p_3(y,z)=p_2(y,z)=0$, $a_{57}=0$ and $q_1(y,z)=q_2(y,z)=0$, and since the stabiliser acts separately on each of these terms, we must have the same also for $P$. Furthermore using the action we may put $\mathrm{coeff}(p_1(y,z),y)=\mathrm{coeff}(\pi_2(P),x^3)=\mathrm{coeff}(q_3(y,z),yz^2)=0$. Then 
\[[\pi_1(P),\pi_2(P),\pi_3(P)]=\left[\begin{pmatrix} 0 & 1 & 0 \\ 0 & 0 & 0 \\ 0 & 0 & 0 \end{pmatrix},\  -zx^2,\ -y^2z-\frac{1}{c}z^3\right].\]
This means that $\mathfrak{h}\cong\mathfrak{g}_\mathbb{Q}\left(0,0,0,\frac{1}{c}\right)$.

Finally if $\pi_1(P)=0$ then $\mathfrak{h}$ is a $\mathbb{Q}$-form of $\mathfrak{g}_\mathbb{R}(0,0,0,0)$ that is generated by the following two matrices
\[u=\begin{pmatrix} 0 & 0 & 0 \\ 1 & 0 & 0 \\ 0 & 0 & 0\end{pmatrix}, v=\begin{pmatrix}
0 &0 & 0 \\ 0 & 0 & 0 \\ 1 & 0 & 0 
\end{pmatrix}.\] 
This means that $\mathfrak{h}$ is isomorphic to $\mathfrak{g}_\mathbb{Q}(0,0,0,0)$. In conclusion we have proved that $\mathfrak{h}\cong\mathfrak{g}_\mathbb{Q}(0,a,b,c)$. As we have seen in Proposition \ref{alpha=0}, finding representatives for the isomorphism classes corresponds to finding normal forms for the adjoint action of $\mathrm{GL}_2(\mathbb{Q})$ over $\mathbb{P}(\mathfrak{sl}(2,\mathbb{Q}))$. Using the theory of rational canonical forms we can see that the normal forms are 
\[\begin{pmatrix} 1 & 0 \\ 0 & -1\end{pmatrix}, \begin{pmatrix} 0 & \varepsilon \\ 1 & 0 \end{pmatrix}\text{ with } \varepsilon\notin\mathbb{Q}^2\]
or the $0$ matrix. Hence as representatives of the $\mathbb{Q}$-isomorphism classes of the family $\mathfrak{g}_\mathbb{Q}(0,a,b,c)$ we have $\mathfrak{g}_\mathbb{Q}(0,0,0,0)$ and $\mathfrak{g}_\mathbb{Q}(0,0,\varepsilon,1)$, noticing that $\mathfrak{g}_\mathbb{Q}(0,0,\varepsilon,1)\cong\mathfrak{g}_\mathbb{Q}(0,1,0,0)$ if $\varepsilon\in\mathbb{Q}^2$. Finally it is clear that $\mathfrak{g}_\mathbb{Q}(0,0,\varepsilon',1)\cong \mathfrak{g}_\mathbb{Q}(0,0,\varepsilon,1)$ if and only if the ratio of $\varepsilon$ and $\varepsilon'$ is the square of a rational number.
\end{proof}

\subsubsection{Skjelbred and Sund method}

Now that the question of commensurability classes of lattices in the Lie groups associated with the Lie algebras $\mathfrak{g}_\mathbb{R}(\alpha,a,b,c)$ is settled we want to study the commensurability question for the Lie groups associated to the Lie algebras $L_j$ and $N_j$. We have seen that in order to study this question we have to understand the $\mathbb{Q}$-isomorphism classes of rational subalgebras of these Lie algebras. To do so we introduce a method that was developed by Skjelbred and Sund in \cite{Skje} and applied by Gong in his thesis \cite{Gong} in order to classify $7$-dimensional real nilpotent Lie algebras. To state the theorem we need to introduce some terminology. 

\begin{defi}
Let $\mathbb{K}$ be a field and $\mathfrak{g}$ a Lie algebra over $\mathbb{K}$. A map $B:\bigwedge^2\mathfrak{g}\rightarrow \mathbb{K}$ such that 
\[B([x_1,x_2],x_3)+B([x_2,x_3],x_1)+B([x_3,x_1],x_2)=0\]
is called a \emph{cocycle} and the space of cocycles is denoted by $\mathrm{Z}^2(\mathfrak{g},\mathbb{K})$. 
A map $B:\bigwedge^2\mathfrak{g}\rightarrow \mathbb{K}$ such that there exists $g\in\mathrm{Hom}(\mathfrak{g},\mathbb{K})$ and
\[B(x,y)=g([x,y])\]
is called a \emph{coboundary} and the space of coboundaries is denoted by $\mathrm{B}^2(\mathfrak{g},\mathbb{K})$. It can be noticed that the coboundaries form a subspace of the cocycles and finally the quotient space 
\[\mathrm{H}^2(\mathfrak{g},\mathbb{K})=\mathrm{Z}^2(\mathfrak{g},\mathbb{K})/\mathrm{B}^2(\mathfrak{g},\mathbb{K})\]
is called the \emph{2-nd cohomology group} of $\mathfrak{g}$ with coefficients in $\mathbb{K}$.
\end{defi}

\begin{defi}
For $B:\bigwedge^2\mathfrak{g}\rightarrow \mathbb{K}$ we can define the set
\[\mathfrak{g}^\perp_B=\{x\in\mathfrak{g}\ |\ B(x,\mathfrak{g})=0\}.\]
And if $B=(B_1,\ldots,B_k):\bigwedge^2\mathfrak{g}\rightarrow \mathbb{K}^k$ we can define $\mathfrak{g}^\perp_B=\mathfrak{g}^\perp_{B_1}\cap \ldots\cap \mathfrak{g}^\perp_{B_k}$.
\end{defi}

\begin{defi}
Let us denote by $\mathrm{G}_k(\mathrm{H}^2(\mathfrak{g},\mathbb{K}))$ the $k$-th Grassmannian of $\mathrm{H}^2(\mathfrak{g},\mathbb{K})$. Furthermore if $B$ is a cocycle let us denote by $\widetilde{B}$ its image in cohomology. Thus we define a subspace of the $k$-th Grassmannian of the cohomology of $\mathfrak{g}$ as follows
\[U_k(\mathfrak{g})=\{\widetilde{B}_1\mathbb{K}\oplus\ldots\oplus\widetilde{B}_k\mathbb{K}\in\mathrm{G}_k(\mathrm{H}^2(\mathfrak{g},\mathbb{K}))\ |\ \mathfrak{g}_{B=(B_1,\ldots,B_k)}^\perp\cap \mathcal{Z}(\mathfrak{g})=0 \}\]
where $\mathcal{Z}(\mathfrak{g})$ is the center of $\mathfrak{g}$. 
\end{defi}

If we call $\mathrm{Aut}(\mathfrak{g})$ the automorphism group of the Lie algebra $\mathfrak{g}$ we can notice that we have an action of $\mathrm{Aut}(\mathfrak{g})$ on $\mathrm{H}^2(\mathfrak{g},\mathbb{K})$ induce by the following action on cocycles: 
\[\text{if } \varphi\in \mathrm{Aut}(\mathfrak{g})\text{ and }B\in\mathrm{Z}^2(\mathfrak{g},\mathbb{K})\text{ then }\varphi\cdot B(x,y)=B(\varphi(x),\varphi(y)).\] Furthermore it can be proved that this action induces an action on $U_k(\mathfrak{g})$. We are now ready to state the theorem of Skjelbred and Sund.

\begin{theo}[{\cite[Theorem 3.5]{Skje}}]
Let $\mathfrak{g}$ be a Lie algebra over a field $\mathbb{K}$. The isomorphism classes of Lie algebras $\widetilde{\mathfrak{g}}$, with center $\widetilde{\mathfrak{Z}}$ of dimension $k$, with $\widetilde{\mathfrak{g}}/\widetilde{\mathfrak{Z}}\cong \mathfrak{g}$ and without abelian factors are in bijective correspondence with elements in $U_k(\mathfrak{g})/\mathrm{Aut}(\mathfrak{g})$.
\end{theo}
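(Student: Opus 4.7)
The plan is to establish a bijection by defining maps in both directions via the classical dictionary between central extensions of $\mathfrak{g}$ and the second Lie algebra cohomology. Given a $k$-dimensional subspace $W = \widetilde{B}_1\mathbb{K}\oplus\cdots\oplus\widetilde{B}_k\mathbb{K} \in U_k(\mathfrak{g})$, I would construct the Lie algebra $\widetilde{\mathfrak{g}}_W := \mathfrak{g}\oplus\mathbb{K}^k$ equipped with the bracket
\[
[(x,u),(y,v)]:=\bigl([x,y]_{\mathfrak{g}},\,B_1(x,y),\ldots,B_k(x,y)\bigr);
\]
the cocycle condition on each $B_i$ is precisely what the Jacobi identity requires, so this is a Lie algebra. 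Conversely, from $\widetilde{\mathfrak{g}}$ with center $\widetilde{\mathfrak{Z}}$ of dimension $k$ satisfying $\widetilde{\mathfrak{g}}/\widetilde{\mathfrak{Z}}\cong\mathfrak{g}$, I would choose a vector-space section $s:\mathfrak{g}\to\widetilde{\mathfrak{g}}$ of the quotient together with a basis of $\widetilde{\mathfrak{Z}}$, and define $B(x,y):=[s(x),s(y)]-s([x,y])$; writing $B$ in the chosen basis yields a tuple $(B_1,\ldots,B_k)\in Z^2(\mathfrak{g},\mathbb{K})^k$ whose classes span a $k$-dimensional subspace of $H^2(\mathfrak{g},\mathbb{K})$.

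The key verifications are then: first, the center of $\widetilde{\mathfrak{g}}_W$ equals $\{0\}\oplus\mathbb{K}^k$ (so it has exactly dimension $k$) if and only if $\mathfrak{g}^\perp_B\cap\mathcal{Z}(\mathfrak{g})=0$, which is precisely the non-degeneracy built into the definition of $U_k(\mathfrak{g})$; second, $\widetilde{\mathfrak{g}}_W$ has no abelian direct factor if and only if $\widetilde{B}_1,\ldots,\widetilde{B}_k$ are linearly independent in $H^2(\mathfrak{g},\mathbb{K})$, because any non-trivial relation $\sum\lambda_i\widetilde{B}_i=0$ permits one to modify the section so that one coordinate of $B$ vanishes, exhibiting a central line as an abelian summand. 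Going in the opposite direction, I need to check that the remaining ambiguities in the construction, namely the choice of section $s$ and the choice of basis of $\widetilde{\mathfrak{Z}}$, alter $(B_1,\ldots,B_k)$ only by adding coboundaries to each $B_i$ and by applying an invertible linear combination among the $B_i$. Both operations fix the associated subspace of $H^2(\mathfrak{g},\mathbb{K})$, so the element of the Grassmannian is canonically attached to $\widetilde{\mathfrak{g}}$.

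The main subtlety, and principal obstacle, is identifying exactly the residual freedom that forces the quotient by $\mathrm{Aut}(\mathfrak{g})$. The isomorphism $\widetilde{\mathfrak{g}}/\widetilde{\mathfrak{Z}}\cong\mathfrak{g}$ is not canonical, and two choices differ by an automorphism $\varphi\in\mathrm{Aut}(\mathfrak{g})$; a direct computation shows that changing the identification by $\varphi$ replaces each cocycle by $\varphi\cdot B_i(x,y)=B_i(\varphi(x),\varphi(y))$, which induces exactly the action on $U_k(\mathfrak{g})$ described in the excerpt, and one has to check this action is well defined on the Grassmannian. Finally, the two constructions are mutually inverse on equivalence classes: starting from $W\in U_k(\mathfrak{g})$, forming $\widetilde{\mathfrak{g}}_W$, and applying the reconstruction with the obvious section returns $W$ itself; starting from $\widetilde{\mathfrak{g}}$, the reconstruction produces a Lie algebra isomorphic to $\widetilde{\mathfrak{g}}$ via $(x,v)\mapsto s(x)+v$ after identifying $\mathbb{K}^k$ with $\widetilde{\mathfrak{Z}}$ through the chosen basis. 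The delicate part throughout is bookkeeping these simultaneous ambiguities (section, basis of the center, identification of the quotient with $\mathfrak{g}$) and confirming that each collapses precisely to one of the equivalences already built into $U_k(\mathfrak{g})/\mathrm{Aut}(\mathfrak{g})$.
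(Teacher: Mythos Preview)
The paper does not contain its own proof of this statement: it is quoted as a black box from Skjelbred--Sund \cite{Skje} and then applied. So there is nothing in the paper to compare your argument against.

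That said, your outline is the standard proof of the Skjelbred--Sund theorem and is correct. The verifications you identify are exactly the right ones: the center of the extension equals $(\mathfrak{g}^\perp_B\cap\mathcal{Z}(\mathfrak{g}))\oplus\mathbb{K}^k$, so the condition defining $U_k(\mathfrak{g})$ pins down the center dimension; linear independence of the $\widetilde{B}_i$ in $H^2$ is equivalent to the absence of an abelian direct summand; and the three ambiguities (section, basis of $\widetilde{\mathfrak{Z}}$, identification of the quotient with $\mathfrak{g}$) collapse respectively to coboundaries, $\mathrm{GL}_k$-changes fixing the point of the Grassmannian, and the $\mathrm{Aut}(\mathfrak{g})$-action. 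The one point worth making explicit is that an abstract isomorphism $\psi:\widetilde{\mathfrak{g}}\to\widetilde{\mathfrak{g}}'$ automatically sends center to center, hence descends to an automorphism of $\mathfrak{g}$; this is what justifies that \emph{all} isomorphisms, not only those arising from the bookkeeping choices you list, are accounted for by the $\mathrm{Aut}(\mathfrak{g})$-orbit.
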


\begin{rema}
If $B\in U_k(\mathfrak{g})/\mathrm{Aut}(\mathfrak{g})$ is a representative of one orbit then the corresponding $k$-dimensional central extension of $\mathfrak{g}$ is defined as the direct sum of the vector spaces $\mathfrak{g}(B)=\mathfrak{g}\oplus\mathbb{K}^k$ and with Lie brackets $[(x,u),(y,v)]=([x,y],B(x,y))$.
\end{rema}

We will now apply this method to the aforementioned Lie algebras.  

\begin{prop}\label{Q form degenerate case}
All the Lie algebras $L_i$ and $N_j$ have just one $\mathbb{Q}$-form, up to $\mathbb{Q}$-isomorphism, except $L_6^\mathbb{R}(\varepsilon),N_3^\mathbb{R}(\varepsilon)$ and $N_{9}^\mathbb{R}(\varepsilon)$. For these Lie algebras their  $\mathbb{Q}$-forms are $L_6^\mathbb{Q}(\varepsilon),N_3^\mathbb{Q}(\varepsilon)$ and  $N_{9}^\mathbb{Q}(\varepsilon)$ respectively, as defined in Appendix \ref{LieBracket}.
\end{prop}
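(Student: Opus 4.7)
The plan is to apply the Skjelbred--Sund method to each Lie algebra in the list. Given one of the $L_i$ or $N_j$, call it $\mathfrak{g}$, I would first compute its center $\mathcal{Z}(\mathfrak{g})$ (whose dimension $k$ is already visible from the bracket tables of Appendix \ref{LieBracket}), and thereby present $\mathfrak{g}$ as a $k$-dimensional central extension of a lower-dimensional Lie algebra $\mathfrak{h} = \mathfrak{g}/\mathcal{Z}(\mathfrak{g})$. By the theorem cited in the text, isomorphism classes of such extensions without abelian direct factors correspond bijectively to elements of $U_k(\mathfrak{h}_{\mathbb{K}})/\mathrm{Aut}(\mathfrak{h}_{\mathbb{K}})$. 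Running this with $\mathbb{K} = \mathbb{R}$ tags $\mathfrak{g}$ itself with a distinguished $\mathrm{Aut}(\mathfrak{h}_{\mathbb{R}})$-orbit $\mathcal{O}$ in $U_k(\mathfrak{h}_{\mathbb{R}})$; running it with $\mathbb{K} = \mathbb{Q}$ and restricting to the preimage of $\mathcal{O}$ enumerates precisely the $\mathbb{Q}$-forms of $\mathfrak{g}$, up to $\mathbb{Q}$-isomorphism.

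The key work is, for each $\mathfrak{g}$, to pick the quotient $\mathfrak{h}$ so that $\mathrm{Aut}(\mathfrak{h}_{\mathbb{K}})$ and $\mathrm{H}^2(\mathfrak{h}_{\mathbb{K}}, \mathbb{K})$ are manageable. In our list the quotients are either abelian, Heisenberg-type, or among the few low-dimensional nilpotent Lie algebras already listed in \cite{Graaf}, for all of which $\mathrm{Aut}(\mathfrak{h})$ can be written down explicitly and $\mathrm{H}^2(\mathfrak{h}, \mathbb{K})$ computed directly from the definition as $\mathrm{Z}^2/\mathrm{B}^2$. I would then identify the cocycle classes $\widetilde{B}_1, \ldots, \widetilde{B}_k$ that define $\mathfrak{g}$, determine the condition $\mathfrak{h}_B^\perp \cap \mathcal{Z}(\mathfrak{h}) = 0$, and describe the $\mathrm{Aut}(\mathfrak{h}_{\mathbb{R}})$-orbit $\mathcal{O}$ concretely. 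The task then reduces to asking whether $\mathcal{O} \cap U_k(\mathfrak{h}_{\mathbb{Q}})$ is a single $\mathrm{Aut}(\mathfrak{h}_{\mathbb{Q}})$-orbit or splits.

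For the bulk of the Lie algebras in the list the action of $\mathrm{Aut}(\mathfrak{h}_{\mathbb{Q}})$ is rich enough (it contains an appropriate $\mathrm{GL}$-factor) to sweep out the whole rational intersection $\mathcal{O} \cap U_k(\mathfrak{h}_{\mathbb{Q}})$ in a single orbit, which gives a unique $\mathbb{Q}$-form. The exceptional cases $L_6(\varepsilon)$, $N_3(\varepsilon)$, $N_9(\varepsilon)$ are exactly those where, after using up the available freedom in $\mathrm{Aut}(\mathfrak{h}_{\mathbb{Q}})$, the residual invariant is the square class in $\mathbb{Q}^*/(\mathbb{Q}^*)^2$ of a scalar extracted from the cocycle (the discriminant of a quadratic form, or the coefficient of a nondegenerate monomial). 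Since every nonzero class has a representative in $\mathbb{R}^*/(\mathbb{R}^*)^2 = \{\pm 1\}$, these invariants collapse over $\mathbb{R}$ but not over $\mathbb{Q}$, giving exactly the families $L_6^{\mathbb{Q}}(\varepsilon)$, $N_3^{\mathbb{Q}}(\varepsilon)$, $N_9^{\mathbb{Q}}(\varepsilon)$ of Appendix \ref{LieBracket}.

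The main obstacle is the sheer case analysis: the list is long, and for each Lie algebra the cohomology, automorphism group, and orbit decomposition have to be computed independently. A further subtlety is that some of the $L_i$ and $N_j$ do split off an abelian direct factor, and the Skjelbred--Sund correspondence only classifies central extensions \emph{without} abelian factors; for those cases I would first strip off the abelian summand, apply the method to the non-abelian part (where the analysis above goes through unchanged), and then observe that an abelian factor $\mathbb{Q}^m$ is unique in its commensurability class and contributes no extra $\mathbb{Q}$-invariants. Once these preliminaries are settled the remaining verification for each algebra is a finite linear-algebra computation over $\mathbb{Q}$, which, modulo routine calculation, yields the classification as stated.
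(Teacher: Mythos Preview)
Your approach is essentially the one the paper takes: it introduces the Skjelbred--Sund method precisely for this proposition and carries out the orbit analysis in $U_k(\mathfrak{h})/\mathrm{Aut}(\mathfrak{h})$ over $\mathbb{Q}$ just as you outline, including stripping off abelian summands and citing de Graaf and Gong for the $6$- and $7$-dimensional non-abelian parts rather than redoing those computations from scratch. One correction: your blanket claim that the residual invariant in the exceptional cases is a class in $\mathbb{Q}^*/(\mathbb{Q}^*)^2$ is right for $L_6(\varepsilon)$ and $N_3(\varepsilon)$ but not for $N_9(\varepsilon)$, where the paper's computation shows the invariant lives in $\mathbb{Q}^*/(\mathbb{Q}^*)^6$ (the stabiliser constraints force $\varepsilon \mapsto \alpha^6\varepsilon$, not $\alpha^2\varepsilon$); over $\mathbb{R}$ this still collapses to $\{0,\pm 1\}$ since $6$ is even, so your heuristic about why the families split over $\mathbb{Q}$ but not $\mathbb{R}$ remains valid, but the exponent matters for the actual $\mathbb{Q}$-classification.
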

\begin{proof}
First consider the Lie algebras presented in Appendix \ref{LieBracket}  that are the sum of a $6$-dimensional Lie algebra and an abelian ideal. Since in \cite{Graaf} all $6$-dimensional Lie algebras over any field of characteristic different from $2$ are classified up to isomorphism the result follows from their analysis. Now consider the Lie algebras in Appendix \ref{LieBracket} that are the sum of a $7$-dimensional Lie algebra and an abelian ideal. Lie algebras of dimension $7$ over $\mathbb{R}$ are classified by Gong in \cite{Gong}. Hence we can follow his classification for the isomorphism classes that concern us. For almost all the Lie algebras in dimension $7$ the analysis that is done for $\mathbb{R}$ in \cite{Gong} works for $\mathbb{Q}$ without any problem so we will point out only the cases where there is a difference. Let again fix some terminology. If $\{e_i\}_i$ is a basis for $\mathfrak{g}$ we let $\Delta_{ij}=(e_i\wedge e_j)^*$  be the elemts of the basis for $(\bigwedge^2\mathfrak{g})^*$. We will write the elements of the cohomology group $\mathrm{H}^2(\mathfrak{g},\mathbb{K})$ simply as cocycles thinking them as equivalence classes. Finally for the action of $\mathrm{Aut}(\mathfrak{g})$ on $\mathrm{H}^2(\mathfrak{g},\mathbb{K})$ if $g\cdot(\sum \alpha_{ij}\Delta_{ij})=\alpha'_{ij}\Delta_{ij}$ we will write $\alpha_{ij}\mapsto \alpha'_{ij}$.\\

The Lie algebras $N_9^\mathbb{R}(\varepsilon)$ are $2$-dimensional central extensions of the Lie algebra $\mathfrak{g}$ defined on the basis $\{x_1,\ldots,x_5\}$ by $[x_1,x_2]=x_3$. An element of the cohomology group of $\mathfrak{g}$ reads as $a\Delta_{13}+b\Delta_{14}+c\Delta_{15}+d\Delta_{23}+e\Delta_{24}+f\Delta_{25}+g\Delta_{45}$ and we will just write $B=[a,b,c,d,e,f,g]$. The action of the automorphism group of $\mathfrak{g}$ on it is as follows: 
\begin{align*}
& a\mapsto aa_{11}\delta+da_{21}\delta;\\
& b\mapsto a_{11}(aa_{34}+ba_{44}+ca_{54})+a_{21}(da_{34}+ea_{44}+fa_{54})+g(a_{41}a_{54}-a_{51}a_{44});\\
& c\mapsto a_{11}(aa_{35}+ba_{45}+ca_{55})+a_{21}(da_{35}+ea_{45}+fa_{55})+g(a_{41}a_{55}-a_{51}a_{45});\\
& d\mapsto aa_{12}\delta+da_{22}\delta;\\
& e\mapsto a_{12}(aa_{34}+ba_{44}+ca_{54})+a_{22}(da_{34}+ea_{44}+fa_{54})+g(a_{42}a_{54}-a_{52}a_{44});\\
& f\mapsto a_{12}(aa_{35}+ba_{45}+ca_{55})+a_{22}(da_{35}+ea_{45}+fa_{55})+g(a_{42}a_{55}-a_{52}a_{45});\\
& g\mapsto g(a_{44}a_{55}-a_{54}a_{45})
\end{align*}
where $\delta=a_{11}a_{22}-a_{12}a_{21}=a_{33}$.
Looking at the definition of the family of Lie algebras $N_9^\mathbb{R}(\varepsilon)$ in Appendix \ref{LieBracket} we can see that it corresponds to the $2$-dimensional subspace of the cohomology of $\mathfrak{g}$ generated by $(\Delta_{13}+ \Delta_{14}+\varepsilon\Delta_{25})\wedge (\Delta_{15}+\Delta_{23})$ with $\varepsilon\in\{0,1,-1\}$. Let $V_1$ be the subspace of $\mathrm{H}^2(\mathfrak{g},\mathbb{R})$ generated by $\{\Delta_{13},\Delta_{14},\Delta_{15},\Delta_{23},\Delta_{24},\Delta_{25}\}$, $V_2$ the subspace generated by $\{\Delta_{14},\Delta_{15},\Delta_{24},\Delta_{25},\Delta_{45}\}$ and $V_3=V_1\cap V_2$. Notice that $V_1,V_2$ and $V_3$ are submodules for the group action. If $L_{\varepsilon}$ is a $2$-dimensional subspace of $\mathrm{H}^2(\mathfrak{g},\mathbb{R})$ associated to $N_9^\mathbb{R}(\varepsilon)$ then in Gong's analysis $L$ is characterised by $L_\varepsilon\subseteq V_1$ and $L_\varepsilon\cap V_3=0$. Let us now consider a $\mathbb{Q}$-form $\mathfrak{h}$ of $N_9^\mathbb{R}(\varepsilon)$. Then $\mathfrak{h}$ is a $2$-dimensional central extension of $\mathfrak{g}$ and let $P$ be the $2$-dimensional subspace of $\mathrm{H}^2(\mathfrak{g},\mathbb{Q})$ associated to it. We can then assume $P\subseteq V_1$ and $P\cap V_3=0$ so that the elements of a basis for $P$ are $A=[1,b,c,d,e,f,0]$ and $B=[0,b_1,c_1,1,e_1,f_1,0]$. Following Gong's analysis we can bring the second element of the basis to either $B_1=[0,0,1,1,0,0,0]$ or $B_2=[0,0,0,1,0,0,0]$. For the case $B_1=[0,0,1,1,0,0,0]$ we choose $a_{21}=a_{34}=a_{54}=0$, $a_{35}=-\frac{a_{12}a_{55}}{a_{22}}$, $a_{11}a_{55}=a_{11}a_{22}^2=1$ in order to fix $B_1$. Considering then the action on $A$ we have 

\begin{align*}a=1 & \mapsto a_{11}\delta;
b\mapsto a_{11}ba_{44};  
c\mapsto a_{11}(a_{35}+ba_{45}+ca_{55});
d\mapsto a_{12}\delta; \\
& e\mapsto a_{12}ba_{44}+a_{22}ea_{44};
f\mapsto a_{12}(a_{35}+ba_{45}+ca_{55})+a_{22}(ea_{45}+fa_{55});
g\mapsto 0.
\end{align*}

Notice that $b$ and $e$ cannot be both $0$ otherwise there will be a non trivial element in the intersection $\mathfrak{g}^\perp_{(A,B)}\cap \mathcal{Z}(\mathfrak{g})$.
Now assuming $b\not=0$ we can make it $1$ taking $a_{44}=\frac{1}{ba_{11}}$ and solving for $a_{12}$ make $e$ equal $0$. Solving for $a_{45}$ we can make $c=d$ and then subtracting a multiple of $B$ make them equal $0$.  Then taking $a_{12}=a_{35}=a_{45}=0$ we obtain 

\begin{align*}a=1\mapsto a_{11}\delta;
b=1\mapsto a_{11}a_{44};
c=0\mapsto 0;
d=0\mapsto 0;
e=0\mapsto 0;
f\mapsto a_{22}fa_{55};
g\mapsto 0.
\end{align*}

The representatives of the orbits are then $A_\varepsilon=[1,1,0,0,0,\varepsilon,0]$ with $\varepsilon\in\mathbb{Q}$ and $A_{\varepsilon'}$ is in the same orbit of $A_{\varepsilon}$ if and only if there exists $\alpha\in\mathbb{Q}^*$ such that $\varepsilon'=\alpha^6\varepsilon$. For the case $B_2=[0,0,0,1,0,0,0]$ we can then assume $A=[1,b,c,0,e,f,0]$. In order to fix $B_2$ we need $a_{21}=a_{34}=a_{35}=0$ and $a_{11}a_{22}^2=1$. Then the action on $A$ is as follows 

\begin{align*}a & \mapsto a_{11}\delta;
b\mapsto a_{11}(ba_{44}+ca_{54});  
c\mapsto a_{11}(ba_{45}+ca_{55});
d\mapsto a_{12}\delta; \\
& e\mapsto a_{12}(ba_{44}+ca_{54})+a_{22}(ea_{44}+fa_{54});
f\mapsto a_{12}(ba_{45}+ca_{55})+a_{22}(ea_{45}+fa_{55});
g\mapsto 0.
\end{align*}

Again one of $b$ or $e$ should be non zero. Make then $b=1$ and $e=0$. Make also $c=0$ by solving for $a_{45}$. Then $f\ne 0$, since otherwise the $2$-cocycle will contain a non trivial element of the center in its kernel, and we get $A=[1,1,0,0,0,1,0]$. Then one can see that $[1,1,0,0,0,1,0]\wedge B_2$ is in the same orbit as $A_0\wedge B_1$.\\

We are now left to consider the Lie algebras in Appendix \ref{LieBracket} that are not decomposable, namely $L_6^\mathbb{R}(1),N_{10},N_{11},N_{12}$ and $N_{13}$. The Lie algebra $L_6^\mathbb{R}(1)$ is a $2$-dimensional central extension of the Lie algebra defined on the basis $\{x_1,\ldots,x_6\}$ by $[x_1,x_2]=x_4,[x_1,x_3]=x_5$. The elements in its cohomology group can be represented by $a\Delta_{14}+b\Delta_{15}+c\Delta_{16}+d\Delta_{23}+e\Delta_{24}+f(\Delta_{25}+\Delta_{34})+g\Delta_{26}+h\Delta_{35}+i\Delta_{36}$. The action of the automorphism group is as follows:

\begin{align*}
a\mapsto &  aa_{11}^2a_{22}+ba_{11}^2a_{32}+ea_{11}a_{22}a_{21}+fa_{11}(a_{21}a_{32}+a_{31}a_{22})+ha_{11}a_{31}a_{32};\\
b\mapsto &  aa_{11}^2a_{23}+ba_{11}^2a_{33}+ea_{11}a_{21}a_{23}+fa_{11}(a_{21}a_{33}+a_{31}a_{23})+ha_{11}a_{31}a_{33};\\
c\mapsto &  aa_{11}a_{46}+ba_{11}a_{56}+ca_{11}a_{66}+ea_{21}a_{46}+f(a_{21}a_{56}+a_{31}a_{46})+ga_{21}a_{66}+ha_{31}a_{56}+ia_{31}a_{66};\\
d\mapsto &  d(a_{22}a_{33}-a_{23}a_{32})+e(a_{22}a_{43}-a_{23}a_{42})+f(a_{22}a_{53}-a_{23}a_{52}+a_{32}a_{43}-a_{33}a_{42})\\
&+g(a_{22}a_{63}-a_{23}a_{62})+h(a_{32}a_{53}-a_{33}a_{52})+i(a_{32}a_{63}-a_{33}a_{62});\\
e\mapsto & ea_{11}a_{22}^2+2fa_{11}a_{22}a_{32}+ha_{11}a_{32}^2;\\
f\mapsto &  ea_{11}a_{22}a_{23}+f(a_{11}a_{22}a_{33}+a_{11}a_{32}a_{23})+ha_{11}a_{32}a_{33};\\
g\mapsto &  ea_{22}a_{46}+f(a_{22}a_{56}+a_{32}a_{46})+ga_{22}a_{66}+ha_{32}a_{56}+ia_{32}a_{66};\\
h\mapsto &  ea_{11}a_{23}^2+2fa_{11}a_{23}a_{33}+ha_{11}a_{33}^2;\\
i\mapsto &  ea_{23}a_{46}+f(a_{23}a_{56}+a_{33}a_{46})+ga_{23}a_{66}+ha_{33}a_{56}+ia_{33}a_{66}.
\end{align*}

The family $L_{6}^\mathbb{R}(1)$ corresponds to $(\Delta_{14}-\Delta_{36})\wedge (\Delta_{15}+\Delta_{26})$. Call $V_1$ the subspace of $\mathrm{H}^2(\mathfrak{g},\mathbb{R})$ generated by $\{\Delta_{14},\Delta_{15},\Delta_{16},\Delta_{23},\Delta_{26},\Delta_{36}\}$ and $V_2$ the one generated by $\{\Delta_{14},\Delta_{15},\Delta_{16},\Delta_{23}\}$. Notice that $V_1$ and $V_2$ are submodules under the action of $\mathrm{Aut}(\mathfrak{g})$. If $L$ is a $2$-dimensional subspace of $\mathrm{H}^2(\mathfrak{g},\mathbb{R})$ associated to $L_{6}^\mathbb{R}(1)$ then $L$ is characterised by $L\subseteq V_1$ and $L\not\subseteq V_2$. Hence the two generators of $L$ have the form $A=[a_1,b_1,c_1,d_1,0,0,g_1,0,i_1]$ with $i_1\ne 0$ and $B=[a,b,c,d,0,0,g,0,i]$. Furthermore we must have either $a$ or $a_1$ not $0$ since otherwise we would have a non trivial element in $\mathfrak{g}^\perp_{(A,B)}\cap \mathcal{Z}(\mathfrak{g})$. Then we can bring the first generator of a $2$-dimensional subspace of $\mathrm{H}^2(\mathfrak{g},\mathbb{Q})$ related to a $\mathbb{Q}$-form of $L_{6}^\mathbb{R}(1)$ to $A=[1,0,0,0,0,0,0,0,-1]$. In order to leave $A$ stable we need $a_{23}=a_{32}=a_{62}=0, a_{46}=\frac{a_{31}a_{66}}{a_{11}}, a_{11}^2a_{22}=a_{33}a_{66}=1$. Then the action on $B=[0,b,c,d,0,0,g,0,i]$ is as follows

\begin{align*}
a\mapsto  0; 
b\mapsto & ba_{11}^2a_{33}; 
c\mapsto   ba_{11}a_{56}+ca_{11}a_{66}+ga_{21}a_{66}+ia_{31}a_{66}; 
d\mapsto  da_{22}a_{33}+ga_{22}a_{63}; \\
e\mapsto & 0; 
f\mapsto  0;
g\mapsto ga_{22}a_{66}; 
h\mapsto  0; 
i\mapsto  i.
\end{align*}

Notice that $i$ should be $0$ since otherwise the corresponding Lie algebra over $\mathbb{R}$ is not in the same isomorphism class of $L_{6}^\mathbb{R}(1)$. Notice furthermore that $b$ and $g$ should be non zero since otherwise we would have a non trivial element in $\mathfrak{g}^\perp_{(A,B)}\cap \mathcal{Z}(\mathfrak{g})$. Then solving for $a_{63},a_{56}$ put $d=c=0$. Then taking $a_{63}=a_{56}=a_{21}=0$ we are left with 

\begin{align*}a\mapsto 0;
b\mapsto ba_{11}^2a_{33};
c\mapsto 0;
d\mapsto 0;
e\mapsto 0;
f\mapsto 0;
g\mapsto ga_{22}a_{66};
h\mapsto  0;
i\mapsto 0.
\end{align*}

Then a representative of the orbit is $B_\varepsilon=[0,1,0,0,0,0,\varepsilon,0,0]$ and $B_{\varepsilon'}$ is in the same orbit as $B_{\varepsilon}$ if and only if there exists $\alpha\in\mathbb{Q}^*$ such that $\varepsilon'=\alpha^2\varepsilon$. Since over $\mathbb{R}$ the Lie algebra associated to $A\wedge B_\varepsilon$ should be isomorphic to $L_6^\mathbb{R}(1)$ then $\varepsilon>0$.

The Lie algebras $N_{10}$ and $N_{11}$ are $2$-dimensional central extensions of the Lie algebra defined on the basis $\{x_1,\ldots,x_6\}$ by $[x_1,x_2]=x_3$.  An element of the cohomology group of $\mathfrak{g}$ reads as $a\Delta_{13}+b\Delta_{14}+c\Delta_{15}+d\Delta_{16}+e\Delta_{23}+f\Delta_{24}+g\Delta_{25}+h\Delta_{26}+i\Delta_{45}+l\Delta_{46}+m\Delta_{56}$. The action of the automorphism group of $\mathfrak{g}$ on it is as follows: 
\begin{align*}
 a\mapsto  aa_{11}\delta+  e & a_{21}\delta;\\
 b\mapsto  a_{11}(aa_{34}+ & ba_{44}+ca_{54}+da_{64})+a_{21}(ea_{34}+fa_{44}+ga_{54}+ha_{64})\\ & +i(a_{41}a_{54}-a_{51}a_{44})+l(a_{41}a_{64}-a_{44}a_{61})+m(a_{51}a_{64}-a_{54}a_{61});\\
c\mapsto   a_{11}(aa_{35}+ & ba_{45}+ca_{55}+da_{65})+a_{21}(ea_{35}+fa_{45}+ga_{55}+ha_{65}) \\ & +i(a_{41}a_{55}-a_{51}a_{45})+l(a_{41}a_{65}-a_{45}a_{61})+m(a_{51}a_{65}-a_{55}a_{61});\\
 d\mapsto  a_{11}(aa_{36}+ & ba_{46}+ca_{56}+da_{66})+a_{21}(ea_{36}+fa_{46}+ga_{56}+ha_{66})
 \\ & +i(a_{41}a_{56}-a_{51}a_{46})+l(a_{41}a_{66}-a_{46}a_{61})+m(a_{51}a_{66}-a_{56}a_{61});\\
 e\mapsto  aa_{12}\delta+  e & a_{22}\delta; \\
 f\mapsto   a_{12}(aa_{34}+ & ba_{44}+ca_{54}+da_{64})+a_{22}(ea_{34}+fa_{44}+ga_{54}+ha_{64}) \\ & +i(a_{42}a_{54}-a_{52}a_{44})+l(a_{42}a_{64}-a_{44}a_{62})+m(a_{52}a_{64}-a_{54}a_{62});\\
 g\mapsto   a_{12}(aa_{35}+ & ba_{45}+ca_{55}+da_{65})+a_{22}(ea_{35}+fa_{45}+ga_{55}+ha_{65}) \\ & +i(a_{42}a_{55}-a_{52}a_{45})+l(a_{42}a_{65}-a_{45}a_{62})+m(a_{52}a_{65}-a_{55}a_{62});\\
 h\mapsto   a_{12}(aa_{36}+ & ba_{46}+ca_{56}+da_{66})+a_{22}(ea_{36}+fa_{46}+ga_{56}+ha_{66}) \\ & +i(a_{42}a_{56}-a_{52}a_{46})+l(a_{42}a_{66}-a_{46}a_{62})+m(a_{52}a_{66}-a_{56}a_{62});\\
 i\mapsto   i(a_{44}a_{55}- & a_{54}a_{45})+l(a_{44}a_{65}-a_{45}a_{64})+m(a_{54}a_{65}-a_{55}a_{64});\\
 l \mapsto  i(a_{44}a_{56}- & a_{54}a_{46})+l(a_{44}a_{66}-a_{46}a_{64})+m(a_{54}a_{66}-a_{56}a_{64});\\
 m \mapsto  i(a_{45}a_{56}- & a_{55}a_{46})+l(a_{45}a_{66}-a_{46}a_{65})+m(a_{55}a_{66}-a_{56}a_{65})
\end{align*}
where $\delta=a_{11}a_{22}-a_{12}a_{21}$. The Lie algebras  $N_{10}$ and $N_{11}$ correspond to $(\Delta_{13}+\Delta_{25})\wedge (\Delta_{14}+\Delta_{26})$ and $(\Delta_{13}+\Delta_{25})\wedge (\Delta_{14}+\Delta_{23}+\Delta_{26})$ respectively. Call $V_1$ the subspace of $\mathrm{H}^2(\mathfrak{g},\mathbb{R})$ generated by $\{\Delta_{13},\Delta_{14},\Delta_{15},\Delta_{16},\Delta_{23},\Delta_{24},\Delta_{25},\Delta_{26}\}$ and $V_2$ the one generated by $\{\Delta_{14},\Delta_{15},\Delta_{16},\Delta_{24},\Delta_{25},\Delta_{26}\}$. Notice that $V_1$ and $V_2$ are submodules under the action of $\mathrm{Aut}(\mathfrak{g})$. If $L_1$ and $L_2$ are $2$-dimensional subspaces of $\mathrm{H}^2(\mathfrak{g},\mathbb{R})$ associated to $N_{10}$ and $N_{11}$ respectively then they are both characterised by $L_i\subseteq V_1$ and $L_i\not\subseteq V_2$. Hence the two generators of the two dimensional subspaces of $\mathrm{H}^2(\mathfrak{g},\mathbb{R})$ associated to a $\mathbb{Q}$-form of both $N_{10}$ and $N_{11}$ should have the form $A=[a_1,b_1,c_1,d_1,e_1,f_1,g_1,h_1,0,0,0]$ and $B=[a,b,c,d,e,f,g,h,0,0,0]$ with either $a$ or $a_1$ not $0$. Then under the group action we can bring $A$ to $A=[1,0,0,0,0,0,1,0,0,0,0]$. In order to stabilise it we need $a_{12}=a_{54}=a_{56}=a_{34}=a_{36}=0,a_{35}=-\frac{a_{21}a_{55}}{a_{11}},a_{11}^2a_{22}=1,a_{22}a_{55}=1$. Then the action on $B=[0,b,c,d,e,f,g,h,0,0,0]$ is as follows

\begin{align*}
  &a\mapsto   e  a_{21}\delta;
 b\mapsto  a_{11}(  ba_{44}+da_{64})+a_{21}(fa_{44}+ha_{64});\\
&c\mapsto   a_{11}(   ba_{45}+ca_{55}+da_{65})+a_{21}(ea_{35}+fa_{45}+ga_{55}+ha_{65}); \\
 &d\mapsto  a_{11}(  ba_{46}+da_{66})+a_{21}(fa_{46}+ha_{66});
 e\mapsto   e  a_{22}\delta; 
 f\mapsto    a_{22}(fa_{44}+ha_{64}); \\ 
& g\mapsto     a_{22}(ea_{35}+fa_{45}+ga_{55}+ha_{65}); 
 h\mapsto     a_{22}(fa_{46}+ha_{66}); 
 i\mapsto   0;
 l \mapsto  0;
 m \mapsto   0. 
\end{align*}

Since we cannot have all $b,c,d,e,f,g,h$ equal  $0$ let us suppose that $b\ne 0$ then solving for $a_{45}$ and $a_{46}$ we can put $c=d=0$. Putting now $a_{45}=a_{46}=a_{21}=0$ we are left with 
\begin{align*}
 a\mapsto    & 0; b\mapsto  a_{11} ba_{44};
c\mapsto   0; 
 d\mapsto  0;
 e\mapsto  e   a_{22}\delta; 
 f\mapsto   a_{22}(fa_{44}+ha_{64}); \\
 g\mapsto  & a_{22}(ga_{55}+ha_{65}); 
 h\mapsto  a_{22}ha_{66}; 
 i\mapsto   0;
 l \mapsto  0;
 m \mapsto  0. 
\end{align*}
 Now if $h$ is $0$ the Lie algebra is not isomorphic to a $\mathbb{Q}$-form of neither $N_{10}$ nor $N_{11}$. Then solving for $a_{64}$ and $a_{65}$ we can put $f=g=0$. Taking now $a_{64}=a_{65}=0$ we have 
 \begin{align*}
  a\mapsto     0; b\mapsto  a_{11} ba_{44};
 c\mapsto  0; 
  d\mapsto  0;
  e\mapsto  e   a_{22}\delta; 
  f\mapsto   0; 
  g\mapsto  0; 
  h\mapsto  a_{22}ha_{66}; 
  i\mapsto   0;
  l \mapsto  0;
  m \mapsto  0. 
 \end{align*}
Then depending on whether $e$ is $0$ or not we can arrive to the standard form $B_1=[0,1,0,0,0,0,0,1,0,0,0]$ that corresponds to $L_{10}$ or $B_2=[0,1,0,0,1,0,0,1,0,0,0]$ that corresponds to $L_{11}$.

The Lie algebras $N_{12}$ and $N_{13}$ are $3$-dimensional extensions of the abelian Lie algebra of dimension $5$, $\mathbb{R}^5$. Elements of the cohomology group of $\mathbb{R}^5$ can be represented as $B=a\Delta_{12}+b\Delta_{13}+c\Delta_{14}+d\Delta_{15}+e\Delta_{23}+f\Delta_{24}+g\Delta_{25}+h\Delta_{34}+i\Delta_{35}+l\Delta_{45}$. If we denote by $g=(a_{ij})$ an element of the automorphism group of $\mathbb{R}^5$, that in this case is just $\mathrm{GL}(5,\mathbb{R})$, then its action on the cohomology group is as follows:\\
\begin{align*}
a\mapsto a\Sigma_{12}^{12}+b\Sigma_{13}^{12}+c\Sigma_{14}^{12}+d\Sigma_{15}^{12}+e\Sigma_{23}^{12}+f\Sigma_{24}^{12}+g\Sigma_{25}^{12}+h\Sigma_{34}^{12}+i\Sigma_{35}^{12}+l\Sigma_{45}^{12};\\
b\mapsto a\Sigma_{12}^{13}+b\Sigma_{13}^{13}+c\Sigma_{14}^{13}+d\Sigma_{15}^{13}+e\Sigma_{23}^{13}+f\Sigma_{24}^{13}+g\Sigma_{25}^{13}+h\Sigma_{34}^{13}+i\Sigma_{35}^{13}+l\Sigma_{45}^{13};\\
c\mapsto a\Sigma_{12}^{14}+b\Sigma_{13}^{14}+c\Sigma_{14}^{14}+d\Sigma_{15}^{14}+e\Sigma_{23}^{14}+f\Sigma_{24}^{14}+g\Sigma_{25}^{14}+h\Sigma_{34}^{14}+i\Sigma_{35}^{14}+l\Sigma_{45}^{14};\\
d\mapsto a\Sigma_{12}^{15}+b\Sigma_{13}^{15}+c\Sigma_{14}^{15}+d\Sigma_{15}^{15}+e\Sigma_{23}^{15}+f\Sigma_{24}^{15}+g\Sigma_{25}^{15}+h\Sigma_{34}^{15}+i\Sigma_{35}^{15}+l\Sigma_{45}^{15};\\
e\mapsto a\Sigma_{12}^{23}+b\Sigma_{13}^{23}+c\Sigma_{14}^{23}+d\Sigma_{15}^{23}+e\Sigma_{23}^{23}+f\Sigma_{24}^{23}+g\Sigma_{25}^{23}+h\Sigma_{34}^{23}+i\Sigma_{35}^{23}+l\Sigma_{45}^{23};\\
f\mapsto a\Sigma_{12}^{24}+b\Sigma_{13}^{24}+c\Sigma_{14}^{24}+d\Sigma_{15}^{24}+e\Sigma_{23}^{24}+f\Sigma_{24}^{24}+g\Sigma_{25}^{24}+h\Sigma_{34}^{24}+i\Sigma_{35}^{24}+l\Sigma_{45}^{24};\\
g\mapsto a\Sigma_{12}^{25}+b\Sigma_{13}^{25}+c\Sigma_{14}^{25}+d\Sigma_{15}^{25}+e\Sigma_{23}^{25}+f\Sigma_{24}^{25}+g\Sigma_{25}^{25}+h\Sigma_{34}^{25}+i\Sigma_{35}^{25}+l\Sigma_{45}^{25};\\
h\mapsto a\Sigma_{12}^{34}+b\Sigma_{13}^{34}+c\Sigma_{14}^{34}+d\Sigma_{15}^{34}+e\Sigma_{23}^{34}+f\Sigma_{24}^{34}+g\Sigma_{25}^{34}+h\Sigma_{34}^{34}+i\Sigma_{35}^{34}+l\Sigma_{45}^{34};\\
i\mapsto a\Sigma_{12}^{35}+b\Sigma_{13}^{35}+c\Sigma_{14}^{35}+d\Sigma_{15}^{35}+e\Sigma_{23}^{35}+f\Sigma_{24}^{35}+g\Sigma_{25}^{35}+h\Sigma_{34}^{35}+i\Sigma_{35}^{35}+l\Sigma_{45}^{35};\\
l\mapsto a\Sigma_{12}^{45}+b\Sigma_{13}^{45}+c\Sigma_{14}^{45}+d\Sigma_{15}^{45}+e\Sigma_{23}^{45}+f\Sigma_{24}^{45}+g\Sigma_{25}^{45}+h\Sigma_{34}^{45}+i\Sigma_{35}^{45}+l\Sigma_{45}^{45}
\end{align*}
where $\Sigma_{ij}^{st}=a_{is}a_{jt}-a_{it}a_{js}$.
The Lie algebras $N_{12}$ and $N_{13}$ correspond to the three dimensional subspace of the cohomology group of $\mathbb{R}^5$ represented by $\Delta_{12}\wedge\Delta_{13}\wedge(\Delta_{14}+\Delta_{25})$ and $\Delta_{12}\wedge(\Delta_{13}+\Delta_{24})\wedge(\Delta_{14}+\Delta_{25})$ respectively. We can notice that for both subspaces two of the generators are such that one is a symplectic form when restricted to the subspace $\spn\{e_1,e_2,e_4,e_5\}$ with $\{e_i\}$ the standard basis for $\mathbb{R}^5$ and the other is a Lagrangian subspace with respect to the first. Then if $P$ is the three dimensional subspace of $\mathrm{H}^2(\mathbb{Q}^4,\mathbb{Q})$ corresponding to a $
\mathbb{Q}$-form  $\mathfrak{h}$ of $N_{12}$ or $N_{13}$ we can bring two of the generators to $A=[1,0,0,0,0,0,0,0,0,0]$ and $C=[0,0,1,0,0,0,1,0,0,0]$. Now in order to fix $A$ and $C$ we need $a_{12}=a_{13}=a_{14}=a_{15}=a_{23}=a_{24}=a_{25}=a_{53}=a_{54}=a_{43}=0$,  $a_{45}=\frac{-a_{21}a_{55}}{a_{11}}$ and $a_{11}a_{22}=a_{11}a_{44}=a_{22}a_{55}=1$. Then if $B=[0,b,0,d,e,f,g,h,i,l]$ we get 

\begin{align*}
&a\mapsto 0;
b\mapsto ba_{11}a_{33}+ea_{21}a_{33}-ha_{33}a_{41}-ia_{33}a_{51};\\
& c\mapsto ba_{11}a_{34}+ca_{11}a_{44}-da_{21}a_{45}a_{55}+ea_{21}a_{34}+fa_{21}a_{44}-g\frac{a_{21}^2a_{55}}{a_{11}}+h\Sigma_{34}^{14}-ia_{34}a_{51}-la_{44}a_{51};\\
&d\mapsto ba_{11}a_{35}+da_{11}a_{55}+ea_{21}a_{35}-f\frac{a_{21}^2a_{55}}{a_{11}}+ga_{21}a_{55}+h\Sigma_{34}^{15}+i\Sigma_{35}^{15}+l\Sigma_{45}^{15};\\
& e\mapsto ea_{22}a_{33}-ha_{33}a_{42}-ia_{33}a_{52};
f\mapsto ea_{22}a_{34}+fa_{22}a_{44}+h\Sigma_{34}^{24}-ia_{34}a_{52}-la_{44}a_{52};\\
& g\mapsto ea_{22}a_{35}-f\frac{a_{21}a_{22}a_{55}}{a_{11}}+ga_{22}a_{55}+h\Sigma_{34}^{25}+i\Sigma_{35}^{25}+l\Sigma_{45}^{25};\\
& h\mapsto ha_{33}a_{44}; 
i\mapsto -h\frac{a_{21}a_{33}a_{55}}{a_{11}}+ia_{33}a_{55};
l\mapsto h\Sigma_{34}^{45}+ia_{34}a_{55}+la_{44}a_{55}.
\end{align*}

Now since the subspace spanned by $\{\Delta_{12},\Delta_{13},\Delta_{14},\Delta_{15},\Delta_{23},\Delta_{24},\Delta_{25}\}$ is a submodule for the action of the stabiliser of $A$ and $C$ we have $h=i=l=0$. Then $e$ should be $0$ as well otherwise the Lie algebra is isomorphic to a $\mathbb{Q}$-form of a Lie algebra that does not interest us. Then $b$ should be non zero otherwise the cocycle will contain a non trivial element of the center in its kernel. Now solving for $a_{34}$ make $c=g$ and then by subtracting a scalar multiple of $C$ make them equal $0$. Furthermore solving for $a_{35}$ make $d=0$. Taking now $a_{34}=a_{35}=a_{21}=0$ we are left with 

\begin{align*}
a\mapsto 0;
b\mapsto ba_{11}a_{33};
 c\mapsto 0;
d\mapsto 0;
e\mapsto 0;
f\mapsto fa_{22}a_{44};
g\mapsto 0;
 h\mapsto 0; 
i\mapsto 0;
l\mapsto 0.
\end{align*}

Hence depending on whether $f$ is $0$ or not a representative of the $\mathbb{Q}$-orbit is either  $B_1=[0,1,0,0,0,0,0,0,0,0]$ or $B_2=[0,1,0,0,0,1,0,0,0,0]$ that correspond to $N_{12}$ and $N_{13}$ defined over $\mathbb{Q}$.

\end{proof}

Putting together Proposition \ref{Q-form general case}, \ref{Q form alpha=0} and \ref{Q form degenerate case} we see that the Proof of Theorem \ref{ClassLattic} is achieved.

\begin{rema}
A priori the method just presented could have been applied also to classify the family of Lie algebras that we found in the non degenerate case, namely $\mathfrak{g}(\alpha,a,b,c)$. Nevertheless, the method was not that easy to apply for that case hence we decided to use an ad hoc method. Indeed also in \cite{Gong} for the case of central extensions of the free Lie algebra of rank $2$ over $3$ generators the author uses another method. 
\end{rema}

\section{Topological considerations}\label{topology}

Let $\Gamma$ be a subgroup of $\mathcal{H}(n,1)$ acting properly discontinuously and cocompactly on $\mathfrak{a}(\mathbb{C}^{n+1})$. Then from \cite[ Theorem 1.3]{Margulis} $\Gamma$ is virtually polycyclic. From a theorem of Selberg every finitely generated linear group contains a torsion free subgroup of finite index. Hence, up to replacing $\Gamma$ by a finite index subgroup, we can consider $M=\Gamma\backslash\mathfrak{a}(\mathbb{C}^{n+1})$ to be a compact flat Hermite-Lorentz manifold. From Theorem \ref{FG} there exists a subgroup $H\le \mathcal{H}(n,1)$ that acts simply transitively on $\mathfrak{a}(\mathbb{C}^{n+1})$ and $\Gamma\cap H$ has finite index in $\Gamma$ and it is a lattice in $H$. Hence, up to finite cover, $M$ is diffeomorphic to $(\Gamma\cap H)\backslash H$. \\
Let us suppose that $H=U(\gamma_2,\gamma_3,b_2,b_3)$ is a unipotent group so that it has the form of Proposition \ref{UnipSimplTrans}.

\begin{prop} 
Let $\Gamma$ be a lattice in the group $U:=U(\gamma_2,\gamma_3,b_2,b_3)$. The manifold $\Gamma\backslash U$ is a fiber bundle over a real torus $\Gamma/(\Gamma\cap \mathcal{C}^2U)\backslash U/\mathcal{C}^2U$ of dimension $\frac{2n+1}{3}\le p\le 2n+2$ with fibers that are real tori of dimension $q=2n+2-p$. Furthermore this fibration split into two fiber bundles as follows
\[\Gamma\backslash U\rightarrow \Gamma/(\Gamma\cap \mathcal{C}^3U)\backslash U/\mathcal{C}^3U\rightarrow \Gamma/(\Gamma\cap \mathcal{C}^2U)\backslash U/\mathcal{C}^2U.\]
\end{prop}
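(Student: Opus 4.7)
The plan is to realise the stated fibrations as $\Gamma$-quotients of the principal bundles associated with the lower central series of $U$, with Malcev--Raghunathan theory ensuring that all quotients are genuine manifolds.

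First I would check that $\mathcal{C}^2 U$ is itself abelian. From the remark preceding Subsection~2.1, $\mathcal{C}^2\mathfrak{u} \subseteq \mathbb{C}\tau \oplus \mathbb{C}\,\Ima\gamma_2 \oplus \mathbb{R}\,\pi_3(\gamma_3(i\xi)-J\gamma_3(\xi))$. By condition~(1) of Proposition~\ref{UnipSimplTrans} we have $\Ima\gamma_2 \oplus J\Ima\gamma_2 \subseteq \ker\gamma_2$, and by construction $\pi_3(w_0) \in S \subseteq \ker\gamma_2$ (using the decomposition~(\ref{decomposition})). Hence every $W$-component of an element of $\mathcal{C}^2\mathfrak{u}$ lies in $\ker\gamma_2$, so the explicit bracket formula of Proposition~\ref{UnipSimplTrans} immediately gives $[\mathcal{C}^2\mathfrak{u},\mathcal{C}^2\mathfrak{u}]=0$. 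Thus $\mathcal{C}^2 U = \exp(\mathcal{C}^2\mathfrak{u})$ is a simply connected \emph{abelian} Lie group. The quotient $U/\mathcal{C}^2 U$ is of course abelian as well, since $\mathcal{C}^2 U$ is the commutator subgroup.

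Next I would invoke the lattice intersection theorem cited in Proposition~\ref{VirtNil} (\cite[Corollary~3.5]{Rag}) to deduce that $\Gamma \cap \mathcal{C}^i U$ is a lattice in $\mathcal{C}^i U$ for $i=2,3$, and hence that $\Gamma\cdot \mathcal{C}^i U$ is closed in $U$ and $\Gamma/(\Gamma \cap \mathcal{C}^i U)$ embeds as a lattice in the simply connected nilpotent group $U/\mathcal{C}^i U$. The canonical projection $U \to U/\mathcal{C}^i U$ is a principal $\mathcal{C}^i U$-bundle, and descending by the free, proper action of $\Gamma$ yields a locally trivial fiber bundle
\[
\Gamma\backslash U \longrightarrow \Gamma/(\Gamma\cap \mathcal{C}^i U)\backslash U/\mathcal{C}^i U
\]
with fiber $(\Gamma\cap \mathcal{C}^i U)\backslash \mathcal{C}^i U$. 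Applied with $i=2$, the base is a nilmanifold on an abelian group (hence a torus) and, by the previous paragraph, the fiber is also a torus; this proves the main assertion. For the splitting, apply the same construction first with $i=3$ (the fiber is a torus because $\mathcal{C}^3 U \subseteq \mathbb{C}\tau$ is central) and then again inside the simply connected nilpotent group $U/\mathcal{C}^3 U$, whose derived subgroup $\mathcal{C}^2 U/\mathcal{C}^3 U$ is abelian since $\mathcal{C}^2 U$ is; the composition recovers the original fibration.

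Finally, the dimension bounds are a direct count. One has $\dim_\mathbb{R} U = 2n+2$ and, combining Step~1 with Proposition~\ref{uppper bound k},
\[
\dim \mathcal{C}^2 U \;\le\; 2 + 2\,\mathrm{rank}(\gamma_2) + \dim \mathbb{R}\pi_3(w_0) \;\le\; 2 + \tfrac{2(2n-2)}{3} + 1 \;=\; \tfrac{4n+5}{3},
\]
so $p = \dim(U/\mathcal{C}^2 U) \ge \tfrac{2n+1}{3}$, with the trivial upper bound $p \le 2n+2$. The chief subtlety is the closedness of $\Gamma\cdot \mathcal{C}^i U$ in $U$, which is exactly where Raghunathan's theorem does the real work; once available, the descent of the principal bundle to the $\Gamma$-quotient is formal.
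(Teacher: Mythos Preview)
Your proof is correct and follows essentially the same approach as the paper: both use Raghunathan's lattice intersection theorem to ensure $\Gamma\cap\mathcal{C}^iU$ is a lattice in $\mathcal{C}^iU$, descend the principal bundles of the lower central series to the $\Gamma$-quotient, and derive the dimension bound from Proposition~\ref{uppper bound k}. Your direct verification that $\mathcal{C}^2U$ is abelian is slightly more elaborate than necessary, since the paper has already established that $U$ is at most $3$-step nilpotent, whence $[\mathcal{C}^2U,\mathcal{C}^2U]\subseteq\mathcal{C}^4U=\{0\}$ immediately; otherwise the arguments coincide.
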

\begin{proof}
Let $\mathcal{C}^iU$, with $i\le 3$, be the elements of the lower central series, we have the following commutative diagram

\begin{equation*}
\begin{tikzcd}
0 \arrow[r]  & \mathcal{C}^3  U \arrow[r] \arrow[d, hook]  & U \arrow[r, "\pi_1"] \arrow[d, "\mathrm{Id}"] & U  /\mathcal{C}^3U \arrow[r] \arrow[d, "\pi_2"] & 0 \\
0 \arrow[r] &\mathcal{C}^2  U \arrow[r] & U  \arrow[r, "\pi"] & U  /\mathcal{C}^2U \arrow[r] & 0
\end{tikzcd}
\end{equation*}

From \cite[Corollary 1 of Theorem 2.3]{Rag} if $\Gamma$ is a lattice in $U$ nilpotent simply connected Lie group then $\Gamma\cap \mathcal{C}^iU$ is a lattice in $\mathcal{C}^iU$. Hence considering the induces maps by $\pi_1$ and $\pi_2$ on the quotients by $\Gamma$ we have that $\Gamma\backslash U$ can be seen as a sequence of fiber bundles. Furthermore the derived group $\mathcal{C}^2U$ is abelian of dimension $q$ with $0\le q\le 2+2\mathrm{rank}(\gamma_2) +1$. From Lemma \ref{uppper bound k} we then have $q\le \frac{4n+5}{3}$. Hence the quotient $U/\mathcal{C}^2U$ is an abelian Lie group of dimension $p$ with $\frac{2n+1}{3}\le p\le 2n+2$. Then $\mathcal{C}^2U$ is isomorphic to $\mathbb{R}^q$ and $U/\mathcal{C}^2U$ is isomorphic to $\mathbb{R}^p$.
Since, as we have seen, $\Gamma$ intersects $\mathcal{C}^2U$ in a lattice  we have that $\Gamma\cap \mathcal{C}^2U$ is isomorphic to $\mathbb{Z}^q$ and $\pi(\Gamma)$ is isomorphic to $\mathbb{Z}^p$. Then finally considering the fiber bundle induce by $\pi$ the manifold $\Gamma/(\Gamma\cap \mathcal{C}^2U)\backslash U/\mathcal{C}^2U$ is a torus and the fibers $\Gamma\cap \mathcal{C}^2U\backslash\mathcal{C}^2U$ are real tori.
\end{proof}

\begin{rema}
Notice that the above proposition is just a translation of the fact that the group $U(\gamma_2,\gamma_3,b_2,b_3)$ is $3$-step nilpotent.
\end{rema}

\appendix \section{}\label{LieBracket}

For $\mathbb{K}=\mathbb{R}$ the following is a non redundant list, up to isomorphism, of the $8$-dimensional nilpotent Lie algebras that appear as Lie algebras of unipotent simply transitive subgroups of $\mathrm{U}(3,1)\ltimes \mathbb{C}^{3+1}$. They are found putting together Proposition \ref{w_0=0}, \ref{gamma_2=0}, \ref{alpha=0} and \ref{w_0not0,gamma2not0}. Furthermore taking $\mathbb{K}=\mathbb{Q}$ this is also the complete non redundant list of the $\mathbb{Q}$-isomorphism classes of $\mathbb{Q}$-forms in the aforementioned Lie algebras and hence of the abstract commensurability classes of nilpotent crystallographic subgroups of $\mathrm{U}(3,1)\ltimes \mathbb{C}^{3+1}$. They are found putting together Proposition \ref{Q-form general case}, \ref{Q form alpha=0} and \ref{Q form degenerate case}. We present these Lie algebras defined over the field $\mathbb{K}$ in a compact version that is valid for both $\mathbb{K}=\mathbb{R}$ or $\mathbb{K}=\mathbb{Q}$. The presentation is given in the basis $\{x_1,\ldots,x_8\}$ and we will write only the non zero Lie brackets. For the Lie algebras that decompose as a direct sum of an abelian ideal and a smaller dimensional Lie algebra we have written in brackets the corresponding names in the lists of de Graaf \cite{Graaf} for dimension up to $6$ and of Gong \cite{Gong} for dimension $7$.\\
\space
For the case $\pi(\gamma_3(i\xi)-J\gamma_3(\xi))=0$, see Proposition \ref{w_0=0}, we have
\begin{itemize}
	\item $L_1$: abelian, 
	\item $L_2$: $[x_1,x_2]=x_3$ $\qquad (L_{3,2})$,
	\item $L_3$: $[x_1,x_2]=x_4,[x_1,x_3]=x_5$ $\qquad (L_{5,8})$,
	\item $L_4$: $[x_1,x_2]=x_4, [x_1,x_3]=x_5, [x_1,x_4]=x_6, [x_1,x_5]=x_7$ $\qquad (247A)$, 
	\item $L_5$: $[x_1,x_2]=x_4, [x_1,x_3]=x_5, [x_1,x_4]=x_6, [x_1,x_5]=x_7, [x_2,x_3]=x_6$ $\qquad(247L)$,
	\item $L_6^\mathbb{K}(\varepsilon)$:  $[x_1,x_2]=x_4, [x_1,x_3]=x_5, [x_1,x_4]=x_7, [x_1,x_5]=x_8, [x_2,x_6]=\varepsilon x_8, [x_3,x_6]=-x_7$ with $\varepsilon\in \mathbb{K}_{>0}$ and $L_{6}(\varepsilon)\cong L_6(\varepsilon')$ if and only if there exists $\alpha\in\mathbb{K}^*$ such that $\varepsilon'=\alpha^2\varepsilon$.
\end{itemize}
\space
For the case $\pi(\gamma_3(i\xi)-J\gamma_3(\xi))\not =0$ and $\gamma_2=0$, see Proposition \ref{gamma_2=0}, we have 

\begin{itemize}
	\item $N_1$: $ [x_1,x_2]=x_3,[x_1,x_3]=x_5,[x_2,x_4]=x_6$ $\qquad (L_{6,19}(0))$,
    \item $N_2$: $[x_1,x_2]=x_3,[x_1,x_3]=x_5,[x_1,x_4]=x_6$ $\qquad (L_{6,25})$, 
    \item $N_3^\mathbb{K}(\varepsilon)$: $[x_1,x_2]=x_3,[x_1,x_3]=x_5,[x_1,x_4]=\varepsilon x_6, [x_2,x_3]=x_6,[x_2,x_4]=x_5$ with $\varepsilon\in\mathbb{K}$ and $N_3(\varepsilon)\cong N_3(\varepsilon')$ if and only if there exists $\alpha\in\mathbb{K}^*$ such that $\varepsilon'=\alpha^2\varepsilon$ $\qquad (L_{6,24}(\varepsilon))$,
    \item $N_4$: $[x_1,x_2]=x_3,[x_1,x_3]=x_5,[x_1,x_4]=x_6,[x_2,x_4]=x_5$ $\qquad (L_{6,23})$,  
    \item $N_5: [x_1,x_2]=x_3, [x_1,x_3]=x_6, [x_1,x_5]=x_7, [x_2,x_4]=x_6$ $\qquad(257A)$,
    \item $N_6$: $[x_1,x_2]=x_3, [x_1,x_3]=x_6, [x_1,x_4]=x_7, [x_2,x_5]=x_7$ $\qquad(257B)$,
    \item $N_7$: $[x_1,x_2]=x_3, [x_1,x_3]=x_6, [x_2,x_4]=x_6, [x_2,x_5]=x_7$ $\qquad(257C)$,
    \item $N_8$: $[x_1,x_2]=x_3, [x_1,x_3]=x_6, [x_1,x_4]=x_7, [x_2,x_4]=x_6, [x_2,x_5]=x_7$ $\qquad(257D)$,
    \item $N_9^\mathbb{K}(\varepsilon)$: $[x_1,x_2]=x_3, [x_1,x_3]=x_6, [x_1,x_4]=x_6, [x_1,x_5]=x_7, [x_2,x_3]=x_7, [x_2,x_5]=\varepsilon x_6$ with  $\varepsilon\in\mathbb{K}$ and $N_{9}(\varepsilon)\cong N_{9}(\varepsilon')$ if and only if there exists $\alpha\in\mathbb{K}^*$ such that $\varepsilon'=\alpha^6\varepsilon$. Over $\mathbb{R}$ to classify equivalence classes the parameter $\varepsilon$ can then take three values $\varepsilon\in\{0,1,-1\}$ that correspond to $(257I),(257J_1)$ and $(257J)$ respectively,
    \item $N_{10}: [x_1,x_2]=x_3, [x_1,x_3]=x_7, [x_1,x_4]=x_8, [x_2,x_5]=x_7, [x_2,x_6]=x_8$
    \item $N_{11}: [x_1,x_2]=x_3, [x_1,x_3]=x_7, [x_1,x_4]=x_8, [x_2,x_3]=x_8, [x_2,x_5]=x_7, [x_2,x_6]=x_8 $
    \item $N_{12}: [x_1,x_2]=x_6, [x_1,x_3]=x_7, [x_1,x_4]=x_8, [x_2,x_5]=x_8$
    \item $N_{13}: [x_1,x_2]=x_6, [x_1,x_3]=x_7, [x_1,x_4]=x_8, [x_2,x_4]=x_7, [x_2,x_5]=x_8$.
\end{itemize}
\space
For the case $\pi(\gamma_3(i\xi)-J\gamma_3(\xi))\not =0$ and $\gamma_2\ne 0$, see Proposition  \ref{alpha=0}, \ref{w_0not0,gamma2not0}, \ref{Q-form general case} and \ref{Q form alpha=0}, we have 
\begin{itemize}
	\item $\mathfrak{g}_\mathbb{K}(0,0,0,0)$: $[x_1,x_2]=x_4, [x_1,x_3]=x_5, [x_2,x_3]=x_6, [x_1,x_4]=x_7, [x_1,x_5]=x_8$
	\item $\mathfrak{g}_\mathbb{K}(0,0,\varepsilon,1)$: $[x_1,x_2]=x_4, [x_1,x_3]=x_5, [x_2,x_3]=x_6, [x_1,x_4]=x_7, [x_1,x_5]=x_8, [x_2,x_6]=\varepsilon x_8,$ $ [x_3,x_6]=x_7$  with $\varepsilon\in\mathbb{K}$ such that $\mathfrak{g}_\mathbb{K}(0,0,\varepsilon',1)\cong \mathfrak{g}_\mathbb{K}(0,0,\varepsilon,1)$ if and only if there exists $\alpha\in\mathbb{K}^*$ such that $\varepsilon'=\alpha^2\varepsilon$
     \item $ \mathfrak{g}_\mathbb{R}(a,b,c):\  [x_1,x_2]=x_4, [x_1,x_3]=x_5, [x_2,x_3]=x_6, [x_1,x_4]=x_7, [x_1,x_5]=x_8, [x_2,x_4]=x_7,$ $ [x_2,x_5]=- x_8,  [x_3,x_4]=-x_8, [x_3,x_5]=3x_7, [x_2,x_6]=ax_7+bx_8, [x_3,x_6]=cx_7-ax_8 $ \\ with   $a,b,c\in\mathbb{R}, (a,b)\ne (0,-2) \text{ and }\mathfrak{g}_\mathbb{R}(a,b,c)\cong \mathfrak{g}_\mathbb{R}(-a,b,c)$
  \item $\mathfrak{g}_\mathbb{Q}(e,f,g,h,j,k,l)$ with $e,f,g,h,j,k,l\in\mathbb{Q} $. This is a general family of Lie algebras of the form: \\$[x_1,x_2]=x_4, [x_1,x_3]=x_5, [x_2,x_3]=x_6, [x_1,x_4]=x_7, [x_1,x_5]=x_8, [x_2,x_4]=fx_7+hx_8, [x_2,x_5]=-gx_7-f x_8,  [x_3,x_4]=-gx_7-fx_8, [x_3,x_5]=ex_7+gx_8,[x_2,x_6]=jx_7+kx_8, [x_3,x_6]=lx_7-jx_8$ .\\
  Each isomorphism class of this family is represented by one of the Lie algebras of the following lists. For all of them we will ask \\
  $ 3f^2g^2+6efgh-4f^3h-4g^3e-e^2h^2<0 $ and $\begin{pmatrix} hl+2gj-fk \\ -eh+fg+j\\ 2g^2-2fh+k\end{pmatrix}\wedge \begin{pmatrix} -gl+ek-2fj\\ 2eg-2f^2+l\\ eh-fg-j \end{pmatrix}\ne 0$.
  \begin{itemize}
     \item $\mathfrak{h}^1_\mathbb{Q}([e],[g],j,k,l):$ with $j,k,l\in\mathbb{Q}$ and $(e,g)\in\mathbb{Q}^2$ representatives of the equivalence class defined by $(e,g)\sim (e',g')$ if and only if there exists $\sigma_1,\sigma_2\in\mathbb{Q}^*$ such that $e'=\sigma_1^3 e$ and $g'=\sigma_1\sigma_2^2g$.  This family is defined by $f=h=0$.
	\item $\mathfrak{h}^2_\mathbb{Q}([e],m,j,k,l,[t]):$ with $j,k,l\in\mathbb{Q}$, $e\in\mathbb{Q}^*$ a representative of the equivalent class defined by $e\sim e'$ if and only if there exists $\mu\in\mathbb{Q}^*$ such that $e'=\mu^9e$, $m\in\mathbb{Q}\cup\{\infty\}$ and $t\in\mathbb{Q}\setminus\mathbb{Q}^3$ a representative of the equivalence class defined by $t\sim t'$ if and only if there exists $\mu\in\mathbb{Q}$ such that $t'=\mu^3t^j$ with $j=1,2$.
	\begin{itemize}
	\item if $m\in\mathbb{Q}$: this family is defined by $f=0,g=-emt$ and $h=-et(m^3t+1)$,
	\item if $m=\infty$: this family is defined by $f=0,g=0$ and $h=-et^2$.
     \end{itemize}
	\item $\mathfrak{h}^3_\mathbb{Q}([e],m,j,k,l,[t]):$  with  $j,k,l\in\mathbb{Q}$, $e\in\mathbb{Q}^*$ a representative of the equivalent class defined by $e\sim e'$ if and only if there exists $\mu\in\mathbb{Q}^*$ such that $e'=\mu^9e$, $m\in\mathbb{Q}\cup\{\infty\}$ and $t\in\mathbb{Q}$ such that $t^2-4>0$, $t^2-4\notin\mathbb{Q}^2$ and $t$ not in the image of the function $f(x)=x^3-3x$ over the rational. Finally we choose $t$ as a representative of the equivalence class defined by $t\sim t'$ if and only if there exists $\alpha,\beta\in\mathbb{Q}$ with $\alpha^2+t\alpha\beta+\beta^2=1$ such that $t'=-3t\alpha^2\beta+t\beta^3+6\alpha+\alpha^3t^2-8\alpha^3$.
	\begin{itemize}
	\item $m\in\mathbb{Q}$: this family is defined by $f=-2em,g=e(3m^2-tm-1)$ and $h=et(3m^2-m^3t-1)$,
	\item $m=\infty$: this family is defined by $f=-2e,g=3e$ and $h=-et^2$.
	\end{itemize}
	
	\end{itemize}
	Let us remark that the presentation of the Lie algebras depend on the choice of representative of the class $[t]$.
	\item $\mathfrak{g}_\mathbb{K}(0,-2,c): [x_1,x_2]=x_4, [x_1,x_3]=x_5, [x_2,x_3]=x_6, [x_1,x_4]=x_7, [x_1,x_5]=x_8, [x_2,x_4]=x_7,$ $[x_2,x_5]=- x_8,  [x_3,x_4]=-x_8, [x_3,x_5]=3x_7, [x_2,x_6]=-2x_8, [x_3,x_6]=cx_7$ with $c\in\mathbb{K}$.
\end{itemize}

\begin{rema}
Since it is not written in Gong's thesis we point out that the isomorphism between $N_9^\mathbb{R}(-1)$ and $(37B)$ is given by $x_1'=x_2-x_4,x_2'=x_1-x_3,x_3'=x_1+x_3,x_4'=x_2+x_4x_5'=2(x_7-x_5), x_6'=2x_6, x_7'=2(x_5+x_7)$.
\end{rema}

\bibliographystyle{amsplain}

\end{document}